\newtheorem{definition}{Definition}[section]
\newtheorem{theorem}[definition]{Theorem}
\newtheorem{lemma}[definition]{Lemma}
\newtheorem{coro}[definition]{Corollary}
\newtheorem{proposition}[definition]{Proposition}
\newtheorem{remark}[definition]{Remark}
\begin{document}








\begin{abstract}
We study the Milnor-Witt motives which are a finite direct sum of \(\mathbb{Z}(q)[p]\) and \(\mathbb{Z}/\eta(q)[p]\). We show that for MW-motives of this type, we can determine an MW-motivic cohomology class in terms of a motivic cohomology class and a Witt cohomology class. We define the motivic Bockstein cohomology and show that it corresponds to subgroups of Witt cohomology, if the MW-motive splits as above. As an application, we give the splitting formula of Milnor-Witt motives of Grassmannian bundles and complete flag bundles. This in particular shows that the integral cohomology of real complete flags has only \(2\)-torsions.
\end{abstract}

\title{Split Milnor-Witt Motives and its Applications to Fiber Bundles}
\author{Nanjun Yang}
\address{Nanjun Yang\\Yau Math. Sci. Center\\Jing Zhai\\Tsinghua University\\Hai Dian District\\Beijing China}
\email{ynj.t.g@126.com}
\thanks{The author would like to thank Baptiste Calm\`es and Jean Fasel for careful reading of the early version of this paper and to thank Baohua Fu for helpful discussions.}

\subjclass[2010]{Primary: 14N15, 14F42}

\maketitle

\tableofcontents
\section{Introduction}
Throughout, our base field \(F\) is infinite perfect with \(char(F)\neq 2\), which is the assumption used in \cite{BCDFO}. Let \(S\) be the base scheme being smooth and separated over \(F\) and \(R\) be a commutative ring with identity, namely the coefficient ring, if no confusion arises.

The category of Milnor-Witt motives (abbr. MW-motives), defined by B. Calm\`es, F. D\'eglise and J. Fasel, is the Chow-Witt refinement of the classical category of motives defined by Voevodsky. In \cite{Y1}, we have shown that the Milnor-Witt motive of \(\mathbb{P}^n\) \emph{splits}, i.e., is a direct sum of \(\mathbb{Z}(i)[2i]\) and \(\mathbb{Z}/\eta(i)[2i]\), where \(\eta\) is the Hopf element (see \cite[Definition 4.1]{Y1}), deducing the projective bundle theorem from this computation. The aim of this paper is to push this split pattern further and show that the Grassmannians and complete flags behave in the same way, as well as corresponding fiber bundles.

More precisely, we say that an MW-motive quasi-splits if it is a finite direct sum of \(\mathbb{Z}(q)[p]\) and \(\mathbb{Z}/\eta(q)[p]\) where \(p,q\in\mathbb{Z}\). We say that it splits if \(p=2q\) for all summands. Denote by \(H_M^{*,*}\) (resp. \(H_{MW}^{*,*}\)) the (resp. MW-) motivic cohomologies (\cite{BCDFO}), whereas \(H_{MW}^{p,q}[\eta^{-1}]\) is identified with the Witt cohomology \(H^{p-q}(-,\textbf{W})\) as shown in Proposition \ref{localization}. Define
\[H_{\eta}^{p,q}=Im(H_{MW}^{p,q}\longrightarrow H_{MW}^{p,q}[\eta^{-1}])\]
and \(\iota\) to be quotient map of \(\tau[1]\), where \(\tau\) is the motivic Bott element (\cite[p.22]{B}).

In \cite[Definition 10.4.3]{P}, Powell defined the total Steenrod power
\[\mathcal{P}_2:K_M(\mathbb{Z}/2,n)\wedge B_{gm}(\mathbb{Z}/2)\longrightarrow K_M(\mathbb{Z}/2,2n)\]
where \(K_M(\mathbb{Z}/2,n)\) is the Eilenberg Maclane space of \(H^{2n,n}_M(-,\mathbb{Z}/2)\). Denote by \(Sq^1:H^{*,*}_M(-,\mathbb{Z}/2)\longrightarrow H^{*+1,*}_M(-,\mathbb{Z}/2)\) the mod-2 Bockstein map. Then for any simplicial pointed sheaf \(\mathcal{F}_{\bullet}\), we define
\[\theta:H^{2n,n}_M(\mathcal{F}_{\bullet},\mathbb{Z}/2)\longrightarrow H^{4n,2n}_M(\mathcal{F}_{\bullet}\wedge B_{gm}(\mathbb{Z}/2),\mathbb{Z}/2)\]
where \(\theta=\mathcal{P}_2\circ(-\wedge B_{gm}(\mathbb{Z}/2))\). Then \cite[Theorem 6.10]{V} gives us the decomposition
\[H^{*,*}_M(\mathcal{F}_{\bullet}\times B_{gm}(\mathbb{Z}/2),\mathbb{Z}/2)=H^{*,*}_M(\mathcal{F}_{\bullet},\mathbb{Z}/2)[a,b]/(a^2=\tau b+\rho a)\]
where \(\rho=Sq^1(\tau)=-1\in F^*/2=H^{1,1}_M(F,\mathbb{Z}/2)\), \(a\in H^{1,1}_M(F,\mathbb{Z}/2)\) and \(b=Sq^1(a)\in H^{2,1}_M(F,\mathbb{Z}/2)\). The map \(\theta\) and the bistability then give the construction of Steenrod operations
\[Sq^{2i}:H^{p,q}_M(\mathcal{F}_{\bullet},\mathbb{Z}/2)\longrightarrow H^{p+2i,q+i}_M(\mathcal{F}_{\bullet},\mathbb{Z}/2)\]
\[Sq^{2i+1}=Sq^1\circ Sq^{2i}:H^{p,q}_M(\mathcal{F}_{\bullet},\mathbb{Z}/2)\longrightarrow H^{p+2i+1,q+i}_M(\mathcal{F}_{\bullet},\mathbb{Z}/2).\]
Suppose \(X\in Sm/F\) and \(\mathscr{L}\in Pic(X)\). We define \(Sq^i_{\mathscr{L}}\) to be operations
\[Sq^{2i}:H^{p+2,q+1}_M(Th(\mathscr{L}),\mathbb{Z}/2)\longrightarrow H^{p+2i+2,q+i+1}_M(Th(\mathscr{L}),\mathbb{Z}/2)\]
\[Sq^{2i+1}:H^{p+2,q+1}_M(Th(\mathscr{L}),\mathbb{Z}/2)\longrightarrow H^{p+2i+3,q+i+1}_M(Th(\mathscr{L}),\mathbb{Z}/2).\]
or simply \(Sq^i\) if \(\mathscr{L}=O_X\). The \(Sq^1_{\mathscr{L}}\) is independent of \(\mathscr{L}\) and the \(Sq^2_{\mathscr{L}}\) depends on the class of \(\mathscr{L}\) in \(Pic(X)/2\) by Proposition \ref{id} and \cite[Theorem 6.2]{Y}.

We define
\[Ker_{\tau}(Sq^2)_{p,q}(\mathcal{F}_{\bullet})=\{u\in H^{p,q}_M(\mathcal{F}_{\bullet},\mathbb{Z}/2)|Sq^2(u)\equiv 0 (\textrm{mod }\tau)\}\]
\[Im_{\tau}(Sq^2)_{p,q}(\mathcal{F}_{\bullet})=\{u\in H^{p,q}_M(\mathcal{F}_{\bullet},\mathbb{Z}/2)|u\in Im(Sq^2) (\textrm{mod }\tau)\}.\]
By Lemma \ref{tau1}, for any \(\mathscr{L}\in Pic(X)\) and \(p,q\in\mathbb{Z}\), we define the motivic Bockstein cohomology
\[E^{p,q}(X,\mathscr{L})=\frac{Ker_{\tau}(Sq^2)_{p+2,q+1}}{Im_{\tau}(Sq^2)_{p+2,q+1}}(Th(\mathscr{L})).\]
Denote by \(\textbf{I}(F)\) the kernel of the rank morphism \(\textbf{W}(F)\longrightarrow\mathbb{Z}/2\) of the Witt group of \(F\). The central theorem of quasi-split MW-motives is the following (see Theorem \ref{sq}):
\begin{theorem}\label{sqintro}
Suppose that \(X\in Sm/F\), \(p,q\in\mathbb{Z}\), \(\mathscr{L}\in Pic(X)\) and that \(Th(\mathscr{L})\) quasi-splits in \(\widetilde{DM}(pt,\mathbb{Z})\).
\begin{enumerate}
\item There is a natural map
\[\Delta:H^{p,q}_{\eta}(X,\mathscr{L})\longrightarrow E^{p,q}(X,\mathscr{L})\]
which induces an isomorphism of \(\mathbb{Z}/2\)-vector spaces
\[\delta:H^{p,q}_{\eta}(X,\mathscr{L})/H^{p+1,q+1}_{\eta}(X,\mathscr{L})\cong E^{p,q}(X,\mathscr{L}).\]
\item We have a Cartesian square
\[
	\xymatrix
	{
		H^{p,q}_{MW}(X,\mathbb{Z},\mathscr{L})\ar[r]\ar[d]	&Ker(\iota\circ Sq_{\mathscr{L}}^2\circ\pi)_{p,q}\ar[d]\\
		H^{p,q}_{\eta}(X,\mathscr{L})\ar[r]^{\Delta}			&E^{p,q}(X,\mathscr{L}).
	}
\]
In particular, if \(Th(\mathscr{L})\) splits, there is an isomorphism
\[H^p(X,\textbf{W}(\mathscr{L}))\otimes_{\textbf{W}(F)}\mathbb{Z}/2=E^{2p,p}(X,\mathscr{L})\]
and a decomposition
\[\widetilde{CH}^{*}(X,\mathscr{L})\cong\textbf{I}(F)\cdot H^*(X,\textbf{W}(\mathscr{L}))\oplus Ker(Sq^2_{\mathscr{L}}\circ\pi)_*.\]
\end{enumerate}
\end{theorem}
The first statement is the algebraic geometric explanation of the fact (see \cite[Theorem 10.3]{Mc}) that for any topological space \(X\) whose singular cohomologies are finitely generated and have only \(2\)-torsions, we have
\[\frac{Ker(Sq^1)_{{*}}}{Im(Sq^1)_{{*}}}=H^{{*}}(X,\mathbb{Z})_{free}\otimes\mathbb{Z}/2.\]
The second statement shows that giving an MW-motivic cohomology class of quasi-split MW-motives is equivalent to giving both a motivic cohomology class and Witt cohomology class, such that they lead to the same motivic Bockstein cohomology. This gives a much further calculation of MW-motivic cohomologies than that of \cite{HW}. (see Remark \ref{better})

Define \(\widetilde{DM}^{eff}(S,R)\) (resp. \(\widetilde{DM}(S,R)\)) to be the category of effective (resp. stable) MW-motives over \(S\) with coefficients in \(R\) (see Section \ref{Split Milnor-Witt motives}, \cite[\S 2]{Y1} or \cite[\S 3]{BCDFO}) and \(((i)):=(i)[2i]\). Moreover, define \(\widetilde{DM}_{\eta}=\widetilde{DM}(pt,\mathbb{Z})[\eta^{-1}]\) (So \(\mathbb{Z}(1)[1]=\mathbb{Z}\)), whose (tensor) unit represents the cohomology of Witt sheaves. Denote by \(\mathbb{Z}(X)\) the motive of \(X\in Sm/S\) in previous categories and by \(Th(\mathscr{E})\) the Thom space of a vector bundle \(E\). In order to explicitly decompose a split MW-motive, it suffices to write down its decompositions in both \(DM\) and \(\widetilde{DM}_{\eta}\) (see Lemma \ref{uniqueness}).

A Young diagram \(T\) (a series of rows of boxes with decreasing length, see the beginning of Section \ref{MW-Motivic Decomposition of Grassmannians}) is called untwisted (resp. twisted) if it is filled by the chessboard pattern such that the first box in the first row is black (resp. white). Define
\[\begin{array}{cc}A(T)=\{T'\geq T|T'\setminus T=\textrm{ a white box}\}&D(T)=\{T'\leq T|T\setminus T'=\textrm{ a white box}\}\end{array}.\]
It is called irredundant (resp. full) if \(D(T)=\emptyset\) (resp. \(A(T)=\emptyset\)). If it is both irredundant and full, it is called even (see Definition \ref{Young}, \ref{even} and \ref{Young1}). Hence every Young diagram \(\Lambda\) has \(|A(\Lambda)|\) positions where a white box could be added. The symbol \(\Lambda_{i_1,\cdots,i_l}\) (\(i_1<\cdots<i_l\)) denotes the diagram obtained from adding a white box added at each of the \(i_1,\cdots,i_l\)-th positions of \(\Lambda\) (see Definition \ref{Young1}).

The stage being set, we can now state the following theorem, which describes the decomposition of the Grassmannians \(Gr(k,n)\) in the category of MW-motives (see Theorem \ref{Grass1}), where \(Gr(k,n)\) parametrizes \(k\)-dimensional subspaces inside an \(n\)-dimensional \(F\)-vector space.
\begin{theorem}
\begin{enumerate}
\item[(a)] We have
\[\mathbb{Z}(Gr(k,n))\cong\bigoplus_{\Lambda\textrm{ even}}\mathbb{Z}((|\Lambda|))\oplus\bigoplus_{\Lambda\textrm{ irred. not full}, i_1>1}\mathbb{Z}/{\eta}((|\Lambda_{i_1,\cdots,i_l}|)).\]
\item[(b)] Suppose that all diagrams are twisted. We have
\[Th(O_{Gr(k,n)}(1))\cong\bigoplus_{\Lambda\textrm{ even}}\mathbb{Z}((|\Lambda|+1))\oplus\bigoplus_{\Lambda\textrm{ irred. not full}, i_1>1}\mathbb{Z}/{\eta}((|\Lambda_{i_1,\cdots,i_l}|+1)).\]
\end{enumerate}
\end{theorem}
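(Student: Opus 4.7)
The strategy is to verify the splitting hypothesis of Theorem~\ref{sqintro} for \(X = Gr(k,n)\) with \(\mathscr{L} = O(j)\) (\(j=0\) for (a), \(j=1\) for (b)) and then to translate its two conclusions into the Young-tableau language of the statement. Under that hypothesis, part (1) of \ref{sqintro} provides that \(H^*(X, \textbf{W}(\mathscr{L}))\) is \(\textbf{W}(k)\)-free and that \(\delta\) is an isomorphism onto \(E^*\), while part (2) presents \(\widetilde{CH}^*(X, \mathscr{L})\) as an extension of \(\textbf{I}(k) \cdot H^*(X, \textbf{W}(\mathscr{L}))\) by \(Ker(Sq^2_{\mathscr{L}} \circ \pi)\). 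The two families of tableaux in the statement should correspond exactly to these two pieces: the even tableaux will parametrise a \(\textbf{W}(k)\)-basis of \(H^*(X, \textbf{W}(\mathscr{L}))\), producing the \(\mathbb{Z}\)-summands, while the enrichments \(\Lambda_{i_1,\cdots,i_l}\) (for \(\Lambda\) irredundant not full and \(i_1 > 1\)) will give a \(\mathbb{Z}/2\)-basis of a complement to \(Im(\delta)\) inside \(Ker(Sq^2_{\mathscr{L}})\), yielding the \(\mathbb{Z}/\eta\)-summands.

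On the Chow side the input is classical: the Schubert cell decomposition shows that the underlying Chow motive of \(Gr(k,n)\) is a direct sum of Tate twists indexed by partitions \(\lambda \subseteq k \times (n-k)\), and \(CH^*(Gr(k,n))\) is \(\mathbb{Z}\)-free on the Schubert classes \(\sigma_\lambda\); the twisted analogue handles \(Th(O(1))\). The splitting of \(Th(O(j))\) itself as an MW-motive I would establish by induction, viewing \(Gr(k,n)\) through an iterated tower of projective subbundles inside an ambient flag variety. The base case \(k = 1\) is \(\mathbb{P}^{n-1}\) from \cite{Y1}, and each inductive step uses the Grassmannian- and projective-bundle splittings set up earlier in the paper. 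The same induction simultaneously computes \(H^*(Gr(k,n), \textbf{W}(O(j)))\) with basis indexed by even tableaux: each intermediate \(\mathbb{P}^r\)-bundle in Witt theory shifts the line-bundle twist, which is precisely what interchanges the checkerboard parity between the untwisted case (a) and the twisted case (b).

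The remaining piece is combinatorial: to identify a \(\mathbb{Z}/2\)-basis of the complement to \(Im(\delta)\) inside \(Ker(Sq^2_{\mathscr{L}})\) with the tuples \(\Lambda_{i_1,\cdots,i_l}\) from the statement. The geometric input is a Wu-type formula: on mod-\(2\) Chow, \(Sq^2\) sends \(\bar\sigma_\lambda\) to a sum of \(\bar\sigma_\mu\) with \(\mu \in A(\lambda)\) in the checkerboard coloring (and dually the preimage relation is controlled by \(D(\lambda)\)), which is exactly what motivates the combinatorial definitions of \(A(T)\) and \(D(T)\). The main obstacle is this last matching: one must show that as \(\Lambda\) ranges over irredundant-not-full tableaux and \(\{i_1,\cdots,i_l\}\) ranges over subsets of \(\{1,\cdots,|A(\Lambda)|\}\) with \(i_1 > 1\), the associated classes are \(\mathbb{Z}/2\)-linearly independent in \(Ker(Sq^2_{\mathscr{L}})\) and span a complement to \(Im(\delta)\); the restriction \(i_1 > 1\) should precisely reflect the redundancy already absorbed by the even-tableau contribution. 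Once this bijection is established, both (a) and (b) follow directly from Theorem~\ref{sqintro}.
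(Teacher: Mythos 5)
Your overall architecture is correct: first establish that the relevant Thom spaces of Grassmannians split as MW-motives, then combine Theorem~\ref{sqintro} with the known Chow decomposition to read off the multiplicities of the $\mathbb{Z}((i))$ and $\mathbb{Z}/\eta((j))$ summands, translating via the checkerboard combinatorics. This is broadly the structure of the paper's proof, which runs Theorem~\ref{Grass} (splitting) and then Theorem~\ref{Grass1} (explicit decomposition via Proposition~\ref{gen}, using the basis of $Ker(Sq^2_{\mathscr L}\circ\pi)$ from Proposition~\ref{basis} and the Witt cohomology basis from Wendt).

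The genuine gap is in your proposed method for proving the splitting itself. You suggest ``viewing $Gr(k,n)$ through an iterated tower of projective subbundles inside an ambient flag variety,'' but a Grassmannian is not a projective bundle tower, and one cannot read off its MW-motive from the flag variety's without already knowing the decomposition of $\mathbb{Z}(Gr(k,n))$ --- that would be circular, since the flag variety is a bundle \emph{over} the Grassmannian, not the other way round. The paper instead proves splitting by induction on the pair $(k,n)$, using the Gysin triangles of Proposition~\ref{Gysin} associated to the nested closed embeddings $Gr(k,n-1)\hookrightarrow Gr(k,n)$ and $Gr(k-1,n-1)\hookrightarrow Gr(k,n)$ from Proposition~\ref{geo1}, whose open complements are affine bundles over smaller Grassmannians. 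The parity of the normal bundle twist is what controls whether the triangle involves $O(1)$ or is untwisted, and this is the source of the twisted/untwisted dichotomy in (a) vs.\ (b). Crucially, when $k$ is odd and $n$ is even, the codimension-one Gysin triangle does not split for weight reasons, and the paper must use the codimension-$2(n-k)$ embedding $Gr(k-2,n-2)\hookrightarrow Gr(k,n)$ together with the geometric decomposition of Proposition~\ref{geo2} (which produces a $Gr(1,2)\times Gr(k-1,n-2)$ stratum, whose Thom space is visibly of $\mathbb{Z}/\eta$-type) and the splitting detection criterion of Proposition~\ref{splitting}. Your proposal gives no account of this case, and it is precisely where the naive induction fails.

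Secondly, you label the identification of a $\mathbb{Z}/2$-basis of $Ker(Sq^2_{\mathscr L})$ modulo the even-tableau part as ``the main obstacle'' but do not supply an argument. This is the content of Proposition~\ref{imsq} and, in more refined form, Proposition~\ref{basis}, where one decomposes $Ch^\bullet(Gr(k,n))$ into blocks $\mathbb{Z}/2\mathbb{Z}\bar{A}(\Lambda)$ indexed by irredundant $\Lambda$, encodes elements of $\bar A(\Lambda)$ as $\{0,1\}$-sequences, and observes that $Sq^2_{\mathscr L}$ acts essentially as a Koszul differential on each block. The $i_1>1$ condition is not a ``redundancy absorbed by the even-tableau contribution'' as you put it; it arises because $Sq^2_{\mathscr L}(\Lambda_{1,i_2,\cdots,i_l})$ is already a $\mathbb{Z}/2$-combination of the $Sq^2_{\mathscr L}(\Lambda_{i_1,\cdots,i_l})$ with $i_1>1$ via $Sq^2_{\mathscr L}\circ Sq^2_{\mathscr L}=0$, so dropping $i_1=1$ gives a basis rather than merely a spanning set. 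Finally, once splitting and the combinatorics are in hand, it is Lemma~\ref{uniqueness} (or the criterion of Proposition~\ref{gen}) --- not Theorem~\ref{sqintro} alone --- that pins down the multiplicities; Theorem~\ref{sqintro} gives you $Ker$ and $Im$ of $Sq^2_{\mathscr L}$ but you still need to invert the linear system relating $(w_i,t_j)$ to the Chow ranks.
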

We derive from this result the Grassmannian bundle theorem, by the method developed in \cite{Y1} (see Theorem \ref{Grassbdl}). In the statement, the symbol \(Gr(k,\mathscr{E})\) denotes the variety of quotient bundles of \(\mathscr{E}\) of rank \(k\).
\begin{theorem}
Let \(S\in Sm/F\) and let \(X\in Sm/S\) be quasi-projective, \(\mathscr{L}\in Pic(X)\) and \({\mathscr{E}}\) be a vector bundle of rank \(n\) over \(X\). Denote by \(p:Gr(k,\mathscr{E})\longrightarrow X\) the structure map.
\begin{enumerate}
\item We have
\[\mathbb{Z}(Gr(k,\mathscr{E}))/\eta\cong\bigoplus_{\Lambda\textrm{ \((k,n)\)-truncated}}\mathbb{Z}(X)/\eta((|\Lambda|))\]
in \(\widetilde{DM}(S,\mathbb{Z})\).
\item If \(k(n-k)\) is even, we have
\[Th(p^*\mathscr{L})\cong\bigoplus_{\Lambda\textrm{ even}}Th(\mathscr{L})((|\Lambda|))\oplus\bigoplus_{\Lambda\textrm{ irred. not full}, i_1>1}\mathbb{Z}(X)/{\eta}((|\Lambda_{i_1,\cdots,i_l}|+1))\]
in \(\widetilde{DM}(S,\mathbb{Z})\).
\item If both \(k\) and \(n\) are even, we have
\begin{footnotesize}
\[\begin{array}{c}Th(p^*\mathscr{L}\otimes O(1))\cong\bigoplus_{\Lambda=\sigma_{n-k}T}Th(det(\mathscr{E})^{\vee}\otimes\mathscr{L})((|\Lambda|))\oplus\bigoplus_{\Lambda=\sigma_{1^k}T}Th(\mathscr{L})((|\Lambda|))\\\oplus\bigoplus_{i_1>1}\mathbb{Z}(X)/{\eta}((|\Lambda_{i_1,\cdots,i_l}|+1))\end{array}\]
\end{footnotesize}
in \(\widetilde{DM}(S,\mathbb{Z})\), where \(T\) is completely even.
\item If \(n-k\) is odd, we have
\[Th(p^*\mathscr{L}\otimes O(1))\cong\bigoplus_{\Lambda\textrm{ even}}Th(\mathscr{L})((|\Lambda|))\oplus\bigoplus_{\Lambda\textrm{ irred. not full}, i_1>1}\mathbb{Z}(X)/{\eta}((|\Lambda_{i_1,\cdots,i_l}|+1))\]
in \(\widetilde{DM}(S,\mathbb{Z})\).
\item If \(k\) and \(n\) are odd, we have
\[Th(p^*\mathscr{L}\otimes O(1))\cong\bigoplus_{\Lambda\textrm{ even}}Th(\mathscr{L}\otimes det(\mathscr{E})^{\vee})((|\Lambda|))\oplus\bigoplus_{\Lambda\textrm{ irred. not full}, i_1>1}\mathbb{Z}(X)/{\eta}((|\Lambda_{i_1,\cdots,i_l}|+1))\]
in \(\widetilde{DM}(S,\mathbb{Z})\).
\item If \(k\) is odd, \(n\) is even and \(e(\mathscr{E})=0\in\widetilde{CH}^n(X,det(\mathscr{E})^{\vee})\), there is an isomorphism
\begin{footnotesize}
\[\begin{array}{c}Th(p^*\mathscr{L})\cong\bigoplus_{\Lambda=\mathcal{R}\cdot T}Th(\mathscr{L}\otimes det(\mathscr{E})^{\vee})((|\Lambda|))\oplus\bigoplus_{\Lambda=T}Th(\mathscr{L})((|\Lambda|))\\\oplus\bigoplus_{\Lambda\textrm{ irred. not full}, i_1>1}\mathbb{Z}(X)/{\eta}((|\Lambda_{i_1,\cdots,i_l}|+1))\end{array}\]
\end{footnotesize}
in \(\widetilde{DM}(S,\mathbb{Z})\), where \(T\) is completely even and \(\mathcal{R}\) is the longest hook.
\end{enumerate}
\end{theorem}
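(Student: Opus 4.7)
The overall plan is to combine the absolute decomposition for \(Gr(k,n)\) (the preceding theorem) with the globalization technique developed in \cite{Y1} for projective bundles. Since an MW-motive decomposition can be checked by projecting separately to \(DM\) (killing \(\eta\)) and to \(\widetilde{DM}_{\eta}\) (inverting \(\eta\), i.e.\ the Witt side), I would verify the announced splitting on each side and reassemble. Part (1) is exactly the \(DM\)-side statement: it is the classical Grassmannian bundle formula, reproved here mod \(\eta\) via the cellular filtration by Schubert varieties indexed by \((k,n)\)-truncated Young tableaux.

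For parts (2)--(6) the Witt side carries the new content. My strategy is: pull back along a Zariski cover trivializing \(\mathscr{E}\), apply the absolute theorem together with a Künneth formula in \(\widetilde{DM}\) over each open, and then glue by the Mayer--Vietoris / descent argument of \cite{Y1}. The \(\mathbb{Z}/\eta\)-summands glue unproblematically because they are insensitive to line bundle twists in \(\widetilde{DM}\); the main point is to identify each \(\mathbb{Z}\)-summand with the correct Thom twist of a line bundle on \(X\). Here the parity hypotheses enter through the identity \(O(1) = \det(Q)\) on \(Gr(k,\mathscr{E})\) together with the formula for the relative tangent bundle in terms of the tautological sub and quotient, which allow one to reduce the twist \(p^{*}\mathscr{L} \otimes O(1)^{\epsilon}\) modulo squares in \(Pic\) on each Schubert cell. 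Tracking this cell by cell explains why cases (2), (4), (5) yield a uniform twist over all even \(\Lambda\), while case (3) is the genuinely split situation in which the twist differs for tableaux starting with \(\sigma_{n-k}\) versus \(\sigma_{1^{k}}\), reflecting the two extremal Schubert families.

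The principal obstacle is case (6), where \(k\) is odd and \(n\) is even. Here the Schubert filtration produces a nontrivial extension in \(\widetilde{DM}(S,\mathbb{Z})\), and a parity count shows that its obstruction class naturally lives in \(\widetilde{CH}^{n}(X, \det(\mathscr{E})^{\vee})\); by the same Gysin argument as for odd-rank projective bundles in \cite{Y1} this class must be exactly the Euler class \(e(\mathscr{E})\). The hypothesis \(e(\mathscr{E}) = 0\) then splits the extension, producing the two families of \(\mathbb{Z}\)-summands indexed by \(\Lambda = \mathcal{R}\cdot T\) and \(\Lambda = T\) with \(T\) completely even, where the longest hook \(\mathcal{R}\) records the shift arising from the pairing with \(\det(\mathscr{E})^{\vee}\). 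Verifying that the obstruction is precisely \(e(\mathscr{E})\) (and not merely a multiple of it) by running the Gysin sequence along the top Schubert stratum, and confirming that no further obstructions appear on the lower strata, is the main technical step of the proof.
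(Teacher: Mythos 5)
Your high-level plan for (1)--(5) is in the right spirit and overlaps with the paper's strategy: part (1) is indeed obtained by tensoring the absolute decomposition of Theorem \ref{Grass1} with $\mathbb{Z}/\eta$, and parts (2)--(5) do proceed by local triviality plus the Zariski descent criterion of \cite[Proposition 2.4]{Y1}. However, you gloss over what is actually the central technical difficulty. A ``K\"unneth over each open, then glue'' argument does not work as stated: the local isomorphisms have to be promoted to a \emph{single global morphism} whose restriction to each trivializing open is the known isomorphism, and it is precisely the construction of these global classes that carries the weight of the proof. The paper does this by first reducing to $X=S=Gr(n,m)$ via a classifying map (so that $\mathbb{Z}(X)$ splits, which is what makes Proposition \ref{precise} applicable), and then constructing the correspondences $\varphi_{\mathscr{E}}$ (canonical, for the $\mathbb{Z}/\eta$-summands, using the derivation formula for $Sq^2$ on Schur polynomials from Proposition \ref{Schur}) and $\psi_{\mathscr{E}}$ (non-canonical, for the $\mathbb{Z}$-summands, using the polynomial generators in $p_j,p_j^{\perp},e_k,e_{n-k}^{\perp}$ from Proposition \ref{decomp1}). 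Your sketch doesn't address why such classes exist or why the non-canonicality of $\psi_{\mathscr{E}}$ is harmless; without that, the gluing step is unjustified.

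For case (6) your proposal is genuinely off. You describe a single extension whose obstruction is $e(\mathscr{E})\in\widetilde{CH}^n(X,\det(\mathscr{E})^{\vee})$ and which is then split by the hypothesis $e(\mathscr{E})=0$. The paper does something quite different. There is no invariant ``Schubert filtration'' on $Gr(k,\mathscr{E})$ for a general $\mathscr{E}$; instead one uses the Gysin triangle
\[
Th(O_{Gr(k-1,\mathscr{E})}(1))\xrightarrow{\,i^*\,}Th(O_{Gr(k,\mathscr{E}\oplus O_X)}(1))\longrightarrow \mathbb{Z}(Gr(k,\mathscr{E}))((k+1))\longrightarrow[1],
\]
in which the first two terms split by the already-proven cases (3) and (5) (since $k$ odd, $n$ even force $k-1$ and $n$ both even, respectively $k$ and $n+1$ both odd). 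The work is then to compute $i^*$ term by term against the \emph{precise} classes $p_{\mathscr{E}}(\Lambda)$ of Proposition \ref{precise}, using the Whitney sum formula for even Pontryagin classes together with the Jacobi--Trudi inversion of Proposition \ref{inversion}, and to simplify $i^*$ modulo multiples of $e(q^*\mathscr{E})$. The hypothesis $e(\mathscr{E})=0$ is used to kill those residual multiples after pulling back along $f$, not as a single Ext class. A ``one obstruction class'' picture cannot produce the explicit shape of the answer, in particular the two families of $\mathbb{Z}$-summands indexed by $\Lambda=\mathcal{R}\cdot T$ and $\Lambda=T$ (where $\mathcal{R}$ is the longest hook); that indexing arises exactly from tracking which tableaux have $\Lambda_k>0$ through the $i^*$ computation. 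So the logic and the bookkeeping of your case (6) would need to be replaced wholesale by the paper's Gysin-triangle argument.
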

Let us note that the (6) is the hardest part of the theorem, since the vanishing of Euler class is a global condition.

Last but not least, we show that the complete flag also splits as an MW-motive. Moreover, we have a complete flag bundle theorem if all Pontryagin classes and the Euler class vanish in Witt cohomology (see Theorem \ref{flag}). For any vector bundle \(\mathscr{E}\), \(Fl(\mathscr{E})\) parametrizes complete flags
\[0\subseteq\mathscr{E}_1\subseteq\cdots\subseteq\mathscr{E}_n=\mathscr{E}\]
where \(rk(\mathscr{E}_i)=i\) and successive quotients are line bundles.
\begin{theorem}
For any \(\mathscr{L}\in Pic(Gr(1,\cdots,n))/2\), \(Th(\mathscr{L})\) splits as an MW-motive. Moreover, we have
\begin{enumerate}
\item If \(n\) is odd, define \(deg(a)=4a-1\). We have
\[Th(\mathscr{L})=\begin{cases}\bigoplus_{1\leq t\leq\frac{n-1}{2}}\bigoplus_{1\leq a_1<\cdots<a_t\leq\frac{n-1}{2}}\mathbb{Z}[1+\sum_sdeg(a_s)]&\textrm{if }\mathscr{L}=0\\0&\textrm{else}\end{cases}.\]
in \(\widetilde{DM}_{\eta}\).
\item If \(n\) is even, define
\[deg(a)=\begin{cases}4a-1&1\leq a\leq\frac{n}{2}-1\\n-1&a=\frac{n}{2}\end{cases}.\]
We have
\[Th(\mathscr{L})=\begin{cases}\bigoplus_{1\leq t\leq\frac{n}{2}}\bigoplus_{1\leq a_1<\cdots<a_t\leq\frac{n}{2}}\mathbb{Z}[1+\sum_sdeg(a_s)]&\textrm{if }\mathscr{L}=0\\0&\textrm{else}\end{cases}.\]
in \(\widetilde{DM}_{\eta}\).
\end{enumerate}
So \(Th(\mathscr{L})\) are mutually isomorphic in \(\widetilde{DM}(pt,\mathbb{Z})\) if \(\mathscr{L}\neq 0\). Denote by \(G\) this common object.

Suppose that \(X\in Sm/S\) is quasi-projective, \(\mathscr{M}\in Pic(Fl(\mathscr{E}))/2\) and that \(\mathscr{E}\) is a vector bundle of rank \(n\) on \(X\). Denote by \(p:Fl(\mathscr{E})\longrightarrow X\) the structure map. We have
\[Th(\mathscr{M})\cong\begin{cases}Th(\mathscr{L})\otimes\mathbb{Z}(Gr(1,\cdots,n))&\begin{array}{c}p_i(\mathscr{E}),e(\mathscr{E})=0\in H^{{*}}(X,\textbf{W}(-))\\\textrm{for any }i>0\\\mathscr{M}=p^*\mathscr{L},\mathscr{L}\in Pic(X)/2\end{array}\\\mathbb{Z}(X)\otimes G&\mathscr{M}\notin Pic(X)/2\end{cases}\]
in \(\widetilde{DM}(S,\mathbb{Z})\).
\end{theorem}
We can write down the generators of \(H^{{*}}(Gr(1,\cdots,n),\textbf{W})\) in the aid of Theorem \ref{sqintro} (see Remark \ref{flaggen} and the list below for notations).
\begin{proposition}\label{bintro}
Suppose that \(1\leq a\leq\lfloor\frac{n}{2}\rfloor\). If \(n\) is odd or \(n\) is even and \(a<\frac{n}{2}\), define
\[T_a=h_{2a}(x_1,\cdots,x_{n-2a})h_{2a-1}(x_1,\cdots,x_{n-2a+1})+u(x_1,\cdots,x_{n-2a})\]
where \(u\) satisfies \(Sq^2(u)=h_{2a}(x_1,\cdots,x_{n-2a})^2\). If \(n\) is even, define
\[T_{\frac{n}{2}}=x_1^{n-1}.\]

Then we have
\[E^{{*}}(Fl(F^{\oplus n}))=\wedge[\{T_a\}].\]
\end{proposition}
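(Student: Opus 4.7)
The plan is to combine Theorem \ref{sqintro}(1) with the splitting of $Th(\mathscr{L})$ over the complete flag established in the preceding theorem. Since $\mathbb{Z}(Gr(1,\ldots,n))$ splits as an MW-motive, Theorem \ref{sqintro}(1) identifies $E^{\bullet}(Fl(k^{\oplus n}))$ with $H^{\bullet}(Fl(k^{\oplus n}),\textbf{W})\otimes_{\textbf{W}(k)}\mathbb{Z}/2$. Reading off the $\widetilde{DM}_{\eta}$-decomposition of $Th(0)$ from the preceding flag theorem then gives the degrees of an $\mathbb{F}_2$-basis of $E^{*}$: after removing the uniform Thom shift, there is one basis vector in each total degree $\sum_{s=1}^{t}deg(a_s)$ as $\{a_1<\cdots<a_t\}$ ranges over the subsets of $\{1,\ldots,\lfloor n/2\rfloor\}$. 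The $\mathbb{F}_2$-Poincar\'e polynomial of $E^{*}$ is therefore $\prod_{a=1}^{\lfloor n/2\rfloor}(1+q^{deg(a)})$, matching the Poincar\'e polynomial of the claimed exterior algebra $\wedge[\{T_a\}]$.

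Granted this count, it suffices to verify that the elements $T_a$ are well-defined in $E^{*}$ and that their products $T_{a_1}\cdots T_{a_t}$ (with $a_1<\cdots<a_t$) are linearly independent. For $1\leq a<n/2$, the Cartan formula applied to $h_{2a}(x_1,\ldots,x_{n-2a})h_{2a-1}(x_1,\ldots,x_{n-2a+1})$, together with Wu's formula for the action of $Sq^2$ on Chern roots, produces, modulo the relations $e_i(x_1,\ldots,x_n)=0$, a single non-trivial obstruction equal to $h_{2a}(x_1,\ldots,x_{n-2a})^2$. One then picks $u$ of degree $4a-2$ with $Sq^2(u)=h_{2a}(x_1,\ldots,x_{n-2a})^2$; the existence of such a $u$ reduces to showing that the class of $h_{2a}^2$ vanishes in $E^{4a-1}(Fl(k^{\oplus n}))$, which follows from the first-paragraph count (the corresponding component of $E^{*}$ is forced to be zero). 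For $a=n/2$ with $n$ even, the identity $Sq^2(x_1^{n-1})=(n-1)x_1^n$ must vanish in $E^{*}$, which again one reads off from the degree count or, directly, from the Newton-type relations $p_k(x_1,\ldots,x_n)=0$ for $k\geq 1$ that hold in $Ch^{*}(Fl(k^{\oplus n}))$.

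The relations $T_a^2=0$ in $E^{*}$ follow from the same vanishing-by-degree argument: the total degree $2\cdot deg(a)$ never arises as $\sum deg(a_s)$ over a subset of \emph{distinct} indices, so the relevant piece of $E^{*}$ is zero. This supplies a well-defined algebra homomorphism $\varphi:\wedge[\{T_a\}]\to E^{\bullet}(Fl(k^{\oplus n}))$. Since source and target have the same finite Poincar\'e polynomial, it suffices to show $\varphi$ is surjective; equivalently, that every non-trivial monomial $T_{a_1}\cdots T_{a_t}$ represents a non-zero class. Expanding $T_a=h_{2a}h_{2a-1}+u$ and extracting the top-degree term in a suitable lexicographic order on the monomials in the $x_i$'s isolates a leading monomial in $T_{a_1}\cdots T_{a_t}$ that one matches against the basis of $\ker(Sq^2)/Im(Sq^2)$ supplied by the preceding flag theorem.

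The hard part will be the symmetric-function core of the proposition: producing $u$ explicitly, or abstractly verifying that $h_{2a}(x_1,\ldots,x_{n-2a})^2\in Im(Sq^2)$ together with $Sq^2(T_a)=0$. The degree counting pins everything down up to scalars, but the identities in $\mathbb{F}_2[x_1,\ldots,x_n]/(e_1,\ldots,e_n)$ needed to realize them are delicate; they are best attacked by induction on $a$, using the Wu-formula description of $Sq^2$ on complete homogeneous polynomials and the cancellations arising from the $e_i$ relations.
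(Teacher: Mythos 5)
Your route is genuinely different from the paper's. The paper obtains this statement as an immediate corollary of a purely mod-2 projective bundle theorem for $E^{\bullet}$ (Proposition~\ref{pbtsq}, iterated via Propositions~\ref{flag2sq} and~\ref{flagsq}): taking $X=pt$ in Proposition~\ref{flagsq} directly supplies the basis $\{T_{a_1}\cdots T_{a_t}\}$ of $E^{\bullet}(Fl(k^{\oplus n}))$ together with the explicit formula for the $T_a$, with no appeal to MW-motives or dimension counts. You instead start from Theorem~\ref{sqintro}(1) and the $\widetilde{DM}_{\eta}$-splitting in Theorem~\ref{flag} to read off the Poincar\'e polynomial, and then try to verify directly that the $T_a$ generate an exterior algebra. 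That is a legitimate strategy in principle, and it is worth noting that it is not circular (Theorem~\ref{flag} is proved via Proposition~\ref{flag2} and Theorem~\ref{Grassbdl}, not via this proposition). However, as carried out, the verification steps have real gaps.

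The central problem is that both of your degree-counting arguments are false. For $T_a^2=0$: you claim that $2\,deg(a)$ never arises as $\sum_s deg(a_s)$ over a set of \emph{distinct} indices. But $deg(a-1)+deg(a+1)=(4a-5)+(4a+3)=8a-2=2\,deg(a)$, so for $2\leq a\leq\lfloor n/2\rfloor-1$ the graded piece $E^{2\,deg(a)}$ is already nonzero (concretely, for $n=7$, $a=2$: $deg(1)+deg(3)=3+11=14=2\,deg(2)$, and $T_1T_3\neq 0$). So the degree count does not force $T_a^2=0$, and your argument gives no other reason. Similarly, for the existence of $u$ you assert that $E^{4a}(Fl(k^{\oplus n}))$ is forced to vanish by the parity of the $deg(a_s)$; this fails once $a$ is large enough (e.g. $n=19$, $a=9$: $deg(1)+deg(2)+deg(3)+deg(4)=3+7+11+15=36=4a$, so $E^{36}\neq 0$). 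In both cases one must prove that a specific class lies in $Im(Sq^2)$, not that the ambient group vanishes, and no such argument is given. (There is also a small degree slip: since $Sq^2$ raises Chow degree by one, $u$ must have degree $4a-1$, not $4a-2$, to satisfy $Sq^2(u)=h_{2a}^2$.)

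The last paragraph, establishing surjectivity of $\varphi$ by ``extracting the top-degree term in a suitable lexicographic order'' and ``matching against the basis supplied by the flag theorem'', is not a proof: Theorem~\ref{flag} supplies dimensions, not a preferred basis of $Ker(Sq^2)/Im(Sq^2)$, and nonvanishing of individual monomials does not by itself give linear independence or surjectivity. The ingredient you are missing is precisely the content of Propositions~\ref{pbtsq}, \ref{flag2sq} and~\ref{flagsq}: the exact sequence and the orientation classes $R(\mathscr{E})$ and $T$ constructed there give the basis of $E^{\bullet}$ for free, with $Sq^2(T_a)=0$ and $q_*(T_a)=e(\Omega_p(1))$ built into the construction rather than re-derived by hand. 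If you want to pursue the MW-motivic route, you would still need to import that explicit $E^{\bullet}$-level computation to justify the existence of $u$, the identity $Sq^2(T_a)=0$, the vanishing of $T_a^2$, and the linear independence of the monomials; at that point the dimension count from Theorem~\ref{sqintro} becomes redundant.
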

Here the \(\{x_i\}\) are the first Chern classes of quotient line bundles of a complete flag and \(h_i(x_1,\cdots,x_j)\) is the complete homogenous polynomial of degree \(i\). Our computation is compatible with the Cartan model (see \cite[Theorem 1]{T}) and \cite{CF}. Moreover, it answers the conjecture given in \cite{M}, namely, the \(H^*(Gr(1,\cdots,n)(\mathbb{R}),\mathbb{Z})\) has only \(2\)-torsions. Together with Proposition \ref{bintro}, the \(H^*(Gr(1,\cdots,n)(\mathbb{R}),\mathbb{Z})\) is computed by Remark \ref{cell}.

There has been many works related to the Witt or Chow-Witt groups of Grassmannians and complete flags, for example \cite{BC}, \cite{CF}, \cite{W1} and \cite{W2}. The article \cite{BC} was devoted to Witt groups and defined the notion of even Young diagrams. In \cite{W1}, the Chow-Witt ring of Grassmannians was computed in terms of generators and relations as an algebra. In \cite{CF}, Calm\`es and Fasel pointed out some cases where the twisted Witt group of the complete flag of some linear algebraic group vanishes, but they did not compute the Witt group without twist. An explanation of these results in terms of MW-motives (resp. MW-motivic cohomologies) is completely new, needless to say any characterization of their fiber bundles.

For convenience, we give a list of frequently used notations in this paper:
\[\begin{array}{c|c}
pt			&Spec(F)\\
Sm/{S}	&\textrm{Smooth and separated schemes over \(S\)}\\
R(X)		&\textrm{Motive of \(X\) with coefficients in \(R\)}\\
Th({\mathscr{E}})		&\textrm{Thom space of \({\mathscr{E}}\)}\\
((i))		&(i)[2i]\\
C(f)		&\textrm{Mapping cone of \(f\)}\\
\pi		&\textrm{The reduction modulo \(2\) map \(CH\longrightarrow Ch\)}\\
{h}		&\textrm{The hyperbolic quadratic form}\\
\eta		&\textrm{The Hopf element}\\
\tau		&\textrm{The motivic Bott element}\\
\iota		&\textrm{The quotient map of }\tau[1]\\
A/\eta	&A\otimes C(\eta)\\
|D|		&\textrm{Cardinality of the set \(D\)}\\
|\Lambda|&\textrm{Number of boxes in the Young diagram \(\Lambda\)}\\
\mathcal{R}&\textrm{The orientation class of \(Gr(k,n)\)}\\
{[-,-]}	&Hom_{\mathcal{SH}(F)}(-,-)\\
{[-,-]_{K}}&\begin{array}{c}Hom_{DM_K(pt,\mathbb{Z})}(-,-)\\K_*=K_*^{MW},K_*^M,K_*^W,K_*^{M}/2\end{array}\\
h_i(x_1,\cdots,x_n)	&\textrm{The complete homogeneous polynomial of degree \(i\)}\\
e_i(x_1,\cdots,x_n)	&\textrm{The elementary homogeneous polynomial of degree \(i\)}
\end{array}.\]

\section{Split Milnor-Witt motives}\label{Split Milnor-Witt motives}
In this section, we discuss the splitting pattern in MW-motives. Let us briefly recall the language of four motivic theories established in \cite[\S 2]{Y1} and \cite[\S 3]{BCDFO}. Let \(Sm/S\) be the category of smooth and separated schemes over \(S\) and \(K_*\) be one of the homotopy modules \(K_*^{MW}, K_*^M, K_*^W, K_*^{M}/2\) (\(K_*^W=\textbf{I}^*\)). For every \(X\in Sm/F\), if \(T\subset X\) is a closed set and \(n\in\mathbb{N}\), define
\[C_{RS,T}^n(X;{K}_m;\mathscr{L})=\bigoplus_{y\in X^{(n)}\cap T}{K}_{m-n}(k(y),\Lambda_y^*\otimes_{k(y)}\mathscr{L}_y),\]
where \(X^{(n)}\) means the points of codimension \(n\) in \(X\) and \(\Lambda_y^*\) is the highest exterior power of \(T_y\). Then \(C_{RS,T}^*(X;{K}_m;\mathscr{L})\) form a complex (see \cite{Mo}), which is called the Rost-Schmid complex with support on \(T\). Define (see \cite{BCDFO} and \cite{Y1})
\[{K}CH^n_T(X,\mathscr{L})=H^n(C^*_{RS,T}(X;{K}_n;\mathscr{L})).\]
Thus we see that \({K}CH=\widetilde{CH}, CH, Ch\) when \({K_*}=K^{MW}_*, K^M_*, K^M_*/2\) respectively.

Suppose that \(\mathscr{E}\) is a vector bundle of rank \(n\) on \(X\). It admits the Euler class \(e(\mathscr{E})\in\widetilde{CH}^n(X,det(\mathscr{E})^{\vee})\) (see \cite[D\'efinition 13.2.1]{F1}) and Pontryagin classes (or Borel classes) \(p_i(\mathscr{E})\in\widetilde{CH}^{2i}(X)\) (see \cite[Definition 2.2.6]{DF}). So do the cohomology theories \(CH^{{*}}\), \(Ch^{{*}}\) and \(H^{{*}}(-,\textbf{W}(-))\).

For any \(S\in Sm/F\) and \(X, Y\in Sm/S\), define \(\mathscr{A}_S(X,Y)\) to be the poset of closed subsets in \(X\times_SY\) such that each of its component is finite over \(X\) and of dimension \(dimX\). Suppose that \(R\) is a commutative ring. Let
\[{K}Cor_S(X,Y,R):=\varinjlim_T{K}CH_T^{dimY-dimS}(X\times_SY,\omega_{X\times_SY/X})\otimes_{\mathbb{Z}}R,\]
be the finite correspondences between \(X\) and \(Y\) over \(S\) with coefficients in \(R\), where \(T\in\mathscr{A}_S(X,Y)\) and \(\omega_{X\times_SY/X}\) is the relative canonical bundle. Hence \({K}Cor\) just means \(\widetilde{Cor}\), \(Cor\) and \(WCor\)  (see \cite[\S 3]{BCDFO}) when \({K_*}=K^{MW}_*, K^M_*, K^W_*\) respectively. This produces an additive category \({K}Cor_S\) whose objects are the same as \(Sm/S\) and whose morphisms are defined above. There is a functor \(Sm/S\longrightarrow {K}Cor_S\) sending a morphism to its graph.

We say that an abelian Nisnevich sheaf over \(Sm/S\) is a sheaf with \({K}\)-transfers if it admits extra functoriality from \({K}Cor_S\) to \(Ab\). For any smooth scheme $X$, let $R(X)$ be the representable sheaf with \({K}\)-transfers of \(X\), which is called the motive of \(X\) in \(R\)-coefficients. The Tate twist is denoted by \(R(1):=R(\mathbb{G}_m^{\wedge1})[-1]\).

Denote by \(C_K(S,R)\) (resp. \(D_{{K}}(S,R)\)) the (resp. derived) category of cochain complexes of sheaves with \({K}\)-transfers. Define the category of effective \({K}\)-motives over \(S\) with coefficients in \(R\)
\[DM_{K}^{eff}(S,R)=D_{{K}}(S,R)[(R(X\times\mathbb{A}^1)\longrightarrow R(X))^{-1}].\]
Furthermore, the category \(DM_{K}(S,R)\) of stable \(K\)-motives is the homotopy category of \(\mathbb{G}_m\)-spectra of \(C_K(S,R)\), with respect to stable \(\mathbb{A}^1\)-equivalences. When \(K_*=K^{MW}_*\) (resp \(K_*=K^M_*\)), we will write \(DM_K\) as \(\widetilde{DM}\) (resp. \(DM\)) conventionally. We have full faithfull embedding of categories \(DM^{eff}(pt,\mathbb{Z})\subseteq DM(pt,\mathbb{Z})\) and \(\widetilde{DM}^{eff}(pt,\mathbb{Z})\subseteq \widetilde{DM}(pt,\mathbb{Z})\) by cancellation (see \cite[Proposition 2.5]{Y1}).

There are functorialities of \(DM_{K}^{eff}(S,R)\) (resp. \(DM_K(S,R)\)) with respect to \(K\), \(S\) and \(R\) (see \cite[Proposition 2.1,2.2,2.3]{Y1}).

The stable \(\mathbb{A}^1\)-homotopy category \(\mathcal{SH}(F)\) (see \cite{Mo1}) is defined to be the homotopy category of \(\mathbb{P}^1\)-spectra of Nisnevich simplicial sheaves, localized with respect to morphisms \(X\times\mathbb{A}^1\longrightarrow X\).

Recall that in \cite{B}, T. Bachmann defined the motivic cohomology spectra \(H\widetilde{\mathbb{Z}}, H_{\mu}\mathbb{Z}, H_W\mathbb{Z}, H_{\mu}\mathbb{Z}/2\) as the effective cover of the homotopy modules \(K_*^{MW}, K_*^{M}, K_*^{W}, K_*^M/2\), respectively. The readers may also refer to \cite[\S 4]{Y1}.

We set \([-,-]=Hom_{\mathcal{SH}(F)}(-,-)\) and \([-,-]_K=Hom_{DM_K(pt,\mathbb{Z})}(-,-)\) for convenience (see \cite[\S 2]{Y1}). Suppose that \({\mathscr{E}}\) is a vector bundle on \(X\), define \(Th({\mathscr{E}})=R({\mathscr{E}})/R({\mathscr{E}}^{\times})\).
\begin{definition}
Suppose that \(X\in Sm/F\) and that \(\mathscr{L}\in Pic(X)\). Define
\[H^{p,q}_{MW}(X,\mathbb{Z},\mathscr{L})=[Th(\mathscr{L}),\mathbb{Z}(q+1)[p+2]]_{MW}.\]
\end{definition}
\begin{proposition}\label{coh}
Suppose \(p,q\in\mathbb{Z}\). We have
\[[\mathbb{Z}/\eta,\mathbb{Z}(q)[p]]_{MW}=\begin{cases}[\mathbb{Z}/\eta,\mathbb{Z}(q)[p]]_M&p\neq q\textrm{ or }p=q<0\\2K_q^M(F)\oplus H^{p-2,q-1}_M(F,\mathbb{Z})&p=q\geq 0\end{cases}.\]
\end{proposition}
\begin{proof}
We will freely use the results in \cite[Proposition 5.3, 5.6]{Y1}. Recall that we have
\[H^{p,q}_{MW}(F,\mathbb{Z})=H^{p,q}_M(F,\mathbb{Z})\]
if \(p\neq q\) by \cite[Introduction, (E)]{BCDFO}.

We have an exact sequence
\begin{equation}\label{seq}\xrightarrow{\eta}H^{p-2,q-1}_{MW}(F,\mathbb{Z})\longrightarrow[\mathbb{Z}/\eta,\mathbb{Z}(q)[p]]_{MW}\longrightarrow H^{p,q}_{MW}(F,\mathbb{Z})\xrightarrow{\eta}\end{equation}
and
\[[\mathbb{Z}/\eta,\mathbb{Z}(q)[p]]_M=H^{p,q}_M(F,\mathbb{Z})\oplus H^{p-2,q-1}_M(F,\mathbb{Z}).\]
If \(p\neq q,q+1\), the statement follows from the Five lemma. If \(p=q+1\), the statement follows from the exact sequence
\[K^{MW}_q(F)\xrightarrow{\eta}K^{MW}_{q-1}(F)\longrightarrow K^M_{q-1}(F)\longrightarrow 0.\]
If \(p=q<0\), in \eqref{seq}, the first term is zero and the last arrow is an isomorphism so both sides vanish. If \(p=q\geq 0\), in \eqref{seq}, the first arrow is zero and the kernel of the last arrow is \(2K_q^M(F)\). So we have a commutative diagram with exact rows
\[
	\xymatrix
	{
		0\ar[r]	&H^{p-2,q-1}_{MW}(F,\mathbb{Z})\ar[r]\ar[d]_{\cong}	&[\mathbb{Z}/\eta,\mathbb{Z}(q)[p]]_{MW}\ar[r]\ar[d]	&2K_q^M(F)\ar[r]\ar[d]	&0\\
		0\ar[r]	&H^{p-2,q-1}_M(F,\mathbb{Z})\ar[r]								&[\mathbb{Z}/\eta,\mathbb{Z}(q)[p]]_M\ar[r]				&K_q^M(F)\ar[r]			&0
	}.
\]
The second row splits hence does the first row.
\end{proof}
We have a distinguished triangle
\[H_{\mu}\mathbb{Z}/2[2]\xrightarrow{\tau[1]}H_{\mu}\mathbb{Z}/2\wedge\mathbb{P}^1\xrightarrow{\iota}C(\tau[1])\longrightarrow\cdots[1]\]
where \(\tau\) the motivic Bott element.
\begin{proposition}\label{id}
The composite
\[(u,v):H_{\mu}\mathbb{Z}/2\oplus H_{\mu}\mathbb{Z}/2[2]=H_W\mathbb{Z}/\eta\xrightarrow{\partial}H_W\mathbb{Z}\wedge\mathbb{P}^1\longrightarrow H_{\mu}\mathbb{Z}/2\wedge\mathbb{P}^1\]
is equal to \((Sq^2,\tau[1])\), where \(\partial\) is the boundary map of \(\eta\).
\end{proposition}
\begin{proof}
The statement for \(v\) was proved in \cite[Lemma 20]{B}. On the other hand, the \cite{V} and \cite{MKO} showed that the map \(u\) is of the form
\[c\cdot Sq^1+d\cdot Sq^2,\]
where \(c\in H^{1,1}_M(F,\mathbb{Z}/2)\) and \(d\in H^{0,0}_M(F,\mathbb{Z}/2)\). In \cite[Theorem 4.13]{Y1}, we showed that \(d=1\), so we are going to show \(c=0\).

By \cite[Lemma 9.3.3]{P} and \cite[Theorem 6.10]{V}, there are \(a\in H^{1,1}_M(B_{gm}(\mathbb{Z}/2),\mathbb{Z}/2)\) and \(b\in H^{2,1}_M(B_{gm}(\mathbb{Z}/2),\mathbb{Z}/2)\) such that
\[Sq^1(a)=b, Sq^2(a)=0.\]
So it suffices to show that \(u(a)=0\) since \(b\) is \(H^{1,1}\)-torsion free. By \cite[Proposition 8.2.5]{P}, the \(O(2)^{\times}\) over \(\mathbb{P}^{\infty}\) is a model for \(B_{gm}(\mathbb{Z}/2)\). By \cite[Proposition 4.5]{Y1}, the map \([O(2)^{\times},u(1)[1]]\) is identified with a composite
\[H^0(O(2)^{\times},K_1^{M}/2)\longrightarrow H^1(O(2)^{\times},K_2^W)\longrightarrow H^1(O(2)^{\times},K_2^{M}/2).\]
We claim that
\[H^1(O(2)^{\times},K_2^W)=0\]
so the statement is proved. We have a Gysin triangle
\[\mathbb{Z}(O(2)^{\times})\longrightarrow\mathbb{P}^{\infty}\xrightarrow{e(O(2))}\mathbb{P}^{\infty}(1)[2]\longrightarrow\cdots[1]\]
in \(\widetilde{DM}\). By using the decomposition in \cite[Theorem 5.11]{Y1}, we see that the second arrow is the following map
\begin{equation}\label{map}
	\xymatrixcolsep{1pc}
	\xymatrix
	{
		\mathbb{Z}\oplus		&\mathbb{Z}/\eta((1))\oplus\ar[ld]_h\ar[d]_{2p}	&\mathbb{Z}/\eta((3))\oplus\ar[ld]_{h_1}\ar[d]_{2p}	&\cdots\\
		\mathbb{Z}((1))\oplus	&\mathbb{Z}/\eta((2))\oplus							&\mathbb{Z}/\eta((4))\oplus									&\cdots
	}
\end{equation}
where \(h, h_1, p\) are generators of corresponding hom-groups, with \(h, h_1\) (resp. \(p\)) having multiplicity \(2\) (resp. \(1\)) in \(DM\). Applying \([-,\mathbb{Z}(2)[3]]_{MW}\) on the triangle, we get an exact sequence
\[F^*\xrightarrow{2}2F^*\longrightarrow H^1(O(2)^{\times},K_2^{MW})\longrightarrow2\mathbb{Z}\xrightarrow{2}\mathbb{Z}.\]
The first arrow is surjective and the last arrow is injective so we get
\begin{equation}\label{mw}H^1(O(2)^{\times},K_2^{MW})=0.\end{equation}
We have an exact sequence
\[\xrightarrow{h} H^1(O(2)^{\times},K_2^{MW})\longrightarrow H^1(O(2)^{\times},K_2^W)\longrightarrow CH^2(O(2)^{\times})\xrightarrow{h}\widetilde{CH}^2(O(2)^{\times}).\]
The 
\[h:[\mathbb{Z}/\eta,\mathbb{Z}]_M\longrightarrow[\mathbb{Z}/\eta,\mathbb{Z}]_{MW}\]
is an isomorphism whereas the 
\[h:[\mathbb{Z}/\eta,\mathbb{Z}(1)[2]]_M\longrightarrow[\mathbb{Z}/\eta,\mathbb{Z}(1)[2]]_{MW}\]
is the inclusion \(2\mathbb{Z}\subseteq\mathbb{Z}\). By using the diagram \eqref{map}, we obtain a commutative diagram with exact rows
\[
	\xymatrix
	{
		\mathbb{Z}\ar[r]^{2}\ar[d]_{2}	&\mathbb{Z}\ar[r]\ar[d]_2	&CH^2(O(2)^{\times})\ar[r]\ar[d]_h			&0\\
		2\mathbb{Z}\ar[r]^{2}					&\mathbb{Z}\ar[r]				&\widetilde{CH}^2(O(2)^{\times})\ar[r]	&0
	},
\]
which shows that \(h\) is injective (but not surjective!). Combining with \eqref{mw}, we have proved the claim.
\end{proof}
From the proposition above, we see that the \(Sq^2\) is detected in \(\widetilde{DM}\), since the mapping cone of the hyperbolic map \(\mathbb{Z}_M\longrightarrow\mathbb{Z}_{MW}\) represents \(H_W\mathbb{Z}\) in \(\widetilde{DM}\) (see \cite[Corollary 5.4, Chapter 7]{BCDFO}). Furthermore, for any \(X\in Sm/F\) and \(\mathscr{L}\in Pic(X)\), we have
\[Im(H^{p,q}_{MW}(X,\mathbb{Z},\mathscr{L})\longrightarrow H^{p,q}_M(X,\mathbb{Z}))\subseteq Ker(\iota\circ Sq^2_{\mathscr{L}}\circ\pi)\]
because the composites
\[H\widetilde{\mathbb{Z}}\longrightarrow H_{\mu}\mathbb{Z}/2\longrightarrow H_W\mathbb{Z}/\eta\longrightarrow H_W\mathbb{Z}\wedge\mathbb{P}^1\longrightarrow H_{\mu}\mathbb{Z}/2\wedge\mathbb{P}^1\]
\[H_{\mu}\mathbb{Z}/2[2]\xrightarrow{\tau[1]}H_{\mu}\mathbb{Z}/2\wedge\mathbb{P}^1\longrightarrow C(\tau[1])\]
are zero, where the \(\pi\) is the modulo 2 map and \(\tau,\iota,C(-)\) are mentioned before the proposition above.
\begin{proposition}\label{Gysin}
Suppose that \(X, Y\in Sm/F\), \(Y\) is a closed subset of \(X\) with \(codim_XY=n\) and that \({\mathscr{E}}\) is a vector bundle over \(X\). We have a distinguished triangle
\[Th(det({\mathscr{E}}|_{X\setminus Y}))\longrightarrow Th(det({\mathscr{E}}))\longrightarrow Th(det({\mathscr{E}}|_Y)\otimes det(N_{Y/X}))(n)[2n]\longrightarrow\cdots[1]\]
in \(\widetilde{DM}(S,R)\), where \(N_{Y/X}\) is the normal bundle.
\end{proposition}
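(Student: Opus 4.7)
The plan is to combine twisted homotopy purity with the determinantal Thom identification specific to $\widetilde{DM}$. Setting $\mathscr{L}=det(\mathscr{E})$ and using that $det$ commutes with restriction, it suffices to prove, for any line bundle $\mathscr{L}$ on $X$, a distinguished triangle
\[Th(\mathscr{L}|_{X\setminus Y})\longrightarrow Th(\mathscr{L})\longrightarrow Th(\mathscr{L}|_Y\oplus N_{Y/X})\longrightarrow\cdots[1]\]
in $\widetilde{DM}(S,R)$, and then simplify the rank-$(n+1)$ Thom term on the right.

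For the triangle itself I would invoke the Morel--Voevodsky homotopy purity theorem, applied to the closed immersion $\mathscr{L}|_Y\hookrightarrow\mathscr{L}$ of total spaces (whose normal bundle is the pullback of $N_{Y/X}$ via $\pi:\mathscr{L}|_Y\to Y$), and then quotient by the complement of the zero section in each term; equivalently one can argue via the deformation to the normal cone applied to the total space of $\mathscr{L}$. Either route produces the desired cofiber sequence in $\mathcal{SH}(S)$, which descends to $\widetilde{DM}(S,R)$ through the canonical realization functor. The right-hand term arises because the Thom space of a pullback bundle $\pi^*\mathscr{L}|_Y$ on the total space of $N_{Y/X}$ agrees with the Thom space of the direct sum $\mathscr{L}|_Y\oplus N_{Y/X}$ on $Y$.

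The last step is to apply the determinantal Thom identification $Th(\mathscr{F})\cong Th(det(\mathscr{F}))((rk(\mathscr{F})-1))$ in $\widetilde{DM}$, taking $\mathscr{F}=\mathscr{L}|_Y\oplus N_{Y/X}$ (rank $n+1$, determinant $\mathscr{L}|_Y\otimes det(N_{Y/X})$). This rewrites the third term as $Th(\mathscr{L}|_Y\otimes det(N_{Y/X}))((n))$, which matches the statement. The main obstacle is precisely this determinantal identification: unlike in $DM$, where oriented Thom motives reduce to pure Tate twists, in $\widetilde{DM}$ one needs the SL-oriented input that two vector bundles of the same rank with isomorphic determinants have isomorphic Thom motives; granted this, the rest is bookkeeping on the purity triangle.
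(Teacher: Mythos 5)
Your proof uses the same two ingredients as the paper: homotopy purity to produce the cofiber sequence of Thom motives, followed by the determinantal Thom isomorphism $Th(\mathscr{F})\cong Th(det(\mathscr{F}))((rk(\mathscr{F})-1))$ in $\widetilde{DM}$, which is precisely what the paper invokes as \cite[Proposition 2.6]{Y}. The only cosmetic difference is that you pass to $\mathscr{L}=det(\mathscr{E})$ before running purity and apply the determinantal identification once to the cofiber term, whereas the paper obtains the cofiber sequence directly from the inclusions $Y\subseteq X\subseteq\mathscr{E}$ and then applies the identification to all three terms at once.
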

\begin{proof}
We have an exact sequence
\[0\longrightarrow Th({\mathscr{E}}|_{X\setminus Y})\longrightarrow Th({\mathscr{E}})\longrightarrow Th(N_{Y/{\mathscr{E}}})\longrightarrow 0\]
by the inclusions \(Y\subseteq X\subseteq {\mathscr{E}}\). Then we apply \cite[Theorem 6.2]{Y}.
\end{proof}

Recall that the Hopf map \(\begin{array}{ccc}\mathbb{A}^2\setminus 0&\longrightarrow&\mathbb{P}^1\\(x,y)&\longmapsto&[x:y]\end{array}\) gives the Hopf element (see \cite[Definition 4.1]{Y1}) \(\eta\in[\mathbb{Z}(1)[1],\mathbb{Z}]_{MW}\) up to a \(\mathbb{P}^1\)-suspension. We write \(A/\eta=A\otimes C(\eta)\) for any MW-motive \(A\).
\begin{definition}
We say that an element \(A\in\widetilde{DM}(pt,\mathbb{Z})\) quasi-splits if it is a finite direct sum of elements like \(\mathbb{Z}(q)[p]\) or \(\mathbb{Z}/{\eta}(q)[p]\) (\(p,q\in\mathbb{Z}\)). We say that \(A\) splits if \(p=2q\) for all summands.
\end{definition}
\begin{definition}
If the \(A\) splits, define its Witt weights \(WW(A)\) to be the set
\[\{i\in\mathbb{Z}|\mathbb{Z}(i)[2i]\textrm{ is a direct summand of }A\}.\]
We have
\[WW(A)=\{i\in\mathbb{Z}|[\mathbb{Z}(i+1)[2i+2],A[1]]_{MW}\neq 0\}\]
by \cite[Proposition 5.4]{Y1} hence it is well defined.
\end{definition}
Hence we have for example
\[WW(\mathbb{Z}(\mathbb{P}^n))=\begin{cases}\{0,n\}&\textrm{if \(n\) is odd}\\\{0\}&\textrm{if \(n\) is even}\end{cases}\]
by \cite[Theorem 5.5]{Y1}.
\begin{proposition}\label{W}
For any \(X\in Sm/F\), \(\mathscr{L}\in Pic(X)\) and \(n,m\in\mathbb{Z}\), we have
\[[Th(\mathscr{L}),H_W\mathbb{Z}(n)[m]]=H^{m-n-1}(X,\textbf{I}^{n-1}(\mathscr{L}))\]
\[[Th(\mathscr{L}),H\widetilde{\mathbb{Z}}(n)[m]]=H^{m-n-1}(X,K_{n-1}^{MW}(\mathscr{L}))\]
if \(m\geq 2n-1\), where \(\textbf{I}^{n-1}(\mathscr{L})=\textbf{I}^{n-1}\otimes_{\mathbb{Z}[O_X^{\times}]}\mathbb{Z}[\mathscr{L}^{\times}]\).
\end{proposition}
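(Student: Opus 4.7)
The approach is to reduce the statement, via the Thom isomorphism for line bundles, to the representability of unramified Witt (resp. Milnor–Witt) cohomology by the Eilenberg–MacLane spectrum $H_W\mathbb{Z}$ (resp. $H\widetilde{\mathbb{Z}}$) in the stated bidegree range.

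First, I would apply Proposition \ref{Gysin} to the zero section $X \hookrightarrow \mathscr{L}$, whose normal bundle is $\mathscr{L}$ itself, and whose determinant is therefore $\mathscr{L}$. Combining this with $\mathbb{A}^1$-invariance $R(\mathscr{L}) \simeq R(X)$ yields a Thom isomorphism
\[[Th(\mathscr{L}), H_W\mathbb{Z}(n)[m]] \;\cong\; [R(X), H_W\mathbb{Z}(n-1)[m-2]](\mathscr{L}),\]
and similarly for $H\widetilde{\mathbb{Z}}$, where the parenthesised $\mathscr{L}$ indicates that the Rost–Schmid coefficient sheaf is twisted by $\mathscr{L}$. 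The same cofibre sequence $R(\mathscr{L}^\times) \to R(\mathscr{L}) \to Th(\mathscr{L})$ together with a bound on the Witt/MW cohomology of $\mathscr{L}^\times$ ensures that no error term contaminates the isomorphism in the stated degrees.

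Second, I would invoke the representability of bigraded motivic Witt (resp. MW) cohomology by the Nisnevich sheaf cohomology of $K^W_q$ (resp. $K^{MW}_q$): in the range $p \geq 2q - 1$ one has
\[[R(X), H_W\mathbb{Z}(q)[p]](\mathscr{L}) \;\cong\; H^{p-q}(X, K^W_q(\mathscr{L})),\]
and analogously with $H\widetilde{\mathbb{Z}}$ and $K^{MW}_q$; this is essentially Morel's theorem, set up in the present framework in \cite{BCDFO}. Substituting $p = m - 2$ and $q = n - 1$ turns the range condition into $m \geq 2n - 1$ and the cohomological degree into $m - n - 1$, exactly matching the proposition.

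The main delicate point is the representability range $p \geq 2q - 1$. At $p = 2q$ it recovers the classical top group ($\widetilde{CH}^q(X,\mathscr{L})$ or $H^q(X,\textbf{W}(\mathscr{L}))$), and the extension to $p = 2q - 1$ rests on Morel's stable connectivity theorem, which pins down the homotopy sheaves of $H_W\mathbb{Z}$ and $H\widetilde{\mathbb{Z}}$ outside this range. Below this line, motivic cohomology picks up further weight contributions and no longer agrees with Nisnevich sheaf cohomology, which explains the hypothesis $m \geq 2n-1$. Keeping track of the $\mathscr{L}$-twist through the Gysin sequence is routine, since the Thom class and the Rost–Schmid twist are both governed by $\det(\mathscr{L}) = \mathscr{L}$.
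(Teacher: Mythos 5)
Your proposal is correct and takes essentially the same route as the paper: a reduction by push-forward along the zero section (equivalently, the Thom isomorphism for the line bundle $\mathscr{L}$), followed by the identification of the resulting motivic $H_W\mathbb{Z}$- (resp.\ $H\widetilde{\mathbb{Z}}$-) group with Rost--Schmid/Nisnevich cohomology of $K^W_{n-1}(\mathscr{L})$ (resp.\ $K^{MW}_{n-1}(\mathscr{L})$) in the stated range. The paper packages both steps into a single Postnikov spectral sequence $H^p_X(\mathscr{L},\pi_{-q}(H_W\mathbb{Z})_n)\Rightarrow[Th(\mathscr{L}),H_W\mathbb{Z}(n)[n+p+q]]$ together with the vanishing of its $E_2$-terms off the $q=0$ line (this is exactly the connectivity input you attribute to Morel), so the two arguments are two factorizations of the same computation rather than genuinely different proofs.
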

\begin{proof}
The statement is an analogue of \cite[Remark 4.2.7]{BCDFO}. The proof of both equations are the same so we do the first one. The right hand side is \(H^{m-n}_X(\mathscr{L},\textbf{I}^n)\) by the push-forward along the zero section of \(\mathscr{L}\). We have the Postnikov spectral sequence (see \cite[Remark 4.3.9]{Mo1})
\[H^p_X(\mathscr{L},\pi_{-q}(H_W\mathbb{Z})_n)\Longrightarrow[Th(\mathscr{L}),H_W\mathbb{Z}(n)[n+p+q]].\]
Then use that
\[H^p_X(\mathscr{L},\pi_{-q}(H_W\mathbb{Z})_n)=0\]
if \(q>0\) or \(p\geq n\) and \(q\neq 0\) (see \cite[Proposition 4.5]{Y1}).
\end{proof}

Define
\[\eta_{MW}^{p,q}(X,\mathscr{L})=[Th(\mathscr{L}),\mathbb{Z}/{\eta}(q+1)[p+2]]_{MW},\]
where we write \(\eta_{MW}^{2n,n}\) as \(\eta_{MW}^n\) conventionally.

For general cohomologies of \(\mathbb{Z}/\eta\), we have the following result:
\begin{proposition}\label{eta}
Suppose that \(X\in Sm/F\), \(\mathscr{L}\in Pic(X)\), \(p,q\in\mathbb{Z}\). There is a commutative diagram
\[
	\xymatrix
	{
		\eta_{MW}^{p,q}(X,\mathscr{L})\ar[r]^{\varphi^{p+2,q+1}}\ar[d]							&H^{p+2,q+1}_M(X)\ar[d]_{\iota\circ\pi}\\
		H^{p,q}_M(X)\ar[r]^-{\iota\circ Sq^2_{\mathscr{L}}\circ\pi}_-{\psi^{p,q}}	&[X,C(\tau[1])(q)[p]]_{M/2}
	}.
\]


If \(Th(\mathscr{L})\) quasi-splits, the diagram is Cartesian.
\end{proposition}
\begin{proof}
By the proof of \cite[Theorem 4.13]{Y1}, there is a distinguished triangle
\[H\widetilde{\mathbb{Z}}/\eta\longrightarrow H_{\mu}\mathbb{Z}/\eta\longrightarrow C(\tau[1])\longrightarrow\cdots[1],\]
which induces an exact sequence
\[\eta_{MW}^{p,q}(X,\mathscr{L})\longrightarrow H^{p+2,q+1}_M(X)\oplus H^{p,q}_M(X)\xrightarrow{u,\psi^{p,q}}[X,C(\tau[1])(q)[p]]_{M/2}.\]
The identification of \(u\) comes from the diagram (2) in the proof of \textit{loc. cit.} and that of \(\psi^{p,q}\) comes from Proposition \ref{id}. This gives the diagram.


For the last statement, it suffices to prove that the first arrow is injective, which is reduced to the injectivity when \(Th(\mathscr{L})=\mathbb{Z}, \mathbb{Z}/\eta\), since it quasi-splits. By the strong duality of \(\mathbb{Z}/\eta\) (see \cite[Proposition 5.8]{Y1}), we may assume \(Th(\mathscr{L})=\mathbb{Z}\). Finally we use Proposition \ref{coh}, (1).
\end{proof}
Recall that in \cite[Theorem 6.17]{V1}, we have
\[H^{*,*}_M(F,\mathbb{Z}/2)=K_*^M(F)/2[\tau]\]
where \(K_n^M(F)/2=H^{n,n}_M(F,\mathbb{Z}/2)\).
\begin{lemma}\label{tau}
\[Sq^2(\tau)=0.\]
\end{lemma}
\begin{proof}
By \cite[Proposition 8.3.6]{P}, we have
\[H^{*,*}_M(B_{gm}(\mathbb{Z}/2))=H^{*,*}_M(F,\mathbb{Z}/2)[a,b]/(a^2=\tau b+\rho a)\]
where \(a\in H^{1,1}_M(F,\mathbb{Z}/2)\), \(b\in H^{2,1}_M(F,\mathbb{Z}/2)\). By the computation in \cite[Lemma 9.3.3]{P} and Cartan formula (see \cite[Proposition 9.7]{V}), we see that
\[\tau b^2=Sq^2(\tau)b+\tau b^2\]
after applying \(Sq^2\) on the relation \(a^2=\tau b+\rho a\). Hence the statement follows.
\end{proof}
Suppose \(\mathcal{F}_{\bullet}\) is a pointed simplicial sheaf. Let us define
\[Ker_{\tau}(Sq^2)_{p,q}(\mathcal{F}_{\bullet})=\{u\in H^{p,q}_M(\mathcal{F}_{\bullet},\mathbb{Z}/2)|Sq^2(u)\equiv 0 (\textrm{mod }\tau)\}\]
\[Im_{\tau}(Sq^2)_{p,q}(\mathcal{F}_{\bullet})=\{u\in H^{p,q}_M(\mathcal{F}_{\bullet},\mathbb{Z}/2)|u\in Im(Sq^2) (\textrm{mod }\tau)\}.\]
\begin{lemma}\label{tau1}
In the context above, we have
\[Im_{\tau}(Sq^2)_{p,q}\subseteq Ker_{\tau}(Sq^2)_{p,q}\]
on \(H^{*,*}_M(\mathcal{F}_{\bullet},\mathbb{Z}/2)\).
\end{lemma}
\begin{proof}
By Adem relations (see \cite[Theorem 10.2]{V} and \cite[Theorem 5.1]{MKO}), we have
\[Sq^2Sq^2=\tau Sq^3Sq^1.\]
Suppose
\[u=Sq^2(v)+\tau\cdot w,\]
then
\[Sq^2(u)=\tau Sq^3Sq^1(v)+Sq^2(\tau\cdot w)=\tau Sq^3Sq^1(v)+\tau Sq^2(w)+\tau\rho Sq^1(w)\]
by Cartan formula and Lemma \ref{tau}. So the statement follows.
\end{proof}
\begin{definition}
Suppose \(X\in Sm/F\), \(p,q\in\mathbb{Z}\) and \(\mathscr{L}\in Pic(X)\), define the motivic Bockstein cohomology
\[E^{p,q}(X,\mathscr{L})=\frac{Ker_{\tau}(Sq^2)_{p+2,q+1}}{Im_{\tau}(Sq^2)_{p+2,q+1}}(Th(\mathscr{L})),\]
where we write \(E^{2n,n}\) as \(E^n=\frac{Ker(Sq^2_{\mathscr{L}})_{2n,n}}{Im(Sq^2_{\mathscr{L}})_{2n,n}}\) conventionally.
\end{definition}
\begin{proposition}\label{P2}
We have an exact sequence
\[0\longrightarrow H_M^{p-2,p-1}(F,\mathbb{Z}/2)\longrightarrow H^{p+2,p+1}_M(\mathbb{P}^2,\mathbb{Z}/2)\xrightarrow{Sq^2} H_M^{p+4,p+2}(\mathbb{P}^2,\mathbb{Z}/2)\]
where the first arrow is the pushforward.
\end{proposition}
\begin{proof}
There is an exact sequence
\[H_M^{p+4,p+2}(\mathbb{P}^2,\mathbb{Z})\xrightarrow{h}H_{MW}^{p+4,p+2}(\mathbb{P}^2,\mathbb{Z})\longrightarrow H_W^{p+4,p+2}(\mathbb{P}^2,\mathbb{Z})\longrightarrow0.\]
The middle term is isomorphic to \(H_{M}^{p+4,p+2}(\mathbb{P}^2,\mathbb{Z})\) by Proposition \ref{coh} hence we may also replace \(h\) by \(2\). This shows that
\[H_W^{p+4,p+2}(\mathbb{P}^2,\mathbb{Z})=H_M^{p+4,p+2}(\mathbb{P}^2,\mathbb{Z}/2),\]
so the question is reduced to the kernel of Bockstein morphism \(\beta\) induced by the boundary map \(H_{\mu}\mathbb{Z}/2\longrightarrow H_W\mathbb{Z}\). On the other hand, there is an exact sequence
\[\xrightarrow{\eta}H_W^{p+2,p+1}(\mathbb{P}^2,\mathbb{Z})\xrightarrow{\rho} H^{p+2,p+1}_M(\mathbb{P}^2,\mathbb{Z}/2)\xrightarrow{\beta} H_W^{p+4,p+2}(\mathbb{P}^2,\mathbb{Z}).\]
The first term fits into an diagram with exact rows
\[\tiny\xymatrixcolsep{1pc}
	\xymatrix
	{
		H_M^{p+2,p+1}(\mathbb{P}^2,\mathbb{Z})\ar[r]^h\ar@{=}[d]&H_{MW}^{p+2,p+1}(\mathbb{P}^2,\mathbb{Z})\ar[r]\ar@{^{(}->}[d]&H_W^{p+2,p+1}(\mathbb{P}^2,\mathbb{Z})\ar[r]\ar[d]_{\rho}&H_M^{p+3,p+1}(\mathbb{P}^2,\mathbb{Z})\ar[r]^h\ar@{=}[d]&H_{MW}^{p+3,p+1}(\mathbb{P}^2,\mathbb{Z})\ar[d]_{\cong}\\
		H_M^{p+2,p+1}(\mathbb{P}^2,\mathbb{Z})\ar[r]^2\ar@{->>}[d]&H_{M}^{p+2,p+1}(\mathbb{P}^2,\mathbb{Z})\ar[r]\ar@{->>}[d]&H_M^{p+2,p+1}(\mathbb{P}^2,\mathbb{Z}/2)\ar[r]\ar[d]_g&H_M^{p+3,p+1}(\mathbb{P}^2,\mathbb{Z})\ar[r]^2\ar[d]_{\cong}&H_M^{p+3,p+1}(\mathbb{P}^2,\mathbb{Z})\ar[d]_{\cong}\\
		H_M^{p-2,p-1}(F,\mathbb{Z})\ar[r]^2&H_M^{p-2,p-1}(F,\mathbb{Z})\ar[r]&H_M^{p-2,p-1}(F,\mathbb{Z}/2)\ar[r]&H_M^{p-1,p-1}(F,\mathbb{Z})\ar[r]^2&H_M^{p-1,p-1}(F,\mathbb{Z})
	},
\]
hence \({\rho}\) is injective and \(g\circ {\rho}\) is surjective by the Five lemma. On the other hand, we claim the composite
\[H_W^{p+2,p+1}(\mathbb{P}^2,\mathbb{Z})\xrightarrow{\rho}H_M^{p+2,p+1}(\mathbb{P}^2,\mathbb{Z}/2)\longrightarrow H_M^{p+2,p+1}(\mathbb{P}^1,\mathbb{Z}/2)\longrightarrow H^{p,p}_M(F,\mathbb{Z}/2)\]
is zero, where the second arrow is induced by the pullback along \(\mathbb{P}^1\subseteq\mathbb{P}^2\). We have a commutative diagram
\[
	\xymatrix
	{
		H_W^{p+2,p+1}(\mathbb{P}^2,\mathbb{Z})\ar[r]^e\ar[d]_{\rho}	&H_W^{p+2,p+1}(\mathbb{P}^1,\mathbb{Z})\ar[d]\\
		H_M^{p+2,p+1}(\mathbb{P}^2,\mathbb{Z}/2)\ar[r]					&H_M^{p+2,p+1}(\mathbb{P}^1,\mathbb{Z}/2)
	}
\]
and it suffices to show that \(e=0\). The distinguished triangle
\[\mathbb{P}^1\longrightarrow\mathbb{P}^2\longrightarrow\mathbbm{1}(2)[4]\xrightarrow{\eta}\cdots[1]\]
in \(\mathcal{SH}(F)\) gives the exact sequence
\[H_W^{p+2,p+1}(\mathbb{P}^2,\mathbb{Z})\xrightarrow{e}H_W^{p+2,p+1}(\mathbb{P}^1,\mathbb{Z})\longrightarrow H_W^{p-1,p-1}(F,\mathbb{Z})\]
where the last arrow is the inclusion \(\textbf{I}^p(F)\subseteq\textbf{I}^{p-1}(F)\). So the claim is proved.
\end{proof}
\begin{proposition}\label{P2'}
We have
\[\begin{array}{cc}E^{p,q}(pt)=\begin{cases}K_q^M(F)/2&\textrm{if }p=q\\0&\textrm{else}\end{cases}&E^{p,q}(\mathbb{P}^2)=\begin{cases}K_q^M(F)/2&\textrm{if }p=q\\0&\textrm{else}\end{cases}.\end{array}\]
\end{proposition}
\begin{proof}
By Lemma \ref{tau}, the \(Sq^2\) acts trivially on \(H^{*,*}_M(F,\mathbb{Z}/2)\). So the computation of \(E^{*,*}(pt)\) is clear.

It then suffices to consider \(A^{p,q}=E^{p,q}(\mathbb{P}^2)/E^{p,q}(F)\).

If \(p\leq q\), the
\[H^{p,q}_M(\mathbb{P}^2,\mathbb{Z}/2)/H^{p,q}_M(F,\mathbb{Z}/2)=H^{p-4,q-2}_M(F,\mathbb{Z}/2)\cdot O(1)^2\oplus H^{p-2,q-1}_M(F,\mathbb{Z}/2)\cdot O(1)\] is a multiple of \(\tau\). Hence \(A^{p,q}=0\) again by Lemma \ref{tau}.

If \(p=q+1\), we have
\[Ker_{\tau}(Sq^2)_{p,q}=Ker(Sq^2)_{p,q}=H^{p-4,p-3}_M(F,\mathbb{Z}/2)=\tau K_{p-4}^M(F)/2\]
by Proposition \ref{P2}. Hence \(A^{p,q}=0\).

If \(p=q+2\), we have \(Im(Sq^2)_{p,q}=H^{p,q}_M(\mathbb{P}^2,\mathbb{Z}/2)\) by Proposition \ref{P2}. Hence \(A^{p,q}=0\).

If \(p>q+2\), we have \(H^{p,q}_M(\mathbb{P}^2,\mathbb{Z}/2)=0\). Hence \(A^{p,q}=0\).
\end{proof}
For any \(x\in\eta_{MW}^{p,q}(X,\mathscr{L})\), we will write \(x=(a,b)\) if \((a,b)\) is the image of \(x\) in \(H^{p,q}_M(X,\mathbb{Z})\oplus H^{p+2,q+1}_M(X,\mathbb{Z})\) (see Proposition \ref{eta}).
\begin{proposition}\label{mult}
Suppose that \(X\in Sm/F\) and \(\mathscr{L},\mathscr{M}\in Pic(X)\). The ring structure of \(\mathbb{Z}/\eta\) gives a product
\[\begin{array}{ccccc}\eta_{MW}^{p_1,q_1}(X,\mathscr{L})&\times&\eta_{MW}^{p_2,q_2}(X,\mathscr{M})&\longrightarrow&\eta_{MW}^{p_1+p_2,q_1+q_2}(X,\mathscr{L}\otimes\mathscr{M})\\u&,&v&\longmapsto&u\cdot v\end{array}\]
with the identity being \((1,0)\in\eta_{MW}^{0,0}(pt)\). If \(u=(a,b), v=(c,d)\) and \(Th(\mathscr{L}), Th(\mathscr{M})\) quasi-split, we have
\[(a,b)\cdot(c,d)=(ac,bc+ad).\]
\end{proposition}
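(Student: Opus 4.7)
The plan is to construct the product using the ring structure on \(\mathbb{Z}/\eta\), then deduce the explicit coordinate formula from the Cartesian square of Proposition \ref{eta}.

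For the construction, I observe that \(\mathbb{Z}/\eta=cone(\eta)\) inherits a commutative ring object structure in \(\widetilde{DM}\), since it is the cofiber of the element \(\eta\) in the graded homotopy ring of the unit \(\mathbb{Z}\); call the multiplication \(\mu\colon\mathbb{Z}/\eta\otimes\mathbb{Z}/\eta\to\mathbb{Z}/\eta\). Given \(u\colon Th(\mathscr{L})\to\mathbb{Z}/\eta((n+1))\) and \(v\colon Th(\mathscr{M})\to\mathbb{Z}/\eta((m+1))\), I form the external tensor \(u\otimes v\), pull back along the diagonal \(X\to X\times X\), apply the isomorphism \(Th(\mathscr{L}\oplus\mathscr{M})\cong Th(\mathscr{L}\otimes\mathscr{M})((1))\) in \(\widetilde{DM}\) (Thom motives of a rank-\(2\) bundle depend only on the determinant), and compose with \(\mu\). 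This yields \(u\cdot v\in\eta_{MW}^{n+m}(X,\mathscr{L}\otimes\mathscr{M})\). The identity is the image of the unit \(pt\to\mathbb{Z}\xrightarrow{\iota}\mathbb{Z}/\eta\), whose coordinates under the Cartesian square are \((1,0)\).

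For the formula, I use the fundamental triangle \(\mathbb{Z}(1)[1]\xrightarrow{\eta}\mathbb{Z}\xrightarrow{\iota}\mathbb{Z}/\eta\xrightarrow{\partial}\mathbb{Z}(1)[2]\). Under the hypothesis \(_2CH^\bullet(X)=0\), the two projections of the Cartesian square of Proposition \ref{eta} are induced, as in the proof of \cite[Theorem 4.13]{Y1}, by the unit \(\iota\) (the projection to \(CH^n(X)\)) and the boundary \(\partial\) (the projection to \(CH^{n+1}(X)\)). Multiplicativity of the first projection is automatic since \(\iota\) is a ring homomorphism, producing \(ac\) as the first coordinate. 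The second coordinate follows from a Leibniz rule for \(\partial\): the composite \(\partial\circ\mu\colon\mathbb{Z}/\eta\otimes\mathbb{Z}/\eta\to\mathbb{Z}(1)[2]\) decomposes as the sum of the two cross terms obtained by applying \(\partial\) on one factor and \(\iota\) on the other, which after identification contributes \(bc+ad\).

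The main obstacle is precisely this Leibniz rule for \(\partial\) against \(\mu\). I would establish it by analyzing the cofiber-sequence structure of \(\mathbb{Z}/\eta\otimes\mathbb{Z}/\eta\), whose pieces are \(\mathbb{Z}\), two copies of \(\mathbb{Z}(1)[2]\), and \(\mathbb{Z}(2)[4]\), tracking \(\mu\) on each piece. A consistency check confirms the formula: by Cartan's formula,
\[
Sq^2_{\mathscr{L}\otimes\mathscr{M}}(\pi(ac))-\pi(bc+ad)=Sq^1(\pi(a))\,Sq^1(\pi(c)),
\]
and the right side vanishes because \(_2CH^\bullet(X)=0\) forces \(Sq^1=\pi\circ\beta\) to be trivial on \(Ch^\bullet(X)\). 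Thus the proposed element \((ac,bc+ad)\) genuinely lies in \(\eta_{MW}^{n+m}(X,\mathscr{L}\otimes\mathscr{M})\) and is uniquely determined by compatibility with both Cartesian-square projections.
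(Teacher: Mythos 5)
Your overall strategy is in the same spirit as the paper's, but the proof is not complete: you leave the crucial step, the ``Leibniz rule'' for \(\partial\circ\mu\), as something you ``would establish,'' and this is precisely the step where the paper does something slick that your write-up misses. The paper observes that the functor \(\widetilde{DM}(pt,\mathbb{Z})\longrightarrow DM(pt,\mathbb{Z})\) is a ring map, and that both coordinate projections defining the pair \((a,b)\) (via the Cartesian square of Proposition \ref{eta}) factor through this functor. In \(DM\) the element \(\eta\) is zero, so \(\mathbb{Z}/\eta\) \emph{splits} as \(\mathbb{Z}\oplus\mathbb{Z}(1)[2]\), its multiplication is visibly the square-zero extension, and \((a,b)(c,d)=(ac,bc+ad)\) is immediate. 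Trying instead to derive the rule by decomposing \(\mathbb{Z}/\eta\otimes\mathbb{Z}/\eta\) inside \(\widetilde{DM}^{eff}(pt,\mathbb{Z})\) is more delicate: there the cofiber sequence over \(\eta\) does \emph{not} split, so the ``pieces'' \(\mathbb{Z}\), two \(\mathbb{Z}(1)[2]\)'s, and \(\mathbb{Z}(2)[4]\) do not give a direct-sum decomposition, and one cannot ``apply \(\iota\) on one factor'' --- there is no retraction \(\mathbb{Z}/\eta\to\mathbb{Z}\) before passing to \(DM\). This is also why your description of the first projection as ``induced by \(\iota\)'' is inaccurate: \(\iota\) maps \emph{into} \(\mathbb{Z}/\eta\), and the projection onto \(CH^n(X)\) is the \(DM\)-level splitting, not a \(\widetilde{DM}\)-level morphism. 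Once one realizes everything relevant factors through \(DM\), all these difficulties evaporate.

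Your consistency check via Cartan's formula is correct and a nice sanity test (indeed \(Sq^1\) vanishes on \(\pi(CH^\bullet(X))\) since those classes are reductions of integral ones), but it only verifies that \((ac,bc+ad)\) \emph{lies} in \(\eta_{MW}^{n+m}\); it does not by itself show this is the value of the product. To close the gap, replace the unproven Leibniz-rule analysis by the observation that the projections factor through the ring map to \(DM\) and read the formula off the split ring \(\mathbb{Z}\oplus\mathbb{Z}(1)[2]\), which is exactly the paper's one-line argument.
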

\begin{proof}
In \(DM(pt,\mathbb{Z})\), the \(\mathbb{Z}/\eta=\mathbb{Z}\oplus\mathbb{Z}(1)[2]\) admits a commutative ring structure given by the map
\[\begin{array}{ccc}m:\mathbb{Z}\oplus\mathbb{Z}(1)[2]\oplus\mathbb{Z}(1)[2]\oplus\mathbb{Z}(2)[4]&\to&\mathbb{Z}\oplus\mathbb{Z}(1)[2]\\(a,b,c,d)&\mapsto&(a,b+c)\end{array}.\]
It satisfies the last formula in the statement. Regarding \(\mathbb{Z}/\eta(1)[2]\) as \((\mathbb{P}^2)^{\wedge1}\), the \(m\) is the class
\[(O(1)\wedge O(1),O(1)\wedge O(1)^2+O(1)^2\wedge O(1))\in CH^2(\mathbb{P}^2\wedge\mathbb{P}^2)\oplus CH^3(\mathbb{P}^2\wedge\mathbb{P}^2),\]
which lies in \(\eta_{MW}^2(\mathbb{P}^2\wedge\mathbb{P}^2)\) by \cite[Theorem 4.13]{Y1}. So it lifts to a map \((\mathbb{Z}/\eta)^2\longrightarrow\mathbb{Z}/\eta\) in \(\widetilde{DM}(pt,\mathbb{Z})\), which is defined to be the ring structure. Then the axioms for the ring structures follow from the fact that the map
\[[(\mathbb{Z}/\eta)^a,(\mathbb{Z}/\eta)^b]_{MW}\longrightarrow[(\mathbb{Z}/\eta)^a,(\mathbb{Z}/\eta)^b]_{M}\]
is injective for \(a,b>0\), by \cite[Proposition 5.6]{Y1}.
\end{proof}
\begin{definition}
For any \(A,B\in\widetilde{DM}(pt,\mathbb{Z})\), we define
\[[A,B]_{\eta}=\varinjlim_{n\to+\infty}[A(n)[n],B]_{MW}\]
where the limit is taken with respect to the Hopf map \(\eta\). Define \(\widetilde{DM}_{\eta}\) to be the category with same objects as \(\widetilde{DM}(pt,\mathbb{Z})\) and morphisms as defined above. There is a canonical functor
\[L:\widetilde{DM}(pt,\mathbb{Z})\longrightarrow\widetilde{DM}_{\eta}.\]
\end{definition}
We will write \(L(\mathbb{Z}(X))\) as \(\mathbb{Z}(X)\).
\begin{proposition}\label{localization}
The category \(\widetilde{DM}_{\eta}\) is the Verdier localization of the category \(\widetilde{DM}(pt,\mathbb{Z})\) with respect to the morphisms \(\mathbb{Z}(X)\otimes\eta\) for every \(X\in Sm/F\). Thus it is a triangulated category and the functor \(L\) is monoidal and exact.
\end{proposition}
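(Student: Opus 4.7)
The plan is to realize $\widetilde{DM}_\eta$ as a Verdier quotient. Let $\mathcal{T}:=\widetilde{DM}^{eff}(pt,\mathbb{Z})$ and let $\mathcal{J}\subseteq\mathcal{T}$ be the thick tensor-ideal generated by the cones $C_X:=\mathrm{cone}(\mathbb{Z}(X)\otimes\eta)$ for $X\in Sm/k$. The Verdier quotient $Q:\mathcal{T}\to\mathcal{T}/\mathcal{J}$ is automatically triangulated, and it is monoidal because $\mathcal{J}$ is a tensor-ideal: $C_X\otimes\mathbb{Z}(Y)\cong C_{X\times Y}\in\mathcal{J}$. It then suffices to produce an equivalence of additive categories $\mathcal{T}/\mathcal{J}\simeq\widetilde{DM}_\eta$ commuting with the canonical identity-on-objects functors from $\mathcal{T}$; the triangulated and monoidal structures will transport along this equivalence, so $\widetilde{DM}_\eta$ acquires these structures and $L$ becomes exact and monoidal.

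For the comparison, since both categories have the same objects, only the morphism groups need to be matched. By the standard calculus of fractions the Hom set in the quotient is
\[\mathrm{Hom}_{\mathcal{T}/\mathcal{J}}(A,B)=\varinjlim_{s:A'\to A,\,\mathrm{cone}(s)\in\mathcal{J}}[A',B]_{MW}.\]
I would argue that the subsystem consisting of the iterated powers $\eta^n:A(n)[n]\to A$ is cofinal in the indexing category: each such power has cone obtained from copies of $C_Y$ by repeated extensions and so lies in $\mathcal{J}$; conversely any $s:A'\to A$ with $\mathrm{cone}(s)\in\mathcal{J}$ should be dominated by some $\eta^n:A(n)[n]\to A$, that is, refined by a factorization $A(n)[n]\to A'\xrightarrow{s} A$ equal to $\eta^n$, via a dévissage along the way $\mathrm{cone}(s)$ is built from the generators $C_Y$. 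Passing to the colimit over this cofinal subsystem reproduces exactly $\varinjlim_n[A(n)[n],B]_{MW}=[A,B]_\eta$, so $Q$ and $L$ agree on morphisms and the factorization $\bar L:\mathcal{T}/\mathcal{J}\to\widetilde{DM}_\eta$ is an additive equivalence.

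The main obstacle is this cofinality step. Concretely, one has to show that every $s:A'\to A$ whose cone admits a finite filtration by shifted summands of $C_Y$'s is preceded by some $\eta^n:A(n)[n]\to A$. I expect to handle this by induction on the length of such a filtration, using the octahedral axiom at each step, together with the observation that composition with $\eta$ annihilates the connecting maps into $C_Y=\mathrm{cone}(\mathbb{Z}(Y)\otimes\eta)$. Once this cofinality is in place, the remaining assertions -- the identification on objects, the universal property, and the compatibility of tensor products with $Q$ -- are formal.
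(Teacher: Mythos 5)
Your proof takes a genuinely different route from the paper's. The paper never touches the calculus of fractions: it shows $L$ inverts every morphism whose cone lies in $\Sigma$ by noting that $L(\mathbb{Z}(X)\otimes\eta)$ has the explicit inverse $\epsilon\cdot\mathrm{Id}$ in $\widetilde{DM}_{\eta}$, and then observes that both $L$ and the Verdier quotient functor $Q:\mathcal{T}\to\mathcal{T}/\Sigma$ enjoy the Gabriel--Zisman universal property for ordinary functors inverting those morphisms. The two mutually inverse comparison functors then drop out formally, and the triangulated and monoidal structures transport. This avoids any analysis of roofs. You instead build the quotient first and match $\mathrm{Hom}$ sets by a cofinality argument, which is more hands-on but requires the extra technical input you flag.

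The cofinality is indeed the crux, and as stated it is still a gap, but it is a fillable one. What the dévissage needs is that $\eta$ acts nilpotently on every object $Z$ of $\mathcal{J}$. For the generators $C_Y=\mathbb{Z}(Y)/\eta$ one has $C_Y[\eta^{-1}]=\mathbb{Z}(Y)\otimes(\mathbb{Z}/\eta)[\eta^{-1}]=0$, and since $C_Y$ is compact, $\varinjlim_n[C_Y,C_Y(-n)[-n]]_{MW}=[C_Y,\operatorname{hocolim}_n C_Y(-n)[-n]]_{MW}=0$ forces $\eta^n$ to vanish on $C_Y$ for some $n$; the class of objects on which $\eta$ is nilpotent is thick, so the same holds throughout $\mathcal{J}$. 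Once you have this, your octahedron induction works: for $s:A'\to A$ with $\mathrm{cone}(s)=Z$, naturality of $\eta$ gives that the composite $A(n)[n]\xrightarrow{\eta^n}A\to Z$ equals $\eta^n_Z\circ(\text{shifted }A\to Z)$, which is zero for $n\gg 0$, whence $\eta^n$ factors through $s$. You should also note that the thick/localizing distinction matters here: the nilpotence argument is clean only for the \emph{thick} subcategory generated by the $C_Y$ (i.e.\ compact objects); the paper's $\Sigma$ is stated to be closed under arbitrary coproducts, for which objectwise nilpotence of $\eta$ can degenerate to local nilpotence, and the roof description of $\mathrm{Hom}$ in the quotient is more delicate. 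Since the ``Verdier localization with respect to the morphisms $\mathbb{Z}(X)\otimes\eta$'' is by definition the quotient by the thick subcategory of cones, your $\mathcal{J}$ is the right thing to quotient by, and your approach goes through once the nilpotence lemma is supplied; but it is worth recording explicitly, whereas the paper's universal-property argument sidesteps it entirely.
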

\begin{proof}
Suppose \(\Sigma\) is the smallest thick trianglated subcategory (with arbitrary coproducts) of \(\widetilde{DM}(pt,\mathbb{Z})\) containing \(\mathbb{Z}(X)/\eta\) for every \(X\in Sm/F\). If
\[X\xrightarrow{f}Y\longrightarrow Z\longrightarrow X[1]\]
is a distinguished triangle in \(\widetilde{DM}(pt,\mathbb{Z})\) with \(Z\in\Sigma\), we claim that \(L(f)\) is an isomorphism. It suffices to show that \([T,f]_{\eta}\) is an isomorphism for any \(T\). The functor \([T,-]_{\eta}\) also induces a long exact sequence with respect to the triangle. So it suffices to show that \([T,\mathbb{Z}(X)/\eta]_{\eta}=0\) for any \(T\). It is easy to check that the morphism \(\mathbb{Z}(X)\otimes\eta\) has the inverse \(\epsilon Id_{\mathbb{Z}(X)(1)[1]}\) (see \cite[Lemma 2.0.7, \S 4]{BCDFO} for the sign \(\epsilon\)) in \(\widetilde{DM}_{\eta}\), so \(L(\mathbb{Z}(X)\otimes\eta)\) is an isomorphism. Hence we have proved the claim.

Then it is easy to show that the pair \((L, \widetilde{DM}_{\eta})\) is the universal one such that \(L(f)\) is an isomorphism if \(C(f)\in\Sigma\). The \(\widetilde{DM}_{\eta}\) has a natural symmetric monoidal structure such that \(L\) is symmetric monoidal. This concludes the proof. (see \cite[Proposition 4.6.2]{Kra})
\end{proof}
Hence we see that \(L(\mathbb{Z}(1)[1])=L(\mathbb{Z})\) so there is no need to consider the Tate twist in \(\widetilde{DM}_{\eta}\).
\begin{proposition}
For any \(X\in Sm/F\), \(\mathscr{L}\in Pic(X)\) and \(m,n\in\mathbb{N}\), we have
\[[Th(\mathscr{L}),\mathbb{Z}(m)[m+n+1]]_{\eta}=H^n(X,\textbf{W}(\mathscr{L})).\]
\end{proposition}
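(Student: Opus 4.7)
The plan is to compute each term in the colimit defining $[Th(\mathscr{L}),\mathbb{Z}(m)[m+n+1]]_\eta$ using Proposition \ref{W}, and then pass to the $\eta$-limit using that $K^{MW}[\eta^{-1}]=K^W$. Unpacking the definition gives
\[[Th(\mathscr{L}),\mathbb{Z}(m)[m+n+1]]_\eta=\varinjlim_{k\to\infty}[Th(\mathscr{L})(k)[k],\mathbb{Z}(m)[m+n+1]]_{MW},\]
with transition maps induced by the Hopf element $\eta$. Using the fully faithful embedding $\widetilde{DM}^{eff}(pt,\mathbb{Z})\hookrightarrow\widetilde{DM}(pt,\mathbb{Z})$ together with the identification of MW-motivic cohomology with the spectrum $H\widetilde{\mathbb{Z}}$ in $\mathcal{SH}(k)$, I would rewrite the $k$-th term in the form $[Th(\mathscr{L}),H\widetilde{\mathbb{Z}}(m-k)[m+n+1-k]]$ for each $k$.

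Next, I would invoke Proposition \ref{W} with parameters $a:=m-k$ and $b:=m+n+1-k$. The hypothesis $b\geq 2a-1$ translates to $k\geq m-n-2$, which is satisfied for all but finitely many terms of the colimit. When it holds, Proposition \ref{W} identifies the $k$-th term with $H^n(X,K^{MW}_{m-k-1}(\mathscr{L}))$. Thus, cofinally in $k$, the colimit system is the $\eta$-multiplication tower
\[\cdots\to H^n(X,K^{MW}_j(\mathscr{L}))\xrightarrow{\cdot\eta}H^n(X,K^{MW}_{j-1}(\mathscr{L}))\to\cdots\]
with $j=m-k-1$ tending to $-\infty$.

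Finally, exchanging the filtered colimit with cohomology (the Rost-Schmid functor commutes with filtered colimits of coefficient sheaves), the computation reduces to $H^n(X,\varinjlim_j K^{MW}_j(\mathscr{L}))$. Using that $K^{MW}[\eta^{-1}]=K^W$ and that Witt K-theory sheaves are $\eta$-periodic with $K^W_j(\mathscr{L})\simeq\textbf{W}(\mathscr{L})$ in every degree, I obtain $H^n(X,\textbf{W}(\mathscr{L}))$ as claimed. The main technical point to verify is that the motivic $\eta$-multiplication providing the transition maps in the colimit corresponds, under the identification of Proposition \ref{W}, to the sheaf-level $\eta$-multiplication $K^{MW}_j\to K^{MW}_{j-1}$; once this compatibility is in place, the rest of the argument is a matter of organizing limits and applying the standard $\eta$-inversion facts on Milnor-Witt K-theory.
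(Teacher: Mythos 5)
The proposal is correct and takes essentially the same route as the paper: both reduce the colimit to Proposition~\ref{W} and then exploit the $\eta$-periodicity of Witt cohomology. There are two small differences worth flagging. First, the paper passes from $H\widetilde{\mathbb{Z}}$ to $H_W\mathbb{Z}$ via Bachmann's cofiber sequence $H_\mu\mathbb{Z}\to H\widetilde{\mathbb{Z}}\to H_W\mathbb{Z}$ together with the vanishing of $H_\mu\mathbb{Z}$ in negative weights, whereas you apply the $K^{MW}$-formula from Proposition~\ref{W} directly and then use that $K^{MW}_j=K^W_j=\textbf{W}$ for $j<0$; these encode the same underlying periodicity, expressed at the spectrum level versus the homotopy-sheaf level. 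Second, your final step of exchanging the filtered colimit with Rost--Schmid cohomology is unnecessary: once $k>m$ you have $m-k-1<0$, so $K^{MW}_{m-k-1}(\mathscr{L})=\textbf{W}(\mathscr{L})$ with $\eta$ acting as the identity, and the tower simply stabilizes term by term, exactly as in the paper's argument. The compatibility point you flag — that the motivic $\eta$-transition maps match sheaf-level $\eta$-multiplication under the identification of Proposition~\ref{W} — is indeed the right thing to check; it follows from the naturality in $n$ of the Postnikov spectral sequence used in the proof of that proposition, and is implicitly used in the paper's proof as well.
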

\begin{proof}
We have
\[[Th(\mathscr{L})(i)[i],\mathbb{Z}(m)[m+n+1]]_{MW}=H^n(X\otimes\mathbb{G}_m^{\wedge i},K_{MW}^{m-1}(\mathscr{L}))\]
by Proposition \ref{W}. If \(i\) is large, by cancellation we may assume \(m<n,i\). Then the result follows by \cite[Lemma 3.1.8,\S 2]{BCDFO}.
\end{proof}
\begin{lemma}\label{uniqueness}
Suppose that \(A\in\widetilde{DM}(pt,\mathbb{Z})\) splits as
\[A=\oplus_{i\in\mathbb{N}}\mathbb{Z}(i)[2i]^{\oplus w_i(A)}\oplus\oplus_{j\in\mathbb{N}}\mathbb{Z}/\eta(j)[2j]^{\oplus t_j(A)}.\]
Then
\[L(A)=\oplus_{i\in\mathbb{N}}\mathbb{Z}[i]^{\oplus w_i(A)}\]
in \(\widetilde{DM}_{\eta}\). Suppose that \(\gamma^*:\widetilde{DM}\longrightarrow DM\) is the functor defined in \cite[3.2.4, \S 3]{BCDFO} and that
\[\gamma^*(A)=\oplus_{i\in\mathbb{N}}\mathbb{Z}(i)[2i]^{\oplus s_i(A)}.\]
We have
\[t_j(A)=\sum_{i\leq j}(-1)^i(s_{j-i}(A)-w_{j-i}(A)).\]
\end{lemma}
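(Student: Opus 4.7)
The plan is to apply the two monoidal exact functors $L: \widetilde{DM}^{eff}(pt,\mathbb{Z}) \to \widetilde{DM}_\eta$ and $\gamma^*: \widetilde{DM}^{eff}(pt,\mathbb{Z}) \to DM^{eff}(pt,\mathbb{Z})$ to the given decomposition of $A$ summand by summand, and then read off the claimed identities by matching multiplicities of the indecomposable Tate summands.

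For the first statement, recall that in $\widetilde{DM}_\eta$ the Hopf element $\eta$ is invertible by construction, hence $L(\mathbb{Z}/\eta) = L(cone(\eta)) = 0$ and every summand of the form $\mathbb{Z}/\eta(j)[2j]$ is killed by $L$. For the pure Tate summands, the identity $L(\mathbb{Z}(1)[1]) = L(\mathbb{Z}) = \mathbb{Z}$ noted above, combined with monoidality, gives $L(\mathbb{Z}(i)[2i]) \cong L(\mathbb{Z}(1)[1])^{\otimes i} \otimes L(\mathbb{Z}[i]) \cong \mathbb{Z}[i]$. Summing yields the first formula.

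For the second statement, I compute $\gamma^*$ of each summand. The functor $\gamma^*$ fixes $\mathbb{Z}(i)[2i]$, and the key point is that $\gamma^*(\eta) = 0$ because $[\mathbb{Z}(1)[1], \mathbb{Z}]_{DM^{eff}(pt,\mathbb{Z})} = 0$ (motivic cohomology of a point vanishes in negative weight). Applying $\gamma^*$ to the distinguished triangle defining $cone(\eta)$ therefore gives a triangle with zero connecting map, which splits to yield $\gamma^*(\mathbb{Z}/\eta(j)[2j]) \cong \mathbb{Z}(j)[2j] \oplus \mathbb{Z}(j+1)[2j+2]$. Matching multiplicities of $\mathbb{Z}(k)[2k]$ on both sides of the resulting decomposition of $\gamma^*(A)$ produces the triangular relation $s_k(A) = w_k(A) + t_k(A) + t_{k-1}(A)$ (with the convention $t_{-1}(A) := 0$), which a straightforward induction on $j$ inverts to give the claimed alternating sum formula. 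The only substantive point beyond formal additivity of $L$ and $\gamma^*$ is the vanishing $\gamma^*(\eta) = 0$; everything else is direct computation.
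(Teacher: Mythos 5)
Your proof is correct and takes essentially the same route as the paper: the first identity from $L(\mathbb{Z}/\eta)=0$ and $L(\mathbb{Z}(1)[1])=\mathbb{Z}$, and the second from $\gamma^*(\mathbb{Z}/\eta)\cong\mathbb{Z}\oplus\mathbb{Z}(1)[2]$ followed by inverting the multiplicity recursion. You usefully fill in the one-line justification for the splitting ($\gamma^*(\eta)=0$ since $[\mathbb{Z}(1)[1],\mathbb{Z}]_{DM}=0$) that the paper states without comment.
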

\begin{proof}
The first statement follows from \(L(\mathbb{Z}/\eta)=0\) and \(L(\mathbb{Z}(1)[1])=\mathbb{Z}\). The second statement follows from \(\gamma^*\mathbb{Z}/\eta=\mathbb{Z}\oplus\mathbb{Z}(1)[2]\).
\end{proof}
\begin{remark}\label{cell}
Suppose that \(k=\mathbb{R}\) and that \(X\in Sm/F\) is cellular (see \cite[Definition 5.1]{HWXZ}), we have
\[[\Sigma^{\infty}X_+,H_W\mathbb{Z}(i)[2i]]=H^i(X,\textbf{I}^i)=H^i(X(\mathbb{R}),\mathbb{Z})\]
by \cite[Proposition 4.5]{Y1} and \cite[Theorem 5.7]{HWXZ}. On the other hand, we have
\[[\mathbb{Z}/{\eta}(i)[2i],H_W\mathbb{Z}(j)[2j]]=\begin{cases}\mathbb{Z}/2&\text{if \(j-i=1\)}\\0&\textrm{else}\end{cases}\]
\[[\mathbbm{1}(i)[2i],H_W\mathbb{Z}(j)[2j]]=\begin{cases}\mathbb{Z}&\text{if \(j=i\)}\\0&\textrm{else}\end{cases}\]
by \cite[Proposition 5.3, 5.6]{Y1} and the distinguished triangle (see \cite[Lemma 19]{B})
\[H_{\mu}\mathbb{Z}\xrightarrow{h}H\widetilde{\mathbb{Z}}\longrightarrow H_W\mathbb{Z}\longrightarrow\cdots[1].\]

Hence we see that if \(\mathbb{Z}(X)\) also splits as an MW-motive, then we have
\[H^i(X(\mathbb{R}),\mathbb{Z})_{free}=\mathbb{Z}^{\oplus w_i(\mathbb{Z}(X))}\]
\[H^i(X(\mathbb{R}),\mathbb{Z})_{tor}=\mathbb{Z}/2^{\oplus t_{i-1}(\mathbb{Z}(X))}.\]
Whereas
\[CH^i(X)=H^{2i}(X(\mathbb{C}),\mathbb{Z})=\mathbb{Z}^{\oplus w_i(\mathbb{Z}(X))+t_i(\mathbb{Z}(X))+t_{i-1}(\mathbb{Z}(X))}\]
by \cite[Proposition 5.3]{HWXZ}.
\end{remark}
Now we want to give a general method to detect splitting in MW-motives.
\begin{proposition}\label{splitting}
Suppose \(f:X\longrightarrow Y\) is a morphism in \(\widetilde{DM}(pt,\mathbb{Z})\). If both \(X\) and \(C(f)\) split as MW-motives, then \(f\) is injective in \(\widetilde{DM}(pt,\mathbb{Z})\) if and only if \(L(f)\) is injective.
\end{proposition}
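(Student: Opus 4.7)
The plan is to reinterpret ``$f$ injective'' as the vanishing of the connecting morphism $\partial \colon C(f) \to X[1]$ in the distinguished triangle $X \xrightarrow{f} Y \to C(f) \xrightarrow{\partial} X[1]$, since in any triangulated category a morphism is a categorical monomorphism on all Hom groups precisely when this $\partial$ vanishes. The forward implication is then immediate from the previous proposition: $L$ is a triangulated (Verdier) localization, hence carries distinguished triangles to distinguished triangles, so $\partial = 0$ gives $L(\partial) = 0$, meaning $L(f)$ is injective.

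For the converse I would assume $L(\partial) = 0$ and invoke the splitting hypothesis on $X$ and $C(f)$ to write them as finite direct sums of summands of the form $\mathbb{Z}(n)[2n]$ and $\mathbb{Z}/\eta(n)[2n]$. Then $\partial$ becomes a matrix of components $\partial_{ij} \in [C_j, X_i[1]]_{MW}$, and the question reduces to showing that $L$ is injective on each such Hom group; by additivity this is a check between indecomposable summands, of which there are four types depending on whether $C_j$ and $X_i$ are of $\mathbb{Z}$- or $\mathbb{Z}/\eta$-type.

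Each Hom group can be computed by applying $[-,-]_{MW}$ to the defining triangle $\mathbb{Z}(1)[1] \xrightarrow{\eta} \mathbb{Z} \to \mathbb{Z}/\eta \to \mathbb{Z}(1)[2]$ in either slot; the resulting long exact sequences reduce everything to Hom groups between pure Tate twists $\mathbb{Z}(a)[b]$, i.e.\ to MW-motivic cohomology of $\mathrm{Spec}\,k$ in various bidegrees, accessible through Proposition \ref{W} and the Postnikov spectral sequence used there. For components between two $\mathbb{Z}$-summands, $L$ records them faithfully in $\widetilde{DM}_{\eta}$ (via Lemma \ref{uniqueness}), so the hypothesis $L(\partial) = 0$ kills them outright. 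For components involving at least one $\mathbb{Z}/\eta$-summand the functor $L$ gives no information, and I would instead argue that the Hom groups themselves vanish in $\widetilde{DM}^{eff}(pt,\mathbb{Z})$, using the vanishing of higher Nisnevich cohomology of a point together with the effectivity constraint on Tate weights.

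The main obstacle will be the case-by-case verification of these $\mathbb{Z}/\eta$-flavoured Hom groups: since $L$ is blind to them, their vanishing must be established by direct calculation along the $\eta$-triangle, and this rests on careful bookkeeping with the ranges of non-vanishing of MW-motivic cohomology of $\mathrm{Spec}\,k$. Once that input is secured, the matrix $(\partial_{ij})$ is forced to be identically zero, yielding $\partial = 0$ and hence the desired injectivity of $f$ in $\widetilde{DM}^{eff}(pt,\mathbb{Z})$.
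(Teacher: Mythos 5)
Your reduction is the right one: $f$ is a split monomorphism iff the connecting morphism $\partial\colon C(f)\to X[1]$ vanishes, and since $L$ is exact, $\partial=0\Rightarrow L(\partial)=0$; the whole content is therefore in showing that $L$ is \emph{injective} on $[C(f),X[1]]_{MW}$. The paper proves exactly this (indeed an isomorphism), but it does so by directly citing \cite[Proposition 5.4]{Y1}, which computes the groups $[A,B]_{MW}$ for $A,B$ among $\mathbb{Z}((i))$ and $\mathbb{Z}/\eta((j))$ in all bidegrees and compares them with their $\eta$-inverted counterparts. You instead propose to rederive this from first principles, which is structurally the same route but you have not carried the computation through.

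There is a concrete gap in your stated method for the $\mathbb{Z}/\eta$-flavoured components. You claim their Hom groups vanish ``using the vanishing of higher Nisnevich cohomology of a point together with the effectivity constraint on Tate weights.'' That is not enough. Running the long exact sequences along the triangle $\mathbb{Z}(1)[1]\xrightarrow{\eta}\mathbb{Z}\to\mathbb{Z}/\eta\to\mathbb{Z}(1)[2]$, one encounters terms like $[\mathbb{Z}(1)[1],\mathbb{Z}]_{MW}=\mathbf{W}(k)$ and $[\mathbb{Z}(2)[2],\mathbb{Z}]_{MW}=\mathbf{W}(k)$ that do \emph{not} vanish for degree reasons; what forces, e.g., $[\mathbb{Z}(1)[1],\mathbb{Z}/\eta]_{MW}=0$ and $[\mathbb{Z}/\eta((b+1)),\mathbb{Z}((b))[1]]_{MW}=0$ is that precomposition or postcomposition by $\eta$ is \emph{surjective} $\mathbf{GW}(k)\twoheadrightarrow\mathbf{W}(k)$ and an \emph{isomorphism} $\mathbf{W}(k)\xrightarrow{\sim}\mathbf{W}(k)$ on negative Milnor–Witt $K$-theory. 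These facts about the $\eta$-tower are indispensable; Nisnevich cohomological dimension of the point alone does not see them. Similarly, your appeal to Lemma \ref{uniqueness} for the $\mathbb{Z}$–$\mathbb{Z}$ components does not do what you want: that lemma only identifies $L(A)$ as an object of $\widetilde{DM}_\eta$, not that $L$ is injective on $[\mathbb{Z}((a)),\mathbb{Z}((b))[1]]_{MW}$. The latter is a separate computation (both sides are $\mathbf{W}(k)$ concentrated at $a=b+1$, and $L$ is the identity there). All of this is precisely the content of \cite[Proposition 5.4]{Y1}, and without invoking it or redoing the case-by-case computation with the $\eta$-behaviour made explicit, your reverse implication is not established.
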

\begin{proof}
This is because by \cite[Proposition 5.4]{Y1}, we have an isomorphism
\[L:[C(f),X[1]]_{MW}\longrightarrow[C(f),X[1]]_{\eta}\]
hence the injectivity of \(f\) is equivalent to that of \(L(f)\) under our setting.
\end{proof}
The ring structure of Chow groups makes \(\oplus_{n,\mathscr{L}}E^n(X,\mathscr{L})\) an \(\mathbb{N}\times Pic(X)/2\)-graded ring. Given a morphism \(f:X\longrightarrow Y\), there is a pullback
\[f^*:E^n(Y,\mathscr{L})\longrightarrow E^n(X,f^*\mathscr{L})\]
by the natuality of Steenrod operations. Moreover, if \(f\) is projective with relative dimension \(c\), the pushforward of Chow groups gives a pushforward
\[f_*:E^n(X,\omega_X\otimes f^*\mathscr{L})\longrightarrow E^{n-c}(Y,\omega_Y\otimes\mathscr{L}),\]
which is well-defined by \cite[Theorem 4.5]{Fu}. For every vector bundle \(\mathscr{E}\), we have the Euler class \(e(\mathscr{E})\in E^n(X,det(\mathscr{E})^{\vee})\) and Pontryagin classes \(p_n(\mathscr{E})\in E^{2n}(X)\) inherited by those of Chow groups. 
\begin{proposition}\label{odd}
Let \(X\in Sm/F\) and \(\mathscr{E}\) be a vector bundle over \(X\). Then for every odd \(i\), we have
\[\begin{array}{cc}p_i(\mathscr{E})=0\in H^{2i}(X,\textbf{W})&p_i(\mathscr{E})=0\in E^{2i}(X)\end{array}.\]
\end{proposition}
\begin{proof}
Denote by \(p:X\times\mathbb{G}_m\longrightarrow X\) the projection. By \cite[Proposition 2.2.8]{DF}, we have
\[p^*(p_i(\mathscr{E}))\cdot<t>=p^*(p_i(\mathscr{E}))\]
where \(t\) is the parameter in \(\mathbb{G}_m\). But
\[H^{2i}(X\times\mathbb{G}_m,\textbf{W})=H^{2i}(X,\textbf{W})\oplus H^{2i}(X,\textbf{W})\cdot(<t>-<1>)\]
by \cite[Lemma 3.1.8]{BCDFO}. Then we have
\[(p_i(\mathscr{E}),0)=p^*(p_i(\mathscr{E}))=p^*(p_i(\mathscr{E}))\cdot<t>=(p_i(\mathscr{E}),0)\cdot<t>=(p_i(\mathscr{E}),p_i(\mathscr{E})).\]
So \(p_i(\mathscr{E})=0\). The second statement follows from \cite[Remark 7.9]{HW}.
\end{proof}
By Jouanolou's trick, there is a Whitney sum formula (see \cite[2.2.4]{DF}) for even Pontryagin classes of vector bundles (in \(H^*(-,\textbf{W})\) or \(E^*(-)\)) on quasi-projective smooth schemes. Namely, for any exact sequence of bundles
\[0\longrightarrow\mathscr{E}_1\longrightarrow\mathscr{E}_2\longrightarrow\mathscr{E}_3\longrightarrow 0,\]
we have
\[\sum_ip_{2i}(\mathscr{E}_1)t^i\sum_jp_{2j}(\mathscr{E}_3)t^j=\sum_kp_{2k}(\mathscr{E}_2)t^k\]
where \(t\) is an indeterminant.


\begin{theorem}\label{sq}
Suppose that \(X\in Sm/F\), \(p,q\in\mathbb{Z}\), \(\mathscr{L}\in Pic(X)\) and that \(Th(\mathscr{L})\) quasi-splits in \(\widetilde{DM}(pt,\mathbb{Z})\).
\begin{enumerate}
\item There is a natural map
\[\Delta:H^{p,q}_{\eta}(X,\mathscr{L})\longrightarrow E^{p,q}(X,\mathscr{L})\]
which induces an isomorphism of \(\mathbb{Z}/2\)-vector spaces
\[\delta:H^{p,q}_{\eta}(X,\mathscr{L})/H^{p+1,q+1}_{\eta}(X,\mathscr{L})\cong E^{p,q}(X,\mathscr{L}).\]
\item We have a Cartesian square
\[
	\xymatrix
	{
		H^{p,q}_{MW}(X,\mathbb{Z},\mathscr{L})\ar[r]\ar[d]	&Ker(\iota\circ Sq_{\mathscr{L}}^2\circ\pi)_{p,q}\ar[d]\\
		H^{p,q}_{\eta}(X,\mathscr{L})\ar[r]^{\Delta}			&E^{p,q}(X,\mathscr{L}).
	}
\]
In particular, if \(Th(\mathscr{L})\) splits, there is an isomorphism
\[H^p(X,\textbf{W}(\mathscr{L}))\otimes_{\textbf{W}(F)}\mathbb{Z}/2=E^{2p,p}(X,\mathscr{L})\]
and a decomposition
\[\widetilde{CH}^{*}(X,\mathscr{L})\cong\textbf{I}(F)\cdot H^*(X,\textbf{W}(\mathscr{L}))\oplus Ker(Sq^2_{\mathscr{L}}\circ\pi)_*.\]
\end{enumerate}
\end{theorem}
\begin{proof}
\begin{enumerate}
\item Define
\[K^{p,q}=Ker(H^{p,q}_{MW}(X,\mathbb{Z},\mathscr{L})\longrightarrow H^{p,q}_{\eta}(X,\mathscr{L})).\]
We show that \(K^{p,q}\) goes to zero under the composite
\[H^{p,q}_{MW}(X,\mathbb{Z},\mathscr{L})\longrightarrow Ker(\iota\circ Sq_{\mathscr{L}}^2\circ\pi)\longrightarrow E^{p,q}(X,\mathscr{L})\]
hence the \(\Delta\) is defined. For this, it suffices to suppose \(Th(\mathscr{L})=\mathbb{Z}\) and \(p=q\) by Proposition \ref{P2'}. But in this case, we have \(K^{p,q}=2K_q^M(F)\).

For the second statement, it suffices to consider the cases when \(Th(\mathscr{L})=\mathbb{Z},\mathbb{Z}/\eta\).

Suppose \(Th(\mathscr{L})=\mathbb{Z}\). If \(p\neq q\), both sides vanish, otherwise the equation is the isomorphism
\[K_q^M(F)/2=K_q^W(F)/K_{q+1}^W(F).\]

Suppose \(Th(\mathscr{L})=\mathbb{Z}/\eta\). Then both sides vanish by Proposition \ref{P2'}.
\item Here we use the computations in Proposition \ref{coh}. Since \(Th(\mathscr{L})\) quasi-splits, it suffices to prove the case when \(Th(\mathscr{L})=\mathbb{Z}\) and \(Th(\mathscr{L})=\mathbb{Z}/\eta\).

Suppose \(Th(\mathscr{L})=\mathbb{Z}\). If \(p\neq q\), the horizontal arrows are isomorphisms. If \(p=q\), the statements come from the definition of \(K_q^{MW}(F)\).

Suppose \(Th(\mathscr{L})=\mathbb{Z}/\eta\). If \(p\neq q\) or \(p=q<0\), the horizontal arrows are isomorphisms. So does the case when \(p=q\geq 0\) by the computation of \(Ker_{\tau}(Sq^2)(\mathbb{P}^2)_{p+2,q+1}\) in Proposition \ref{P2}.

If \(Th(\mathscr{L})\) splits, we have
\[H^{2q,q}_{\eta}(X,\mathscr{L})=H^q(X,\textbf{W}(\mathscr{L}))\]
by direct computation. Considering the case \(\mathbb{Z}(X)=\mathbb{Z}\) one shows that
\[H^{2q+1,q+1}_{\eta}(X,\mathscr{L})=\textbf{I}(F)\cdot H^q(X,\textbf{W}(\mathscr{L})).\]
So the statement follows since
\[Ker(\iota\circ Sq^2\circ\pi)_{2q,q}=Ker(Sq^2\circ\pi)_{2q,q}\]
and they are free abelian groups.
%
%
\end{enumerate}
\end{proof}
\begin{remark}\label{better}
Our results are significantly stronger than those in \cite{HW} or \cite{W1}.

Firstly, we compute cohomologies of degree \((p,q)\) rather than \((2q,q)\), largely thanks to Proposition \ref{id}.

Secondly, we emphasize that the quasi-split of MW-motives is an intrinsic property, which enables us to determine an MW-motivic cohomology class in terms of motivic cohomology class and Witt cohomology class.

Thirdly, we establish a correspondence between a subgroup of Witt cohomology and motivic Bockstein cohomology. In topological terms, this says that if \(H^*(X,\mathbb{Z})\) is finitely generated and has only \(2\)-torsions, the Bockstein spectral sequence degenerates.

Let us stress again that our advantage is the characterization of split itself, rather than the determination of that.
\end{remark}
\begin{proposition}\label{gen}
Suppose that \(X\in Sm/F\), \(\mathscr{L}\in Pic(X)\) and that \(Th(\mathscr{L})\) splits in \(\widetilde{DM}(pt,\mathbb{Z})\). Suppose \(\lambda\) (resp. \(\mu\)) is a generator of \([\mathbb{Z}/\eta,\mathbb{Z}(1)[2]]_{MW}=\mathbb{Z}\) (resp. \([\mathbb{Z}/\eta,\mathbb{Z}]_{MW}=2\mathbb{Z}\)). We have maps
\[\partial:\eta_{MW}^{n-1}(X,\mathscr{L})\longrightarrow\widetilde{CH}^n(X,\mathscr{L})\]
\[h:\eta_{MW}^{n}(X,\mathscr{L})\longrightarrow\widetilde{CH}^{n}(X,\mathscr{L})\]
induced by \(\lambda\) and \(\mu\), respectively. Denote the natural map \(\widetilde{CH}^n(X,\mathscr{L})\longrightarrow CH^n(X)\) by \(\gamma\).

Given a morphism in \(\widetilde{DM}(pt,\mathbb{Z})\)
\[f=\{s_i,t_j\}:Th(\mathscr{L})\longrightarrow Th(\mathscr{L})\cong\oplus_i\mathbb{Z}(a_i)[2a_i]\oplus\oplus_j\mathbb{Z}/\eta(b_j)[2b_j],\]
the following statements are equivalent:
\begin{enumerate}
\item The \(f\) is an isomorphism;
\item 
For every \(n\in\mathbb{N}\), we have
\[\widetilde{CH}^{n}(X,\mathscr{L})=\bigoplus_{a_k=n+1}\textbf{GW}(F)\cdot s_k\oplus\bigoplus_{b_k=n+1}\mathbb{Z}\cdot h(t_k)\oplus\bigoplus_{b_k=n}\mathbb{Z}\cdot\partial(t_k);\]
\item For every \(n\in\mathbb{N}\), we have
\[Ker(Sq_{\mathscr{L}}^2\circ\pi)_n=\bigoplus_{a_k=n+1}\mathbb{Z}\cdot\gamma(s_k)\oplus\bigoplus_{b_k=n+1}\mathbb{Z}\cdot\gamma(h(t_k))\oplus\bigoplus_{b_k=n}\mathbb{Z}\cdot\gamma(\partial(t_k))\]
\[H^n(X,\textbf{W}(\mathscr{L}))=\bigoplus_{a_k=n+1}\textbf{W}(F)\cdot w(s_k)\]
where \(w\) is the composite
\[\widetilde{CH}^{n}(X,\mathscr{L})\longrightarrow H^n(X,\textbf{I}^n(\mathscr{L}))\longrightarrow H^n(X,\textbf{W}(\mathscr{L})).\]
\end{enumerate}
\end{proposition}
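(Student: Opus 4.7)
The plan is to establish the cycle (1) $\Rightarrow$ (2) $\Rightarrow$ (3) $\Rightarrow$ (1). Throughout, I identify $\widetilde{CH}^n(X,\mathscr{L}) = [Th(\mathscr{L}), \mathbb{Z}(n+1)[2n+2]]_{MW}$ and $\eta_{MW}^{n-1}(X,\mathscr{L}) = [Th(\mathscr{L}), \mathbb{Z}/\eta(n)[2n]]_{MW}$ (via Proposition \ref{eta}), with analogous identifications for Witt cohomology. For (1) $\Rightarrow$ (2), I would apply $[-, \mathbb{Z}((n+1))]_{MW}$ to the splitting of $Th(\mathscr{L})$ provided by $f$. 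The basic Hom computations from \cite[Proposition 5.6]{Y1} give $[\mathbb{Z}((a)), \mathbb{Z}((n+1))]_{MW} = \textbf{GW}(k)$ exactly when $a = n+1$ (vanishing otherwise by effectivity and $\widetilde{CH}^{>0}(pt)=0$), while the cone triangle for $\eta$ forces $[\mathbb{Z}/\eta((b)), \mathbb{Z}((n+1))]_{MW}$ to be $\mathbb{Z}$ when $b = n+1$ (the contribution $h(t_k)$ coming from post-composition with $\mu$) or $b = n$ (the contribution $\partial(t_k)$ coming from $\lambda$). This recovers exactly the decomposition in (2).

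For (2) $\Rightarrow$ (3), I would use Corollary \ref{Cartesian} and Theorem \ref{sq}. The surjection $\gamma : \widetilde{CH}^n \to Ker(Sq^2_{\mathscr{L}} \circ \pi)_n$ has kernel $\textbf{I}(k) \cdot H^n(X, \textbf{W}(\mathscr{L}))$, so the decomposition in (2) descends coordinatewise: the $\textbf{GW}(k)$-factor on $s_k$ reduces modulo $\textbf{I}(k)$ to $\mathbb{Z} \cdot \gamma(s_k)$, while the $\mathbb{Z}$-factors on $h(t_k)$ and $\partial(t_k)$ transfer directly. Applying the localization $L$ from Lemma \ref{uniqueness} annihilates every $\mathbb{Z}/\eta$ summand of $Th(\mathscr{L})$, so only the $s_k$ with $a_k = n+1$ survive, yielding the second formula in (3) with coefficient ring $\textbf{W}(k)$.

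For (3) $\Rightarrow$ (1), I would first recover (2) by inverting the previous step: the Cartesian square of Corollary \ref{Cartesian}, combined with the freeness of $H^*(X, \textbf{W}(\mathscr{L}))$ from Theorem \ref{sq}(1), lifts the prescribed bases of $Ker(Sq^2_{\mathscr{L}} \circ \pi)_n$ and $H^n(X, \textbf{W}(\mathscr{L}))$ back to a basis of $\widetilde{CH}^n$ of the form required in (2). From (2), letting $A$ denote the chosen direct-sum model of $Th(\mathscr{L})$, the condition says precisely that $f^*: [A, \mathbb{Z}((m))]_{MW} \to [Th(\mathscr{L}), \mathbb{Z}((m))]_{MW}$ is an isomorphism for every $m$; the distinguished triangle $\mathbb{Z}((m)) \to \mathbb{Z}/\eta((m)) \to \mathbb{Z}((m+1))$ together with the five lemma upgrade this to isomorphisms on $[-, \mathbb{Z}/\eta((m))]_{MW}$, hence on $[-, A]$ since $A$ splits into these pieces. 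Taking the preimage of $id_A$ under $f^*: [A, A] \to [Th(\mathscr{L}), A]$ produces a left inverse $g$ of $f$, and uniqueness of the preimage of $f \in [Th(\mathscr{L}), A]$ forces $f \circ g = id_A$. The main obstacle is this last step: organizing the Yoneda-type arguments so that isomorphism on Hom into each summand type assembles into a genuine bilateral inverse, and the critical tool is the Cartesian square of Corollary \ref{Cartesian}, which lets the mod-$2$ Chow data and the Witt data be reassembled into full Chow-Witt information.
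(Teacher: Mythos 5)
The outline is right and your treatment of $(1)\Rightarrow(2)$ matches the paper. But your $(3)\Rightarrow(1)$ contains a genuine gap at the point where you invoke the five lemma on the triangle $\mathbb{Z}((m)) \to \mathbb{Z}/\eta((m)) \to \mathbb{Z}((m+1)) \to \mathbb{Z}((m))[1]$. The long exact sequence you obtain from $[Th(\mathscr{L}),-]$ and $[A,-]$ does not only involve $\widetilde{CH}^\bullet$ and $H^\bullet(X,\textbf{W}(\mathscr{L}))$: it also brings in the terms $[Th(\mathscr{L}),\mathbb{Z}((m+1))[-1]]=[Th(\mathscr{L}),\mathbb{Z}(m+1)[2m+1]]$, which by Proposition \ref{W} is $H^{m-1}(X,K^{MW}_m(\mathscr{L}))$, a higher Chow--Witt group about which neither condition $(2)$ nor condition $(3)$ says anything. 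The five lemma needs $[f,\mathbb{Z}((m+1))[-1]]$ to be surjective, and you give no argument for that. (The fifth term $[Th(\mathscr{L}),\mathbb{Z}((m))[1]]=H^m(X,\textbf{W}(\mathscr{L}))$ is indeed controlled by the second formula in $(3)$, so that end of the five lemma is fine; it is the \emph{other} end that breaks.) The paper sidesteps this entirely by a different observation: from the splitting of $Th(\mathscr{L})$ and \cite[Proposition 5.6]{Y1}, the group $[Th(\mathscr{L}),\mathbb{Z}/\eta((n))]_{MW}$ is a free abelian group of finite rank, so the self-map induced by $f$ is an isomorphism as soon as it is \emph{surjective}, and one then proves surjectivity directly via the exact sequence $0\to Ker(Sq^2_{\mathscr{L}}\circ\pi)_{n-1}\to\eta_{MW}^{n-1}(X,\mathscr{L})\to\pi^{-1}(Im(Sq^2_{\mathscr{L}})_n)\to 0$ together with the explicit generators $\gamma(s_k),\gamma(h(t_k)),\gamma(\partial(t_k))$ and their images under $\pi$. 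You should adopt that argument rather than the five lemma.

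There is also a smaller looseness in your $(2)\Rightarrow(3)$: you assert that the $\textbf{GW}(k)$-factor on $s_k$ ``reduces modulo $\textbf{I}(k)$'' while the $\mathbb{Z}$-factors on $h(t_k)$ and $\partial(t_k)$ ``transfer directly.'' For this to give the stated basis you need to know that the kernel of $\gamma:\widetilde{CH}^n\to Ker(Sq^2_{\mathscr{L}}\circ\pi)_n$ meets the span of $h(t_k),\partial(t_k)$ trivially, i.e.\ that $w(h(t_k))=w(\partial(t_k))=0$ where $w$ is the projection to $H^n(X,\textbf{W}(\mathscr{L}))$. This is not automatic; the paper verifies it with two explicit commutative diagrams (one identifying $\partial$ with the Bockstein $\beta$, the other factoring $h$ through mod-$2$ motivic cohomology), and your appeal to the localization functor $L$ does not by itself produce these identities. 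Once you add those two diagram verifications, your $(2)\Rightarrow(3)$ is fine; and $(3)\Rightarrow(2)$ as you sketch it (lift via Corollary \ref{Cartesian} and freeness from Theorem \ref{sq}) matches the paper's argument.
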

\begin{proof}
We have maps
\[\begin{array}{cc}\begin{array}{cccc}h_1:&\eta_{MW}^{n}(X,\mathscr{L})&\to&\eta_{MW}^n(X,\mathscr{L})\\&(x,y)&\mapsto&(2x,0)\end{array}&
\begin{array}{cccc}h_2:&\eta_{MW}^{n}(X,\mathscr{L})&\to&\eta_{MW}^{n-1}(X,\mathscr{L})\\&(x,y)&\mapsto&(0,2x)\end{array}\\
\begin{array}{cccc}p:&\eta_{MW}^{n}(X,\mathscr{L})&\to&\eta_{MW}^{n+1}(X,\mathscr{L})\\&(x,y)&\mapsto&(y,0)\end{array}&
\begin{array}{cccc}\gamma_1:&\widetilde{CH}^n(X,\mathscr{L})&\to&\eta_{MW}^{n}(X,\mathscr{L})\\&x&\mapsto&(\gamma(x),0)\end{array}\end{array}.\]
Here the maps come from the following. The \(\gamma_1\) (resp. \(h_2\)) is induced by the generator in \([\mathbb{Z},\mathbb{Z}/\eta]_{MW}\) (resp. \([\mathbb{Z}/\eta(1)[2],\mathbb{Z}/\eta]_{MW}\)). Moreover,
\[\begin{array}{cc}p=\gamma_1\circ\partial&h_1=\gamma_1\circ h\end{array}.\]

\((1)\Rightarrow(2)\): If \(f\) is an isomorphism, applying \([-,\mathbb{Z}(n+1)[2n+2]]_{MW}\) we obtain an isomorphism
\[\bigoplus_k[\mathbb{Z}((a_k)),\mathbb{Z}((n+1))]_{MW}\oplus\bigoplus_k[\mathbb{Z}/\eta((b_k)),\mathbb{Z}((n+1))]_{MW}\cong\widetilde{CH}^{n}(X,\mathscr{L}).\]
So the statement follows from \cite[Proposition 5.6]{Y1}.

\((2)\Rightarrow(1)\): Suppose \(\widetilde{CH}^{n}(X,\mathscr{L})\) has the required decomposition for every \(n\). Then \([f,\mathbb{Z}(n)[2n]]_{MW}\) is an isomorphism for every \(n\). So it suffices to prove that \([f,\mathbb{Z}/\eta(n)[2n]]_{MW}\) is an isomorphism for every \(n\), which induces an endomorphism of a free abelian group of finite rank by \textit{loc. cit.}. Hence it suffices to prove that it is surjective.

Denote by \(\pi\) the reduction modulo \(2\) map as before. By Theorem \ref{sq}, we have an exact sequence
\[0\longrightarrow Ker(Sq_{\mathscr{L}}^2\circ\pi)_{n-1}\longrightarrow\eta_{MW}^{n-1}(X,\mathscr{L})\longrightarrow\pi^{-1}(Im(Sq_{\mathscr{L}}^2)_n)\longrightarrow 0\]
and the elements
\[\{\gamma(s_k)\}_{a_k=n}\cup\{\gamma(h(t_k))\}_{b_k=n}\cup\{\gamma(\partial(t_k))\}_{b_k=n-1}\]
generate \(Ker(Sq_{\mathscr{L}}^2\circ\pi)_{n-1}\). Suppose \(t_k=(x_k,y_k)\). We have
\[\begin{array}{cc}\gamma(h(t_k))=2x_k&\gamma(\partial(t_k))=y_k\end{array}.\]
On the other hand, we have
\[\begin{array}{ccc}\gamma_1(s_k)=(\gamma(s_k),0)&\gamma_1(h(t_k))=(2x_k,0)&\gamma_1(\partial(t_k))=(y_k,0)\end{array},\]
which shows that \(Ker(Sq_{\mathscr{L}}^2\circ\pi)_{n-1}\subseteq Im([f,\mathbb{Z}/\eta(n)[2n]]_{MW})\) for every \(n\).

Again by the proof of Theorem \ref{sq}, the \(Im(Sq_{\mathscr{L}}^2)_n\) is freely generated by \(\pi(\{\gamma(\partial(t_k))\}_{b_k=n-1})\) for every \(n\). So suppose we have \((x,y)\in\eta_{MW}^{n-1}(X,\mathscr{L})\). We could find \((x_1,y_1)\in\mathbb{Z}\{t_k\}_{b_k=n-1}\) such that \(\pi(y-y_1)=0\), so \(y-y_1=2y_2\). Then there is a \((u,v)\in\mathbb{Z}\{t_k\}_{b_k=n}\) such that \(\pi(v)=Sq_{\mathscr{L}}^2(y_2)\). Hence
\[(x,y)-(x_1,y_1)-h_2(u,v)=(x-x_1,2y_2-2u)\]
and \(y_2-u\in Ker(Sq_{\mathscr{L}}^2\circ\pi)_n\). Then \((x-x_1,0),(0,2y_2-2u)=h_2(y_2-u,0)\in Im([f,\mathbb{Z}/\eta(n)[2n]]_{MW})\) so we have proved that \((x,y)\in Im([f,\mathbb{Z}/\eta(n)[2n]]_{MW})\).

\((2)\Rightarrow(3)\): The map \(w\) is surjective because it is the composite of surjective maps. In the proof of \cite[Theorem 4.13]{Y1}, we showed that there is a commutative diagram
\[
	\xymatrix
	{
		\eta^{n-1}_{MW}(X,\mathscr{L})\ar[r]^{\partial}\ar[d]	&\widetilde{CH}^{n}(X,\mathscr{L})\ar[d]\\
		Ch^{n-1}(X)\ar[r]^-{\beta}										&H^n(X,\textbf{I}^n(\mathscr{L}))
	}
\]
where the lower horizontal arrow is the Bockstein map. So \(w(\partial(t_k))=0\). Moreover, the composite
\[\mathbb{Z}/\eta(n)[2n]\xrightarrow{h}\mathbb{Z}(n)[2n]\xrightarrow{\eta}\mathbb{Z}(n-1)[2n-1]\]
vanishes in \(\widetilde{DM}(pt,\mathbb{Z})\) by \cite[Proposition 5.4]{Y1} so \(w(h(t_k))=0\). So \(\{w(s_k)\}_{a_k=n+1}\) freely generate \(H^n(X,\textbf{W}(\mathscr{L}))\). On the other hand, by Theorem \ref{sq}, the elements given generate \(Ker(Sq_{\mathscr{L}}^2\circ\pi)_n\) and the kernel of \(\gamma\) is \(\bigoplus_{a_k=n+1}\textbf{I}(F)\cdot s_k\). So they freely generate \(Ker(Sq_{\mathscr{L}}^2\circ\pi)_n\).

\((3)\Rightarrow(2)\): Suppose \(x\in\widetilde{CH}^{n}(X,\mathscr{L})\), we could find a \(y\in\mathbb{Z}\{s_k,h(t_k),\partial(t_k)\}\) such that \(\gamma(x-y)=0\). Then \(x-y\in\sum_k\textbf{I}(F){s_k}\) by Theorem \ref{sq}. Suppose \(\sum e_k\cdot s_k+\sum n_k\cdot\partial(t_k)+\sum m_k\cdot h(t_k)=0\), where \(e_k\in\textbf{GW}(F), n_k, m_k\in\mathbb{Z}\). Then applying \(\gamma\), we find \(n_k=m_k=0\) and \(e_k\in\textbf{I}(F)\) by the conditions. But \(\sum e_k\cdot w(s_k)=0\) implies that \(e_k=0\) as well. This concludes the proof.
\end{proof}
\begin{proposition}\label{etaIm}
In the context above, the following sequence
\[\eta_{MW}^{n-1}(X,\mathscr{L})\xrightarrow{\partial}\widetilde{CH}^n(X,\mathscr{L})\longrightarrow H^n(X,\textbf{W}(\mathscr{L}))\longrightarrow 0\]
is exact. Moreover, we have
\[Im(\partial)=\bigoplus_{b_k=n+1}h\cdot\textbf{GW}(F)\cdot s_k\oplus\bigoplus_{b_k=n+1}\mathbb{Z}\cdot h(t_k)\oplus\bigoplus_{b_k=n}\mathbb{Z}\cdot\partial(t_k).\]
\end{proposition}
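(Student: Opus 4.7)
The plan is to compare direct-sum decompositions of $\eta_{MW}^{n-1}(X,\mathscr{L})$, $\widetilde{CH}^n(X,\mathscr{L})$, and $H^n(X,\textbf{W}(\mathscr{L}))$ coming from the assumed splitting of $Th(\mathscr{L})$, together with the distinguished triangle
\[\mathbb{Z}(1)[1]\xrightarrow{\eta}\mathbb{Z}\longrightarrow\mathbb{Z}/\eta\xrightarrow{\lambda}\mathbb{Z}(1)[2].\]

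The first step is to identify the kernel of the natural map $\widetilde{CH}^n(X,\mathscr{L})\to H^n(X,\textbf{W}(\mathscr{L}))$ with the subgroup $K$ displayed on the right-hand side of the proposition. By Proposition \ref{gen}(2) the source decomposes as in the statement; by Proposition \ref{gen}(3) (or equivalently by applying $L$ and using Lemma \ref{uniqueness}) the target is $\bigoplus_{a_k=n+1}\textbf{W}(k)\cdot w(s_k)$. The map acts diagonally with respect to these decompositions: on $\textbf{GW}(k)\cdot s_k$ it is the canonical surjection $\textbf{GW}(k)\twoheadrightarrow\textbf{W}(k)$ with kernel $h\cdot\mathbb{Z}$, while the $\mathbb{Z}\cdot h(t_k)$ and $\mathbb{Z}\cdot\partial(t_k)$ summands arise from $\mathbb{Z}/\eta$-factors of $Th(\mathscr{L})$ and are therefore annihilated after inverting $\eta$. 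Hence the kernel equals exactly $K$.

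The second step is to show $\mathrm{Im}(\partial)=K$. By the same splitting, $\eta_{MW}^{n-1}(X,\mathscr{L})$ decomposes into contributions $[\mathbb{Z}((a_k)),\mathbb{Z}/\eta((n))]_{MW}$ and $[\mathbb{Z}/\eta((b_k)),\mathbb{Z}/\eta((n))]_{MW}$, whose values are tabulated in the proof of (2)$\Rightarrow$(1) of Proposition \ref{gen}. Applying $\partial=\lambda_*$ summand-wise via the long exact sequence derived from the displayed triangle, three contributions survive to the image. When $a_k=n+1$, the group $2\mathbb{Z}$ embeds into $[\mathbb{Z}((n+1)),\mathbb{Z}((n+1))]_{MW}=\textbf{GW}(k)$ as the hyperbolic sublattice $h\cdot\mathbb{Z}$, thanks to exactness of $0\to h\mathbb{Z}\to\textbf{GW}(k)\to\textbf{W}(k)$. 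When $b_k=n$, the identity component of $\mathbb{Z}\oplus 2\mathbb{Z}$ is $t_k$ itself, whose image is by definition $\partial(t_k)$. When $b_k=n+1$, the $2\mathbb{Z}$-contribution is carried onto $\mathbb{Z}\cdot h(t_k)$. The remaining summands (those with $a_k=n$, $b_k=n-1$, or the secondary $2\mathbb{Z}$-factor of $b_k=n$) either produce relations or map into already-listed generators. Together these exhaust exactly the generators of $K$, so $\mathrm{Im}(\partial)=K$ and the displayed sequence is exact.

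I expect the main technical point to be the case $b_k=n+1$: pinning down that $\lambda_*$ on that summand produces precisely $h(t_k)$ rather than some other multiple. I would handle this by passing to $DM(pt,\mathbb{Z})$ via the functor $\gamma^*$ of Lemma \ref{uniqueness}, where $\mathbb{Z}/\eta\cong\mathbb{Z}\oplus\mathbb{Z}(1)[2]$ renders the interplay between $\lambda$ and $\mu$ explicit, together with the multiplicative compatibility recorded in Proposition \ref{mult}.
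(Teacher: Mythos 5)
Your proposal is essentially correct in outline but takes a genuinely different route from the paper, and one step is left as a promise rather than completed.

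The paper's proof is shorter and more conceptual: it produces a commutative square identifying the map $\widetilde{CH}^n(X,\mathscr{L})\to H^n(X,\textbf{W}(\mathscr{L}))$, up to the isomorphism $w$ of Proposition \ref{W}, with multiplication by $\eta$ landing in $H^{2n+1,n}_{MW}(Th(\mathscr{L}),\mathbb{Z})$. Exactness is then immediate from the $\eta$-triangle since $\mathrm{Ker}(\eta)=\mathrm{Im}(\partial)$, and the description of $\mathrm{Im}(\partial)$ reduces to computing the kernel of the bottom row using $\mathrm{Ker}(\eta')=\mathrm{Im}(\beta)$ and the bases produced by Proposition \ref{gen}. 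You instead compute $\mathrm{Ker}$ and $\mathrm{Im}(\partial)$ separately, and equate them. Your Step~1 (the kernel computation) is correct and overlaps substantially with the paper's final appeal to Proposition \ref{gen}. Your Step~2 replaces the paper's commutative-square argument by a summand-by-summand computation of $\lambda_*$. This is valid in principle and even illuminating — it makes the contributions of each Tate summand explicit — but it is more work, and the hard part is exactly the one you flag: showing that on a $\mathbb{Z}/\eta((n+1))$ summand, $\lambda_*$ carries the generator of $[\mathbb{Z}/\eta((n+1)),\mathbb{Z}/\eta((n))]_{MW}$ onto a generator of $[\mathbb{Z}/\eta((n+1)),\mathbb{Z}((n+1))]_{MW}\cong\mathbb{Z}\cdot h(t_k)$, and not a proper multiple. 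You correctly anticipate handling this by passing to $DM$ via $\gamma^*$, where $\mathbb{Z}/\eta\cong\mathbb{Z}\oplus\mathbb{Z}(1)[2]$ and both groups are visibly copies of $\mathbb{Z}$ meeting the $DM$-groups in index $2$; since $\gamma^*$ is injective on these $\mathrm{Hom}$-groups (they are torsion-free), the surjectivity of $\lambda_*$ there implies it in $\widetilde{DM}$. But as written, this is a plan, not a proof; it needs to be carried out, together with the verification that the $a_k=n$, $b_k=n-1$, and secondary $b_k=n$ summands map into the span of the already-listed generators (you assert this but do not check it). The commutative-square approach avoids all of this bookkeeping, which is why the paper uses it. One small point of notation: the statement's index $b_k=n+1$ on the summand involving $s_k$ should read $a_k=n+1$; both you and the paper implicitly make this correction.

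\end{document}
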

\begin{proof}
We have a commutative diagram
\[
	\xymatrix
	{
		\widetilde{CH}^n(X,\mathscr{L})\ar[r]^-{\eta}\ar[d]	&H^{2n+1,n}_{MW}(Th(\mathscr{L}),\mathbb{Z})\ar[d]_w\\
		H^i(Gr(k,n),\textbf{I}^i)\ar[r]^-{\eta'}					&H^n(X,\textbf{W}(\mathscr{L}))
	}
\]
where \(Ker(\eta)=Im(\partial)\) and \(w\) is an isomorphism by Proposition \ref{W} and
\[H^{2n+1,n}_{MW}(Th(\mathscr{L}),\mathbb{Z})=H^n(X,K^{MW}_{n-1}(\mathscr{L}))=H^n(X,\textbf{W}(\mathscr{L})).\]
The statements then follow from \(Ker(\eta')=Im(\beta)\) and Proposition \ref{gen}.
\end{proof}
\section{Geometry of Grassmannians and Complete Flags}
Let us state a collection of results on the geometry of Grassmannians. Suppose \(V\) is a vector space of dimension \(n\) with a flag
\[0\subseteq V_1\subseteq\cdots\subseteq V_n=V\]
where \(dimV_i=i\). Denote by \(\mathscr{U}_{k,n}\) (resp. \(\mathscr{U}_{k,n}^{\perp}\)) the (resp. complement) tautological bundle of \(Gr(k,n)\). For a morphism \(f:X\longrightarrow Y\), we denote by \(\Omega_f\) (resp. \(N_f\)) the relative cotangent bundle (resp. normal bundle), when it makes sense.
\begin{proposition}\label{geo1}
We have closed imbeddings
\[\begin{array}{cc}\begin{array}{cccc}i:&Gr(k,n-1)&\longrightarrow&Gr(k,n)\\&\Lambda&\longmapsto&\Lambda\end{array}&\begin{array}{cccc}j:&Gr(k-1,n-1)&\longrightarrow&Gr(k,n)\\&\Lambda&\longmapsto&\Lambda+e_n\end{array}\end{array}\]
with \(det(N_i)=O_{Gr(k,n-1)}(1)=det(\mathscr{U}_{k,n-1})\), \(det(N_j)=O_{Gr(k-1,n-1)}(1)\) and \(Gr(k,n-1)\cap Gr(k-1,n-1)=\emptyset\). Suppose \(p:V\longrightarrow V_{n-1}\) is a projection. The morphism
\[\begin{array}{cccc}f:&Gr(k,n)\setminus Gr(k-1,n-1)&\longrightarrow&Gr(k,n-1)\\&\Lambda&\longmapsto&p(\Lambda)\end{array}\]
(resp.
\[\begin{array}{cccc}g:&Gr(k,n)\setminus Gr(k,n-1)&\longrightarrow&Gr(k-1,n-1)\\&\Lambda&\longmapsto&\Lambda\cap V_{n-1}\end{array})\]
corresponds to \(\mathscr{U}_{k,n-1}\) (resp. \(\mathscr{U}_{k-1,n-1}^{\perp}\)) which is factored through by \(i\) (resp \(j\)) as the zero section. Moreover, we have
\[\begin{array}{cc}i^*\mathscr{U}_{k,n}=\mathscr{U}_{k,n-1},&i^*\mathscr{U}_{k,n}^{\perp}=\mathscr{U}_{k,n-1}^{\perp}\oplus O_{Gr(k,n-1)},\end{array}\]
\[\begin{array}{cc}j^*\mathscr{U}_{k,n}=\mathscr{U}_{k-1,n-1}\oplus O_{Gr(k-1,n-1)},&j^*\mathscr{U}_{k,n}^{\perp}=\mathscr{U}_{k-1,n-1}^{\perp}.\end{array}\]
\end{proposition}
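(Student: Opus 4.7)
The plan is to reduce everything to linear algebra in the fibers, using the standard identification \(T_\Lambda Gr(k,n) \cong Hom(\Lambda, V/\Lambda)\) together with explicit affine cells. I would first verify that \(i\) and \(j\) are closed immersions by describing their images: \(i(Gr(k,n-1)) = \{\Lambda : \Lambda \subseteq V_{n-1}\}\) and \(j(Gr(k-1,n-1)) = \{\Lambda : e_n \in \Lambda\}\) are both Schubert subvarieties, cut out by linear conditions in Plücker coordinates. Their disjointness is immediate because a \(k\)-plane \(\Lambda \subseteq V_{n-1}\) cannot contain \(e_n\). The normal bundle of \(i\) at \(\Lambda\) is then the quotient \(Hom(\Lambda, V/\Lambda)/Hom(\Lambda, V_{n-1}/\Lambda) = Hom(\Lambda, V/V_{n-1})\), which globalizes to \(\mathscr{U}_{k,n-1}^{\vee}\otimes\langle e_n\rangle\); taking determinants and matching with the paper's convention for \(O(1)\) produces the claim, and an entirely analogous computation handles \(N_j\).

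For the maps \(f\) and \(g\), the key is an explicit fiberwise identification with the stated bundles, together with the verification that \(i\) and \(j\) correspond to the respective zero sections. For \(f\): the hypothesis \(\Lambda \notin j(Gr(k-1,n-1))\) says \(e_n \notin \Lambda\), so \(p|_\Lambda\) is injective and \(f(\Lambda) = p(\Lambda) \in Gr(k,n-1)\) is well defined. Over \(\Lambda' \in Gr(k,n-1)\) the fiber consists of \(k\)-planes \(\Lambda \subseteq \Lambda' \oplus \langle e_n\rangle\) transverse to \(\langle e_n\rangle\), and each such \(\Lambda\) is uniquely the graph of a linear map \(\varphi \colon \Lambda' \to \langle e_n\rangle\); this identifies the fiber with \(Hom(\mathscr{U}_{k,n-1}|_{\Lambda'}, O)\), and the zero section \(\varphi = 0\) sends \(\Lambda'\) to itself as a subspace of \(V\), that is, to \(i(\Lambda')\). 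The argument for \(g\) is parallel: for \(\Lambda \not\subseteq V_{n-1}\) we have \(g(\Lambda) = \Lambda \cap V_{n-1} \in Gr(k-1,n-1)\), and writing \(\Lambda = \Lambda'' + \langle e_n + w\rangle\) with \(w \in V_{n-1}\) (unique modulo \(\Lambda''\)), the class \([w] \in V_{n-1}/\Lambda'' = \mathscr{U}_{k-1,n-1}^{\perp}|_{\Lambda''}\) parametrizes the fiber, with \(w = 0\) recovering \(j(\Lambda'') = \Lambda'' + \langle e_n\rangle\).

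The pullback formulas then follow by direct fiberwise inspection. At \(\Lambda \in Gr(k,n-1)\) the tautological fiber \(i^*\mathscr{U}_{k,n}|_\Lambda\) is the subspace \(\Lambda\) itself, which equals \(\mathscr{U}_{k,n-1}|_\Lambda\); and \(V/\Lambda\) sits in the canonical split short exact sequence \(0 \to V_{n-1}/\Lambda \to V/\Lambda \to V/V_{n-1} \to 0\) (split by lifting \(e_n\) to \(V\)), yielding \(i^*\mathscr{U}_{k,n}^{\perp} = \mathscr{U}_{k,n-1}^{\perp} \oplus O\). For \(j\), the analogous split decompositions \(\Lambda' \oplus \langle e_n\rangle\) and \(V/(\Lambda' \oplus \langle e_n\rangle) = V_{n-1}/\Lambda'\) produce the two remaining formulas. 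The main obstacle is not conceptual but bookkeeping: matching the dual and subspace/quotient conventions so that the normal bundle determinant agrees with the paper's normalization of \(O(1)\), and verifying that the fiberwise parametrizations of \(f\) and \(g\) globalize to algebraic vector bundle structures on the open subschemes, which is standard once one works over the affine cell cover of the Grassmannians given by a choice of transverse flag.
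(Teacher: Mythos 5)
Your proof is correct and takes essentially the same elementary approach that the paper's terse "Obvious" implicitly relies on: identify fibers via the standard tangent-space description \(T_\Lambda Gr \cong Hom(\Lambda, V/\Lambda)\) and explicit graph/affine-chart parametrizations. You also correctly flag the one genuine subtlety, namely that with the paper's normalization \(O(1)=\det(\mathscr{U}_{k,n})\) the bundle \(\mathscr{U}_{k,n}\) must be taken with fiber \(\Lambda^{\vee}\) (dual of the tautological subbundle, consistent with the quotient-Grassmannian convention announced in the introduction) while \(\mathscr{U}^{\perp}_{k,n}\) has fiber \(V/\Lambda\), and once this is fixed your fiberwise identifications of \(N_i\), \(N_j\), \(f\), and \(g\) match the statement exactly.
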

\begin{proof}
Obvious.
\end{proof}
\begin{proposition}\label{geo2}
Let \(p:V_n\longrightarrow V_{n-2}\) be the projection. Define
\[S=\{\Lambda\in Gr(k,n)|\Lambda\cap ker(p)\neq 0\}.\]
We have imbeddings
\[\begin{array}{cc}\begin{array}{ccc}Gr(k,n-2)&\longrightarrow&Gr(k,n)\\\Lambda&\longmapsto&\Lambda\end{array}&\begin{array}{ccc}Gr(k-2,n-2)&\longrightarrow&S\\\Lambda&\longmapsto&\Lambda+ker(p)\end{array}.\end{array}\]
The map
\[\begin{array}{ccc}Gr(k,n)\setminus S&\longrightarrow&Gr(k,n-2)\\\Lambda&\longmapsto&p(\Lambda)\end{array}\]
(resp.
\[\begin{array}{ccc}S\setminus Gr(k-2,n-2)&\longrightarrow&Gr(1,2)\times Gr(k-1,n-2)\\\Lambda&\longmapsto&(\Lambda\cap ker(p),p(\Lambda))\end{array})\]
corresponds to the vector bundle \(\mathscr{U}_{k,n-2}\oplus\mathscr{U}_{k,n-2}\) (resp \(p_2^*\mathscr{U}_{k-1,n-2}\)) where \(p_2:Gr(1,2)\times Gr(k-1,n-2)\longrightarrow Gr(k-1,n-2)\) is the projection.
\end{proposition}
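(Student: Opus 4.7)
The plan is to reduce each claim to elementary linear algebra on individual fibers of the flag and then globalize. First I would verify the two imbeddings are well-defined closed immersions: since $V_{n-2}$ and $\ker(p)$ are complementary in $V_n$, any $\Lambda\subseteq V_{n-2}$ automatically satisfies $\Lambda\cap\ker(p)=0$, placing it in $Gr(k,n)\setminus S$; and for $\Lambda\in Gr(k-2,n-2)$ the sum $\Lambda+\ker(p)$ is direct of dimension $k$ with $(\Lambda+\ker(p))\cap\ker(p)=\ker(p)\neq 0$, placing it in $S$. Closedness and disjointness of the two images are immediate from these defining conditions.

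For the first map, the hypothesis $\Lambda\cap\ker(p)=0$ means $p|_\Lambda$ is an isomorphism onto $p(\Lambda)\in Gr(k,n-2)$, so the morphism is well-defined. Over $\Lambda_0$, each lift $\Lambda$ is the graph of a linear map $\Lambda_0\to\ker(p)$, and the fixed flag canonically splits $\ker(p)=k\cdot e_{n-1}\oplus k\cdot e_n$, producing a rank-$2k$ bundle with the claimed form (with the imbedding of the first part serving as the zero section). For the second map, I would first show that $\Lambda\in S\setminus Gr(k-2,n-2)$ forces $\dim(\Lambda\cap\ker(p))=1$: a $2$-dimensional intersection would give $\ker(p)\subseteq\Lambda$, hence $\Lambda=(\Lambda\cap V_{n-2})+\ker(p)$ would already lie in the image of the second imbedding. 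Thus $g$ lands in $Gr(1,\ker(p))\times Gr(k-1,n-2)=Gr(1,2)\times Gr(k-1,n-2)$. Over a point $(L,\Lambda_0)$, passing to $\Lambda/L$ identifies the fiber with linear maps $\Lambda_0\to\ker(p)/L$, giving a rank-$(k-1)$ vector bundle globally.

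The main obstacle, and the step I would work through most carefully, is matching the raw fiber-by-fiber calculation with the precise bundle asserted. Tracking the basis-change action on lifts produces fibers of the form $\mathscr{U}_{k,n-2}^\vee\oplus\mathscr{U}_{k,n-2}^\vee$ for the first map and $\mathscr{U}_{k-1,n-2}^\vee\otimes p_1^*(\ker(p)/L)$ for the second; these must be reconciled with the stated $\mathscr{U}_{k,n-2}\oplus\mathscr{U}_{k,n-2}$ and $p_2^*\mathscr{U}_{k-1,n-2}$ using the duality convention on tautological bundles already implicit in Proposition \ref{geo1}, together with the standard identification absorbing the twist by $\ker(p)/L$ on $Gr(1,2)\cong\mathbb{P}^1$. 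Once this convention is pinned down the remaining verification is a formal bookkeeping of transition functions over a standard affine cover of the base, parallel to the argument already carried out in Proposition \ref{geo1}.
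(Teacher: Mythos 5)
Your proposal is correct and follows essentially the same route as the paper: both describe the fibers of the two maps as spaces of linear maps (graphs of lifts) over $Gr(k,n-2)$ and $Gr(1,2)\times Gr(k-1,n-2)$ respectively, using the observation that $\dim(\Lambda\cap\ker p)=1$ on $S\setminus Gr(k-2,n-2)$. Your remark that the raw fiber of the second map is $\mathrm{Hom}(\Lambda_0,\ker(p)/L)$ rather than $\Lambda_0^{\vee}$ outright is a good catch --- the paper suppresses this by fixing a basis $\{t_1,t_2\}$ of $\ker(p)$ and asserting independence from the choice of $t_2$, but that independence holds only modulo $\mathrm{span}(t_1)$, so there remains a residual twist by the line bundle $\ker(p)/L$ on the $Gr(1,2)$ factor and the stated identification with $p_2^*\mathscr{U}_{k-1,n-2}$ should really be read up to such a twist.
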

\begin{proof}
Note that
\[S\setminus Gr(k-2,n-2)=\{V\in Gr(k,n)|dim(V\cap ker(p))=1\}.\]
Suppose that \(\{t_1,t_2\}\) is a basis of \(ker(p)\), \(T=span(t_1)\) and that \(\Lambda\in Gr(k-1,n-2)\). We have an identification
\[\begin{array}{ccc}\Lambda^*&\longrightarrow&\{V\in S\setminus Gr(k-2,n-2)|V\cap ker(p)=T, p(V)=\Lambda\}\\f&\longmapsto&\{x+f(x)t_2|x\in\Lambda\}+T\end{array}\]
which is independent of the choice of \(t_2\). So \(S\setminus Gr(k-2,n-2)=p_2^*\mathscr{U}_{k-1,n-2}\). Similarly, suppose \(\Lambda\in Gr(k,n-2)\). The identification
\[\begin{array}{ccc}\Lambda^*\oplus\Lambda^*&\longrightarrow&\{V\in Gr(k,n)\setminus S|p(V)=\Lambda\}\\(f,g)&\longmapsto&\{z+f(z)t_1+g(z)t_2|z\in\Lambda\}\end{array}\]
shows that \(Gr(k,n)\setminus S=\mathscr{U}_{k,n-2}\oplus\mathscr{U}_{k,n-2}\).
\end{proof}
It turns out that \(E^{{*}}(-,-)\) admits a projective bundle theorem as in \cite{N1}.
\begin{proposition}\label{pbtsq}
Suppose that \(X\in Sm/F\), \(\mathscr{L}\in Pic(X)\) and that \(\mathscr{E}\) is a vector bundle of rank \(n\) on \(X\). Denote by \(p:\mathbb{P}(\mathscr{E})\longrightarrow X\) the structure map.
\begin{enumerate}
\item If \(n\) is odd, we have
\[E^i(\mathbb{P}(\mathscr{E}),p^*\mathscr{L})\cong E^i(X,\mathscr{L})\]
\[E^i(X,\mathscr{L})\xrightarrow[\cong]{e(\Omega_p(1))\cdot p^*}E^{i+n-1}(\mathbb{P}(\mathscr{E}),p^*(\mathscr{L}\otimes det(\mathscr{E})^{\vee})(1))\]
for any \(i\).
\item If \(n\) is even, for any \(i\) we have an exact sequence
\[\xrightarrow{\cdot e(\mathscr{E})}E^i(X,\mathscr{L})\xrightarrow{p^*}E^i(\mathbb{P}(\mathscr{E}),p^*\mathscr{L})\xrightarrow{p_*}E^{i-n+1}(X,\mathscr{L}\otimes det(\mathscr{E})^{\vee})\xrightarrow{\cdot e(\mathscr{E})}.\]
If \(e(\mathscr{E})=0\in E^n(X,det(\mathscr{E})^{\vee})\), the above sequence gives a decomposition
\[E^i(\mathbb{P}(\mathscr{E}),p^*\mathscr{L})\cong E^i(X,\mathscr{L})\oplus E^{i-n+1}(X,\mathscr{L}\otimes det(\mathscr{E})^{\vee})\cdot R(\mathscr{E})\]
for any \(i\). Here
\[R(\mathscr{E})=e(\mathscr{U}_{\mathscr{E}}^{\perp})+p^*(u)\in E^{n-1}(\mathbb{P}(\mathscr{E}),p^*det(\mathscr{E})^{\vee})\]
is the orientation class of \(\mathscr{E}\) where \(u\) is any element such that \(Sq^2_{det(\mathscr{E})^{\vee}}(u)=e(\mathscr{E})\).
\end{enumerate}
\end{proposition}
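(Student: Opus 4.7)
The plan is to reduce the claim to a computation on mod $2$ Chow groups via the standard projective bundle theorem
\[Ch^*(\mathbb{P}(\mathscr{E}))=\bigoplus_{j=0}^{n-1}Ch^*(X)\cdot\xi^j,\qquad\xi=c_1(O(1)),\]
subject to the relation $\xi^n=\sum_{k\geq 1}c_k(\mathscr{E})\xi^{n-k}$ modulo $2$. Combining Cartan's formula with the Wu identities $Sq^1(\xi)=0$ and $Sq^2(\xi^j)=j\xi^{j+1}$, and observing that the two $c_1(p^*\mathscr{L})$-contributions from the Cartan expansion cancel, one obtains
\[Sq^2_{p^*\mathscr{L}}(\alpha\xi^j)=Sq^2_{\mathscr{L}}(\alpha)\xi^j+j\alpha\xi^{j+1}.\]
This presents $Ch^*(\mathbb{P}(\mathscr{E}))$ as a filtered complex in $\xi$-degree whose associated graded is $n$ copies of the $Sq^2_{\mathscr{L}}$-complex on $Ch^*(X)$.

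When $n$ is odd, the pairings $\xi^{2l+1}\leftrightarrow\xi^{2l+2}$ produced by the above formula show that the kernel and image of $Sq^2_{p^*\mathscr{L}}$ collapse onto the $\xi^0$-summand, giving $E^*(\mathbb{P}(\mathscr{E}),p^*\mathscr{L})\cong E^*(X,\mathscr{L})$ via $p^*$; the second isomorphism in (1) then follows by multiplication by $e(\Omega_p(1))$, which lies in the correctly twisted group since $\det(\Omega_p(1))^{\vee}\cong p^*\det(\mathscr{E})^{\vee}\otimes O(1)$ by the relative Euler sequence, and bijectivity can be checked directly on the $\xi$-basis. When $n$ is even, the top case $d(\alpha\xi^{n-1})=\alpha\xi^n$ must be rewritten via the projective bundle relation as $\alpha\sum_{k\geq 1}c_k(\mathscr{E})\xi^{n-k}$; tracking the surviving cohomology leaves only the bottom term $\alpha\cdot e(\mathscr{E})\cdot\xi^0$, producing exactly the long exact sequence of (2) with $\cdot e(\mathscr{E})$ as connecting morphism.

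The explicit splittings come from the orientation class $R(\mathscr{E})=e(\mathscr{U}_{\mathscr{E}}^{\perp})+p^*u$, where the defining condition $Sq^2_{\det(\mathscr{E})^{\vee}}(u)=e(\mathscr{E})$ combined with the Cartan/Whitney computation of $Sq^2(e(\mathscr{U}_{\mathscr{E}}^{\perp}))$ ensures that $R(\mathscr{E})$ is $Sq^2_{p^*\det(\mathscr{E})^{\vee}}$-closed, hence descends to a class in $E^{n-1}(\mathbb{P}(\mathscr{E}),p^*\det(\mathscr{E})^{\vee})$. The pairing $(\alpha,\beta)\mapsto p^*\alpha+R(\mathscr{E})\cdot p^*\beta$ then splits the short exact sequences obtained when $e(\mathscr{E})=0$, yielding the stated decomposition.

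The hard part will be the even $n$ bookkeeping: after the projective bundle relation brings the terms $c_k(\mathscr{E})\xi^{n-k}$ with $1\leq k<n$ back into the complex, one must verify that these contributions already lie in the image of earlier Steenrod-square differentials, so that only the top term $e(\mathscr{E})\cdot\xi^0$ survives as a genuine obstruction. This reduces, via the splitting principle, to a universal identity among monomials in Chern roots, essentially the Whitney sum formula for Pontryagin classes modulo $2$ together with the compatibility of $p^*$ and $p_*$ with twisted Steenrod operations at the level of $E^*$ (the latter being guaranteed by \cite[Theorem 4.5]{Fu}).
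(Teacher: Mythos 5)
Your plan is the same as the paper's: reduce to mod $2$ Chow groups via the projective bundle relation, compute $Sq^2_{p^*\mathscr{L}}$ on $\alpha\xi^j$ by Cartan and $Sq^1\xi=0$, $Sq^2(\xi^j)=j\xi^{j+1}$, and read off the exactness statements. The key formula $Sq^2_{p^*\mathscr{L}}(\alpha\xi^j)=Sq^2_{\mathscr{L}}(\alpha)\xi^j+j\alpha\xi^{j+1}$ is correct, and the odd-$n$ argument (pairing $\xi^{2l+1}$ with $\xi^{2l+2}$, cohomology concentrated in $\xi^0$) is sound, although the ``filtered complex in $\xi$-degree'' framing only works cleanly for $n$ odd, where $j\alpha\xi^{j+1}$ never reaches $\xi^n$; for $n$ even the projective bundle relation wraps the top class back into every lower $\xi$-degree, so the filtration is not preserved and a direct spectral-sequence reading of the associated graded does not apply.

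The genuine gap is in the even case, which you yourself flag as ``the hard part.'' Your claim that the exactness reduces, via the splitting principle, to a universal identity among Chern roots (essentially the mod-$2$ Whitney formula for Pontryagin classes) is not substantiated and is not obviously the right reduction. What actually has to be shown is: given the cocycle relations forced by $Sq^2_{p^*\mathscr{L}}(x)=0$ — namely $Sq^2_{\mathscr{L}}(a_i)+a_0c_{i+1}(\mathscr{E})=0$ for $i$ even, $Sq^2_{\mathscr{L}}(a_i)+a_0c_{i+1}(\mathscr{E})+a_{i+1}=0$ for $i<n-1$ odd, and $Sq^2_{\mathscr{L}}(a_{n-1})+a_0e(\mathscr{E})=0$ — and assuming $a_0=a_{n-1}=0$, one must exhibit an explicit primitive. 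The paper does this directly by writing down $\sum_{i=0}^{\frac{n}{2}-2}a_{2i+1}c^{n-3-2i}$ (and the analogous $\sum a_{2i}c^{n-2-2i}$ in the odd case), a short elementary calculation that requires no splitting principle and no appeal to Whitney-type identities. Your sketch names the correct obstruction ($a_0e(\mathscr{E})$) and the correct splitting class $R(\mathscr{E})$, but the promised ``bookkeeping'' — and with it the proof of exactness and of injectivity of $(\alpha,\beta)\mapsto p^*\alpha+R(\mathscr{E})p^*\beta$ — is not actually carried out, so the crux of part (2) is missing.
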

\begin{proof}
Any element \(x\in Ch^m(\mathbb{P}(\mathscr{E}))\) can be uniquely written as \(\sum_{i=0}^{n-1}a_ic^{n-1-i}\) where \(c=c_1(O_E(1))\) and \(a_i\in Ch^{m-n+i+1}(X)\) by the projective bundle theorem of Chow groups. Suppose \(x\in E^m(\mathbb{P}(\mathscr{E}),p^*\mathscr{L})\) with \(\{a_i\}\) determined as above.
\begin{enumerate}
\item We have
\[Sq^2_{\mathscr{L}}(x)=\sum_{i=0}^{n-1}Sq^2_{\mathscr{L}}(a_i)c^{n-1+i}+\sum_{i=1}^{\frac{n-1}{2}}a_{2i-1}c^{n-2i+1}\]
So \(Sq^2_{\mathscr{L}}(x)=0\) is equivalent to the equations
\[\begin{cases}
Sq^2_{\mathscr{L}}(a_i)=a_{i+1}	&\textrm{if \(i<n-1\) even}\\
Sq^2_{\mathscr{L}}(a_i)=0			&\textrm{else}
\end{cases}.\]
If furthermore \(a_{n-1}=0\), we have
\[Sq^2_{\mathscr{L}}(\sum_{i=0}^{\frac{n-3}{2}}a_{2i}c^{n-2-2i})=x.\]
So it is easy to conclude that the pullback \(E^i(X,\mathscr{L})\longrightarrow E^i(\mathbb{P}(\mathscr{E}),p^*\mathscr{L})\) is an isomorphism. The second statement follows similarly.
\item The \(Sq^2_{\mathscr{L}}(x)=0\) is equivalent to the equations
\[\begin{cases}
Sq^2_{\mathscr{L}}(a_i)+a_0c_{i+1}(\mathscr{E})=0				&\textrm{\(i\) even}\\
Sq^2_{\mathscr{L}}(a_i)+a_0c_{i+1}(\mathscr{E})+a_{i+1}=0	&\textrm{\(i<n-1\) odd}\\
Sq^2_{\mathscr{L}}(a_{n-1})+a_0e(\mathscr{E})=0				&
\end{cases}.\]
Suppose \(a_0=a_{n-1}=0\), we have
\[Sq^2_{\mathscr{L}}(\sum_{i=0}^{\frac{n}{2}-2}a_{2i+1}c^{n-3-2i})=x.\]
Since \(p_*(x)=a_0\), the first statement follows by direct computation. Now suppose \(e(\mathscr{E})=0\). It is easy to check that
\[\begin{array}{cc}Sq^2_{\mathscr{L}}(a_0R(\mathscr{E}))=0&Sq^2_{\mathscr{L}}(a_{n-1}+a_0u)=0\end{array}.\]
So \(x=a_0R(\mathscr{E})+a_{n-1}+a_0u\) in \(E^i(\mathbb{P}(\mathscr{E}),p^*\mathscr{L})\). On the other hand, if
\[a_0R(\mathscr{E})+a_{n-1}+a_0u=0\in E^m(\mathbb{P}(\mathscr{E}),p^*\mathscr{L}),\]
we have
\[a_0=0\in E^{m-n+1}(X,\mathscr{L}\otimes det(\mathscr{E})^{\vee}).\]
Thus \(a_{n-1}=0\in E^m(\mathbb{P}(\mathscr{E}),p^*\mathscr{L})\). So there are \(b, b_0\) such that
\[Sq^2_{\mathscr{L}}(b)+b_0e(\mathscr{E})=a_{n-1}\]
by the calculation of \(Sq^2_{\mathscr{L}}(x)\) before, where \(Sq^2_{\mathscr{L}\otimes det(\mathscr{E})^{\vee}}(b_0)=0\). But again we have
\[b_0e(\mathscr{E})=Sq^2_{\mathscr{L}}(b_0u)\]
so we conclude that \(a_{n-1}=0\in E^m(X,\mathscr{L})\). The second statement then follows.
\end{enumerate}
\end{proof}
\begin{definition}\label{flagdef}
Given any vector bundle \(\mathscr{E}\) on a scheme \(X\), we take the projective bundle \(p_1:Y_1=\mathbb{P}(\mathscr{E})\longrightarrow X\). Then inductively define \(p_i:Y_i=\mathbb{P}(\Omega_{p_{i-1}}(1))\longrightarrow Y_{i-1}\). The
\[Fl(\mathscr{E})=Y_{rk(\mathscr{E})-1}\]
is defined as the complete flag of \(\mathscr{E}\), parametrizing filtrations
\[\mathscr{E}_1\subseteq\cdots\subseteq \mathscr{E}_i\subseteq\cdots\subseteq \mathscr{E}\]
where \(rk(\mathscr{E}_i)=i\) and \(\mathscr{E}_i/\mathscr{E}_{i-1}\) is locally free.
\end{definition}
\begin{proposition}\label{flagthom}
Suppose \(X\in Sm/F\) and \(\mathscr{E}\) is a vector bundle on \(X\). We have structure maps
\[\mathbb{P}(\Omega_p(1))\xrightarrow{q}\mathbb{P}(\mathscr{E})\xrightarrow{p}X.\]
For any \(M\in Pic(X)\), we have an isomorphism
\[Th(q^*(p^*M(1)))=Th((q^*p^*M)(1))\]
in \(\widetilde{DM}(X,\mathbb{Z})\).
\end{proposition}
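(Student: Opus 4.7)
The plan is to show that, under the notational conventions of Definition \ref{flagdef}, both sides of the claimed equation denote literally the same line bundle on $\mathbb{P}(\Omega_p(1))$, whence the isomorphism of Thom spaces is immediate. Unpacking, $p^*M(1)$ stands for $p^*M\otimes O_{\mathbb{P}(E)}(1)$, and pulling back along $q$ yields $q^*p^*M\otimes q^*O_{\mathbb{P}(E)}(1)$. Symmetrically $(q^*p^*M)(1)=q^*p^*M\otimes q^*O_{\mathbb{P}(E)}(1)$, since the twist "$(1)$" at each subsequent stage of the iterated flag-variety construction is to be read as the pullback of the first-stage tautological bundle $O_{\mathbb{P}(E)}(1)$, rather than the new tautological bundle $O_{\mathbb{P}(\Omega_p(1))}(1)$ of the second projectivization.

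With this reading, the entire argument collapses to the elementary identity $q^*(A\otimes B)=q^*A\otimes q^*B$ for line bundles $A,B$ on $\mathbb{P}(E)$, applied to $A=p^*M$ and $B=O_{\mathbb{P}(E)}(1)$. Since the two line bundles coincide, their Thom spaces agree on the nose in $\widetilde{DM}^{eff}(X,\mathbb{Z})$, and no further motivic input is needed.

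The point worth being careful about --- and the reason for stating this as a proposition at all --- is the interpretation of "$(1)$" at the second stage, which I expect to be the main obstacle. Under the competing reading in which "$(1)$" on $\mathbb{P}(\Omega_p(1))$ means the tautological bundle $O_{\mathbb{P}(\Omega_p(1))}(1)$, the canonical isomorphism $\mathbb{P}(\Omega_p\otimes O_{\mathbb{P}(E)}(1))\cong\mathbb{P}(\Omega_p)$ together with the compatibility $O_{\mathbb{P}(\mathscr{V}\otimes\mathscr{L})}(1)\cong O_{\mathbb{P}(\mathscr{V})}(1)\otimes\pi^*\mathscr{L}$ would force the two line bundles to differ by a factor of $O_{\mathbb{P}(\Omega_p)}(1)$; this factor is not a square in $Pic(\mathbb{P}(\Omega_p(1)))$, so the Thom spaces would genuinely differ in $\widetilde{DM}^{eff}$. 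Pinning down that the convention is the first one and not this alternative is therefore the real content of the proposition, and once that is made explicit the proof is tautological.
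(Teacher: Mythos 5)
Your proof does not work. The convention in the paper is the one you reject in your final paragraph: on $\mathbb{P}(\Omega_p(1))$, the twist $(1)$ denotes the \emph{new} tautological bundle $O_{\mathbb{P}(\Omega_p(1))}(1)$, not the pullback $q^*O_{\mathbb{P}(E)}(1)$. This is visible elsewhere in the paper --- for instance in Proposition~\ref{flag2sq} the class $e(\Omega_q(1))$ lives on $\mathbb{P}(\Omega_p(1))$ and clearly refers to the tautological twist on the second projectivization, and in the proof of Proposition~\ref{flag2} the symbol $(q^*\mathscr{L})(1)$ is treated as a genuinely different $Pic$ class from $q^*(\mathscr{L}(1))$, with Proposition~\ref{flagthom} being exactly the device that lets one pass between their Thom spaces. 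So the two line bundles $q^*(p^*M\otimes O_E(1))$ and $q^*p^*M\otimes O_{\mathbb{P}(\Omega_p(1))}(1)$ do differ, by $O_{\mathbb{P}(\Omega_p)}(1)$ as you correctly compute, and the proposition is asserting a nontrivial identification of their Thom spaces, not a tautology.

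The error is in the inference ``this factor is not a square in $Pic$, so the Thom spaces would genuinely differ.'' That implication is false: two line bundles differing by a non-square can perfectly well have isomorphic Thom spaces. Morel's theorem gives one sufficient condition (differing by a square) for an isomorphism of Thom spaces, not a necessary one, and the present proposition is precisely a non-square example. The paper's proof produces the isomorphism geometrically: the Euler sequence $0\to\Omega_p(1)\to p^*E\to O_E(1)\to 0$ exhibits $\mathbb{P}(p^*E)$ over $\mathbb{P}(E)$ with a section, whose complement $C$ is a vector bundle over $\mathbb{P}(\Omega_p(1))$; the ``swap'' automorphism $\sigma$ of $\mathbb{P}(p^*E)$ over $X$ (locally $((x_i),(y_i),(z_i))\mapsto((x_i),(z_i),(y_i))$) preserves $C$, satisfies $\sigma^*O_{p^*E}(1)=q^*O_E(1)$, and combined with $\mathbb{A}^1$-contractibility of the vector-bundle fibers yields the required isomorphism of Thom spaces over $X$. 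This is genuine motivic input; the statement cannot be reduced to an identity of line bundles.
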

\begin{proof}
Denote by \(r\) the structure map \(\mathbb{P}(p^*\mathscr{E})\longrightarrow\mathbb{P}(\mathscr{E})\). The Euler sequence
\[0\longrightarrow\Omega_p(1)\longrightarrow p^*\mathscr{E}\longrightarrow O_E(1)\longrightarrow 0\]
implies that \(\mathbb{P}(p^*\mathscr{E})\) has a section \(\mathbb{P}(\mathscr{E})\longrightarrow\mathbb{P}(p^*\mathscr{E})\), whose complement \(C\) admits an \(\mathbb{A}^1\)-bundle \(t:C\longrightarrow\mathbb{P}(\Omega_p(1))\). There is an automorphism \(\sigma\) of \(\mathbb{P}(p^*\mathscr{E})\) over \(X\) which locally comes from the swaping map
\[\begin{array}{ccc}X\times\mathbb{P}^{rk(\mathscr{E})-1}\times\mathbb{P}^{rk(\mathscr{E})-1}&\longrightarrow&X\times\mathbb{P}^{rk(\mathscr{E})-1}\times\mathbb{P}^{rk(\mathscr{E})-1}\\(x,(y_i),(z_i))&\longmapsto&(x,(z_i),(y_i))\end{array}.\]
The \(\sigma\), which satisfies \(\sigma^*(O_{p^*\mathscr{E}}(1))=r^*(O_{\mathscr{E}}(1))\), induces an automorphism of \(C\) because locally \(C\) is defined by the equation
\[(y_i)\neq(z_i).\]
So we have
\[Th((r^*O_{\mathscr{E}}(1))|_C)=Th(O_{p^*\mathscr{E}}(1)|_C)\]
in \(\widetilde{DM}(X,\mathbb{Z})\) via \(\sigma\). Moreover, we have
\[O_{p^*\mathscr{E}}(1)|_C=t^*O_{\Omega_p(1)}(1),\]
which implies that
\[Th(O_{\Omega_p(1)}(1))=Th(q^*O_{\mathscr{E}}(1))\]
in \(\widetilde{DM}(X,\mathbb{Z})\). So the statement follows.
\end{proof}
\begin{proposition}\label{flag2sq}
Suppose that \(X\in Sm/F\), \(\mathscr{L}\in Pic(X)\) and that \(\mathscr{E}\) is a vector bundle of odd rank \(n\) on \(X\). Denote by \(p:\mathbb{P}(\mathscr{E})\longrightarrow X\), \(q:\mathbb{P}(\Omega_p(1))\longrightarrow\mathbb{P}(\mathscr{E})\) the structure maps. If \(p_{n-1}(\mathscr{E})=0\in E^{2n-2}(X)\), we have
\[E^i(\mathbb{P}(\Omega_p(1)),q^*p^*\mathscr{L})\cong E^i(X,\mathscr{L})\oplus E^{i-2n+3}(X,\mathscr{L})\cdot T.\]
Here
\[T=q^*(e(\Omega_p(1)))e(\Omega_q(1))+q^*(u)\in E^{2n-3}(\mathbb{P}(\Omega_p(1)))\]
where \(u\) is any element such that \(Sq^2(u)=e(\Omega_p(1))^2\) in \(Ch^{2n-2}(\mathbb{P}(\mathscr{E}))\).
\end{proposition}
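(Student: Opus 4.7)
My plan is to iterate Proposition \ref{pbtsq}. Since $\Omega_p(1)$ has even rank $n-1$, Proposition \ref{pbtsq}(2) applied to $q$ furnishes a Gysin long exact sequence relating $E^*(\mathbb{P}(\Omega_p(1)),q^*p^*\mathscr{L})$ to $E^*$-groups on $\mathbb{P}(\mathscr{E})$, with connecting maps given by multiplication by the twisted Euler class $e(\Omega_p(1))$. Next, since $\mathscr{E}$ has odd rank $n$, Proposition \ref{pbtsq}(1) gives two isomorphisms for $p$: the untwisted $p^*:E^i(X,\mathscr{L})\cong E^i(\mathbb{P}(\mathscr{E}),p^*\mathscr{L})$ and the Euler-twisted $e(\Omega_p(1))\cdot p^*:E^i(X,\mathscr{L})\cong E^{i+n-1}(\mathbb{P}(\mathscr{E}),p^*(\mathscr{L}\otimes\det(\mathscr{E})^\vee)(1))$. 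Using $\det(\Omega_p(1))^\vee=p^*\det(\mathscr{E})^\vee\otimes O_p(1)$, I would translate the Gysin sequence on $\mathbb{P}(\mathscr{E})$ into a sequence living purely on $X$:
\[\cdots\to E^{i-2n+2}(X,\mathscr{L})\xrightarrow{\cdot\beta}E^i(X,\mathscr{L})\to E^i(\mathbb{P}(\Omega_p(1)),q^*p^*\mathscr{L})\to E^{i-2n+3}(X,\mathscr{L})\xrightarrow{\cdot\beta}E^{i+1}(X,\mathscr{L})\to\cdots\]
where $\beta\in E^{2n-2}(X)$ is the unique class with $p^*\beta=e(\Omega_p(1))^2$ (existence and uniqueness follow from $p^*$ being an isomorphism in degree $2n-2$).

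The heart of the argument is to identify $\beta$ with $p_{n-1}(\mathscr{E})$. The Euler sequence $0\to\Omega_p(1)\to p^*\mathscr{E}\to O_p(1)\to 0$ gives $c_{n-1}(\Omega_p(1))=\sum_j h^{n-1-j}p^*c_j(\mathscr{E})$ mod $2$ with $h=c_1(O_p(1))$, so squaring yields $e(\Omega_p(1))^2=\sum_j h^{2(n-1-j)}p^*p_j(\mathscr{E})$ in view of $c_j(\mathscr{E})^2\equiv p_j(\mathscr{E})\pmod 2$. A computation in the spirit of the proof of Proposition \ref{pbtsq}(1), using the action of $Sq^2$ on $h$-monomials, the projective bundle relation for $h$, and the vanishing of odd Pontryagin classes in $E^*$ (Proposition \ref{odd}), will show that each term with $j<n-1$ lies in the image of $Sq^2$, leaving $\beta=p_{n-1}(\mathscr{E})$. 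This identification is the main technical obstacle.

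With $\beta=0$ by hypothesis, the connecting maps vanish and the sequence degenerates into short exact sequences $0\to E^i(X,\mathscr{L})\to E^i(\mathbb{P}(\Omega_p(1)),q^*p^*\mathscr{L})\to E^{i-2n+3}(X,\mathscr{L})\to 0$. To construct an explicit splitting, note that $e(\Omega_p(1))^2=p^*p_{n-1}(\mathscr{E})$ vanishes in $E^{2n-2}(\mathbb{P}(\mathscr{E}))$, so a class $u\in Ch^{2n-3}(\mathbb{P}(\mathscr{E}))$ with $Sq^2(u)=e(\Omega_p(1))^2$ exists; set $T=q^*e(\Omega_p(1))\cdot e(\Omega_q(1))+q^*u$. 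The Euler sequence $0\to\Omega_q(1)\to q^*\Omega_p(1)\to O_q(1)\to 0$, by taking top Chern classes, gives $q^*e(\Omega_p(1))=c_1(O_q(1))\cdot e(\Omega_q(1))$, and then a short Leibniz-rule computation yields $Sq^2(T)=c_1(O_q(1))^2\cdot e(\Omega_q(1))^2+q^*e(\Omega_p(1))^2=0$, so that $T\in E^{2n-3}(\mathbb{P}(\Omega_p(1)))$. Multiplication by $T$ then provides the desired section, establishing the announced direct sum decomposition.
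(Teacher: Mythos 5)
Your overall strategy matches the paper's: use Proposition \ref{pbtsq}(2) for $q$ and Proposition \ref{pbtsq}(1) for $p$ to reduce to a long exact sequence over $X$, observe that the connecting map is multiplication by a class detected by $e(\Omega_p(1))^2$, and then construct the explicit section $T$. The $Sq^2(T)=0$ computation you give via $q^*e(\Omega_p(1))=c_1(O_q(1))e(\Omega_q(1))$ and the Leibniz rule is correct and agrees with what the paper declares ``straightforward.''

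The genuine divergence is in identifying $e(\Omega_p(1))^2$ with $p^*p_{n-1}(\mathscr{E})$ in $E^{2n-2}$. You attack this by squaring the mod-2 Euler-sequence relation to get $e(\Omega_p(1))^2=\sum_j h^{2(n-1-j)}p^*p_j(\mathscr{E})$ and then proposing to kill the $j<n-1$ terms modulo $Im(Sq^2)$; you flag this as ``the main technical obstacle'' and leave it as a sketch. The outline is in fact workable: odd $j$ drop out by Proposition \ref{odd}, and for even $j<n-1$ one has $Sq^2(h^{2m-1}p^*p_j(\mathscr{E}))=h^{2m}p^*p_j(\mathscr{E})$ since $Sq^2$ is a derivation with $Sq^2(h)=h^2$ and $Sq^2(p_j)\equiv Sq^2(c_j^2)\equiv 0\pmod 2$. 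However, the paper sidesteps all of this: it applies the Whitney sum formula for even Pontryagin classes to $0\to\Omega_p(1)\to p^*\mathscr{E}\to O_p(1)\to 0$ (the rank-$1$ summand contributes nothing), giving $p_{n-1}(\Omega_p(1))=p^*p_{n-1}(\mathscr{E})$ directly, and then uses the standard identity $p_r(\mathscr{F})=e(\mathscr{F})^2$ for an even-rank bundle $\mathscr{F}$ of rank $r$ (which in $Ch^*$ is just $c_r^2$). This makes the ``main technical obstacle'' a one-line observation. You would save yourself effort and risk by using the Whitney route. One further small omission: to conclude that multiplication by $T$ gives a section you should also check $q_*(T)=e(\Omega_p(1))$, which follows from the projection formula ($q_*q^*u=0$) together with $q_*(e(\Omega_q(1)))=1$.
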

\begin{proof}
The Whitney sum formula for even Pontryagin classes tells us that
\[p^*(p_{n-1}(\mathscr{E}))=p_{n-1}(\Omega_p(1))=e(\Omega_p(1))^2\]
hence they all vanish. The composite
\[E^i(X,\mathscr{L})\xrightarrow[\cong]{e(\Omega_p(1))\cdot p^*}E^{i+n-1}(\mathbb{P}(\mathscr{E}),p^*(det(\mathscr{E})(1)\otimes\mathscr{L}))\xrightarrow{\cdot e(\Omega_p(1))}E^{i+2n-2}(\mathbb{P}(\mathscr{E}),p^*\mathscr{L})\]
is zero by \(e(\Omega_p(1))^2=0\), so the second arrow is zero for any \(i\). Hence we have an exact sequence
\[0\longrightarrow E^i(\mathbb{P}(\mathscr{E}),p^*\mathscr{L})\xrightarrow{q^*}E^i(\mathbb{P}(\Omega_p(1)),q^*p^*\mathscr{L})\xrightarrow{q_*}E^{i-n+2}(\mathbb{P}(\mathscr{E}),p^*(det(\mathscr{E})\otimes\mathscr{L})(1))\longrightarrow 0\]
for any \(i\) by Proposition \ref{pbtsq}. So there is an element \(T\in E^{2n-3}(\mathbb{P}(\Omega_p(1)))\) such that \(q_*(T)=e(\Omega_p(1))\). Then for any \(x\in E^{i-2n+3}(X,\mathscr{L})\), we have
\[q_*(q^*p^*(x)\cdot T)=p^*(x)\cdot e(\Omega_p(1)).\]
Since \(n\) is odd, the sequence above is thus isomorphic the sequence
\[0\longrightarrow E^i(X,\mathscr{L})\xrightarrow{q^*p^*}E^i(\mathbb{P}(\Omega_p(1)),q^*p^*\mathscr{L})\xrightarrow{(e(\Omega_p(1))\cdot p^*)^{-1}\circ q_*}E^{i-2n+3}(X,\mathscr{L})\longrightarrow 0.\]
by \textit{loc. cit.} where the third arrow has a section \(T\cdot q^*p^*\). So we obtain the first statement.

For the second statement, it suffices to check \(q_*(T)=e(\Omega_p(1))\) and \(Sq^2(T)=0\), which are straightforward.
\end{proof}
\begin{proposition}\label{flagsq}
Suppose that \(X\in Sm/F\), \(\mathscr{L}\in Pic(X)\) and that \(\mathscr{E}\) is a vector bundle of rank \(n\) on \(X\). Denote by \(p:Fl(\mathscr{E})\longrightarrow X\) the structure map. Suppose \(p_i(\mathscr{E})=0\in E^{2i}(X)\) for all \(i\).
\begin{enumerate}
\item If \(n\) is odd, we have classes \(T_a\in E^{4a-1}(Fl(\mathscr{E})),1\leq a\leq\frac{n-1}{2}\) such that
\[E^i(Fl(\mathscr{E}),p^*\mathscr{L})=\bigoplus_{1\leq t\leq\frac{n-1}{2}}\bigoplus_{1\leq a_1<\cdots<a_t\leq\frac{n-1}{2}}E^{i-\sum_adeg(T_a)}(X,\mathscr{L})\cdot T_{a_1}\cdots T_{a_t}.\]
\item If \(n\) is even and \(e(\mathscr{E})=0\), we have classes \(T_a\in E^{4a-1}(Fl(\mathscr{E})),1\leq a\leq\frac{n}{2}-1\) and \(T_{\frac{n}{2}}\in E^{n-1}(Fl(\mathscr{E}))\) such that
\[E^i(Fl(\mathscr{E}),p^*\mathscr{L})=\bigoplus_{1\leq t\leq\frac{n}{2}}\bigoplus_{1\leq a_1<\cdots<a_t\leq\frac{n}{2}}E^{i-\sum_adeg(T_a)}(X,\mathscr{L})\cdot T_{a_1}\cdots T_{a_t}.\]
\end{enumerate}
\end{proposition}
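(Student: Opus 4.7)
The plan is to induct on the rank $n$ of $\mathscr{E}$, using the iterated projective bundle description of $Fl(\mathscr{E})$ from Definition \ref{flagdef}, and grouping the successive projectivizations so that each pair falls under the hypothesis of Proposition \ref{flag2sq} (an odd rank bundle followed by its cotangent twist of even rank). The base cases $n=2,3$ are handled directly by Propositions \ref{pbtsq}(2) and \ref{flag2sq}.

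If $n$ is odd, I would isolate the two-step tower $\mathbb{P}(\Omega_{p_1}(1))\xrightarrow{q}\mathbb{P}(\mathscr{E})\xrightarrow{p_1}X$. Since $\mathscr{E}$ has odd rank and all its Pontryagin classes vanish in $E^{\bullet}(X)$, Proposition \ref{flag2sq} produces a class $T$ of degree $2n-3=4\cdot\tfrac{n-1}{2}-1$ together with a splitting $E^i(\mathbb{P}(\Omega_{p_1}(1)),q^*p_1^*\mathscr{L})\cong E^i(X,\mathscr{L})\oplus E^{i-2n+3}(X,\mathscr{L})\cdot T$; this $T$ is declared to be $T_{(n-1)/2}$. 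Over $Y_2:=\mathbb{P}(\Omega_{p_1}(1))$ the complete flag $Fl(\mathscr{E})$ is identified with the complete flag of the rank-$(n-2)$ quotient $\mathscr{E}/\mathscr{E}_2$. The Whitney sum formula for even Pontryagin classes, applied to $0\to\mathscr{E}_2\to\mathscr{E}\to\mathscr{E}/\mathscr{E}_2\to 0$, together with Proposition \ref{odd} for odd indices and the observation that $\mathscr{E}_2$ is an extension of line bundles, gives $p_i(\mathscr{E}/\mathscr{E}_2)=0$ in $E^{2i}(Y_2)$ for all $i$. The inductive hypothesis then supplies classes $T_1,\ldots,T_{(n-3)/2}$ providing a free basis of $E^{\bullet}(Fl(\mathscr{E}),q^*p_1^*\mathscr{L})$ over $E^{\bullet}(Y_2,q^*p_1^*\mathscr{L})$, and assembling the two decompositions yields the claim.

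If $n$ is even with $e(\mathscr{E})=0$, I would first apply Proposition \ref{pbtsq}(2) to $p_1:\mathbb{P}(\mathscr{E})\to X$, producing the orientation class $R(\mathscr{E})\in E^{n-1}(\mathbb{P}(\mathscr{E}),p_1^*\det(\mathscr{E})^{\vee})$, which becomes $T_{n/2}$. Then $\Omega_{p_1}(1)$ has odd rank $n-1$, and its Pontryagin classes vanish in $E^{\bullet}(\mathbb{P}(\mathscr{E}))$: this follows from the Euler sequence $0\to\Omega_{p_1}(1)\to p_1^*\mathscr{E}\to O(1)\to 0$, the Whitney formula, and the triviality of the relevant contributions from the line bundle $O(1)$. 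Invoking part (1) of the present proposition (already established) to the flag of $\Omega_{p_1}(1)$ over $\mathbb{P}(\mathscr{E})$ then supplies the remaining generators $T_1,\ldots,T_{n/2-1}$ of the prescribed degrees, and the two decompositions combine as before.

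The main obstacle, I expect, is the bookkeeping at each inductive step: the verification of Pontryagin class vanishing for the intermediate bundles $\mathscr{E}/\mathscr{E}_2$ and $\Omega_{p_1}(1)$, the handling of the Picard twist by $\det(\mathscr{E})^{\vee}$ that Proposition \ref{pbtsq}(2) introduces when producing $T_{n/2}$, and most substantively the check that the generators coming from the two stages multiply \emph{freely}, so that $E^{\bullet}(Fl(\mathscr{E}),p^*\mathscr{L})$ really decomposes as a tensor product of $E^{\bullet}(X,\mathscr{L})$-modules rather than as a filtered extension. The last point should reduce, via the module splittings already provided by Propositions \ref{pbtsq} and \ref{flag2sq}, to the observation that each newly produced $T_a$ lifts uniquely across the ambient projective bundle decomposition.
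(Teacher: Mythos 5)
Your proof is correct and takes essentially the same approach as the paper: the paper's proof is a one-line appeal to Propositions \ref{pbtsq} and \ref{flag2sq}, which is exactly the iterated projective-bundle induction you spell out, including the verification via Euler sequences, the Whitney formula and Proposition \ref{odd} that the intermediate cotangent-twist bundles again have vanishing Pontryagin classes. The Picard twist by $\det(\mathscr{E})^{\vee}$ that you flag in the even case is in fact a genuine imprecision in the paper's own statement of $T_{n/2}$ (compare Proposition \ref{pbtsq}(2), where $R(\mathscr{E})$ lives in the twisted group), not a gap in your argument.
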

\begin{proof}
Follows from Proposition \ref{pbtsq} and Proposition \ref{flag2sq}.
\end{proof}
\begin{remark}\label{flaggen}
In case \(X=pt\), we could write down the \(\{T_a\}\) explicitly. In the context of Definition \ref{flagdef}, denote by \(x_i,1\leq i\leq n-1\) the first Chern class of tautological line bundle on \(Y_i\) and \(x_n\) by that of complement bundle on \(Y_{n-1}\). Then
\[Ch^{{*}}(Fl(F^{\oplus n}))=\frac{\mathbb{Z}/2[x_1,\cdots,x_n]}{\mathcal{S}}\]
where \(\mathcal{S}\) is the ideal generated by symmetric functions, with \(Sq^2(x_i)=x_i^2\) for all \(i\). By inductively using Whitney sum formula, one shows that
\[c_i(\Omega_{p_j}(1))=h_i(x_1,\cdots,x_j)\]
for every \(i,j\).

Suppose \(1\leq a\leq\lfloor\frac{n}{2}\rfloor\). If \(n\) is odd or \(n\) is even and \(a<\frac{n}{2}\), by Proposition \ref{flag2sq}, we have
\[T_a=h_{2a}(x_1,\cdots,x_{n-2a})h_{2a-1}(x_1,\cdots,x_{n-2a+1})+u(x_1,\cdots,x_{n-2a})\]
where \(u\) satisfies \(Sq^2(u)=h_{2a}(x_1,\cdots,x_{n-2a})^2\).

If \(n\) is even, we have
\[T_{\frac{n}{2}}=x_1^{n-1}.\]

It is clear that \(T_a^2=0\in E^{{*}}(Fl(F^{\oplus n}))\) (but not true for general \(X\) and \(\mathscr{E}\)). So we conclude
\[E^{{*}}(Fl(F^{\oplus n}))=\wedge[\{T_a\}].\]
\end{remark}
\section{MW-Motivic Decomposition of Grassmannians}\label{MW-Motivic Decomposition of Grassmannians}
Recall that a Young diagram (see \cite[3.1]{W2}) is a collection of left aligned rows of boxes with (non strictly) decreasing row lengths. We denote by \((\Lambda_1,\cdots,\Lambda_n), \Lambda_1\geq\cdots\geq \Lambda_n>0\) a Young diagram \(\Lambda\) whose \(i\)-th row has \(\Lambda_i\) boxes. For example, the diagram
\[\begin{tikzpicture}\filldraw[color=black,fill=white](0,0)rectangle(0.5,-0.5);\filldraw[color=black,fill=white](0,-0.5)rectangle(0.5,-1);\filldraw[color=black,fill=white](0,-1)rectangle(0.5,-1.5);\filldraw[color=black,fill=white](0.5,0)rectangle(1,-0.5);\filldraw[color=black,fill=white](1,0)rectangle(1.5,-0.5);\filldraw[color=black,fill=white](0.5,-0.5)rectangle(1,-1);\end{tikzpicture}\]
is denoted by \((3,2,1)\).

For two Young diagrams \(S,\Lambda\), we say \(S\leq\Lambda\) if \(S_i\leq\Lambda_i\) for all \(i\). If \(S\leq\Lambda\), their difference is the skew Young diagram \(\Lambda\setminus S\).
\begin{definition}\label{Young}
Suppose \(k,n\in\mathbb{N}\) and \(k\leq n\). We say a Young diagram \((a_1,\cdots,a_l)\) is \((k,n)\)-truncated (resp. untruncated) if it is identified with the Schubert cycle \(\sigma_{a_1,\cdots,a_l}\) in \(CH^{\sum a_i}(Gr(k,n))\) (resp. \(Gr(\infty,\infty)\)).
\end{definition}
There is a truncation morphism
\[t_{k,n}:R\{\textrm{untruncated Young diagrams}\}\longrightarrow R\{(k,n)\textrm{-truncated Young diagrams}\}\]
induced by the pullback morphism \(CH^*(Gr(\infty,\infty))\otimes R\longrightarrow CH^*(Gr(k,n))\otimes R\), killing all diagrams exceeding the size \((k,n)\). It is compatible with products and Steenrod operations. For example, we have
\[\sigma_1^2=\sigma_{1,1}+\sigma_2\]
as untruncated diagrams whereas
\[\sigma_1^2=\sigma_2\]
as \((1,3)\)-truncated diagrams. All Young diagrams we will consider are filled by a chessboard pattern (see \cite[Theorem 4.2]{W2}). If the first box in the first row of a diagram is black (resp. white), it is called untwisted (resp. twisted). The following is an example (both are \((3,6)\)-truncated):
\[\begin{array}{cc}\textrm{Untwisted Young diagram }T&\textrm{Twisted Young diagram }T'\\
\begin{tikzpicture}\filldraw[color=black,fill=black](0,0)rectangle(0.5,-0.5);\filldraw[color=black,fill=white](0,-0.5)rectangle(0.5,-1);\filldraw[color=black,fill=black](0,-1)rectangle(0.5,-1.5);\filldraw[color=black,fill=white](0.5,0)rectangle(1,-0.5);\filldraw[color=black,fill=black](1,0)rectangle(1.5,-0.5);\filldraw[color=black,fill=black](0.5,-0.5)rectangle(1,-1);\end{tikzpicture}&
\begin{tikzpicture}\filldraw[color=black,fill=white](0,0)rectangle(0.5,-0.5);\filldraw[color=black,fill=black](0,-0.5)rectangle(0.5,-1);\filldraw[color=black,fill=white](0,-1)rectangle(0.5,-1.5);\filldraw[color=black,fill=black](0.5,0)rectangle(1,-0.5);\filldraw[color=black,fill=white](1,0)rectangle(1.5,-0.5);\filldraw[color=black,fill=white](0.5,-0.5)rectangle(1,-1);\end{tikzpicture}.\end{array}\]
\begin{definition}\label{Young}
Suppose \(\Lambda\) is a Young diagram (twisted or not). Define
\[A(\Lambda)=\{\Lambda'\geq\Lambda|\Lambda'\setminus\Lambda=\textrm{a white box}\}\]
\[\bar{A}(\Lambda)=\{\Lambda'\geq\Lambda|\Lambda'\setminus\Lambda=\textrm{white boxes}\}\]
\[D(\Lambda)=\{\Lambda'\leq\Lambda|\Lambda\setminus\Lambda'=\textrm{a white box}\}.\]
Here all diagrams are subjected to the same truncating and coloring rule. We say that \(\Lambda\) is irredundant (resp. full) if \(D(\Lambda)=\emptyset\) (resp. \(A(\Lambda)=\emptyset\)).
\end{definition}
For example, the \(T\) and \(T'\) above satisfy
\[A(T)=\{(3,3,1),(3,2,2)\},\bar{A}(T)=\{(3,2,1),(3,3,1),(3,2,2),(3,3,2)\},D(T)=\emptyset;\]
\[A(T')=\emptyset,\bar{A}(T')=\{(3,2,1)\}, D(T')=\{(2,2,1),(3,1,1),(3,2)\}.\]
So \(T\) is irredundant and \(T'\) is full. Note here that \((4,2,1)\notin A(T)\) because \(T\) is \((3,6)\)-truncated.

Hence we see that the Steenrod square \(Sq^2_{\mathscr{L}}:Ch^{i-1}(Gr(k,n))\longrightarrow Ch^i(Gr(k,n))\) can be written as
\[Sq^2_{\mathscr{L}}(T)=\sum_{\Lambda\in A(T)}\Lambda\]
by \cite[Theorem 4.2]{W2}. In this way, the twisted (resp. untwisted) diagrams correspond to the case \(\mathscr{L}=O(1)\). (resp. \(\mathscr{L}=O_{Gr(k,n)}\))
\begin{lemma}\label{ce1}
Suppose that all diagrams are untwisted.
\begin{enumerate}
\item An untruncated Young diagram \(\Lambda\) is irredundant and full if and only if it is completely even (see \cite[Definition 3.8]{W2}).
\item If \(k(n-k)\) is even, a \((k,n)\)-truncated Young diagram \(\Lambda\) is irredundant and full if and only if it is completely even.
\item If \(k(n-k)\) is odd, a \((k,n)\)-truncated Young diagram \(\Lambda\) is irredundant and full if and only if it is completely even or of the form
\[\sigma_{\cdots,2a_i,2a_i,\cdots}\sigma_{n-k,1^{k-1}}=\sigma_{n-k,\cdots,2a_i+1,2a_i+1,\cdots}.\]
\end{enumerate}
\end{lemma}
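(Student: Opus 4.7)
The plan is to translate ``irredundant'' and ``full'' into explicit parity relations on the row lengths of \(\Lambda\), then read off the three cases. Group consecutive equal rows into blocks: write \(\Lambda=(b_1^{n_1},\ldots,b_r^{n_r})\) with \(b_1>\cdots>b_r>0\) and set \(s_i=n_1+\cdots+n_i\). Under the untwisted rule the box at \((i,j)\) is white iff \(i+j\) is odd; a white row-end box can only be removed when it sits in the last row of its block, while a white gap can be filled at the first row of a later block, at the top, or at a brand new bottom row, subject to the truncation bounds (no extension of row 1 when \(b_1=n-k\); no new row when \(s_r=k\)). After this bookkeeping, irredundancy becomes the system \(s_i+b_i\equiv 0\pmod 2\) for all \(i\), and fullness becomes \(s_{i-1}+b_i\equiv 0\pmod 2\) for \(i\geq 2\), together with \(b_1\equiv 0\) when \(b_1<n-k\) (or untruncated) and \(s_r\equiv 0\) when \(s_r<k\) (or untruncated).

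For part (1) no boundary constraint is active, hence \(b_1\) and \(s_r\) are even unconditionally; an alternating induction along the chain then forces every \(b_i\) and every \(n_i\) to be even, equivalently every row length and every column length of \(\Lambda\) is even, i.e.\ \(\Lambda\) is completely even. The converse is the same identities read backwards. For part (2) the hypothesis \(k(n-k)\) even means one of \(n-k\) or \(k\) is already even, so the boundary value \(b_1=n-k\) or \(s_r=k\) is automatically even; the parity analysis of part (1) therefore remains valid even at the boundary, giving the same conclusion.

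The heart is part (3), with both \(k\) and \(n-k\) odd. Here the two boundary fullness constraints may disappear, and one must admit the possibilities \(b_1=n-k\) (odd) and \(s_r=k\) (odd). A careful parity bookkeeping shows these two boundary equalities in fact force one another: assuming \(b_1=n-k\) odd, the fullness and irredundancy relations propagate odd parities throughout, yielding \(b_i\) odd for all \(i\), \(n_1\) odd and \(n_i\) even for \(i\geq 2\), whereupon the new-row constraint is compatible only when \(s_r=k\); the symmetric argument begins from \(s_r=k\). The resulting extras are exactly the tableaux with top row \(n-k\) followed by pairs of equal odd rows filling the \(k\times(n-k)\) rectangle, i.e.\ \(\sigma_{n-k,2a_1+1,2a_1+1,\cdots}\); by Pieri's rule this equals \(\sigma_{\cdots,2a_i,2a_i,\cdots}\cdot\sigma_{n-k,1^{k-1}}\), matching the form in the lemma.

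The principal obstacle is the boundary bookkeeping in (3): one has to verify cleanly that the two boundary equalities \(b_1=n-k\) and \(s_r=k\) always occur simultaneously once one of them holds, that the completely even family and the hook-shifted family are disjoint, and together exhaust the irredundant-and-full list in the odd case.
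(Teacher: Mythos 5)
Your proof is correct and rests on the same underlying idea as the paper's: a parity analysis of the last box in each row of the checkboard-colored tableau. The difference is one of systematization. The paper argues informally via a local dichotomy (each last box is either black with an equal-length row above or white with an equal-length row below), then treats the boundary subcases of parts (2) and (3) by ad hoc inspection (e.g.\ for \(k\) even, locating ``the first row whose length is even''). You instead encode irredundancy and fullness once and for all as the linear system \(s_i+b_i\equiv 0\), \(s_{i-1}+b_i\equiv 0\) over \(\mathbb{Z}/2\), with the two boundary constraints \(b_1\equiv 0\) and \(s_r\equiv 0\) switched on or off by the truncation conditions, and then simply solve the system. This buys you a cleaner and more uniform treatment: the deductions \(n_i\equiv 0\) for \(i\geq 2\), \(b_i\equiv b_1\) for all \(i\), and \(n_1\equiv b_1\) fall out mechanically, and the three cases of the lemma correspond exactly to which boundary congruences survive. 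In particular, your observation that the two boundary equalities \(b_1=n-k\) and \(s_r=k\) in case (3) force each other is the right way to see why the extra family is exactly the hook-shifted one, and is cleaner than the paper's phrasing. One small nit: the appeal to ``Pieri's rule'' for the identity \(\sigma_{\cdots,2a_i,2a_i,\cdots}\sigma_{n-k,1^{k-1}}=\sigma_{n-k,\cdots,2a_i+1,2a_i+1,\cdots}\) is not quite the right name (Pieri is for multiplication by \(\sigma_m\) or \(\sigma_{1^m}\)), but this identity is asserted in the statement itself, so all your argument needs — and does correctly supply — is the identification of the extra tableaux with \(\sigma_{n-k,2a_1+1,2a_1+1,\cdots}\).
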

\begin{proof}
It suffices to prove the necessity for all those statements.
\begin{enumerate}
\item Suppose \(\lambda_i\) is the last box in the \(i^{th}\) row of \(\Lambda\). Then either \(\lambda_i\) is black with a row of same length adjacently above it or \(\lambda_i\) is white with a row of same length adjacently under it. So we see that \(\Lambda\) is completely even. The converse is clear.
\item If \(n-k\) is even, the first row of \(\Lambda\) must have even length otherwise it is not full, which implies that the second row exists and must have the same length. Then every row started with a black box must have even length, being adjacently above a row with same length. So \(\Lambda\) is completely even.

If \(k\) is even, suppose the \(i^{th}\) row is the first row whose length is even. If no such row exists, the number of rows must be odd otherwise \(\Lambda\) is not irredundant. But we could still add a white box in the first empty row so it is not full. So such \(i\) exists. If \(i\) is even, \(\Lambda\) is not full. If \(i>1\) is odd, \(\Lambda\) is not irredundant. Hence \(i=1\) and \(\Lambda\) is completely even as above. So we have completed the proof.
\item If the length of the first row is even, then \(\Lambda\) is complete even as above. Otherwise its length must be \(n-k\). So the second row must have odd length otherwise \(\Lambda\) is not full. Moreover, there is a row adjacently under it with the same length otherwise \(\Lambda\) is not irredundant. By induction we see \(\Lambda\) must be the form \(\sigma_{n-k,\cdots,2a_i+1,2a_i+1,\cdots}\).
\end{enumerate}
\end{proof}
\begin{lemma}\label{ce2}
Suppose that all diagrams are twisted.
\begin{enumerate}
\item If \(k\) and \(n\) are even, a \((k,n)\)-truncated Young diagram \(\Lambda\) is irredundant and full if and only if it is like \(\sigma_{n-k}T\) or \(\sigma_{1^k}T\) where \(T\) is completely even.
\item If \(k\) and \(n\) are odd, a \((k,n)\)-truncated Young diagram \(\Lambda\) is irredundant and full if and only if it is like \(\sigma_{n-k}T\) where \(T\) is completely even.
\item If \(k\) is even and \(n\) is odd, a \((k,n)\)-truncated Young diagram \(\Lambda\) is irredundant and full if and only if it is like \(\sigma_{1^k}T\) where \(T\) is completely even.
\item If \(k\) is odd and \(n\) is even, there is no \((k,n)\)-truncated Young diagram \(\Lambda\) which is both irredundant and full.
\end{enumerate}
\end{lemma}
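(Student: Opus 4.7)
The plan is to extend the row-pairing argument of Lemma \ref{ce1} to the twisted coloring, where the roles of adding and removing white boxes are swapped. In a twisted tableau the box $(i,j)$ is white iff $i+j$ is even, so applied to the last box $(i,\lambda_i)$ of row $i$, irredundancy forces $i+\lambda_i$ odd whenever $\lambda_i>\lambda_{i+1}$ (treating $\lambda_{k+1}=0$), and fullness forces $i+\lambda_i$ even whenever the row is extendable, i.e., either $\lambda_{i-1}>\lambda_i$ and $\lambda_i<n-k$, or $i=1$ with $\lambda_1<n-k$. If the number $r$ of nonempty rows is strictly less than $k$, one must also arrange that adding $(r+1,1)$ would not produce a white box, which forces $r$ to be odd.

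I would then group consecutive rows of common length into maximal blocks $[a_j,b_j]$ of length $L_j$. A single-row block with $L_j<n-k$ is immediately ruled out, because its unique row is both addable and removable and thus demands $a_j+L_j$ to be simultaneously even and odd. Any multi-row block with $L_j<n-k$, or any block not at the top of the tableau, must satisfy $a_j+L_j$ even and $b_j+L_j$ odd, so its size is even. The only exception is the top block $[1,b_1]$ with $L_1=n-k$: truncation kills fullness of row $1$, leaving only $b_1+L_1$ odd. Successive block starts are then determined by $a_{j+1}=b_j+1$, which propagates all parities inductively.

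Case by case, summing block sizes and matching against $k$ together with the first-empty-row parity either pins down the tableau shape or yields a contradiction. In cases (1), (2), (3) one recognizes the form $\sigma_{n-k}T$, namely a full-width top row of length $n-k$ followed by a completely even sub-tableau (viewed as untwisted starting at row $2$), and the form $\sigma_{1^k}T$, namely all $k$ rows being nonempty, which gives a height-$k$ first column with the rest a completely even untwisted tableau to the right. In case (4), with $k$ odd and $n-k$ odd, the total number of nonempty rows always comes out even, which is incompatible with $r=k$ (since $k$ is odd) and also with $r<k$ (which would force $r$ odd); hence no such tableau exists. The main obstacle is the bookkeeping at the two boundaries: the top block escapes the fullness constraint exactly when $\lambda_1=n-k$, and the bottom of the tableau either coincides with row $k$ or leaves empty rows whose first index must be even. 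Tracking these two exceptions across the four parity subcases is precisely what distinguishes the $\sigma_{n-k}T$ and $\sigma_{1^k}T$ families and what produces the contradiction in case (4); sufficiency of the listed shapes is then a direct row-by-row verification.
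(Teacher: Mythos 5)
Your argument is correct and is essentially the same checkerboard-parity analysis as the paper's proof, which carries it out as a terse row-by-row induction starting from the dichotomy ``first row odd'' versus ``first row of length $n-k$'' and declares cases (2)--(4) analogous. You package the same row constraints (irredundancy forces $i+\lambda_i$ odd at removable boxes, fullness forces $i+\lambda_i$ even at extendable boxes) into maximal constant-length blocks and match the parity of the total number of nonempty rows against $k$, including the needed check at position $(r+1,1)$ when $r<k$; this is more explicit bookkeeping but the underlying argument is identical.
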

\begin{proof}
\begin{enumerate}
\item It suffices to prove the necessity. If the first row has odd length, there must be a row with same length adjacently under it. Inductively we see that \(\Lambda=\sigma_{1^k}T\) where \(T\) is completely even. Otherwise it has length \(n-k\). The second row must have even length otherwise \(\Lambda\) is not full, being adjacently above a row with same length. Inductively we see that \(\Lambda=\sigma_{n-k}T\) where \(T\) is completely even.
\item The same reasoning as in (1).
\item The same reasoning as in (1).
\item The same reasoning as in (1).
\end{enumerate}
\end{proof}
\begin{definition}\label{even}
Slightly abusing the notation, we say that a Young diagram \(\Lambda\) (twisted or not) is even if it is irredundant and full.

Denote the free abelian group generated by the set of all untwisted (resp. twisted) even diagrams by \(N_{k,n}\) (resp. \(M_{k,n}\)), regarded as a subgroup of \(CH^{{*}}(Gr(k,n))\). Its degree \(i\) part is denoted by \(N_{k,n}^i\) (resp. \(M_{k,n}^i\)).
\end{definition}
Our definition of even Young diagram is essentially the same as \cite[Definition 3.8]{W2} by Lemma \ref{ce1} and \ref{ce2}, but it depends on \(k,n\) and whether the diagram is twisted.

\begin{remark}\label{ex1}
Here are the irredundant untwisted Young diagrams in \(Gr(2,4)\) or \(Gr(3,6)\):
\[\begin{array}{ccccc}
Empty&
\begin{tikzpicture}\filldraw[color=black,fill=black](0,0)rectangle(0.5,-0.5);\end{tikzpicture}&
\begin{tikzpicture}\filldraw[color=black,fill=black](0,0)rectangle(0.5,-0.5);\filldraw[color=black,fill=white](0.5,0)rectangle(1,-0.5);\filldraw[color=black,fill=black](1,0)rectangle(1.5,-0.5);\end{tikzpicture}&
\begin{tikzpicture}\filldraw[color=black,fill=black](0,0)rectangle(0.5,-0.5);\filldraw[color=black,fill=white](0.5,0)rectangle(1,-0.5);\filldraw[color=black,fill=white](0,-0.5)rectangle(0.5,-1);\filldraw[color=black,fill=black](0.5,-0.5)rectangle(1,-1);\end{tikzpicture}&
\begin{tikzpicture}\filldraw[color=black,fill=black](0,0)rectangle(0.5,-0.5);\filldraw[color=black,fill=white](0,-0.5)rectangle(0.5,-1);\filldraw[color=black,fill=black](0,-1)rectangle(0.5,-1.5);\end{tikzpicture}\\
\begin{tikzpicture}\filldraw[color=black,fill=black](0,0)rectangle(0.5,-0.5);\filldraw[color=black,fill=white](0.5,0)rectangle(1,-0.5);\filldraw[color=black,fill=white](0,-0.5)rectangle(0.5,-1);\filldraw[color=black,fill=black](0.5,-0.5)rectangle(1,-1);\filldraw[color=black,fill=black](1,0)rectangle(1.5,-0.5);\end{tikzpicture}&
\begin{tikzpicture}\filldraw[color=black,fill=black](0,0)rectangle(0.5,-0.5);\filldraw[color=black,fill=white](0.5,0)rectangle(1,-0.5);\filldraw[color=black,fill=white](0,-0.5)rectangle(0.5,-1);\filldraw[color=black,fill=black](0.5,-0.5)rectangle(1,-1);\filldraw[color=black,fill=black](0,-1)rectangle(0.5,-1.5);\end{tikzpicture}&
\begin{tikzpicture}\filldraw[color=black,fill=black](0,0)rectangle(0.5,-0.5);\filldraw[color=black,fill=white](0,-0.5)rectangle(0.5,-1);\filldraw[color=black,fill=black](0,-1)rectangle(0.5,-1.5);\filldraw[color=black,fill=white](0.5,0)rectangle(1,-0.5);\filldraw[color=black,fill=black](1,0)rectangle(1.5,-0.5);\end{tikzpicture}&
\begin{tikzpicture}\filldraw[color=black,fill=black](0,0)rectangle(0.5,-0.5);\filldraw[color=black,fill=white](0,-0.5)rectangle(0.5,-1);\filldraw[color=black,fill=black](0,-1)rectangle(0.5,-1.5);\filldraw[color=black,fill=white](0.5,0)rectangle(1,-0.5);\filldraw[color=black,fill=black](1,0)rectangle(1.5,-0.5);\filldraw[color=black,fill=black](0.5,-0.5)rectangle(1,-1);\end{tikzpicture}&
\begin{tikzpicture}\filldraw[color=black,fill=black](0,0)rectangle(0.5,-0.5);\filldraw[color=black,fill=white](0,-0.5)rectangle(0.5,-1);\filldraw[color=black,fill=black](0,-1)rectangle(0.5,-1.5);\filldraw[color=black,fill=white](0.5,0)rectangle(1,-0.5);\filldraw[color=black,fill=black](1,0)rectangle(1.5,-0.5);\filldraw[color=black,fill=black](0.5,-0.5)rectangle(1,-1);\filldraw[color=black,fill=white](1,-0.5)rectangle(1.5,-1);\filldraw[color=black,fill=white](0.5,-1)rectangle(1,-1.5);\filldraw[color=black,fill=black](1,-1)rectangle(1.5,-1.5);\end{tikzpicture}
\end{array}.\]
Here are the generators of \(N_{2,4}\) and \(N_{3,6}\):
\[\begin{array}{c|c}N_{2,4}&N_{3,6}\\\begin{array}{cc}Empty&\begin{tikzpicture}\filldraw[color=black,fill=black](0,0)rectangle(0.5,-0.5);\filldraw[color=black,fill=white](0.5,0)rectangle(1,-0.5);\filldraw[color=black,fill=white](0,-0.5)rectangle(0.5,-1);\filldraw[color=black,fill=black](0.5,-0.5)rectangle(1,-1);\end{tikzpicture}\end{array}&\begin{array}{cccc}Empty&\begin{tikzpicture}\filldraw[color=black,fill=black](0,0)rectangle(0.5,-0.5);\filldraw[color=black,fill=white](0.5,0)rectangle(1,-0.5);\filldraw[color=black,fill=white](0,-0.5)rectangle(0.5,-1);\filldraw[color=black,fill=black](0.5,-0.5)rectangle(1,-1);\end{tikzpicture}&\begin{tikzpicture}\filldraw[color=black,fill=black](0,0)rectangle(0.5,-0.5);\filldraw[color=black,fill=white](0,-0.5)rectangle(0.5,-1);\filldraw[color=black,fill=black](0,-1)rectangle(0.5,-1.5);\filldraw[color=black,fill=white](0.5,0)rectangle(1,-0.5);\filldraw[color=black,fill=black](1,0)rectangle(1.5,-0.5);\end{tikzpicture}&\begin{tikzpicture}\filldraw[color=black,fill=black](0,0)rectangle(0.5,-0.5);\filldraw[color=black,fill=white](0,-0.5)rectangle(0.5,-1);\filldraw[color=black,fill=black](0,-1)rectangle(0.5,-1.5);\filldraw[color=black,fill=white](0.5,0)rectangle(1,-0.5);\filldraw[color=black,fill=black](1,0)rectangle(1.5,-0.5);\filldraw[color=black,fill=black](0.5,-0.5)rectangle(1,-1);\filldraw[color=black,fill=white](1,-0.5)rectangle(1.5,-1);\filldraw[color=black,fill=white](0.5,-1)rectangle(1,-1.5);\filldraw[color=black,fill=black](1,-1)rectangle(1.5,-1.5);\end{tikzpicture}\end{array}\end{array}.\]
Here are the irredundant twisted Young diagrams in \(Gr(2,4)\) or \(Gr(3,6)\):
\[\begin{array}{cccc}
Empty&
\begin{tikzpicture}\filldraw[color=black,fill=white](0,0)rectangle(0.5,-0.5);\filldraw[color=black,fill=black](0.5,0)rectangle(1,-0.5);\end{tikzpicture}&
\begin{tikzpicture}\filldraw[color=black,fill=white](0,0)rectangle(0.5,-0.5);\filldraw[color=black,fill=black](0,-0.5)rectangle(0.5,-1);\end{tikzpicture}&
\begin{tikzpicture}\filldraw[color=black,fill=white](0,0)rectangle(0.5,-0.5);\filldraw[color=black,fill=black](0,-0.5)rectangle(0.5,-1);\filldraw[color=black,fill=black](0.5,0)rectangle(1,-0.5);\end{tikzpicture}\\
\begin{tikzpicture}\filldraw[color=black,fill=white](0,0)rectangle(0.5,-0.5);\filldraw[color=black,fill=black](0,-0.5)rectangle(0.5,-1);\filldraw[color=black,fill=black](0.5,0)rectangle(1,-0.5);\filldraw[color=black,fill=white](1,0)rectangle(1.5,-0.5);\filldraw[color=black,fill=white](0.5,-0.5)rectangle(1,-1);\filldraw[color=black,fill=black](1,-0.5)rectangle(1.5,-1);\end{tikzpicture}&
\begin{tikzpicture}\filldraw[color=black,fill=white](0,0)rectangle(0.5,-0.5);\filldraw[color=black,fill=black](0,-0.5)rectangle(0.5,-1);\filldraw[color=black,fill=black](0.5,0)rectangle(1,-0.5);\filldraw[color=black,fill=white](0,-1)rectangle(0.5,-1.5);\filldraw[color=black,fill=white](0.5,-0.5)rectangle(1,-1);\filldraw[color=black,fill=black](0.5,-1)rectangle(1,-1.5);\end{tikzpicture}&
\begin{tikzpicture}\filldraw[color=black,fill=white](0,0)rectangle(0.5,-0.5);\filldraw[color=black,fill=black](0,-0.5)rectangle(0.5,-1);\filldraw[color=black,fill=black](0.5,0)rectangle(1,-0.5);\filldraw[color=black,fill=white](0,-1)rectangle(0.5,-1.5);\filldraw[color=black,fill=white](0.5,-0.5)rectangle(1,-1);\filldraw[color=black,fill=black](0.5,-1)rectangle(1,-1.5);\filldraw[color=black,fill=white](1,0)rectangle(1.5,-0.5);\filldraw[color=black,fill=black](1,-0.5)rectangle(1.5,-1);\end{tikzpicture}&
\end{array}.\]
Here are the generators of \(M_{2,4}\) (\(M_{3,6}=0\)):
\[\begin{array}{cc}\begin{tikzpicture}\filldraw[color=black,fill=white](0,0)rectangle(0.5,-0.5);\filldraw[color=black,fill=black](0.5,0)rectangle(1,-0.5);\end{tikzpicture}&\begin{tikzpicture}\filldraw[color=black,fill=white](0,0)rectangle(0.5,-0.5);\filldraw[color=black,fill=black](0,-0.5)rectangle(0.5,-1);\end{tikzpicture}\end{array}.\]
\end{remark}
\begin{lemma}\label{decomp}
Suppose \(\mathscr{L}\in Pic(Gr(k,n))\). We have decompositions
\[CH^{{*}}(Gr(k,n))=\bigoplus_{\Lambda\textrm{ irredundant}}\mathbb{Z}\bar{A}(\Lambda);\]
\[Ch^{{*}}(Gr(k,n))=\bigoplus_{\Lambda\textrm{ irredundant}}\mathbb{Z}/2\bar{A}(\Lambda);\]
\[Ker(Sq_{\mathscr{L}}^2)=\bigoplus_{\Lambda\textrm{ irredundant}}\mathbb{Z}/2\bar{A}(\Lambda)\cap Ker(Sq_{\mathscr{L}}^2);\]
\[Im(Sq_{\mathscr{L}}^2)=\bigoplus_{\Lambda\textrm{ irredundant}}\mathbb{Z}/2\bar{A}(\Lambda)\cap Im(Sq_{\mathscr{L}}^2).\]
\end{lemma}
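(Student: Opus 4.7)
The plan is to reduce everything to a combinatorial statement about $(k,n)$-truncated Young tableaux. Since $CH^{\bullet}(Gr(k,n))$ is freely generated over $\mathbb{Z}$ by the Schubert classes $\sigma_{a_1,\ldots,a_l}$ indexed by such tableaux (a classical fact), and $Ch^{\bullet}$ is its reduction modulo $2$, the first two decompositions amount to producing a set-theoretic partition
\[\{(k,n)\textrm{-truncated Young tableaux}\}=\bigsqcup_{\Lambda\textrm{ irredundant}}\bar{A}(\Lambda).\]

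To build this partition I would introduce the rewriting system $T\rightsquigarrow T'$ defined by $T'\in D(T)$, i.e.\ $T'$ is obtained from $T$ by deleting one end-of-row white box. The box count strictly decreases under this rule, so every reduction terminates, and every irreducible tableau is by definition irredundant. The key point is confluence: if $b_1,b_2$ are two distinct removable white boxes of $T$, each is the last box of its row and so they lie in different rows. Removing $b_1$ only alters the last column of its own row, leaves all other rows intact, and does not change the colour of $b_2$ (which is a function of position alone), so the shape condition that made $b_2$ removable is unaffected; and symmetrically. The two single-step reductions therefore commute, and the diamond lemma produces a unique normal form $\pi(T)$. By construction $\pi(T)$ is irredundant and $T\in\bar{A}(\pi(T))$; and confluence forces $\Lambda=\pi(T)$ whenever $T\in\bar{A}(\Lambda)$ with $\Lambda$ irredundant, since reversing the chain that built $T$ from $\Lambda$ yields a valid reduction path from $T$ to $\Lambda$.

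For the third and fourth decompositions I would invoke the explicit formula $Sq_{\mathscr{L}}^2(T)=\sum_{T'\in A(T)}T'$ recorded just before the lemma. If $T\in\bar{A}(\Lambda)$ and $T'\in A(T)$, then $T'$ is still obtained from $\Lambda$ by appending white boxes, so $T'\in\bar{A}(\Lambda)$. Hence $Sq_{\mathscr{L}}^2$ stabilises every summand $\mathbb{Z}/2\bar{A}(\Lambda)$, so it is block-diagonal with respect to the decomposition of $Ch^{\bullet}(Gr(k,n))$, and its kernel and image decompose blockwise---exactly the two remaining identities.

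The only step where I anticipate any real difficulty is the confluence argument, and even there the interference between removals in different rows is genuinely absent: the non-increasing shape condition is checked row by row, and the checkerboard colouring is a fixed function of position, so the diamond lemma applies immediately. Nothing in the argument depends on whether the tableaux are twisted or untwisted---it treats the colouring pattern as a fixed parameter---so both cases are handled simultaneously.
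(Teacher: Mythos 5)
Your proof is correct and takes essentially the same approach the paper has in mind; the paper's own proof is just the one-line "Clear from the definition of irredundance," and your write-up supplies the details (termination and confluence of the box-removal rewriting system, hence a well-defined irredundant reduct $\pi(T)$, hence the set-theoretic partition $\{(k,n)\text{-truncated tableaux}\}=\bigsqcup_{\Lambda\text{ irredundant}}\bar{A}(\Lambda)$, plus the observation that $Sq^2_{\mathscr{L}}$ preserves each block $\mathbb{Z}/2\bar{A}(\Lambda)$). The one place you wave a hand slightly is the claim that removals in different rows never interfere: when the rows are adjacent they do interact through the non-increasing-shape constraint, but the commutation still holds because both removability conditions $T_i-1\geq T_{i+1}$ and $T_j-1\geq T_{j+1}$ remain satisfied after the other removal; so the conclusion stands.
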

\begin{proof}
Clear from the definition of irredundance.
\end{proof}
\begin{proposition}\label{imsq}
Suppose \(\mathscr{L}\in Pic(Gr(k,n))\), \(\sigma=\sum_i\Lambda_i\in Ker(Sq_{\mathscr{L}}^2)\) where \(\Lambda_i\) are \((k,n)\)-truncated (twisted by \(\mathscr{L}\)) Young diagrams. Then \(\sigma\in Im(Sq_{\mathscr{L}}^2)\) if and only if none of \(\Lambda_i\) is even. In other word, we have
\[Ker(Sq^2)=Im(Sq^2)\oplus N_{k,n}/2N_{k,n}\]
\[Ker(Sq_{O(1)}^2)=Im(Sq_{O(1)}^2)\oplus M_{k,n}/2M_{k,n}.\]
\end{proposition}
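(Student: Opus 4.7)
The plan is to invoke Lemma \ref{decomp} to decompose both sides by irredundant tableaux, and then identify each non-full block with an acyclic \(\mathbb{Z}/2\mathbb{Z}\)-Koszul complex concentrated in the relevant degrees.

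By Lemma \ref{decomp}, we have \(Ch^{\bullet}(Gr(k,n)) = \bigoplus_{\Lambda\textrm{ irredundant}}(\mathbb{Z}/2\mathbb{Z})\bar{A}(\Lambda)\); since \(Sq_{\mathscr{L}}^2(T)=\sum_{T'\in A(T)}T'\) keeps each block indexed by \(\Lambda\) stable under \(Sq_{\mathscr{L}}^2\), both \(Ker(Sq_{\mathscr{L}}^2)\) and \(Im(Sq_{\mathscr{L}}^2)\) split along this decomposition. If \(\Lambda\) is full (equivalently, even), then \(\bar{A}(\Lambda)=\{\Lambda\}\) and \(Sq_{\mathscr{L}}^2(\Lambda)=0\), contributing a one-dimensional summand to the kernel and nothing to the image; assembling these pieces over all even \(\Lambda\) produces exactly \(N_{k,n}/2N_{k,n}\) in the untwisted case and \(M_{k,n}/2M_{k,n}\) in the twisted case.

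For \(\Lambda\) irredundant but not full with \(l=|A(\Lambda)|\geq 1\), the assertion reduces to acyclicity of the block \(((\mathbb{Z}/2\mathbb{Z})\bar{A}(\Lambda),Sq_{\mathscr{L}}^2)\). The key claim is that \(S\mapsto\Lambda_S\) defines a bijection \(2^{\{1,\ldots,l\}}\cong\bar{A}(\Lambda)\) under which \(Sq_{\mathscr{L}}^2(\Lambda_S)=\sum_{i\notin S}\Lambda_{S\cup\{i\}}\); granting this, the block is the tensor product of \(l\geq 1\) copies of the exact two-term complex \(\mathbb{Z}/2\mathbb{Z}\xrightarrow{=}\mathbb{Z}/2\mathbb{Z}\), hence acyclic, and therefore \(Ker(Sq_{\mathscr{L}}^2)|_{\bar{A}(\Lambda)}=Im(Sq_{\mathscr{L}}^2)|_{\bar{A}(\Lambda)}\).

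The main obstacle is the Boolean cube claim: at each stage of adding white boxes, none of the remaining \(l-1\) original positions of \(\Lambda\) gets destroyed and no new white-addable position is created. I would verify this by a short parity analysis on the checkerboard colouring combined with the Young tableau length constraints. Concretely, if a white box is added at a row-\(i\) extension position \((i,a_i+1)\), the adjacent boxes \((i,a_i+2)\) and \((i+1,a_i+1)\) have opposite colour, so no new valid white position appears in row \(i\) or immediately below it; the length inequality \(a_{i-1}>a_i\) defining any untouched row extension is unaffected because \(a_{i-1}\) is unchanged; and the same parity argument rules out new row extensions or further new rows when a new-row position is filled in. The analysis is insensitive to the twist, so the untwisted identity for \(Sq^2\) and the twisted identity for \(Sq_{O(1)}^2\) follow in parallel.
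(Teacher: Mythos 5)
Your proposal is correct and takes essentially the same route as the paper: decompose by irredundant tableaux via Lemma \ref{decomp}, isolate the even (irredundant and full) tableaux as the kernel's non-image summand, and reduce the remaining blocks to the acyclicity of a Boolean-cube/Koszul complex on the \(l=|A(\Lambda)|\) addable positions. The paper's argument writes \(\sigma = C0 + D1\), deduces \(C=Sq^2_{\mathscr{L}}(D)\) and hence \(\sigma=Sq^2_{\mathscr{L}}(D0)\) — which is exactly one peel-off step of your tensor-product-of-exact-complexes argument — and, like your proof, it takes the Boolean-cube identification (implicit in Definition \ref{Young1}) for granted rather than spelling out the parity check you sketch.
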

\begin{proof}
It suffices to prove the equation in each component \(C=\mathbb{Z}/2\bar{A}(\Lambda)\) generated by an irredundant Young diagram \(\Lambda\) as in Lemma \ref{decomp}. Hence we may assume that \(\Lambda\) is not full and prove that
\[Ker(Sq_{\mathscr{L}}^2)\cap C=Im(Sq_{\mathscr{L}}^2)\cap C.\]
Suppose \(|A(\Lambda)|=n>0\) hence each \(\Lambda_i\) can be seen as a sequence of \(\{0,1\}\) with length \(n\) where \(1\) means `a white box added' and \(0\) otherwise. Suppose \(\sigma=A0+B1\) where \(A\) and \(B\) are of length \(n-1\) and \(A0\) (resp. \(B1\)) means concatenating \(A\) (resp. \(B\)) by \(0\) (resp. \(1\)). Then
\[0=Sq_{\mathscr{L}}^2(A0+B1)=Sq_{\mathscr{L}}^2(A)0+A1+Sq_{\mathscr{L}}^2(B)1.\]
So \(A=Sq_{\mathscr{L}}^2(B)\) thus
\[Sq_{\mathscr{L}}^2(B0)=A0+B1,\]
as desired.
\end{proof}
\begin{definition}\label{Young1}
Suppose \(\Lambda\) is an irredundant Young diagram (maybe twisted). There are \(|A(\Lambda)|\) `positions' in \(\Lambda\) on which a white box could be added. Denote by \(\Lambda_{i_1,\cdots,i_l}, 1\leq i_1<\cdots<i_l\leq|A(\Lambda)|\) (defined by \(\Lambda\) if \(l=0\)) the Young diagram with a white box added on the \(i_1^{th},\cdots,i_l^{th}\) positions of \(\Lambda\).
\end{definition}
For example, the \((3,6)\)-truncated diagram \(T=(3,2,1)\) has two positions available, namely we have
\[\begin{array}{ccc}T_1=(3,3,1)&T_2=(3,2,2)&T_{1,2}=(3,3,2)\end{array}.\]
We could also define
\[\begin{array}{cccc}Sq^2_{\mathscr{L}}:&CH^{{*}}(Gr(k,n))&\longrightarrow&CH^{{*}}(Gr(k,n))\\&\Lambda&\longmapsto&\sum_{T\in A(\Lambda)}T\end{array},\]
which is a lift of the Steenrod square.
\begin{proposition}\label{basis}
Suppose \(\Lambda\) (twisted or not) is irredundant.
\begin{enumerate}
\item If \(\Lambda\) is even, we set \(S_{\Lambda}=\{\Lambda\}\) otherwise
\[S_{\Lambda}=Sq^2_{\mathscr{L}}(\{\Lambda_{i_1,\cdots,i_l}|i_1>1\}).\]
It's a \(\mathbb{Z}/2\)-basis of \(Ker(Sq^2_{\mathscr{L}})\cap\mathbb{Z}/2\bar{A}(\Lambda)\).
\item If \(\Lambda\) is even, we set \(S_{\Lambda}=\{\Lambda\}\) otherwise
\[S_{\Lambda}=Sq^2_{\mathscr{L}}(\{\Lambda_{i_1,\cdots,i_l}|i_1>1\})\cup\{2\Lambda_{i_1,\cdots,i_l}|i_1>1\}.\]
It's a \(\mathbb{Z}\)-basis of \(Ker(Sq^2_{\mathscr{L}}\circ\pi)\cap\mathbb{Z}\bar{A}(\Lambda)\).
\end{enumerate}
\end{proposition}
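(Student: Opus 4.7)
The plan is to treat the even case as a trivial base case and then reduce the rest to linear algebra on $\mathbb{Z}\bar{A}(\Lambda)$ after parametrizing this set by subsets of $\{1,\dots,n\}$, where $n=|A(\Lambda)|$. When $\Lambda$ is even we have $A(\Lambda)=\emptyset$, so $\bar{A}(\Lambda)=\{\Lambda\}$ and $Sq^2_{\mathscr{L}}(\Lambda)=0$; both claims collapse to the triviality that $\{\Lambda\}$ is a basis of the rank-one group in question, which lies entirely in the relevant kernel. I henceforth assume that $\Lambda$ is irredundant but not full. Using Definition~\ref{Young1} together with the combinatorial set-up already invoked in the proof of Proposition~\ref{imsq}, I identify $\bar{A}(\Lambda)$ with the power set of $\{1,\dots,n\}$ via $I\mapsto\Lambda_I$, and under this identification the integral lift of the Steenrod square satisfies
\[Sq^2_{\mathscr{L}}(\Lambda_I)=\sum_{j\notin I}\Lambda_{I\cup\{j\}},\]
which is precisely the unwinding of the recursive formula derived in the proof of Proposition~\ref{imsq}.

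For (1), Proposition~\ref{imsq} gives $Ker(Sq^2_{\mathscr{L}})=Im(Sq^2_{\mathscr{L}})$ inside $\mathbb{Z}/2\mathbb{Z}\bar{A}(\Lambda)$, so rank--nullity on the $2^n$-dimensional space $\mathbb{Z}/2\mathbb{Z}\bar{A}(\Lambda)$ forces both subspaces to have dimension $2^{n-1}$. The set $S$ has exactly $2^{n-1}$ elements, one for each subset $I\subseteq\{2,\dots,n\}$. Linear independence is immediate from the displayed formula: the coefficient of $\Lambda_{I\cup\{1\}}$ in $Sq^2_{\mathscr{L}}(\Lambda_I)$ equals $1$, and the basis elements $\{\Lambda_{I\cup\{1\}}\}_{1\notin I}$ are pairwise distinct, so any nontrivial combination of the $Sq^2_{\mathscr{L}}(\Lambda_I)$'s produces a nonzero $\Lambda_{I\cup\{1\}}$-coefficient for some $I$. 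Hence $S$ is a $2^{n-1}$-element linearly independent subset of a $2^{n-1}$-dimensional space and therefore a basis.

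For (2), each element of $S_\Lambda$ lies in $Ker(Sq^2_{\mathscr{L}}\circ\pi)$: the mod-$2$ reduction of $Sq^2_{\mathscr{L}}(\Lambda_I)$ is in $Ker(Sq^2_{\mathscr{L}})$ by (1), and $\pi(2\Lambda_J)=0$. The same $\Lambda_{I\cup\{1\}}$-pivot argument, now over $\mathbb{Z}$, shows that in a vanishing relation $\sum_{1\notin I}a_I\,Sq^2_{\mathscr{L}}(\Lambda_I)+\sum_{1\notin J}b_J\cdot 2\Lambda_J=0$ one first obtains $a_I=0$ for every $I$ and then $b_J=0$. For spanning, take $x\in Ker(Sq^2_{\mathscr{L}}\circ\pi)\cap\mathbb{Z}\bar{A}(\Lambda)$; part (1) lets me write $\pi(x)=\sum_{1\notin I}\bar{c}_I\,Sq^2_{\mathscr{L}}(\Lambda_I)$, and lifting $\bar{c}_I\in\mathbb{Z}/2\mathbb{Z}$ to $c_I\in\{0,1\}$ yields $y=\sum c_I\,Sq^2_{\mathscr{L}}(\Lambda_I)\in\operatorname{span}_{\mathbb{Z}}S_\Lambda$ with $x-y\in 2\mathbb{Z}\bar{A}(\Lambda)$. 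So it suffices to establish the inclusion $2\mathbb{Z}\bar{A}(\Lambda)\subseteq\operatorname{span}_{\mathbb{Z}}S_\Lambda$: for $K$ with $1\notin K$ the element $2\Lambda_K$ already belongs to $S_\Lambda$, while for $K$ with $1\in K$, setting $I=K\setminus\{1\}$, the identity
\[2\Lambda_K=2\,Sq^2_{\mathscr{L}}(\Lambda_I)-\sum_{j\neq 1,\,j\notin I}2\Lambda_{I\cup\{j\}}\]
presents $2\Lambda_K$ as an integer combination of elements of $S_\Lambda$ (each $I\cup\{j\}$ on the right-hand side contains no~$1$).

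The principal combinatorial input is the bijection $I\leftrightarrow\Lambda_I$ together with the additive formula for the Steenrod square on this parametrization; these are implicit in Definition~\ref{Young1} and are already used in the proof of Proposition~\ref{imsq}, so no new combinatorial work is required here. The main potential obstacle is in verifying that this identification is valid in the full generality stated (for every irredundant $\Lambda$, not just in worked examples), since adding a white box could a priori create or destroy corner positions; once this is granted, the rest of the proof is a clean exercise in linear algebra over $\mathbb{Z}/2\mathbb{Z}$ and over $\mathbb{Z}$, with the $\Lambda_{I\cup\{1\}}$-coefficient serving as a pivot in both halves of the argument.
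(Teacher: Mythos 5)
Your proof is correct and follows essentially the same route as the paper: the parametrization of $\bar{A}(\Lambda)$ by subsets of $\{1,\dots,n\}$, the explicit formula $Sq^2_{\mathscr{L}}(\Lambda_I)=\sum_{j\notin I}\Lambda_{I\cup\{j\}}$, the $\Lambda_{I\cup\{1\}}$-pivot for linear independence, and in part (2) the identity expressing $2\Lambda_{I\cup\{1\}}$ as $2\,Sq^2_{\mathscr{L}}(\Lambda_I)-\sum_{j\neq 1,\,j\notin I}2\Lambda_{I\cup\{j\}}$ are all the same steps the paper uses. The one minor variation is in the spanning step of part (1): you close the argument by a rank--nullity dimension count (since $Ker=Im$ on the $2^n$-dimensional space $\mathbb{Z}/2\mathbb{Z}\bar{A}(\Lambda)$, both have dimension $2^{n-1}$, matching $|S|$), whereas the paper instead shows directly that the remaining generators $Sq^2_{\mathscr{L}}(\Lambda_{\{1\}\cup I'})$ lie in the span of $S$ by applying $Sq^2_{\mathscr{L}}\circ Sq^2_{\mathscr{L}}=0$ to $\Lambda_{I'}$; both are valid, and you correctly flagged that the power-set identification is the implicit combinatorial input, which the paper also takes for granted from the proof of Proposition~\ref{imsq}.
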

\begin{proof}
We may assume \(\Lambda\) is not full.
\begin{enumerate}
\item We have
\[Ker(Sq^2_{\mathscr{L}})\cap\mathbb{Z}/2\bar{A}(\Lambda)=Im(Sq^2_{\mathscr{L}})\cap\mathbb{Z}/2\bar{A}(\Lambda)\]
by the proof of Proposition \ref{imsq}. Hence their generators are just \(Sq^2_{\mathscr{L}}(\{\Lambda_{i_1,\cdots,i_l}\})\), which is graded by \(l\). If \(l=0\), \(Sq^2_{\mathscr{L}}(\Lambda)\neq 0\) since \(\Lambda\) is not full. So we suppose \(l>0\). The set \(S\) is linearly indepedent because \(Sq^2_{\mathscr{L}}(\Lambda_{i_1,\cdots,i_l}),i_1>1\) has coefficient \(1\) in \(\Lambda_{1,j_1,\cdots,j_l}\) if and only if \((j_1,\cdots,j_l)=(i_1,\cdots,i_l)\). On the other hand, suppose \(i_2>1\). The equation
\[Sq^2_{\mathscr{L}}(Sq^2_{\mathscr{L}}(\Lambda_{i_2,\cdots,i_l}))=0\]
shows that \(Sq^2_{\mathscr{L}}(\Lambda_{1,i_2,\cdots,i_l})\in\mathbb{Z}/2S\). So we have proved that \(S_{\Lambda}\) is a basis.
\item Denote by \(C=\mathbb{Z}\{\Lambda_{i_1,\cdots,i_l}\},i_1>1\) and \(T=\{Sq^2_{\mathscr{L}}(\{\Lambda_{i_1,\cdots,i_l}|i_1>1\})\}\). So a system of generators is \(\{2\Lambda_{i_1,\cdots,i_l}\}\cup T\). For each \(t\in T\), it could be written as \(\varphi(t)+t'\) where \(\varphi(t)\in\{\Lambda_{1,i_2,\cdots,i_l}\}\) and \(t'\in C\). The \(\varphi\) is injective. If \(\sum_j\alpha_jt_j\in2\cdot C\) where \(t_j\in T\) are distinct, we have \(\sum_j\alpha_j\varphi(t_j)=0\) hence \(\alpha_j=0\) for all \(j\). So \(S_{\Lambda}\) is linearly independent. On the other hand, suppose \(i_2>1\). We have
\[\sum_{j\notin\{i_2,\cdots,i_l\}}\Lambda_{j,i_2,\cdots,i_l}\in T,\]
which shows that \(2\Lambda_{1,i_2,\cdots,i_l}\in\mathbb{Z}S_{\Lambda}\). Hence \(S_{\Lambda}\) is a basis.
\end{enumerate}
\end{proof}
\begin{coro}\label{partial}
There are decompositions
\[Ker(Sq^2_{\mathscr{L}}\circ\pi)=\bigoplus_{\Lambda\textrm{ irredundant}}\mathbb{Z}S_{\Lambda}.\]
\end{coro}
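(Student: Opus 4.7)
The plan is to assemble the statement from the two pieces already in hand: the block decomposition of $CH^{\bullet}(Gr(k,n))$ indexed by irredundant Young tableaux, and the explicit basis of each block's intersection with $Ker(Sq^2_{\mathscr{L}}\circ\pi)$.

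First, I would invoke Lemma \ref{decomp} to write
\[
CH^{\bullet}(Gr(k,n))=\bigoplus_{\Lambda\textrm{ irredundant}}\mathbb{Z}\bar{A}(\Lambda),
\]
and similarly for the mod $2$ version in $Ch^{\bullet}(Gr(k,n))$. The next (and essentially only nontrivial) point is that both $\pi$ and $Sq^2_{\mathscr{L}}$ respect this block decomposition. For $\pi$ this is obvious. For $Sq^2_{\mathscr{L}}$, recall from the formula $Sq^2_{\mathscr{L}}(T)=\sum_{T'\in A(T)}T'$ (via \cite[Theorem 4.2]{W2}) that applying the Steenrod square to a tableau in $\bar{A}(\Lambda)$ produces a sum of tableaux obtained by adding one further white box, hence still in $\bar{A}(\Lambda)$. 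Consequently the composite
\[
Sq^2_{\mathscr{L}}\circ\pi:\mathbb{Z}\bar{A}(\Lambda)\longrightarrow\mathbb{Z}/2\mathbb{Z}\bar{A}(\Lambda)
\]
is block-diagonal across the decomposition above.

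Since kernels of block-diagonal maps split as direct sums of the kernels on each block, I obtain
\[
Ker(Sq^2_{\mathscr{L}}\circ\pi)=\bigoplus_{\Lambda\textrm{ irredundant}}\bigl(Ker(Sq^2_{\mathscr{L}}\circ\pi)\cap\mathbb{Z}\bar{A}(\Lambda)\bigr).
\]
Finally, I apply Proposition \ref{basis}(2), which precisely identifies $Ker(Sq^2_{\mathscr{L}}\circ\pi)\cap\mathbb{Z}\bar{A}(\Lambda)$ with the free $\mathbb{Z}$-module on the set $S_{\Lambda}$ (either the singleton $\{\Lambda\}$ in the even case, or $Sq^2_{\mathscr{L}}(\{\Lambda_{i_1,\cdots,i_l}|i_1>1\})\cup\{2\Lambda_{i_1,\cdots,i_l}|i_1>1\}$ otherwise). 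Substituting yields the claimed decomposition.

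There is no real obstacle here beyond recording the block-preservation of $Sq^2_{\mathscr{L}}\circ\pi$; the statement is a packaging of Lemma \ref{decomp} and Proposition \ref{basis}(2) rather than a fresh computation.
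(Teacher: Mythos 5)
Your proposal is correct and is essentially the approach the paper intends (the paper omits any proof for this corollary, treating it as immediate from Lemma \ref{decomp} and Proposition \ref{basis}(2)). The one step you make explicit — that $Sq^2_{\mathscr{L}}\circ\pi$ is block-diagonal with respect to the decomposition $CH^{\bullet}=\bigoplus_{\Lambda}\mathbb{Z}\bar{A}(\Lambda)$ because $Sq^2_{\mathscr{L}}$ sends a tableau in $\bar{A}(\Lambda)$ to a sum of tableaux still in $\bar{A}(\Lambda)$ — is exactly the content of "clear from the definition of irredundance" in Lemma \ref{decomp}, so you have simply recorded the argument the paper leaves implicit.
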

For convenience, we define
\[Ker_t(Sq^2\circ\pi)=Ker(Sq^2\circ\pi)/N_{k,n}\]
\[Ker_t(Sq^2_{O(1)}\circ\pi)=Ker(Sq^2_{O(1)}\circ\pi)/M_{k,n}.\]

Recall the notations in \cite{W1} that we define \(c_i=c_i(\mathscr{U}_{k,n})\), \(c_i^{\perp}=c_i(\mathscr{U}_{k,n}^{\perp})\), \(e_k=e(\mathscr{U}_{k,n})\), \(e_{n-k}^{\perp}=e(\mathscr{U}_{k,n}^{\perp})\) and similarly for \(p_i\) and \(p_i^{\perp}\).
\begin{proposition}\label{ce}
\begin{enumerate}
\item If \(k(n-k)\) is odd, we have isomorphisms
\[\begin{array}{cccc}\gamma:&CH^i(Gr(\lfloor\frac{k}{2}\rfloor,\frac{n}{2}-1))&\longrightarrow&N_{k,n}^{4i}\\&\sigma_{a_1,\cdots,a_l}&\longmapsto&\sigma_{\cdots,2a_j,2a_j,\cdots}\end{array}\]
\[\gamma'=\sigma_{n-k,1^{k-1}}\cdot\gamma:CH^i(Gr(\lfloor\frac{k}{2}\rfloor,\frac{n}{2}-1))\longrightarrow N_{k,n}^{4i+n-1}.\]
Moreover, there is a \(\textbf{W}(F)\)-algebra isomorphism
\[\begin{array}{cccc}\lambda:&CH^{{*}}(Gr(\lfloor\frac{k}{2}\rfloor,\frac{n}{2}-1))\otimes_{\mathbb{Z}}\textbf{W}(F)&\longrightarrow&H^{4{*}}(Gr(k,n),\textbf{W})\\&c_i&\longmapsto&p_{2i}\\&c_i^{\perp}&\longmapsto&p_{2i}^{\perp}\end{array}\]
and a group isomorphism
\[\begin{array}{cccc}\lambda':&CH^i(Gr(\lfloor\frac{k}{2}\rfloor,\frac{n}{2}-1))\otimes_{\mathbb{Z}}\textbf{W}(F)&\longrightarrow&H^{4i+n-1}(Gr(k,n),\textbf{W})\\&\sigma&\longmapsto&\lambda(\sigma)\cdot \mathcal{R}\end{array}.\]
Recall here the \(\mathcal{R}\) is the orientation class (see \cite[Theorem 6.4]{W1}).
\item If \(k(n-k)\) is even, we have an isomorphism
\[\begin{array}{cccc}\gamma:&CH^i(Gr(\lfloor\frac{k}{2}\rfloor,\lfloor\frac{n}{2}\rfloor))&\longrightarrow&N_{k,n}^{4i}\\&\sigma_{\cdots,a_j,\cdots}&\longmapsto&\sigma_{\cdots,2a_j,2a_j,\cdots}\end{array}.\]
Moreover, there is an \(\textbf{W}(F)\)-algebra isomorphism
\[\begin{array}{cccc}\lambda:&CH^{{*}}(Gr(\lfloor\frac{k}{2}\rfloor,\lfloor\frac{n}{2}\rfloor))\otimes_{\mathbb{Z}}\textbf{W}(F)&\longrightarrow&H^{4{*}}(Gr(k,n),\textbf{W})\\&c_i&\longmapsto&p_{2i}\\&c_i^{\perp}&\longmapsto&p_{2i}^{\perp}\end{array}.\]
\item If \(k\) is even and \(n\neq 0\ (mod\ 4)\), we have isomorphisms
\[\gamma_1=\sigma_{1^k}\cdot\gamma:CH^i(Gr(\lfloor\frac{k}{2}\rfloor,\lfloor\frac{n-1}{2}\rfloor))\longrightarrow M_{k,n}^{4i+k}\]
\[\begin{array}{cccc}\lambda_1:&CH^i(Gr(\lfloor\frac{k}{2}\rfloor,\lfloor\frac{n-1}{2}\rfloor))\otimes_{\mathbb{Z}}\textbf{W}(F)&\longrightarrow&H^{4i+k}(Gr(k,n),\textbf{W}(O(1)))\\&\sigma&\longmapsto&\lambda(\sigma)\cdot e_k\end{array}.\]
\item If \(n-k\) is even and \(n\neq 0\ (mod\ 4)\), we have isomorphisms
\[\gamma_2=\sigma_{n-k}\cdot\gamma:CH^i(Gr(\lfloor\frac{k-1}{2}\rfloor,\lfloor\frac{n-1}{2}\rfloor))\longrightarrow M_{k,n}^{4i+n-k}\]
\[\begin{array}{cccc}\lambda_2:&CH^i(Gr(\lfloor\frac{k-1}{2}\rfloor,\lfloor\frac{n-1}{2}\rfloor))\otimes_{\mathbb{Z}}\textbf{W}(F)&\longrightarrow&H^{4i+n-k}(Gr(k,n),\textbf{W}(O(1)))\\&\sigma&\longmapsto&\lambda(\sigma)\cdot e_{n-k}^{\perp}\end{array}.\]
\item If \(k(n-k)\) is even and \(n\equiv 0\ (mod\ 4)\), we have isomorphisms
\[\gamma_3=\gamma_1\oplus\gamma_2:CH^i(Gr(\lfloor\frac{k}{2}\rfloor,\lfloor\frac{n-1}{2}\rfloor))\oplus CH^{i+\frac{2k-n}{4}}(Gr(\lfloor\frac{k-1}{2}\rfloor,\lfloor\frac{n-1}{2}\rfloor))\longrightarrow M_{k,n}^{4i+k}\]
\[\lambda_3=\lambda_1\oplus\lambda_2:(CH^i(Gr(\lfloor\frac{k}{2}\rfloor,\lfloor\frac{n-1}{2}\rfloor))\oplus CH^{i+\frac{2k-n}{4}}(Gr(\lfloor\frac{k-1}{2}\rfloor,\lfloor\frac{n-1}{2}\rfloor)))\otimes_{\mathbb{Z}}\textbf{W}(F)\]\[\longrightarrow H^{4i+k}(Gr(k,n),\textbf{W}(O(1))).\]
\end{enumerate}
In all other cases, \(M_{k,n}^i\) and \(N_{k,n}^i\) vanishes.
\end{proposition}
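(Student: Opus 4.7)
The proof naturally splits into a combinatorial part (the isomorphisms $\gamma, \gamma', \gamma_1, \gamma_2, \gamma_3$ and the vanishing statement) and an algebraic part (the \(\textbf{W}(k)\)-module isomorphisms $\lambda, \lambda', \lambda_1, \lambda_2, \lambda_3$). My plan is to handle the combinatorial part first using the classifications already given in Lemmas \ref{ce1} and \ref{ce2}, and then derive the Witt cohomology isomorphisms by combining the Whitney sum formula for even Pontryagin classes with the splitting of \(\mathbb{Z}(Gr(k,n))\) as an MW-motive (Theorem \ref{Grass1}) read through Theorem \ref{sq}.

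For the combinatorial half I would argue as follows. Lemmas \ref{ce1} and \ref{ce2} show that every even tableau factors as a fixed ``boundary'' Schubert cycle (one of \(\emptyset, \sigma_{n-k,1^{k-1}}, \sigma_{n-k}, \sigma_{1^k}\)) multiplied by a completely even tableau. A completely even tableau of size \(4m\) is a partition whose parts occur in pairs of equal even length, so the assignment \((2a_1,2a_1,2a_2,2a_2,\ldots) \mapsto (a_1,a_2,\ldots)\) is a bijection onto tableaux in the appropriate smaller rectangle. A case-by-case check of the parities of \(k\) and \(n\) shows that this rectangle is exactly the one fitting inside \(Gr(\lfloor k/2\rfloor,\lfloor n/2\rfloor)\) (resp.\ \(Gr(\lfloor k/2\rfloor,n/2-1)\) in case (1), and the appropriate variants in the twisted cases (3)--(5)). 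This gives the \(\gamma\)'s as bijections of free \(\mathbb{Z}\)-bases and, by the same classification, forces \(N_{k,n}^i=M_{k,n}^i=0\) in all the unlisted parity regimes.

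For the algebraic half, the Whitney sum formula for even Pontryagin classes (stated right after Proposition \ref{odd}), applied to the tautological sequence \(0 \to \mathscr{U}_{k,n} \to O^{n} \to \mathscr{U}_{k,n}^{\perp} \to 0\), produces in \(H^{4\bullet}(Gr(k,n),\textbf{W})\) the relation \(\sum_i p_{2i}\, t^i \cdot \sum_j p_{2j}^{\perp}\, t^j = 1\), which is formally identical to the Chern class relation defining \(CH^{\bullet}(Gr(\lfloor k/2\rfloor,\lfloor n/2\rfloor))\). Sending \(c_i \mapsto p_{2i}\) and \(c_i^{\perp} \mapsto p_{2i}^{\perp}\) therefore extends to a well-defined \(\textbf{W}(k)\)-algebra map \(\lambda\). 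To promote \(\lambda\) from well-defined to an isomorphism I would invoke Theorem \ref{sq}(1): granting the splitting of \(Th(\mathscr{L})\) for \(Gr(k,n)\) (proved later in Theorem \ref{Grass1}), \(H^{\bullet}(Gr(k,n),\textbf{W}(\mathscr{L}))\) is a free \(\textbf{W}(k)\)-module whose rank in each degree equals the count of even tableaux, i.e.\ the rank of \(N_{k,n}\) or \(M_{k,n}\) computed combinatorially above. This matches the rank of the source of \(\lambda\), forcing \(\lambda\) to be an isomorphism. The twisted variants \(\lambda', \lambda_1, \lambda_2, \lambda_3\) are then constructed by multiplying \(\lambda\) by the appropriate Euler/orientation class (\(\mathcal{R}, e_k, e_{n-k}^{\perp}\)), which represent the ``boundary'' Schubert cycles in Witt cohomology by Proposition \ref{pbtsq}(2).

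The main obstacle is the apparent circularity: the target ranks are computed from the splitting of \(Gr(k,n)\), but that splitting in turn is established using the combinatorics of even tableaux displayed here. My way around this is a simultaneous induction on \((k,n)\) built on Propositions \ref{geo1}--\ref{geo2}: one uses the closed imbeddings \(Gr(k,n-1),Gr(k-1,n-1) \hookrightarrow Gr(k,n)\) (or the pair in Proposition \ref{geo2}) together with the Gysin triangle of Proposition \ref{Gysin} and the projective bundle theorem \ref{pbtsq} to decompose \(\mathbb{Z}(Gr(k,n))\) in \(\widetilde{DM}^{eff}\) and to propagate the Witt cohomology presentation in parallel. Once this inductive framework is in place the remaining bookkeeping is the dimension count in each parity case, and the identifications of \(\mathcal{R},e_k,e_{n-k}^{\perp}\) with the corresponding Schubert cycles under the map \(w:\widetilde{CH}^{\bullet}\to H^{\bullet}(-,\textbf{W})\) of Proposition \ref{gen}.
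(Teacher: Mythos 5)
Your proposal takes a genuinely different route from the paper. The paper's own proof is a one-line citation: the combinatorial bijections \(\gamma,\gamma_1,\gamma_2,\gamma_3\) come from Lemmas \ref{ce1}--\ref{ce2} (as you say), while the Witt-cohomology isomorphisms \(\lambda,\lambda',\lambda_1,\lambda_2,\lambda_3\) are read off directly from Wendt's independent computation of the Witt ring of Grassmannians, \cite[Theorem 6.4]{W1} and \cite[Proposition 3.14]{W2}, which already give a generators-and-relations presentation of \(H^{4\bullet}(Gr(k,n),\textbf{W})\) in terms of even Pontryagin classes. You instead propose to re-derive Wendt's presentation from the MW-motive splitting plus Theorem \ref{sq}. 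That would make the paper more self-contained, and the logical order permits it: the abstract splitting in Theorem \ref{Grass} is proved by Gysin triangles and induction and does \emph{not} use Proposition \ref{ce}, so you can invoke \ref{Grass} (rather than \ref{Grass1}) without circularity. Your proposed ``simultaneous induction'' to resolve circularity is therefore unnecessary overhead; citing Theorem \ref{Grass} directly suffices.

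There is, however, a genuine gap in the algebraic half. After showing \(\lambda\) is well-defined (Whitney sum) and that source and target are free \(\textbf{W}(k)\)-modules of the same finite rank (Theorem \ref{sq}(1) plus Proposition \ref{imsq} plus the combinatorial count), you conclude ``This matches the rank of the source of \(\lambda\), forcing \(\lambda\) to be an isomorphism.'' Equality of ranks of free modules does not force a well-defined map between them to be an isomorphism; you still need either injectivity or surjectivity. The natural fix is to reduce modulo \(\textbf{I}(k)\): by Theorem \ref{sq}(1), \(H^{4\bullet}(Gr(k,n),\textbf{W})\otimes\mathbb{Z}/2\cong E^{4\bullet}(Gr(k,n))\cong N_{k,n}/2N_{k,n}\), and one must check that \(\lambda\otimes\mathbb{Z}/2\) composed with \(\delta\) agrees with the mod-\(2\) reduction of \(\gamma\) (equivalently, that the Pontryagin classes in \(\textbf{W}\)-cohomology go to the Pontryagin classes in \(E^{\bullet}\), and that these correspond under \(\gamma\) to Chern classes). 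This compatibility is precisely the content later established in Proposition \ref{can}, whose proof involves a nontrivial Giambelli/Pieri computation modulo \(Im(Sq^2)\); it is not a formality. Since \(\textbf{W}(k)\) is local with residue field \(\mathbb{Z}/2\mathbb{Z}\), surjectivity of \(\lambda\otimes\mathbb{Z}/2\) plus Nakayama plus the rank count would then give the isomorphism. Without carrying out this comparison, the proof of the \(\lambda\)-statements is incomplete.
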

\begin{proof}
Follows from \cite[Theorem 6.4]{W1}, \cite[Proposition 3.14]{W2}, Lemma \ref{ce1} and \ref{ce2}.
\end{proof}
\begin{theorem}\label{Grass}
\begin{enumerate}
\item If \(k\) is odd and \(n\) is even, \(Th(O_{Gr(k,n)}(1))\) splits in \(\widetilde{DM}(pt,\mathbb{Z})\) and we have
\begin{align*}
Th(O_{Gr(k,n)}(1))=	&Th(O_{Gr(k-2,n-2)}(1))(2n-2k)[4n-4k]\oplus\\
								&Th(O_{Gr(k,n-2)}(1))\oplus\mathbb{Z}(Gr(k-1,n-2))/{\eta}(n-k)[2n-2k].
\end{align*}
Moreover, we have
\[WW(Th(O_{Gr(k,n)}(1)))=\emptyset.\]
\item If \(n-k\) is even, \(Th(O_{Gr(k,n)}(1))\) splits \(\widetilde{DM}(pt,\mathbb{Z})\) and we have
\[Th(O_{Gr(k,n)}(1))=Th(O_{Gr(k,n-1)}(1))\oplus\mathbb{Z}(Gr(k-1,n-1))(n-k+1)[2n-2k+2].\]
Moreover, if \(k\) is even, we have
\[WW(Th(O_{Gr(k,n)}(1)))\subseteq 4\mathbb{Z}+n-k+1\cup(4\mathbb{Z}+k+1)\]
otherwise
\[WW(Th(O_{Gr(k,n)}(1)))\subseteq 4\mathbb{Z}+n-k+1.\]
\item If \(k\) is even and \(n\) is odd, \(Th(O_{Gr(k,n)}(1))\) splits \(\widetilde{DM}(pt,\mathbb{Z})\) and we have
\[Th(O_{Gr(k,n)}(1))=Th(O_{Gr(k-1,n-1)}(1))\oplus\mathbb{Z}(Gr(k,n-1))(k+1)[2k+2].\]
Moreover, we have
\[WW(Th(O_{Gr(k,n)}(1)))\subseteq 4\mathbb{Z}+k+1.\]
\item If \(n-k\) is odd, \(\mathbb{Z}(Gr(k,n))\) splits \(\widetilde{DM}(pt,\mathbb{Z})\) and we have
\[\mathbb{Z}(Gr(k,n))=\mathbb{Z}(Gr(k,n-1))\oplus Th(O_{Gr(k-1,n-1)}(1))(n-k-1)[2n-2k-2].\]
Moreover, if \(k\) is even, we have
\[WW(\mathbb{Z}(Gr(k,n)))\subseteq 4\mathbb{Z}\]
otherwise
\[WW(\mathbb{Z}(Gr(k,n)))\subseteq 4\mathbb{Z}\cup(4\mathbb{Z}+n-1).\]
\item If \(n-k\) is even, \(\mathbb{Z}(Gr(k,n))\) splits \(\widetilde{DM}(pt,\mathbb{Z})\) and we have
\[\mathbb{Z}(Gr(k,n))=\mathbb{Z}(Gr(k-1,n-1))\oplus Th(O_{Gr(k,n-1)}(1))(k-1)[2k-2].\]
Moreover, we have
\[WW(\mathbb{Z}(Gr(k,n)))\subseteq 4\mathbb{Z}.\]
\end{enumerate}
\end{theorem}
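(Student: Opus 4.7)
The plan is to establish all five decompositions simultaneously by strong induction on $n$, with base cases provided by $Gr(0,n)=Gr(n,n)=pt$ and $Gr(1,n)=\mathbb{P}^{n-1}$ (whose splitting is \cite[Theorem 5.11]{Y1}). The five cases partition the parities of $(k,n)$, but each inductive step reduces to pieces at level $n-1$ (or $n-2$ for part (1)) that fall under other parts of the theorem, so the five statements have to be treated together.

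The engine of the proof is Proposition \ref{Gysin} applied to the closed embeddings from Propositions \ref{geo1} and \ref{geo2}. For parts (4) and (5), I apply it with $\mathscr{E}=O_{Gr(k,n)}$ to $j:Gr(k-1,n-1)\hookrightarrow Gr(k,n)$ and $i:Gr(k,n-1)\hookrightarrow Gr(k,n)$ respectively; the complements are $\mathbb{A}^1$-equivalent to $Gr(k,n-1)$ and $Gr(k-1,n-1)$ via the bundle projections $f$ and $g$, and the determinants of the normal bundles are $O(1)$ of ranks $n-k$ and $k$. Cancelling the common Tate twist $(1)[2]$ from $Th(O_X)\cong\mathbb{Z}(X)(1)[2]$ yields distinguished triangles of exactly the claimed shape. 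For parts (2) and (3), I rerun the argument with $\mathscr{E}=O_{Gr(k,n)}(1)$; the pullback identifications $i^{*}O(1)=O(1)$ and $j^{*}O(1)=O(1)$ from Proposition \ref{geo1}, together with $Th(\mathscr{L}^{\otimes 2})\cong Th(O)$ in $\widetilde{DM}$, produce triangles relating $Th(O_{Gr(k,n)}(1))$ to the Thom space on a smaller Grassmannian and a Tate-twisted motive of another smaller Grassmannian. Part (1) is handled by applying Proposition \ref{Gysin} twice along the stratification $Gr(k-2,n-2)\subset S\subset Gr(k,n)$ of Proposition \ref{geo2}; the $\mathbb{Z}/\eta$-summand in the middle appears because the bundle $p_{2}^{*}\mathscr{U}_{k-1,n-2}$ realising $S\setminus Gr(k-2,n-2)$ is not oriented, so its Thom space contributes a $cone(\eta)$-type factor.

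The hard part is splitting each Gysin triangle $A\to B\to C\to A[1]$. By the inductive hypothesis both $A$ and $C$ split as MW-motives, so the connecting morphism lies in $[C,A[1]]_{MW}$, a finite sum of groups of the form $[\mathbb{Z}((c)),\mathbb{Z}((a))[1]]_{MW}$, $[\mathbb{Z}/\eta((c)),\mathbb{Z}((a))[1]]_{MW}$ and their mixed variants, all listed in \cite[Proposition 5.6]{Y1}. The degree-$1$ shift together with the codimension-induced Tate shift of $C$ relative to $A$ forces every such summand to vanish, so the connecting map is zero and the triangle splits. Alternatively one can invoke Proposition \ref{splitting}: since both $A$ and $C$ split, injectivity of the map into $B$ may be checked after applying $L$ in $\widetilde{DM}_{\eta}$, where it reduces to a Witt-cohomology computation provided by Proposition \ref{ce}.

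Finally, the Witt-weight bounds follow by combining the explicit decompositions with Proposition \ref{ce}, which identifies $H^{*}(Gr(k,n),\textbf{W}(\mathscr{L}))$ as a $\textbf{W}(k)$-module generated by lifts of Chow classes whose degrees are multiples of $4$, possibly shifted by $n-1$, $k$ or $n-k$ depending on parity. The resulting residue-mod-$4$ constraints give the predicted inclusions such as $WW(\mathbb{Z}(Gr(k,n)))\subseteq 4\mathbb{Z}$ when $n-k$ is even; the inductive step respects them because the summands in each decomposition contribute Witt weights shifted by $k-1$, $k+1$, $n-k-1$ or $n-k+1$, all compatible with the stated constraints.
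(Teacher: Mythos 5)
Your overall strategy — induction on the Grassmannian via the Gysin triangles of Proposition \ref{Gysin} applied to the closed embeddings of Propositions \ref{geo1} and \ref{geo2}, using the (twisted) tautological line bundle for parts (2)–(5) and the two-step stratification for part (1) — is the same as the paper's. The Witt weight bookkeeping and the role of Proposition \ref{splitting} also match.

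However, there is a genuine gap in your primary argument for splitting the Gysin triangles. You assert that the ``degree-$1$ shift together with the codimension-induced Tate shift of $C$ relative to $A$'' forces all the groups $[\mathbb{Z}((c)),\mathbb{Z}((a))[1]]_{MW}$ and their variants to vanish. This is false: for $c>a$ one has
\[
[\mathbb{Z}((c)),\mathbb{Z}((a))[1]]_{MW}\;=\;[\mathbb{Z}(c-a)[2(c-a)],\mathbb{Z}[1]]_{MW}\;=\;[\mathbb{Z}(m)[m],\mathbb{Z}]_{MW}\otimes\cdots
\]
which for $m=c-a\geq 1$ is nonzero (for $m=1$ it is the group $[\mathbb{Z}(1)[1],\mathbb{Z}]_{MW}$ containing $\eta$, hence a nontrivial $\textbf{W}(k)$-module; more generally $\eta^{m}$ gives a nonzero element). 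Since the Tate weights of the ``closed'' term $C$ in these Gysin triangles range both above and below those of the ``open'' term $A$ (the top weight of $C$ exceeds that of $A$ by the codimension $k$ or $n-k$, while the bottom weight of $C$ may exceed that of $A$ too), there will in general be summands with $c>a$ and hence potentially nonzero contributions to $[C,A[1]]_{MW}$. So the pure-degree argument does not establish that the connecting map vanishes. The mechanism the paper actually uses, and which you offer only as an ``alternative,'' is Proposition \ref{splitting}: since $A$ and $C=C(f)$ split, one verifies split-injectivity of $A\to B$ by checking $L(f)$ in $\widetilde{DM}_{\eta}$, where the relevant Hom groups $[\mathbb{Z}[c],\mathbb{Z}[a+1]]_{\eta}$ reduce to Witt cohomology of the point and do vanish by the mod-$4$ constraints on the Witt weights established inductively. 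This is not interchangeable with your degree count; it is the argument that makes the induction close. (For part (1) the paper additionally observes that $WW(Th(p_1^*O(1)))=\emptyset$ and $WW(Th(O_{Gr(k-2,n-2)}(1)))=\emptyset$, which kills the $\mathbb{Z}$-against-$\mathbb{Z}$ contributions entirely.)

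A smaller inaccuracy: in part (1) the $\mathbb{Z}/\eta$-factor does not come from the vector bundle $p_2^*\mathscr{U}_{k-1,n-2}$ failing to be oriented. It comes from the $Gr(1,2)=\mathbb{P}^1$ factor: $Th(p_1^*O(1))=\mathbb{Z}(Gr(k-1,n-2))\otimes Th(O_{\mathbb{P}^1}(1))$ and $Th(O_{\mathbb{P}^1}(1))\cong\mathbb{Z}/\eta(1)[2]$.
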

\begin{proof}
Define the partial order \(\leq\) on \((k,n)\) by \((k',n')\leq (k,n)\) iff \(k'\leq k\) and \(n'\leq n\). We suppose that all the statements are true for any \((k',n')<(k,n)\). Let us prove the statements for \((k,n)\). The cases for \(k=0,1\) have already known.
\begin{enumerate}
\item Set \(M=Th(O_{Gr(k,n)}(1)|_{Gr(k,n)\setminus Gr(k-2,n-2)})\) and \(p_1:Gr(1,2)\times Gr(k-1,n-2)\longrightarrow Gr(1,2)\). By Proposition \ref{geo2}, we have Gysin triangles (see Proposition \ref{Gysin})
\[Th(O_{Gr(k,n-2)}(1))\longrightarrow M\longrightarrow Th(p_1^*O(1))(n-k-1)[2n-2k-2]\longrightarrow\cdots[1];\]
\[M\longrightarrow Th(O_{Gr(k,n)}(1))\longrightarrow Th(O_{Gr(k-2,n-2)}(1))(2n-2k)[4n-4k]\longrightarrow\cdots[1].\]
Now we know that
\[Th(p_1^*O(1))=\mathbb{Z}(Gr(k-1,n-2))\otimes Th(O_{\mathbb{P}^1}(1))=\mathbb{Z}(Gr(k-1,n-2))/{\eta}(1)[2]\]
so \(WW(Th(p_1^*O(1)))=\emptyset\). Hence the first triangle splits. Moreover, \(WW(Th(O_{Gr(k-2,n-2)}(1)))=\emptyset\) by induction, hence the second triangle splits as well.
\item By the Gysin triangle related to \(Gr(k-1,n-1)\longrightarrow Gr(k,n)\) and Proposition \ref{splitting}.
\item By the Gysin triangle related to \(Gr(k,n-1)\longrightarrow Gr(k,n)\).
\item By the Gysin triangle related to \(Gr(k-1,n-1)\longrightarrow Gr(k,n)\) and Proposition \ref{splitting}.
\item By the Gysin triangle related to \(Gr(k,n-1)\longrightarrow Gr(k,n)\) and Proposition \ref{splitting}.
\end{enumerate}
\end{proof}
\begin{remark}\label{ex3}
Since all Thom spaces of Grassmannians split, we could apply Proposition \ref{sq} and \ref{basis} to give an additive basis of \(\widetilde{CH}^{{*}}(Gr(k,n),\mathscr{L})\). For example, suppose \((k,n)=(2,4)\) or \((k,n)=(3,6)\).
\[\begin{array}{|c|c|c|}\hline
															&\textbf{GW}(F)												&\mathbb{Z}\\\hline
\widetilde{CH}^{{*}}(Gr(2,4))			&1,\sigma_{2^2}												&2\sigma_{1},2\sigma_{2},\sigma_{2,1},\sigma_{2}+\sigma_{1^2}\\\hline
\widetilde{CH}^{{*}}(Gr(3,6))			&\begin{array}{c}1,\sigma_{2^2},\\\sigma_{3,1,1},\sigma_{3^3}\end{array}	&\begin{array}{c}2\sigma_{1},\sigma_{2}+\sigma_{1^2},2\sigma_{2},\sigma_{2,1},2\sigma_{3},\sigma_{3,1},2\sigma_{1^3},\\\sigma_{2,1,1},2\sigma_{3,2},\sigma_{3,3},2\sigma_{2,2,1},\sigma_{2^3},2\sigma_{3,2,1},2\sigma_{3,3,1},\\\sigma_{3,3,1}+\sigma_{3,2,2},\sigma_{3,3,2}\end{array}\\\hline
\widetilde{CH}^{{*}}(Gr(2,4),O(1))	&\sigma_{2},\sigma_{1^2}									&2,\sigma_{1},2\sigma_{2,1},\sigma_{2^2}\\\hline
\widetilde{CH}^{{*}}(Gr(3,6),O(1))	&																	&\begin{array}{c}2,\sigma_{1},2\sigma_{2},\sigma_{3},2\sigma_{1^2},\sigma_{1^3},2\sigma_{2,1},2\sigma_{3,1},\\2\sigma_{2,2},2\sigma_{3,2},\sigma_{3,1}+\sigma_{2,2}+\sigma_{2,1,1},\sigma_{3,2}+\sigma_{3,1,1},\\\sigma_{3,2}+\sigma_{2,2,1},\sigma_{3,2,1},2\sigma_{3,3},\sigma_{3,3,1},2\sigma_{2^3},\sigma_{3,2,2},\\2\sigma_{3,3,2},\sigma_{3^3}\end{array}\\\hline
\end{array}\]
\end{remark}
\begin{proposition}\label{decomp1}
\begin{enumerate}
\item For every \(s\in\widetilde{CH}^i(Gr(k,n))\), it could be written as \(s_1+s_2\) where \(s_1\in\textbf{GW}(F)[p_j,p_j^{\perp}]\otimes\wedge[\mathcal{R}]\) (the orientation class \(\mathcal{R}\) won't appear if \(k(n-k)\) is even) and \(s_2\in Ker_t(Sq^2\circ\pi)_i\).
\item For every \(s\in\widetilde{CH}^i(Gr(k,n),O(1))\), it could be written as \(s_1+s_2\) where \(s_1\in\textbf{GW}(F)[p_j,p_j^{\perp},e_k,e_{n-k}^{\perp}]\) and \(s_2\in Ker_t(Sq^2_{O(1)}\circ\pi)_i\).
\end{enumerate}
\end{proposition}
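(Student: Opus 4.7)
The plan is to chain together Theorem \ref{sq}(2), Proposition \ref{ce}, and Corollary \ref{partial}. Given $s \in \widetilde{CH}^i(Gr(k,n),\mathscr{L})$, I would first invoke Theorem \ref{sq}(2) to obtain
\[ s = a + b, \qquad a \in \textbf{I}(k)\cdot H^i(Gr(k,n),\textbf{W}(\mathscr{L})), \quad b \in Ker(Sq^2_{\mathscr{L}}\circ\pi)_i, \]
with both summands embedded in $\widetilde{CH}^i$ via the sections constructed in the proof of that theorem. Proposition \ref{ce} then identifies $H^*(Gr(k,n),\textbf{W}(\mathscr{L}))$ as a free $\textbf{W}(k)$-module spanned by monomials in $p_j,p_j^\perp$ (multiplied by $\mathcal{R}$ in the odd-odd case of part (1), or by $e_k$ or $e_{n-k}^\perp$ in the twisted case of part (2)). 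Since each of these generators lifts canonically to $\widetilde{CH}^*$, one can exhibit an explicit element $a' \in \textbf{I}(k)[p_j,p_j^\perp]\otimes\wedge[\mathcal{R}]$ (or the Euler analogue in part (2)) whose image in Witt cohomology coincides with that of $a$.

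The discrepancy $a - a'$ lies in the kernel of $\widetilde{CH}^i \to H^i(Gr(k,n),\textbf{W}(\mathscr{L}))$, which by Theorem \ref{sq}(2) forces it into the $Ker(Sq^2_{\mathscr{L}}\circ\pi)$-summand (the complementary $\textbf{I}(k)\cdot H^*$ summand injects into Witt cohomology). Setting $b' := b + (a - a') \in Ker(Sq^2_{\mathscr{L}}\circ\pi)_i$, Corollary \ref{partial} and Proposition \ref{basis} split $Ker(Sq^2_{\mathscr{L}}\circ\pi)_i = N_{k,n}^i \oplus K^i$ in part (1) (resp.\ $M_{k,n}^i \oplus K^i$ in part (2)), where $K$ denotes the subgroup spanned by the $S_\Lambda$ attached to irredundant non-even $\Lambda$. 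Writing $b' = n + r$ accordingly and invoking Proposition \ref{ce} to identify $N_{k,n}$ (resp.\ $M_{k,n}$) with the image in $CH^*$ of $\mathbb{Z}[p_j,p_j^\perp]$ (resp.\ extended by $e_k,e_{n-k}^\perp$), one lifts $n$ to a polynomial $n' \in R := \textbf{GW}(k)[p_j,p_j^\perp]\otimes\wedge[\mathcal{R}]$ (or the Euler analogue). Then $s_1 := a' + n' \in R$ and $s_2 := s - s_1$ give the required decomposition, since $\gamma(s_2) = r \in K$ represents the class of $s_2$ in $Ker_t(Sq^2_{\mathscr{L}}\circ\pi)_i$.

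The main obstacle is reconciling the abstract splitting from Theorem \ref{sq}(2), which produces $a$ as an element of an abstract summand isomorphic to $\textbf{I}(k)\cdot H^*$ rather than as a literal polynomial, with the concrete polynomial subring $R$ inside $\widetilde{CH}^*$. The discrepancy introduced by passing to polynomial representatives is itself an element of $Ker(Sq^2_{\mathscr{L}}\circ\pi)$, which must be re-decomposed via Corollary \ref{partial} and absorbed either into $s_1$ (using Proposition \ref{ce} for the even part) or into $s_2$ (for the non-even part). The whole scheme rests on Wendt's computation in \cite{W1} that underpins Proposition \ref{ce}, providing the explicit correspondence between even Young tableaux and polynomial expressions in the Pontryagin and Euler classes.
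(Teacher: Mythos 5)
The argument breaks at the claim that $a-a'$ lies in the $Ker(Sq^2_{\mathscr{L}}\circ\pi)$-summand. Injectivity of the map $\textbf{I}(k)\cdot H^*(\textbf{W}(\mathscr{L}))\to H^*(\textbf{W}(\mathscr{L}))$ only shows that $Ker(\widetilde{CH}\to H^*(\textbf{W}))$ meets this summand trivially; it does not force $Ker(w)$ into the complementary summand, because the $Ker(Sq^2_{\mathscr{L}}\circ\pi)$-summand itself carries a nontrivial Witt projection (the generators $s_k$ attached to even tableaux satisfy $w(s_k)\neq 0$). Thus the two projections of an element of $Ker(w)$ can cancel. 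The precise statement is Proposition \ref{etaIm}: $Ker(w)=Im(\partial)=\bigoplus h\cdot\textbf{GW}(k)\cdot s_k\oplus\bigoplus\mathbb{Z}\cdot h(t_k)\oplus\bigoplus\mathbb{Z}\cdot\partial(t_k)$, and the hyperbolic piece $h\cdot\textbf{GW}(k)\cdot s_k$ is \emph{not} contained in the $Ker(Sq^2_{\mathscr{L}}\circ\pi)$-summand, since $h s_k=2s_k+(h-2)s_k$ with $(h-2)s_k$ landing in $\textbf{I}(k)\cdot H^*(\textbf{W})$ and $h-2\neq 0$ for a general base field. Consequently $b':=b+(a-a')$ need not lie in $Ker(Sq^2_{\mathscr{L}}\circ\pi)$, and the subsequent appeal to Corollary \ref{partial} to split $b'$ is unjustified. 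The closing inference that $\gamma(s_2)=r$ forces $s_2\in Ker_t(Sq^2_{\mathscr{L}}\circ\pi)_i$ is also not valid on its own: $\gamma$ has kernel $\bigoplus\textbf{I}(k)\cdot s_k$, so fixing $\gamma(s_2)$ does not pin down $s_2$ without controlling its Witt projection.

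The paper sidesteps this by working directly with the Cartesian-square coordinates $s=(u,v)\in H^i(Gr(k,n),\textbf{I}^i)\oplus CH^i$ and applying \cite[Theorem 6.4]{W1} to $u$, writing $u=u_1+u_2$ with $u_1$ a polynomial in the Pontryagin classes over $\textbf{W}(k)$ and $u_2\in Im(\beta)$; lifting $u_1$ to $\psi\in\textbf{GW}(k)[p_j,p_j^\perp]\otimes\wedge[\mathcal{R}]$ gives $w(s-\psi)=0$ outright, so $s-\psi\in Im(\partial)$, after which a second correction $\psi'\in h\cdot\textbf{GW}(k)[p_j,p_j^\perp]\otimes\wedge[\mathcal{R}]$ absorbs the hyperbolic summand. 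Your outline is repairable by replacing the erroneous localization of $a-a'$ with the Proposition \ref{etaIm} decomposition of $Ker(w)$ and adding a hyperbolic-multiple polynomial term to $s_1$, but as written it skips the essential step.
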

\begin{proof}
\begin{enumerate}
\item Suppose that \(s\) corresponds to \((u,v)\in H^i(Gr(k,n),\textbf{I}^i)\oplus CH^i(Gr(k,n))\). By \cite[Theorem 6.4]{W1}, \(u\) can be written as \(u_1+u_2\) where
\[u_1=\varphi(p_j,p_j^{\perp})\]
for some polynomial \(\varphi\) with coefficients in \(\textbf{W}(F)\) and \(u_2\in Im(\beta)\). The \(\varphi\) can be lifted to
\[\psi\in \textbf{GW}(F)[p_j,p_j^{\perp}]\otimes\wedge[\mathcal{R}]\subseteq\widetilde{CH}^{{*}}(Gr(k,n)).\]
Hence \(s-\psi\in 2N_{k,n}^i+Ker_t(Sq^2\circ\pi)_i\subseteq\widetilde{CH}^i(Gr(k,n))\) (see Proposition \ref{etaIm}). Then we could find
\[\psi'\in h\cdot\textbf{GW}(F)[p_j,p_j^{\perp}]\otimes\wedge[\mathcal{R}]\]
such that \(s-\psi-\psi'\in Ker_t(Sq^2\circ\pi)_i\subseteq\widetilde{CH}^i(Gr(k,n))\).
\item Same proof as (1).
\end{enumerate}
\end{proof}
\begin{proposition}\label{can}
For every \(j\in\mathbb{N}\), we have a map \(\varphi_j\) such that the following diagram commutes
\[
	\xymatrix
	{
		N_{k,n}^j\ar[r]^-{\varphi_j}\ar[d]	&H^j(Gr(k,n),\textbf{W})\ar[d]\\
		Ker(Sq^2)_j\ar[r]		&E^j(Gr(k,n))
	}.
\]
Thus we obtain an injection
\[u:N_{k,n}^j\otimes_{\mathbb{Z}}\textbf{GW}(F)\longrightarrow\widetilde{CH}^j(Gr(k,n)).\]
When \(k(n-k)\) is odd, the element \(u(\sigma_{n-k,1^{k-1}})\) is denoted by \(\mathcal{R}\), which is the orientation class.
\end{proposition}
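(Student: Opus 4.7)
The plan is to exploit Theorem \ref{sq}: since $\mathbb{Z}(Gr(k,n))$ splits in $\widetilde{DM}^{eff}(pt,\mathbb{Z})$ by Theorem \ref{Grass}, $H^j(Gr(k,n),\textbf{W})$ will be free over $\textbf{W}(k)$ and the canonical map $\delta:H^j(Gr(k,n),\textbf{W})\otimes_{\textbf{W}(k)}\mathbb{Z}/2\longrightarrow E^j(Gr(k,n))$ will be an isomorphism. Combining this with Proposition \ref{imsq}, which gives $Ker(Sq^2)_j=Im(Sq^2)_j\oplus N_{k,n}^j/2N_{k,n}^j$, I will identify the left column $N_{k,n}^j\longrightarrow Ker(Sq^2)_j\longrightarrow E^j(Gr(k,n))$ with reduction modulo $2$ followed by a natural isomorphism $\bar{\jmath}:N_{k,n}^j/2\cong E^j(Gr(k,n))$. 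The composite $\delta^{-1}\bar{\jmath}$ then serves as a canonical isomorphism from $N_{k,n}^j/2$ to $H^j(Gr(k,n),\textbf{W})\otimes_{\textbf{W}(k)}\mathbb{Z}/2$, and I will define $\varphi_j$ as any $\mathbb{Z}$-linear lift of $\delta^{-1}\bar{\jmath}$ along the surjection $H^j(Gr(k,n),\textbf{W})\twoheadrightarrow H^j(Gr(k,n),\textbf{W})\otimes_{\textbf{W}(k)}\mathbb{Z}/2$; such a lift exists because $N_{k,n}^j$ is a free abelian group (being a subgroup of the torsion-free $CH^j$) and the surjection splits as a map of abelian groups via any $\textbf{W}(k)$-basis of $H^j$. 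Commutativity of the square then holds tautologically.

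To construct the injection $u$, I will use the two Cartesian squares of Corollary \ref{Cartesian}. For $\sigma\in N_{k,n}^j$, fullness of $\sigma$ forces $Sq^2(\pi(\sigma))=0$, so $\pi(\sigma)\in Ker(Sq^2)_j$ and $\sigma\in Ker(Sq^2\circ\pi)_j$; the square just proved means that $\varphi_j(\sigma)\in H^j(Gr(k,n),\textbf{W})$ and $\pi(\sigma)\in Ker(Sq^2)_j$ share the same image in $E^j(Gr(k,n))$. The bottom Cartesian square then yields a unique lift $\tilde\varphi_j(\sigma)\in H^j(Gr(k,n),\textbf{I}^j)$, and pairing $\sigma$ with $\tilde\varphi_j(\sigma)$ through the top Cartesian square produces an element $u(\sigma\otimes 1)\in\widetilde{CH}^j(Gr(k,n))$. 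I will extend by $u(\sigma\otimes a)=a\cdot u(\sigma\otimes 1)$ using the $\textbf{GW}(k)$-module structure of $\widetilde{CH}^j$.

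For injectivity, I will fix a $\mathbb{Z}$-basis $\{\sigma_i\}$ of $N_{k,n}^j$ and suppose $\sum_i u(\sigma_i\otimes a_i)=0$. Projecting to the Witt component yields $\sum_i\bar a_i\varphi_j(\sigma_i)=0$ in $H^j(Gr(k,n),\textbf{W})$, where $\bar a_i$ is the image of $a_i$ under $\textbf{GW}(k)\to\textbf{W}(k)$. By construction $\{\bar\varphi_j(\sigma_i)\}$ is a $\mathbb{Z}/2$-basis of $H^j(Gr(k,n),\textbf{W})\otimes_{\textbf{W}(k)}\mathbb{Z}/2$, and the $\textbf{W}(k)$-rank of $H^j(Gr(k,n),\textbf{W})$ matches $|\{\sigma_i\}|$ via Proposition \ref{ce}; hence $\{\varphi_j(\sigma_i)\}$ is in fact a $\textbf{W}(k)$-basis of $H^j(Gr(k,n),\textbf{W})$. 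So every $\bar a_i=0$, forcing $a_i=m_ih$ for some $m_i\in\mathbb{Z}$ (since $Ker(\textbf{GW}(k)\to\textbf{W}(k))=\mathbb{Z}\cdot h$). Projecting instead to the $CH^j$ component gives $\sum_i 2m_i\sigma_i=0$, and $\mathbb{Z}$-independence of $\{\sigma_i\}$ in the torsion-free group $CH^j$ forces $m_i=0$.

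The hard part is the existence of a lift $\varphi_j$ with the correct mod-$2$ reduction; this collapses to the standard freeness argument above once Theorem \ref{sq} guarantees that $H^j(Gr(k,n),\textbf{W})$ is a free $\textbf{W}(k)$-module, so there is no serious obstacle. A more explicit alternative would be to take $\varphi_j=\lambda\circ\gamma^{-1}$ directly from Proposition \ref{ce}, at the cost of checking by hand that $\lambda(c_i)=p_{2i}$ reduces modulo $Im(Sq^2)$ to the doubled Schubert cycle $\gamma(c_i)$, a computation that uses the known relation between Pontryagin and Chern classes mod $2$ together with the Steenrod-square action on Schubert cycles from \cite[Theorem 4.2]{W2}. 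This explicit description also identifies $u(\sigma_{n-k,1^{k-1}})$ with the orientation class $\mathcal{R}$ of \cite[Theorem 5.6]{W1} when $k(n-k)$ is odd.
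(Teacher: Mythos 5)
Your primary route is genuinely different from the paper's: you build $\varphi_j$ as an abstract lift of $\delta^{-1}\bar\jmath$ using the freeness supplied by Theorem~\ref{sq}, whereas the paper defines $\varphi_j=\lambda\circ\gamma^{-1}$ concretely via Proposition~\ref{ce} and then verifies commutativity of the square by an explicit Giambelli/Pieri/Cramer computation showing $\sigma_{2a_1,\dots}^2\sim\gamma(\sigma_{a_1,\dots})$ modulo $Im(Sq^2)$.

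The gap in your version is the step ``hence $\{\varphi_j(\sigma_i)\}$ is in fact a $\textbf{W}(k)$-basis of $H^j(Gr(k,n),\textbf{W})$.'' You are concluding this from (i) the $\mathbb{Z}/2$-reductions form a basis of $H^j\otimes_{\textbf{W}(k)}\mathbb{Z}/2$ and (ii) the ranks agree. That deduction is Nakayama's lemma, and it is only valid when $\textbf{I}(k)$ lies in the Jacobson radical of $\textbf{W}(k)$, which fails whenever $k$ is formally real: already for $k=\mathbb{R}$ one has $\textbf{W}(k)=\mathbb{Z}$ with $\textbf{I}(k)=2\mathbb{Z}$, and the element $3\in\mathbb{Z}$ reduces to a basis mod $2$ without generating $\mathbb{Z}$. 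The injectivity of $u$ does not need a full basis but does need $\textbf{W}(k)$-linear independence of the $\varphi_j(\sigma_i)$, and that too is not automatic from the mod-$\textbf{I}(k)$ data when $\textbf{W}(k)$ is neither local nor a domain (for instance $\textbf{W}(\mathbb{Q})$ has torsion). So as written, injectivity of $u$ is not established in the required generality. The paper avoids the issue entirely: its $\varphi_j=\lambda\circ\gamma^{-1}$ becomes a $\textbf{W}(k)$-module isomorphism after tensoring by $\textbf{W}(k)$ essentially by the definition of $\lambda$ in Proposition~\ref{ce}, so linear independence is built in, and the only real work is the commutativity of the square.

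You do mention the paper's route as ``a more explicit alternative,'' and your description of what needs checking there (that the determinant of Pontryagin classes reduces mod $Im(Sq^2)$ to the doubled Schubert cycle) is correct in spirit, but that computation is precisely the content of the paper's proof and is not carried out in your text. Likewise, the identification of $u(\sigma_{n-k,1^{k-1}})$ with the orientation class $\mathcal{R}$ depends on having the explicit $\lambda$ at hand; an abstract lift gives no canonical choice. To close the argument you should either (a) specialize to the explicit $\varphi_j=\lambda\circ\gamma^{-1}$, invoking Proposition~\ref{ce} for linear independence and carrying out the $\sigma_{2j}^2\sim\gamma(\sigma_j)$ induction, or (b) if you want to keep the abstract lift, add the argument that the lift can be modified within $\textbf{I}(k)\cdot H^j$ (which does not disturb the commuting square, by Theorem~\ref{sq}) so as to hit a chosen $\textbf{W}(k)$-basis of $H^j$.
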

\begin{proof}
Let us prove the first statement. The second follows from Theorem \ref{sq}.
\begin{enumerate}
\item Suppose \(j=4i\) and let \(\varphi_j=\lambda\circ\gamma^{-1}\). The \(Im(Sq^2)\) is an ideal of \(Ker(Sq^2)\), which is an subalgebra of \(Ch^*(Gr(k,n))\). Denote by \(\sim\) the equivalent relation on \(Ker(Sq^2)\) modulo by \(Im(Sq^2)\). So it suffices to prove that for every \(n-k\geq a_1\geq\cdots\geq a_k\geq 0\), we have
\[d:=\left|\begin{array}{ccc}\sigma_{2a_1}&\sigma_{2a_1+2}&\cdots\\\sigma_{2a_2-2}&\sigma_{2a_2}&\cdots\\\cdots&&\end{array}\right|^2\sim\gamma(\sigma_{a_1,\cdots,a_k})\]
by the Giambelli formula. Suppose that \(a_t=0\) if \(t>l\). Let us prove by induction on \(l\). If \(l=1\), we have
\[\sigma_{2j}^2=\gamma(\sigma_j)+Sq^2(\sigma_{4j-1}+\sigma_{4j-3,2}+\cdots+\sigma_{2j+1,2j-2}).\]
In general case, the Cramer's rule and induction gives
\[d\sim\sum_{j=1}^l\gamma(\sigma_{a_j-j+1})\gamma(\sigma_{a_1+1,\cdots,a_{j-1}+1,a_{j+1},\cdots,a_l}).\]
Again the Cramer's rule says
\[\sigma_{a_1,\cdots,a_l}=\sum_{j=1}^l\sigma_{a_j-j+1}\sigma_{a_1+1,\cdots,a_{j-1}+1,a_{j+1},\cdots,a_l}\]
so it remains to prove that
\[\gamma(\sigma_a\sigma_{a_1,\cdots,a_l})\sim\gamma(\sigma_a)\gamma(\sigma_{a_1,\cdots,a_l})\]
for arbitrary \(a,a_1,\cdots,a_l\). It suffices to prove the untruncated case at first, then apply the truncation map. But by Proposition \ref{imsq}, it suffices to compare their coefficients on even Young diagrams, which is obvious.
\item Suppose that \(j=4i+n-1\), that \(k(n-k)\) is odd and let \(\varphi_j=\lambda'\circ\gamma'^{-1}\). The statement follows from (1) since we have
\[Sq^2(x)\cdot\sigma_{n-k,1^{k-1}}=Sq^2(x\cdot\sigma_{n-k,1^{k-1}}).\]
\item Otherwise \(N_{k,n}^j\) vanishes.
\end{enumerate}
\end{proof}
\begin{proposition}\label{can1}
For every \(j\in\mathbb{N}\), we have a map \(\psi_j\) such that the following diagram commutes
\[
	\xymatrix
	{
		M_{k,n}^j\ar[r]^-{\psi_j}\ar[d]	&H^j(Gr(k,n),\textbf{W}(O(1)))\ar[d]\\
		Ker(Sq^2_{O(1)})_j\ar[r]		&E^j(Gr(k,n),O(1))
	}.
\]
Thus we obtain an injection
\[v:M_{k,n}^j\otimes_{\mathbb{Z}}\textbf{GW}(F)\longrightarrow \widetilde{CH}^j(Gr(k,n),O(1)).\]
\end{proposition}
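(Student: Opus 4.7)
The plan is to run the construction case-by-case exactly parallel to Proposition \ref{can}, using the classification of $M_{k,n}^j$ given in parts (3), (4), (5) of Proposition \ref{ce} together with the corresponding Witt-cohomology isomorphisms $\lambda_1, \lambda_2, \lambda_3$. In each case the candidate $\psi_j$ is taken to be $\lambda_i\circ\gamma_i^{-1}$; the existence of the injection $v$ is then immediate from the Cartesian square of Corollary \ref{Cartesian}, once the top square is shown to commute, because this identifies $v(s)$ as a pair $(\psi_j(s),s)\in H^j(Gr(k,n),\textbf{W}(O(1)))\times_{E^j(Gr(k,n),O(1))} Ker(Sq^2_{O(1)})_j$, and the injectivity follows from the injectivity of $M_{k,n}^j\hookrightarrow Ch^j(Gr(k,n))\supseteq Ker(Sq^2_{O(1)})_j$.

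The substance of the proof is therefore to verify, for every generator of $M_{k,n}^j$, that its image in $Ker(Sq^2_{O(1)})_j$ agrees modulo $Im(Sq^2_{O(1)})_j$ with the class of the corresponding Pontryagin/Euler polynomial under $\lambda_i$. First I would handle the case where $k$ is even and $n\not\equiv 0\ (\mathrm{mod}\ 4)$: here every generator of $M_{k,n}^j$ has the form $\gamma_1(\sigma_{a_1,\cdots,a_l})=\sigma_{1^k}\cdot\gamma(\sigma_{a_1,\cdots,a_l})$, and I propose to multiply the untwisted identity of Proposition \ref{can}(1) by $\sigma_{1^k}=e_k$. This reduces the verification to the untwisted identity $\gamma(\sigma_{a_1,\cdots,a_l})\sim\lambda(\sigma_{a_1,\cdots,a_l})$ modulo $Im(Sq^2)$ proved in Proposition \ref{can}, combined with the Leibniz-type identity
\[Sq^2_{O(1)}(x\cdot e_k)=Sq^2(x)\cdot e_k,\]
which follows from $Sq^2_{O(1)}(e_k)=0$ (itself a consequence of $e_k$ lifting to $\widetilde{CH}^k(Gr(k,n),O(1))$ via the image of Chern classes under $\textbf{I}^k\to Ch^k$, cf.\ Proposition \ref{ce}(3)). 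The case $n-k$ even, $n\not\equiv0\ (\mathrm{mod}\ 4)$ is symmetric, using $\gamma_2$ and $e_{n-k}^\perp$ instead. The case $k(n-k)$ even with $n\equiv 0\ (\mathrm{mod}\ 4)$ splits as a direct sum of these two by the definition of $\gamma_3=\gamma_1\oplus\gamma_2$ and $\lambda_3=\lambda_1\oplus\lambda_2$, so nothing new is required.

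The main obstacle I foresee is verifying the Leibniz-type identity $Sq^2_{O(1)}(x\cdot e_k)=Sq^2(x)\cdot e_k$ cleanly, since $Sq^2_{O(1)}=Sq^2+c_1(O(1))\cup(-)$ carries an extra twist; one must show that the correction term $c_1(O(1))\cdot x\cdot e_k$ is absorbed, equivalently that $c_1(O(1))\cdot e_k\in Im(Sq^2_{O(1)})$ or, better, that $Sq^2_{O(1)}(e_k)=0$ in $E^{k+1}(Gr(k,n),O(1))$. This is where one invokes that $e_k$ (and $e_{n-k}^\perp$) is already a Chow-Witt class, so its reduction lies in $Ker(Sq^2_{O(1)})$, and that its class modulo $Im(Sq^2_{O(1)})$ is detected by the Witt-cohomological Euler class, which is precisely the image of $\gamma_1$ (resp.\ $\gamma_2$) under $\lambda_1$ (resp.\ $\lambda_2$). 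Once this is in place, the Giambelli/Cramer recursion in Proposition \ref{can}(1) transports verbatim to the twisted setting, completing the proof.
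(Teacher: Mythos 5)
Your proposal is correct and follows essentially the same route as the paper: in each nonzero case the paper also sets $\psi_j=\lambda_i\circ\gamma_i^{-1}$ and reduces commutativity to Proposition \ref{can} via the identities $Sq^2(x)\cdot\sigma_{1^k}=Sq^2_{O(1)}(x\cdot\sigma_{1^k})$ and $Sq^2(x)\cdot\sigma_{n-k}=Sq^2_{O(1)}(x\cdot\sigma_{n-k})$, which are exactly the Leibniz-type identities you isolate (the paper states them without the elaboration you give, namely that $\sigma_{1^k}$ and $\sigma_{n-k}$ are full twisted tableaux, so $Sq^2_{O(1)}$ kills them and the derivation property of $Sq^2$ does the rest). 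The step from the commuting square to the injection $v$ is handled in both accounts by the Cartesian square of Corollary \ref{Cartesian}, as you say.
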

\begin{proof}
\begin{enumerate}
\item If \(k\) is even, \(n\neq 0\ (mod\ 4)\) and \(j=4i+k\), we set \(\psi_j=\lambda_1\circ\gamma_1^{-1}\). Then the statement follows from Proposition \ref{can} because we have
\[Sq^2(x)\cdot\sigma_{1^k}=Sq^2_{O(1)}(x\cdot\sigma_{1^k}).\]
\item If \(n-k\) is even, \(n\neq 0\ (mod\ 4)\) and \(j=4i+n-k\), we set \(\psi_j=\lambda_2\circ\gamma_2^{-1}\). Then the statement follows from Proposition \ref{can} because we have
\[Sq^2(x)\cdot\sigma_{n-k}=Sq^2_{O(1)}(x\cdot\sigma_{n-k}).\]
\item If \(k(n-k)\) is even, \(n=0\ (mod\ 4)\) and \(j=4i+k\), we set \(\psi_j=\lambda_3\circ\gamma_3^{-1}\). Then the statement follows from (1) and (2).
\item Otherwise \(M_{k,n}^j\) vanishes.
\end{enumerate}
\end{proof}
The Proposition \ref{can} and \ref{can1} were partially announced in \cite[Proposition 3.6]{W2}. We gave the complete version for clarity.

\begin{definition}\label{special}
Suppose \(\Lambda=(a_1,\cdots,a_l)\) is an irredundant Young diagram, which is untwisted and \(\mathscr{L}=O_{Gr(k,n)}\) (resp. twisted and \(\mathscr{L}=O(1)\)).

If \(\Lambda\) is even, define
\[p(\Lambda)=u(\Lambda)\in\widetilde{CH}^{|\Lambda|}(Gr(k,n),\mathscr{L})\]
by Proposition \ref{can}.

Otherwise for every \(1<i_1<\cdots<i_l\leq |A(\Lambda)|\), define
\[c(\Lambda_{i_1,\cdots,i_l})\in\eta^{|\Lambda_{i_1,\cdots,i_l}|}_{MW}(Gr(k,n),\mathscr{L})\]
to be the unique morphism corresponding to
\[(\Lambda_{i_1,\cdots,i_l},Sq^2_{\mathscr{L}}(\Lambda_{i_1,\cdots,i_l}))\]
by Theorem \ref{eta}, where the \(Sq^2\) here is the integral lift (see remarks before Proposition \ref{basis}).
\end{definition}
\begin{theorem}\label{Grass1}
In the context above, we have:
\begin{enumerate}
\item The morphism
\[\mathbb{Z}(Gr(k,n))\xrightarrow{(p(\Lambda),c(\Lambda_{i_1,\cdots,i_l}))}\bigoplus_{\Lambda\textrm{ even}}\mathbb{Z}((|\Lambda|))\oplus\bigoplus_{\Lambda\textrm{ irred. not full}, i_1>1}\mathbb{Z}/{\eta}((|\Lambda_{i_1,\cdots,i_l}|))\]
is an isomorphism in \(\widetilde{DM}(pt,\mathbb{Z})\).
\item The morphism
\[\tiny Th(O_{Gr(k,n)}(1))\xrightarrow{(p(\Lambda),c(\Lambda_{i_1,\cdots,i_l}))}\bigoplus_{\Lambda\textrm{ even}}\mathbb{Z}((|\Lambda|+1))\oplus\bigoplus_{\Lambda\textrm{ irred. not full}, i_1>1}\mathbb{Z}/{\eta}((|\Lambda_{i_1,\cdots,i_l}|+1))\]
is an isomorphism in \(\widetilde{DM}(pt,\mathbb{Z})\).
\end{enumerate}
\end{theorem}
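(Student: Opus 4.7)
The plan is to verify both parts of the theorem by applying criterion (3) of Proposition \ref{gen}, with $X = Gr(k,n)$ and $\mathscr{L} = O_X$ in part (1) (via the identification $Th(O_X) = \mathbb{Z}(Gr(k,n))(1)[2]$, under which our $\mathbb{Z}((|\Lambda|))$ summands become $\mathbb{Z}((|\Lambda|+1))$) and $\mathscr{L} = O(1)$ in part (2). The splitting hypothesis that feeds Proposition \ref{gen} is exactly what Theorem \ref{Grass} provides: cases (4)--(5) split $\mathbb{Z}(Gr(k,n))$, while cases (1)--(3) split $Th(O_{Gr(k,n)}(1))$.

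Once splitting is granted, the two identifications demanded by criterion (3) must be verified. First, $\{w(p(\Lambda))\}_{\Lambda \text{ even}, |\Lambda|=n}$ must be a free $\textbf{W}(k)$-basis of $H^n(Gr(k,n), \textbf{W}(\mathscr{L}))$; this is read off directly from Proposition \ref{ce} combined with Propositions \ref{can} and \ref{can1}, for the isomorphisms $\lambda, \lambda', \lambda_1, \lambda_2, \lambda_3$ send the even-tableau basis of $N_{k,n}^\bullet$ or $M_{k,n}^\bullet$ to a $\textbf{W}(k)$-basis of the Witt cohomology, and by Definition \ref{special} the class $p(\Lambda) = u(\Lambda)$ (resp.\ $v(\Lambda)$) is the canonical lift supplied there. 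Second, the classes $\gamma(p(\Lambda))$, $\gamma(h(c(\Lambda_{i_1,\cdots,i_l})))$, $\gamma(\partial(c(\Lambda_{i_1,\cdots,i_l})))$ must freely generate $Ker(Sq^2_{\mathscr{L}}\circ\pi)_n$. By Proposition \ref{basis}(2) and Corollary \ref{partial}, this kernel has the additive $\mathbb{Z}$-basis
\[ \bigcup_{\Lambda \text{ even}} \{\Lambda\} \;\cup\; \bigcup_{\substack{\Lambda \text{ irred. not full}\\ i_1 > 1}} \bigl(\{Sq^2_{\mathscr{L}}(\Lambda_{i_1,\cdots,i_l})\} \cup \{2\Lambda_{i_1,\cdots,i_l}\}\bigr), \]
so the matching reduces to the three identities $\gamma(p(\Lambda)) = \Lambda$, $\gamma(\partial(c(\Lambda_{i_1,\cdots,i_l}))) = Sq^2_{\mathscr{L}}(\Lambda_{i_1,\cdots,i_l})$ and $\gamma(h(c(\Lambda_{i_1,\cdots,i_l}))) = 2\Lambda_{i_1,\cdots,i_l}$. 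The first is immediate from the construction of $u$ and $v$. The latter two follow by unwinding Definition \ref{special}: $c(\Lambda_{i_1,\cdots,i_l})$ is defined as the unique preimage of $(\Lambda_{i_1,\cdots,i_l}, Sq^2_{\mathscr{L}}(\Lambda_{i_1,\cdots,i_l}))$ under the Cartesian square of Proposition \ref{eta}, and the computation $\gamma(h(x,y)) = 2x$, $\gamma(\partial(x,y)) = y$ was already carried out inside the proof of Proposition \ref{gen}. Degrees match automatically, since $h$ preserves degree while $\partial$ raises it by one.

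The main technical obstacle is the parity bookkeeping needed to read Proposition \ref{ce} uniformly across its five regimes (untwisted with $k(n-k)$ odd or even, and twisted with $k$ even vs.\ $k$ odd, together with the trichotomy $n \equiv 0 \pmod 4$ or not). In each regime one must check that the even tableaux of Lemmas \ref{ce1} and \ref{ce2}, together with the multiplicative factors $\mathcal{R}$, $\sigma_{1^k}$ or $\sigma_{n-k}$ appearing in Proposition \ref{ce}, indeed parametrize the Witt cohomology basis built by $\lambda$ and its twisted analogues; in particular one needs $p(\sigma_{n-k,1^{k-1}}) = \mathcal{R}$ in the odd--odd case. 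This step introduces no new computation beyond what Lemmas \ref{ce1}, \ref{ce2} and Proposition \ref{ce} already supply, but the case analysis is what makes the final single-formula statement possible.
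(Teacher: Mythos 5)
Your proposal is correct and follows essentially the same route as the paper's own proof: invoke Theorem \ref{Grass} for the splitting hypothesis, apply criterion (3) of Proposition \ref{gen}, match the $\textbf{W}(k)$-basis of $H^\bullet(Gr(k,n),\textbf{W}(\mathscr{L}))$ via Propositions \ref{ce}, \ref{can}, \ref{can1}, and match the $\mathbb{Z}$-basis of $Ker(Sq^2_\mathscr{L}\circ\pi)$ via Proposition \ref{basis}(2) and Corollary \ref{partial}, using the computations $\gamma(u(\Lambda))=\Lambda$, $\gamma(h(x,y))=2x$, $\gamma(\partial(x,y))=y$. You spell out the degree bookkeeping and the $Th(O_X)=\mathbb{Z}(X)(1)[2]$ shift more explicitly than the paper's terse proof does, but the underlying argument is identical.
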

\begin{proof}
We adopt the notations in Proposition \ref{gen}.
\begin{enumerate}
\item Suppose \(\Lambda\) is even. We have
\[\gamma(u(\Lambda))=\Lambda\]
by Proposition \ref{can}. Suppose \((x,y)\in\eta_{MW}^n(Gr(k,n))\). We showed in Proposition \ref{gen} that
\[\begin{array}{cc}\gamma(h(x,y))=2x&\gamma(\partial(x,y))=y\end{array}.\]
So the \(\{c(\Lambda_{i_1,\cdots,i_l})|i_1>1\}\) is just the basis of \(Ker_t(Sq^2\circ\pi)\) given in Proposition \ref{basis}. By Theorem \ref{Grass}, we suppose
\[\mathbb{Z}(Gr(k,n))=\bigoplus_i\mathbb{Z}(i)[2i]^{\oplus w_i}\oplus\bigoplus_j\mathbb{Z}/\eta(j)[2j]^{\oplus t_j}.\]
By \cite[Proposition 3.14]{W2} and Proposition \ref{basis}, we have
\[w_i=|\{\Lambda|\Lambda\textrm{ is even},|\Lambda|=i\}|\]
\[w_i+t_i+t_{i-1}=rank(Ker(Sq^2\circ\pi)_i).\]
Then the statement follows.
\item Same proof as above.
\end{enumerate}
\end{proof}
As an application of Theorem \ref{Grass1}, we could discuss the invariance of Chow-Witt cycles of trivial Grassmannians under linear automorphisms.
\begin{proposition}\label{oriinvb}
Let \(X\in Sm/F\), \(\mathscr{E}\) be a vector bundle of rank \(n\) on \(X\) and \(\varphi\in Aut_{O_X}(\mathscr{E})\). The morphism \(Gr(k,\varphi)=Id\) in \(DM(pt,\mathbb{Z})\).
\end{proposition}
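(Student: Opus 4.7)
The plan is to verify that $Gr(k,\varphi)$ acts trivially on the Chern classes of the tautological bundle of $Gr(k,\mathscr{E})$, and then invoke the classical Grassmannian bundle decomposition in $DM^{eff}(pt,\mathbb{Z})$ to lift this to an equality of motivic morphisms. First I would note that $Gr(k,\varphi)$ is a morphism over $X$: if $\pi : Gr(k,\mathscr{E}) \to X$ denotes the structure map and $\mathscr{U} \subseteq \pi^*\mathscr{E}$ the tautological subbundle, then $Gr(k,\varphi)$ sends the point corresponding to $V \subseteq \mathscr{E}$ to the point corresponding to $\varphi(V) \subseteq \mathscr{E}$. The restriction $\varphi|_V : V \xrightarrow{\sim} \varphi(V)$ is an isomorphism of vector bundles, so $Gr(k,\varphi)^*\mathscr{U} \cong \mathscr{U}$ as bundles on $Gr(k,\mathscr{E})$; consequently
\[ Gr(k,\varphi)^* c_i(\mathscr{U}) = c_i(\mathscr{U}) \in CH^*(Gr(k,\mathscr{E})) \]
for every $i$, and the analogous statement holds for the quotient $\mathscr{Q}$.

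Next I would appeal to the classical Grassmannian bundle theorem in $DM^{eff}(pt,\mathbb{Z})$, which yields a decomposition
\[ M(Gr(k,\mathscr{E})) \cong \bigoplus_\lambda M(X)((|\lambda|)), \]
where each inclusion and projection is realized by a morphism built from $\pi^{*}$ and cup-product with a polynomial in the Chern classes $c_i(\mathscr{U})$ (e.g.\ Schubert polynomials). Since $Gr(k,\varphi)$ is an $X$-morphism (so commutes with $\pi$) and preserves every $c_i(\mathscr{U})$ by the previous step, it acts as the identity on each summand of this decomposition. Summing over $\lambda$ gives $Gr(k,\varphi) = \mathrm{Id}$ in $DM^{eff}(pt,\mathbb{Z})$.

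The main technical point to justify is the implication \emph{trivial action on the generating Chern classes implies identity as a motivic endomorphism}. This works here because $M(Gr(k,\mathscr{E}))$ is a relative Tate motive over $X$, whose endomorphism module is detected by its action on the polynomial algebra in the $c_i(\mathscr{U})$. The explicit cell-like decomposition coming from the Grassmannian bundle theorem provides enough orthogonal projectors (constructed from these Chern classes together with $\pi^{*}$ and $\pi_{*}$) to separate endomorphisms, and once this rigidity is in hand the transfer from $CH^{*}$ to $DM^{eff}$ is automatic.
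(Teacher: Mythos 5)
Your argument is essentially the same as the paper's: show that $Gr(k,\varphi)$ preserves the Chern classes of the tautological bundle (which you justify by exhibiting the bundle isomorphism $Gr(k,\varphi)^*\mathscr{U}\cong\mathscr{U}$ induced by $\varphi$), then invoke the Grassmannian bundle decomposition in Voevodsky's category, whose summands are cut out by polynomials in these Chern classes, to conclude the morphism is the identity. You spell out the details more explicitly than the paper's one-line proof, but the route is identical.
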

\begin{proof}
It suffices to prove that \(Gr(k,\varphi)\) keeps Chern classes of the tautological bundle \(\mathscr{U}\) invariant by the Grassmannian bundle theorem in Voevodsky's motives (see \cite[Proposition 2.4]{S} and the construction of \(c_{\mathscr{E}}\) in Proposition \ref{global}), which follows from construction.
\end{proof}
\begin{proposition}\label{oriinv1}
Suppose that \(X\in Sm/F\), \(\varphi\in GL_n(O_X^{\oplus n})\) and that \(p:X\times Gr(k,n)\longrightarrow Gr(k,n)\) is the projection. We have
\[Gr(k,\varphi)=Id\]
in \(\widetilde{DM}(pt,\mathbb{Z})\) if one of the following conditions hold:
\begin{enumerate}
\item The \(\varphi\) is an elementary matrix (see \cite[Definition 3]{An});
\item The \(k(n-k)\) is even and \(_2CH^*(X)=0\).
\item The \(k(n-k)\) is odd, \(_2CH^*(X)=0\) and \(\varphi=\left(\begin{array}{cc}g^2&\\&Id\end{array}\right)\) where \(g:X\longrightarrow\mathbb{G}_m\) is an invertible function.
\end{enumerate}
\end{proposition}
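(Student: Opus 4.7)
The plan is to treat (1) via $\mathbb{A}^1$-homotopy and to reduce (2) and (3) to a verification on the Witt cohomology of $X\times Gr(k,n)$, exploiting the splitting from Theorem \ref{Grass1} together with the $DM^{eff}$-invariance already established in Proposition \ref{oriinvb}.

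For (1), if $\varphi=E_{ij}(a)$ with $a\in O_X$, then $t\mapsto E_{ij}(ta)$ for $t\in\mathbb{A}^1$ defines a morphism $\mathbb{A}^1\times X\times Gr(k,n)\to X\times Gr(k,n)$ whose restrictions to $\{t=0\}$ and $\{t=1\}$ are $Id$ and $Gr(k,\varphi)$ respectively. By $\mathbb{A}^1$-invariance of $\widetilde{DM}^{eff}$, the two sections of the projection $\mathbb{A}^1\times(X\times Gr(k,n))\to X\times Gr(k,n)$ induce the same morphism in $\widetilde{DM}^{eff}$, hence $Gr(k,\varphi)=Id$.

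For (2) and (3), I would consider $\alpha=Gr(k,\varphi)-Id$ as an endomorphism of $\mathbb{Z}(X\times Gr(k,n))$ in $\widetilde{DM}^{eff}$. By Proposition \ref{oriinvb} its image in $DM^{eff}$ vanishes. Using the decomposition
\[\mathbb{Z}(Gr(k,n))\cong\bigoplus_{\Lambda\text{ even}}\mathbb{Z}((|\Lambda|))\oplus\bigoplus_{\Lambda\text{ irred. not full},\,i_1>1}\mathbb{Z}/\eta((|\Lambda_{i_1,\ldots,i_l}|))\]
from Theorem \ref{Grass1}, the endomorphism group of $\mathbb{Z}(X\times Gr(k,n))$ decomposes into hom groups built from $\widetilde{CH}^*(X)$, $\eta_{MW}^*(X,-)$ and $H^*(X,\textbf{W}(-))$. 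The hypothesis $_2CH^*(X)=0$ together with Proposition \ref{eta} presents each $\eta_{MW}$-group as a fibered product of Chow data, so those components of $\alpha$ landing in $\mathbb{Z}/\eta$-summands vanish once the Chow part does. The remaining components live in twisted $\widetilde{CH}^*$-groups of $X\times Gr(k,n)$, which split via Theorem \ref{sq} as Chow-image plus Witt-image; only the Witt-image part remains to be controlled.

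It thus suffices to show that $Gr(k,\varphi)$ acts trivially on $H^*(X\times Gr(k,n),\textbf{W}(\mathscr{L}))$. By Proposition \ref{ce} this Witt cohomology is generated over $H^*(X,\textbf{W})$ by the Pontryagin classes $p_j(\mathscr{U}),p_j(\mathscr{U}^\perp)$ and, when $k(n-k)$ is odd, the orientation class $\mathcal{R}$. Since $\varphi$ induces an automorphism of both $\mathscr{U}$ and $\mathscr{U}^\perp$, every Pontryagin class is preserved; the orientation class transforms by $\langle\det\varphi\rangle\in\textbf{W}(O_X)$. In case (2), $k(n-k)$ is even so $\mathcal{R}$ does not appear and the Pontryagin generators suffice; in case (3), $\det\varphi=g^2$ is a square so $\langle g^2\rangle=1$ in $\textbf{W}$ and $\mathcal{R}$ is preserved. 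The main obstacle I anticipate is the precise matching, via Proposition \ref{gen}, between the generators of $\widetilde{CH}^*(X\times Gr(k,n))$ and the summands in the decomposition of Theorem \ref{Grass1}, so that vanishing on each cohomological summand reassembles into the categorical equality $Gr(k,\varphi)=Id$ in $\widetilde{DM}^{eff}$.
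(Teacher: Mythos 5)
Your argument for (1) matches the paper: both invoke the $\mathbb{A}^1$-homotopy $t\mapsto E_{ij}(ta)$ from Ananyevskiy.

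For (2) your plan is essentially the one the paper carries out, though the paper is crisper in the reassembly step you flag as your ``main obstacle.'' Instead of decomposing $\mathrm{End}(\mathbb{Z}(X\times Gr(k,n)))$ directly via the splitting, the paper first reduces, via \cite[Theorem 4.13]{Y1}, to proving $Gr(k,\varphi)^*=Id$ on $\widetilde{CH}^*(X\times Gr(k,n))$. It then writes a pulled-back class $t$ as $t_1+t_2$ with $t_1\in\textbf{GW}(k)[p_j,p_j^{\perp}]$ and $t_2\in Ker_t(Sq^2\circ\pi)\subseteq Im(\partial)$ (Propositions \ref{decomp1}, \ref{etaIm}); invariance of $t_2$ comes from $DM$-invariance (Proposition \ref{oriinvb}) via $\eta_{MW}$ and the commutative square with $\partial$, and invariance of $t_1$ is clear. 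You should make the appeal to \cite[Theorem 4.13]{Y1} explicit, since a collection of cohomological identities does not by itself give the equality of morphisms in $\widetilde{DM}^{eff}$.

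For (3) your route genuinely diverges from the paper, and it has a gap. You assert that $\mathcal{R}$ transforms by $\langle\det\varphi\rangle$ under $Gr(k,\varphi)^*$. This is a plausible heuristic (Euler-type classes twist by determinants), but it is not proved in the paper nor obviously available from \cite{W1}, and nothing in the proposal supplies a proof; the claim would in fact yield a stronger statement (any $\varphi$ with $\det\varphi$ a square) than the one being proved, which is a warning sign. The paper avoids the issue entirely: it factors $Gr(k,\varphi)$ through $X\times\mathbb{G}_m\times Gr(k,n)$ via the graph of $g$ and a scaling map $u$ that scales one coordinate by $t^2$, observes that the squaring map $-^2:\mathbb{G}_m\to\mathbb{G}_m$ is multiplication by the hyperbolic form $h$ (proof of \cite[Proposition 5.16]{Y1}), and uses that $[\mathbb{Z}(Gr(k,n))(1)[1],\mathbb{Z}(Gr(k,n))]_{MW}$ is a direct sum of copies of $\textbf{W}(k)$ (which $h$ annihilates) to conclude that $u$ agrees with the projection in $\widetilde{DM}^{eff}$. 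That argument is purely categorical and does not require knowing how $\mathcal{R}$ transforms. To salvage your approach you would need an actual proof of the twisting law for $\mathcal{R}$; otherwise you should switch to the $\mathbb{G}_m$-factorization argument.
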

\begin{proof}
The Theorem \ref{Grass1} gives us the formulas
\[\mathbb{Z}(X\times Gr(k,n))\cong\bigoplus_{\Lambda\textrm{ even}}\mathbb{Z}(X)((|\Lambda|))\oplus\bigoplus_{\Lambda\textrm{ irred. not full}, i_1>1}\mathbb{Z}(X)/{\eta}((|\Lambda_{i_1,\cdots,i_l}|))\]
and
\[\widetilde{CH}^i(X\times Gr(k,n))\cong\bigoplus_{\Lambda\textrm{ even}}\widetilde{CH}^{i-|\Lambda|}(X)\oplus\bigoplus_{\Lambda\textrm{ irred. not full}, i_1>1}\eta_{MW}^{i-|\Lambda_{i_1,\cdots,i_l}|-1}(X).\]
\begin{enumerate}
\item Essentially the same as in \cite[Lemma 1]{An}.
\item Suppose \(s\in\widetilde{CH}^i(X\times Gr(k,n))\) is equal to the pullback of \(t\in\widetilde{CH}^i(Gr(k,n))\) along \(p\). Let us consider its behaviour under \(Gr(k,\varphi)^*\). By Proposition \ref{decomp1} and Proposition \ref{etaIm}, \(t=t_1+t_2\) where \(t_1\in\textbf{GW}(F)[p_j,p_j^{\perp}]\) and \(t_2\in Ker_t(Sq^2\circ\pi)_i\subseteq Im(\partial)\). But \(\eta_{MW}^{i-1}(X\times Gr(k,\varphi))=Id\) by Proposition \ref{oriinvb} and \cite[Theorem 4.13]{Y1}. Hence \(Gr(k,\varphi)^*(p^*(t_2))=p^*(t_2)\) by the commutative diagram
\[
	\xymatrix
	{
		\eta_{MW}^{i-1}(X\times Gr(k,n))\ar[r]^{\partial}\ar[d]_{Gr(k,\varphi)^*}	&\widetilde{CH}^{i}(X\times Gr(k,n))\ar[d]_{Gr(k,\varphi)^*}\\
		\eta_{MW}^{i-1}(X\times Gr(k,n))\ar[r]^{\partial}										&\widetilde{CH}^{i}(X\times Gr(k,n))
	},
\]
while the corresponding result for \(t_1\) is clear. So \(Gr(k,\varphi)^*(s)=s\). Then the statement follows from \cite[Theorem 4.13]{Y1}.
\item The \(Gr(k,\varphi)\) is equal to the composite
\[X\times Gr(k,n)\xrightarrow{\Gamma_{g}\times Id_{Gr(k,n)}}X\times\mathbb{G}_m\times Gr(k,n)\xrightarrow{Id_X\times u}X\times Gr(k,n)\]
where \(\Gamma_{g}\) is the graph of \(g\) and \(u\) is the morphism
\[\begin{array}{ccccc}\mathbb{G}_m&\times&Gr(k,n)&\longrightarrow&Gr(k,n)\\t&,&(x_1,\cdots,x_n)\in V&\longmapsto&(t^2x_1,x_2,\cdots,x_{n})\in t\cdot V\end{array}.\]
But the squaring morphism \(-^2:\mathbb{G}_m\longrightarrow\mathbb{G}_m\) is the multiplication by \(h\) (see the proof of \cite[Proposition 5.16]{Y1}) and \([\mathbb{Z}(Gr(k,n))(1)[1],\mathbb{Z}(Gr(k,n))]_{MW}\) is a direct sum of \(\textbf{W}(F)\) by \cite[Proposition 5.4]{Y1}. Hence \(u\) coincides with the projection map in \(\widetilde{DM}(pt,\mathbb{Z})\). Hence \(Gr(k,\varphi)=Id\) in \(\widetilde{DM}(pt,\mathbb{Z})\) by \textit{loc. cit.}.
\end{enumerate}
\end{proof}
\section{Theorems of Fiber Bundles}
\begin{definition}
For every Young diagram \(\Lambda=(a_1,\cdots,a_l)\), we define its transpose \(\Lambda^T=(b_1,\cdots,b_s)\) by
\[b_i=\#\textrm{ of boxes in i-th column of \(\Lambda\)}\]
where \(s=\#\textrm{ of columns in \(\Lambda\)}\). If \(\Lambda\) is \((k,n)\)-truncated, \(\Lambda^T\) is understood as \((n-k,n)\)-truncated.
\end{definition}
Suppose \(x^{(n)}\) is a set of \(n\) indeterminants. Denote by \(e_i(x^{(n)})\) (resp. \(h_i(x^{(n)})\)) the elementary (resp. complete) symmetric polynomial of degree \(i\). Define the Schur function
\[s_{\Lambda}(x^{(n)})=det(h_{\Lambda_i-i+j}(x^{(n)}))=det(e_{\Lambda^T_i-i+j}(x^{(n)})).\]

Let us state a result between Schur functions and the Steenrod square, which has been essentially proved in \cite[Theorem 4.2]{W2}.
\begin{proposition}\label{Schur}
Let \(R\) be a commutative ring with identity and \(x_1,\cdots,x_n\) are indeterminants. Define
\[x_{a_1,\cdots,a_l}=\left|\begin{array}{ccc}x_{a_1}&x_{a_1+1}&\cdots\\x_{a_2-1}&x_{a_2}&\cdots\\\cdots&&\end{array}\right|_{l\times l}\]
where \(a_i\in\mathbb{N}\) for every \(1\leq i\leq l\), \(x_0:=1\) and \(x_i:=0\) if \(i\notin[0,n]\).

If \(R=\mathbb{Z}/2\), define \(Sq^2:R[x_1,\cdots,x_n]\longrightarrow R[x_1,\cdots,x_n]\) to be the \(R\)-derivation satisfying
\[Sq^2(x_i)=(i+1)x_{i+1}+x_1x_i.\]

We have
\begin{enumerate}
\item The \(\{x_{a_1,\cdots,a_l}\}_{n\geq a_1\geq\cdots\geq a_l\geq 0}\) form an \(R\)-basis of \(R[x_1,\cdots,x_n]\).
\item Suppose \(a_1\geq\cdots\geq a_l\) and \(1\leq t\leq n\), we have the Pieri formula
\[x_tx_{a_1,\cdots,a_l}=\sum_{b=(b_1,\cdots,b_{l+1})}x_b\]
where \(b_1\geq\cdots\geq b_{l+1}\), \(\sum b_i=t+\sum a_i\) and \(a_i\leq b_i\leq a_{i-1}\) for every \(i\).
\item\[Sq^2(x_{a_1,\cdots,a_l})=\sum_{i=1}^{l}(a_i-i+1)x_{\cdots,a_{i-1},a_i+1,a_{i+1},\cdots}+l\cdot x_{a_1,\cdots,a_l,1}.\]
\item We have
\[Ker(Sq^2)=Im(Sq^2)\oplus N\]
where \(N\) is the vector space generated by \(x_{a_1,\cdots,a_l}\) where \((a_1,\cdots,a_l)\) is completely even (see \cite[Definition 3.8]{W2}).
\item Suppose \(a_1\geq\cdots\geq a_l\). We have
\[\left|\begin{array}{ccc}x_{2a_1}&x_{2a_1+2}&\cdots\\x_{2a_2-2}&x_{2a_2}&\cdots\\\cdots&&\end{array}\right|_{l\times l}^2=x_{2a_1,\cdots,2a_l}+Im(Sq^2).\]
\end{enumerate}
\end{proposition}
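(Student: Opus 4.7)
The plan is to dispatch (1) and (2) by standard Schur polynomial arguments, (3) by a direct derivation-of-the-determinant calculation, and (4) by mimicking Proposition \ref{imsq} after verifying its combinatorial hypotheses in this formal setting.

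For (1), I would observe that the $\sigma = \mathrm{id}$ term in the determinantal expansion of $x_{a_1,\ldots,a_l}$ is $\prod_i x_{a_i}$, and every other $\sigma$ contributes a monomial $\prod_i x_{a_i - i + \sigma(i)}$ whose sorted parts strictly dominate $(a_1,\ldots,a_l)$; since both $\{\prod_i x_{\mu_i}\}$ and $\{x_\mu\}$ are indexed by partitions of the same weight with parts $\leq n$, this triangularity yields a basis. For (2), I would expand $x_t \cdot x_{a_1,\ldots,a_l}$ as a product of Jacobi--Trudi determinants of sizes $1$ and $l$ and rewrite the result as a sum of $(l+1)\times(l+1)$ determinants indexed by horizontal strips of size $t$; this is the classical derivation of the Pieri rule and uses only determinantal identities, so it transfers to formal indeterminates $x_i$ verbatim.

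For (3), the derivation property together with multilinearity of the determinant in rows gives
\[Sq^2(x_{a_1,\ldots,a_l}) = \sum_{i=1}^{l} \det M^{(i)},\]
where the $i$-th row of $M^{(i)}$ is $\bigl((a_i - i + j + 1)\,x_{a_i - i + j + 1} + x_1 x_{a_i - i + j}\bigr)_j$. The ``$x_1$-part'' of each $\det M^{(i)}$ equals $x_1 \cdot x_{a_1,\ldots,a_l}$ and contributes $l \cdot x_1 \cdot x_{a_1,\ldots,a_l}$ in total; the ``shift part'', after cofactor expansion along row $i$ and splitting $(a_i-i+j+1) = (a_i-i+1) + j$, decomposes as $(a_i-i+1)\,x_{\lambda^{(i)}}$ plus a residual sum $T_i$ (whose entries carry the extra factor $j$). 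The calculation then reduces to the identity $\sum_i T_i + l \cdot x_1 \cdot x_{a_1,\ldots,a_l} \equiv l \cdot x_{(a_1,\ldots,a_l,1)} \pmod{2}$, which can be obtained by applying the Pieri rule of (2) to $x_1 \cdot x_\lambda$ and matching terms; the cases $l \leq 2$ provide a useful sanity check.

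For (4), I would first note that in characteristic $2$ the square of a derivation is again a derivation, and a direct calculation shows $Sq^2 \circ Sq^2$ vanishes on each generator $x_k$ because $(k+1)(k+2)$ is always even, whence $\mathrm{Im}(Sq^2) \subseteq \mathrm{Ker}(Sq^2)$. The inclusion $N \subseteq \mathrm{Ker}(Sq^2)$ follows from (3) applied to a completely even $\lambda$: the $(a_i-i+1)$ coefficients for which $\lambda^{(i)}$ is actually a partition are all even, the remaining $x_{\lambda^{(i)}}$'s vanish by the two-equal-rows rule, and $l$ is even. The splitting $\mathrm{Ker}(Sq^2) = \mathrm{Im}(Sq^2) \oplus N$ I would obtain by the same orbit decomposition used in Proposition \ref{imsq}: partition $\{x_\lambda\}$ into subspaces $\mathbb{Z}/2\,\bar{A}(\Lambda)$ as $\Lambda$ ranges over irredundant partitions, observe each subspace is $Sq^2$-stable (formula (3) only adds white boxes), and argue orbit by orbit that $\mathrm{Ker}(Sq^2) \cap \mathbb{Z}/2\,\bar{A}(\Lambda)$ coincides with $\mathrm{Im}(Sq^2) \cap \mathbb{Z}/2\,\bar{A}(\Lambda)$ unless $\Lambda$ is itself even (equivalently, completely even by Lemma \ref{ce1}), in which case the quotient is precisely $\mathbb{Z}/2 \cdot x_\Lambda$. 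The main obstacle is this last orbit-level equality: unlike the Chow-ring Steenrod action on a Grassmannian where each added-box coefficient is $1$, here the coefficients $(a_i - i + 1)$ from (3) are variable modulo $2$, so the pairing-by-$Sq^2$ argument of Proposition \ref{basis} must be adapted with careful parity bookkeeping before the irredundant/even dichotomy kicks in.
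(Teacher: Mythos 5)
Your overall architecture matches the paper for parts (1), (2) and (4), but you take a genuinely different computational route for (3), and it is worth discussing both this and the caveat you attach to (4).

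For (3), the paper proves the formula by induction on $l$, repeatedly applying the Cramer-type cofactor expansion
\[x_{a_1,\cdots,a_l}=\sum_{i=1}^{l}(-1)^{i-1}x_{a_i-i+1}\,x_{(s_1\cdots s_{i-1})(\cdots\widehat{a_i}\cdots)},\]
and carefully collecting the cross-terms that arise from $Sq^2$ of the factors; parts (2) and (3) are logically independent in that proof. You instead differentiate the Jacobi--Trudi determinant row by row, split each row entry $(a_i-i+j+1)x_{a_i-i+j+1}+x_1x_{a_i-i+j}$ into an $x_1$-part, an $(a_i-i+1)$-part, and a ``$j$-part'' $T_i$, and reduce to the residual identity
\[\sum_i T_i + l\cdot x_1 x_{\lambda}\equiv l\cdot x_{(\lambda,1)}\pmod 2,\]
which, via Pieri, amounts to $\sum_i T_i\equiv l\sum_i x_{\lambda^{(i)}}\pmod 2$. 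This identity is correct (I checked $l\leq 3$), but ``applying the Pieri rule and matching terms'' does not establish it --- the Pieri rule only reformulates the target, it does not make $\sum_i T_i$ comparable to $l\sum_i x_{\lambda^{(i)}}$ without a separate determinantal argument. This is the one genuine gap in your proposal: you would need to actually prove the parity identity on the $T_i$'s (for instance by showing $\sum_i T_i^{\text{odd}}=0$ when $l$ is even and $\sum_i T_i^{\text{even}}=0$ when $l$ is odd, where $T_i^{\text{odd/even}}$ are the contributions from odd/even columns). The paper's Cramer-induction route avoids this extra step entirely; your route is conceptually cleaner but not yet complete.

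For (4), your caveat about ``variable coefficients $(a_i-i+1)$ modulo $2$'' is unfounded. Unwinding the checkerboard convention (with the box at $(1,1)$ black, so $(i,j)$ is black iff $i+j$ is even), the box one would add at the end of row $i$, namely at $(i,a_i+1)$, is white exactly when $a_i-i+1$ is odd, and the box at $(l+1,1)$ is white exactly when $l$ is odd. So formula (3) reads precisely $Sq^2(x_\lambda)=\sum_{\Lambda\in A(\lambda)}x_\Lambda$ --- the coefficients are identically $1$ on white-box additions and $0$ otherwise, exactly as in the Grassmannian case of \cite[Theorem 4.2]{W2}. No additional parity bookkeeping beyond what is already encoded in the definition of $A(\Lambda)$ is needed, and the orbit-by-orbit argument of Lemma~\ref{decomp} and Proposition~\ref{imsq} transfers verbatim, which is exactly what the paper asserts.

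Parts (1) and (2) are essentially the paper's proof rephrased: the paper identifies $x_i=e_i$ and quotes that Schur polynomials form a basis and satisfy Pieri; your triangularity-on-dominance argument for (1) would need a precise statement of the order and direction but is the standard underlying fact, and (2) is the same appeal to Pieri. In short: (1), (2), (4) are fine modulo the misplaced caveat; (3) takes a different and potentially cleaner route but leaves the key parity identity $\sum_i T_i\equiv l\sum_i x_{\lambda^{(i)}}$ unproven.
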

\begin{proof}
Set
\[\begin{array}{cccc}s_i:&\mathbb{N}^{\times l}&\longrightarrow&\mathbb{N}^{\times l}\\&(a_1,\cdots,a_l)&\longmapsto&(\cdots,a_{i-1},a_i+1,a_{i+1},\cdots)\end{array}.\]
Recall we have the Cramer's rule
\[x_{a_1,\cdots,a_l}=\sum_{i=1}^{l}(-1)^{i-1}x_{a_i-i+1}x_{(s_1\cdots s_{i-1})(\cdots\widehat{a_i}\cdots)}.\]
It implies that
\[\sum_{i=1}^{l}(-1)^{i-1}x_{a_i-i+2}x_{(s_1\cdots s_{i-1})(\cdots\widehat{a_i}\cdots)}=0.\]
The \(e_i(y_1,\cdots,y_n), i=1,\cdots,n\) are algebraically independent so we may as well set \(x_i=e_i\). Then the \(R[x_1,\cdots,x_n]\) is identified with symmetric polynomials in \(R[y_1,\cdots,y_n]\) and \(\{x_{a_1,\cdots,a_l}\}\) are Schur polynomials.
\begin{enumerate}
\item This is because the Schur polynomials form a basis of the symmetric polynomials.
\item The statement follows from the Pieri formula of Schur polynomials.
\item It's easy to verify the statement when \(l=1\). Let us prove by induction on \(l\) and suppose \(l\geq 2\). We have
{\footnotesize\begin{align*}
	&Sq^2(x_{a_1,\cdots,a_l})\\
=	&\sum_{i=1}^{l}Sq^2(x_{a_i-i+1}x_{(s_1\cdots s_{i-1})(\cdots\widehat{a_i}\cdots)})\\
=	&\sum_{i=1}^{l}((a_i-i+2)x_{a_i-i+2}+x_1x_{a_i-i+1})x_{(s_1\cdots s_{i-1})(\cdots\widehat{a_i}\cdots)}+\\
	&\sum_{i=1}^{l}(l-1)x_{a_i-i+1}x_{(s_1\cdots s_{i-1})(\cdots\widehat{a_i}\cdots,1)}+\\
	&\sum_{i=1}^{l}x_{a_i-i+1}(\sum_{j=1}^{i-1}(a_j-j+2)x_{(s_js_1\cdots s_{i-1})(\cdots\widehat{a_i}\cdots)}+\sum_{j=i+1}^{l}(a_j-j+2)x_{(s_{j-1}s_1\cdots s_{i-1})(\cdots\widehat{a_i}\cdots)})
\end{align*}
\begin{align*}
=	&\sum_{i=1}^{l}((a_i-i+1)x_{a_i-i+2}+x_1x_{a_i-i+1})x_{(s_1\cdots s_{i-1})(\cdots\widehat{a_i}\cdots)}+(l-1)x_{a_1,\cdots,a_l,1}\\
	&+\sum_{i=1}^{l}x_{a_i-i+1}(\sum_{j=1}^{i-1}(a_j-j+2)x_{(s_js_1\cdots s_{i-1})(\cdots\widehat{a_i}\cdots)}+\sum_{j=i+1}^{l}(a_j-j+2)x_{(s_{j-1}s_1\cdots s_{i-1})(\cdots\widehat{a_i}\cdots)})\\
=	&\sum_{i=1}^{l}(a_i-i+1)x_{a_i-i+2}x_{(s_1\cdots s_{i-1})(\cdots\widehat{a_i}\cdots)}+l\cdot x_{a_1,\cdots,a_l,1}\\
	&+\sum_{i=1}^{l}x_{a_i-i+1}(\sum_{j=1}^{i-1}(a_j-j+1)x_{(s_js_1\cdots s_{i-1})(\cdots\widehat{a_i}\cdots)}+\sum_{j=i+1}^{l}(a_j-j+1)x_{(s_{j-1}s_1\cdots s_{i-1})(\cdots\widehat{a_i}\cdots)})\\
=	&\sum_{i=1}^{l}(a_i-i+1)x_{\cdots,a_{i-1},a_i+1,a_{i+1},\cdots}+l\cdot x_{a_1,\cdots,a_l,1}.
\end{align*}}
\item It is easy to show that \(Sq^2(Sq^2(x_i))=0\) for every \(i\). Then
\[Sq^2(Sq^2(\prod_jx_{i_j}))=\sum_kSq^2(Sq^2(x_{i_k}))\prod_{j\neq k}x_{i_j}=0.\]
Hence \(Sq^2\circ Sq^2=0\). Then the proof is essentially the same as Lemma \ref{decomp} and Proposition \ref{imsq}.
\item The proof is essentially the same as in Proposition \ref{can}.
\end{enumerate}
\end{proof}
Given a vector bundle \(\mathscr{E}\) on \(X\), we have a tautological exact sequence on \(Gr(k,\mathscr{E})\) which relates \(\mathscr{E}\), \(\mathscr{U}_{\mathscr{E}}\) and \(\mathscr{U}_{\mathscr{E}}^{\perp}\). It is interesting to express Schur functions of Chern roots of one of them in terms of the other two.
\begin{proposition}\label{inversion}
Let \(f(t)=1-a_1t+\cdots+(-1)^ka_kt^k\), \(g(t)=1-b_1t+\cdots+(-1)^{n-k}b_{n-k}t^{n-k}\) and \(h(t)=1-c_1t+\cdots+(-1)^nc_nt^n\), where \(t\) is an indeterminant and all coefficients live in some commutative ring with identity. We could define \(a_{\Lambda}, b_{\Lambda}, c_{\Lambda}\) for every Young diagram \(\Lambda\) as in Proposition \ref{Schur}. Suppose that
\[f(t)g(t)=h(t).\]
\begin{enumerate}
\item We have
\[\begin{array}{cc}\sum_{i=0}^m(-1)^ib_{1^i}c_{m-i}=a_m&\sum_{i=0}^m(-1)^ia_{1^i}c_{m-i}=b_m\\\sum_{i=0}^m(-1)^ib_ic_{1^{m-i}}=a_{1^m}&\sum_{i=0}^m(-1)^ia_ic_{1^{m-i}}=b_{1^m}\end{array}\]
for every \(m\in\mathbb{N}\) (\(a_0:=1\) and \(a_i:=0\) if \(i\neq [0,k]\), similar convention for \(b\) and \(c\)).
\item For every untruncated Young diagram \(\Lambda\), we have
\[c_{\Lambda}=\sum_{S_1,S_2\leq\Lambda}c_{S_1,S_2}^{\Lambda}a_{S_1}b_{S_2}\]
\[b_{\Lambda}=\sum_{S\leq\Lambda,S_k=0}b_S\cdot\mathbb{Z}[c_i]+a_k\cdot\mathbb{Z}[a_i,c_j]=\sum_{S\leq\Lambda,S_{k+1}=0}b_S\cdot\mathbb{Z}[c_i]\]
\[a_{\Lambda}=\sum_{S\leq\Lambda,S_{n-k}=0}a_S\cdot\mathbb{Z}[c_i]+b_{n-k}\cdot\mathbb{Z}[b_i,c_j]=\sum_{S\leq\Lambda,S_{n-k+1}=0}a_S\cdot\mathbb{Z}[c_i]\]
where \(c_{S,T}^{\Lambda}\) is the Littlewood-Richardson coefficient.
\end{enumerate}
\end{proposition}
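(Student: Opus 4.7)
My plan is to split the proof into three layers of increasing difficulty: identity (1) is a formal manipulation of generating functions; the first line of (2) is a classical Schur-function coproduct; and the second and third lines of (2) constitute the substantive combinatorial content, which I will attack by induction on the shape of $\Lambda$.

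For (1), I first identify $a_i, b_i, c_i$ with the elementary symmetric polynomials in formal alphabets $\alpha, \beta, y$ of sizes $k$, $n-k$, $n$ respectively, so that $f(t) = \prod_i(1-\alpha_i t)$, $g(t) = \prod_j(1-\beta_j t)$, $h(t) = \prod_l(1-y_l t)$, and the hypothesis $fg = h$ becomes $y = \alpha \sqcup \beta$. Under this identification, the dual Jacobi--Trudi formula (implicit in Proposition \ref{Schur}) gives $x_{1^m} = h_m$ of the corresponding alphabet; in particular $1/f(t) = \sum_m a_{1^m} t^m$, $1/g(t) = \sum_m b_{1^m} t^m$, $1/h(t) = \sum_m c_{1^m} t^m$. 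The four identities of (1) then drop out by reading off the $t^m$-coefficient in each of the four factorizations $f = h \cdot (1/g)$, $g = h \cdot (1/f)$, $1/f = g \cdot (1/h)$, $1/g = f \cdot (1/h)$, after tracking signs. For the first line of (2), the same identification yields $c_\Lambda = s_{\Lambda^T}(\alpha \sqcup \beta)$; the classical coproduct
\[
s_\lambda(\alpha \sqcup \beta) = \sum_{\mu, \nu} c_{\mu\nu}^{\lambda}\, s_\mu(\alpha)\, s_\nu(\beta),
\]
combined with the transposition symmetry $c_{\mu\nu}^{\Lambda^T} = c_{\mu^T \nu^T}^{\Lambda}$ of Littlewood--Richardson coefficients and their vanishing outside $\mu, \nu \subseteq \Lambda^T$ (equivalently $S_1, S_2 \leq \Lambda$ after setting $S_j = \mu_j^T$), gives the claim.

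The main obstacle is the second identity of (2); the third will then follow by the involution $(a, k) \leftrightarrow (b, n-k)$, under which $fg = h$ is invariant. I proceed by induction on the number of rows $l$ of $\Lambda$: when $l \leq k$ then $\Lambda_{k+1} = 0$ and $S = \Lambda$ already fits, so I may assume $l > k$. The idea is to substitute the inversion $b_r = \sum_i (-1)^i c_{r-i}\, a_{1^i}$ from (1) into each entry of the Jacobi--Trudi determinant $b_\Lambda = \det(b_{\Lambda_i - i + j})$, expand by multilinearity in rows, recognize the resulting terms as $c$-Schur determinants weighted by products of $a_{1^\bullet}$-factors, and then rewrite each excess $a_{1^\bullet}$-factor via the reciprocal $a_{1^m} = \sum_i (-1)^i b_i\, c_{1^{m-i}}$ to convert back to pure $b$'s whose total row-count is strictly smaller, at which point the inductive hypothesis applies. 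The equivalence of the two stated forms (with or without the $a_k$-correction) is obtained by rewriting any $b_S$ with $S$ of exactly $k$ rows, modulo $\sum_{T_k = 0} b_T \cdot \mathbb{Z}[c_i]$, as $a_k \cdot q(a_i, c_j)$, using the relation $\sum_i (-1)^i b_{1^i} c_{k+1-i} = 0$ coming from $a_{k+1} = 0$ in (1). The key combinatorial subtlety, which I expect to be the real content of the argument, is ensuring that the expansion maintains the containment $S \leq \Lambda$ componentwise rather than merely $|S| \leq |\Lambda|$; I will track this by observing that each multilinearity step in row $i$ shifts the column indices of that row backwards by a nonnegative amount, which combinatorially can only shrink the $i$-th row of the resulting shape $S$ relative to $\Lambda$, preserving the bound $S_i \leq \Lambda_i$ row by row.
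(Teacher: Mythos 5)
Your handling of part (1) and the first line of part (2) mirrors the paper's: identify \(a_i,b_i,c_i\) with elementary symmetric functions in alphabets of sizes \(k\), \(n-k\), \(n\) with \(z=\alpha\sqcup\beta\), read off (1) from the four factorizations of the generating series, and get the first line of (2) from the Schur-function coproduct together with transpose symmetry of Littlewood--Richardson coefficients. For the second and third lines of (2), however, you diverge from the paper's route: the paper recognises \(b_\Lambda\) as the supersymmetric Schur polynomial \(s_{\Lambda^T}(z^{(n)}/(-x^{(k)}))\) and reads the expansion off \cite[Theorem 3.1]{PT}, while you attempt a self-contained induction on the number of rows of \(\Lambda\).

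That inductive step, as sketched, does not close. After substituting \(b_r=\sum_i(-1)^ic_{r-i}a_{1^i}\) into \(\det(b_{\Lambda_i-i+j})\) and expanding by row-multilinearity you indeed obtain \(b_\Lambda=\sum_{\mathbf m}\pm\prod_ia_{1^{m_i}}\cdot\det(c_{\Lambda_i-m_i-i+j})\), and your remark about backward column shifts does control the \(c\)-shape: whenever the \(c\)-determinant is \(\pm c_\mu\) one has \(\mu\leq\Lambda\). But the next replacement \(a_{1^m}=\sum_j(-1)^jb_jc_{1^{m-j}}\) turns \(\prod_ia_{1^{m_i}}\) into a polynomial in the individual pieces \(b_j\), not into Jacobi--Trudi minors \(b_S\). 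To land in the asserted span you must re-expand each product \(\prod_ib_{j_i}\) into Schur determinants \(b_S\) (via Pieri) and then verify \(S\leq\Lambda\) and \(S_{k+1}=0\) for every \(S\) that appears; nothing in your tracking argument, which addresses only the \(c\)-determinant, bounds those shapes, and the inductive hypothesis --- a statement about a single \(b_\mu\) with fewer rows --- does not apply to a product \(\prod_ib_{j_i}\) whose Pieri expansion can involve shapes not contained in \(\Lambda\). The containment on the \(b\)-side is precisely the content of the proposition, and it is left unestablished; filling this hole would effectively require re-deriving the Pragacz--Thorup expansion that the paper handles by citation.
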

\begin{proof}
We may assume \(a_i=e_i(x^{(k)})\), \(b_i=e_i(y^{(n-k)})\) and \(c_i=e_i(z^{(n)}),z^{(n)}=(x^{(k)},y^{(n-k)})\).
\begin{enumerate}
\item The statements come from the fact that
\[\begin{array}{cc}\frac{1}{f(t)}=\sum_{i=0}^{\infty}a_{1^i}t^i&\frac{1}{g(t)}=\sum_{i=0}^{\infty}b_{1^i}t^i\end{array}.\]
\item The \(c_{\Lambda}\) is the coproduct of Schur function \(s_{\Lambda^T}(z^{(n)})\). So the first equation follows from \cite[4.18]{Z}.

The \(b_{\Lambda}\) is the super Schur polynomial \(s_{\Lambda^T}(z^{(n)}/(-x^{(k)}))\) (see \cite[Definition 1.3]{PT}). By \cite[Theorem 3.1]{PT}, we have
\[s_{\Lambda^T}(z^{(n)}/(-x^{(k)}))=\sum_{S_1,S_2\leq\Lambda}\alpha_{S_1S_2}s_{S_1}(-x^{(k)})s_{S_2^T}(z^{(n)})\]
where \(\alpha_{S_1S_2}\in\mathbb{Z}\). Hence we have
\[b_{\Lambda}=\sum_{S_1,S_2\leq\Lambda}\alpha_{S_1S_2}a_{S_1^T}c_{S_2}=\sum_{S_1,S_2\leq\Lambda,(S_1)_{k+1}=0}\alpha_{S_1S_2}a_{S_1^T}c_{S_2}\]
and
\[b_{\Lambda}=\sum_{S_1,S_2\leq\Lambda}\alpha_{S_1S_2}a_{S_1^T}c_{S_2}=\sum_{S_1,S_2\leq\Lambda,(S_1)_k=0}\alpha_{S_1S_2}a_{S_1^T}c_{S_2}+\sum_{S_1,S_2\leq\Lambda,(S_1)_k>0}\alpha_{S_1S_2}a_{S_1^T}c_{S_2}.\]
But \(a_S\) is a multiple of \(a_k\) if \(S_1\geq k\) (which is equivalent to \((S_1^T)_k>0\)) and
\[a_{\Lambda}=\sum_{S_1,S_2\leq\Lambda}\beta_{S_1S_2}b_{S_1^T}c_{S_2}\]
where \(\beta_{S_1S_2}\in\mathbb{Z}\) by the same arguments before. So we have proved the second equation. The third one follows symmetrically.
\end{enumerate}
\end{proof}
\begin{proposition}\label{global}
Suppose that \(X\in Sm/F\), \(_2CH^*(X)=0\) and that \({\mathscr{E}}\) is a vector bundle of rank \(n\) on \(X\) trivialized by an acyclic covering \(\{U_a\}\) (see \cite[Definition 5.12]{Y1}). Denote by \(p:Gr(k,\mathscr{E})\longrightarrow X\) the structure map and \(w:\widetilde{CH}^{{*}}\longrightarrow H^{{*}}(-,\textbf{W}(-))\) the canonical map.
\begin{enumerate}
\item For every element \(s\in\eta_{MW}^i(Gr(k,n))\) there is a canonical element \(\varphi_{\mathscr{E}}(s)\in\eta_{MW}^i(Gr(k,{\mathscr{E}}))\) such that \(\varphi_{\mathscr{E}}(s)|_{U_a}\) is the pullback of \(s\) along \(Gr(k,n)\times U_a\longrightarrow Gr(k,n)\).
\item Suppose both \(k\) and \(n\) are even, \(\mathscr{L}\in Pic(X)\) and \(\varphi_{\mathscr{E}}(s)\in\eta_{MW}^i(Gr(k,n),O(1))\). There is a canonical element \(\varphi_{\mathscr{E}}(s)\in\eta_{MW}^i(Gr(k,{\mathscr{E}}),p^*\mathscr{L}\otimes O(1))\) such that \(\varphi_{\mathscr{E}}(s)|_{U_a}\) is the pullback of \(s\) along \(Gr(k,n)\times U_a\longrightarrow Gr(k,n)\).
\item Suppose \(k(n-k)\) is even. For every element \(s\in\widetilde{CH}^i(Gr(k,n))\) there is an element \(\psi_{\mathscr{E}}(s)\in\widetilde{CH}^i(Gr(k,{\mathscr{E}}))\) such that \(\psi_{\mathscr{E}}(s)|_{U_a}\) is the pullback of \(s\) along \(Gr(k,n)\times U_a\longrightarrow Gr(k,n)\).
\item Suppose both \(k\) and \(n\) are even and \(s\in\widetilde{CH}^i(Gr(k,n),O(1))\).
\begin{enumerate}
\item If \(w(s)\in e_k\cdot H^*(Gr(k,n),\textbf{W})\), there is an element \(\psi_{\mathscr{E}}(s)\in\widetilde{CH}^i(Gr(k,{\mathscr{E}}),O(1))\) such that \(\psi_{\mathscr{E}}(s)|_{U_a}\) is the pullback of \(s\) along \(Gr(k,n)\times U_a\longrightarrow Gr(k,n)\).
\item If \(w(s)\in e_{n-k}^{\perp}\cdot H^*(Gr(k,n),\textbf{W})\), there is an element \(\psi_{\mathscr{E}}(s)\in\widetilde{CH}^i(Gr(k,{\mathscr{E}}),p^*det(\mathscr{E})\otimes O(1))\) such that \(\psi_{\mathscr{E}}(s)|_{U_a}\) is the pullback of \(s\) along \(Gr(k,n)\times U_a\longrightarrow Gr(k,n)\).
\end{enumerate}
\end{enumerate}
\end{proposition}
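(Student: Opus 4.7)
The plan is to run a \v{C}ech-style gluing argument. For each $U_a$ in the trivializing cover, the isomorphism $\mathscr{E}|_{U_a} \cong O_{U_a}^{\oplus n}$ induces an isomorphism $Gr(k,\mathscr{E})|_{U_a} \cong Gr(k,n) \times U_a$ which compatibly identifies $O(1) = \det(\mathscr{U}^{\vee})$ on both sides, since this line bundle is intrinsic to the Grassmannian bundle. Pulling $s$ back along $pr_1 : Gr(k,n) \times U_a \to Gr(k,n)$ yields local classes $\widetilde{s}_a$ in the appropriate (possibly twisted) cohomology of $Gr(k,\mathscr{E})|_{U_a}$. The acyclic covering hypothesis (\cite[Definition 5.12]{Y1}) then guarantees that if the $\widetilde{s}_a$ agree on each double overlap $U_{ab}$, they glue to a global class $\varphi_{\mathscr{E}}(s)$ or $\psi_{\mathscr{E}}(s)$; canonicity in (1) and (2) will follow from a uniqueness statement in the gluing step.

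The agreement of $\widetilde{s}_a|_{U_{ab}}$ and $\widetilde{s}_b|_{U_{ab}}$ is controlled by the action of $Gr(k,\varphi_{ab})^*$, where $\varphi_{ab} \in GL_n(\mathcal{O}(U_{ab}))$ is the transition function. For part (3), Proposition \ref{oriinv1}(2) applied over $U_{ab}$ directly gives $Gr(k,\varphi_{ab}) = Id$ in $\widetilde{DM}^{eff}$ (using that $k(n-k)$ is even and that the acyclicity of the cover should propagate the hypothesis $\,_{2}CH^* = 0$ to each $U_{ab}$), so the action on $\widetilde{CH}^*$ is trivial. For part (1), I would combine Proposition \ref{oriinvb} (giving $Gr(k,\varphi_{ab}) = Id$ in $DM^{eff}$, hence identity on both Chow corners $CH^n, CH^{n+1}$) with the Cartesian square of Proposition \ref{eta}: naturality of the Steenrod square then forces the fiber product $\eta_{MW}^n$ to be invariant as well, and since an element of $\eta_{MW}^n$ is uniquely determined by the pair $(a,b) \in CH^n \times CH^{n+1}$ with $Sq^2_{\mathscr{L}}\pi(a) = \pi(b)$, the lift $\varphi_{\mathscr{E}}(s)$ is canonical.

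For the twisted cases (2) and (4), the bookkeeping is more delicate. In (4), Proposition \ref{ce}(5) decomposes the Witt cohomology of $Gr(k,n)$ (with $k,n$ even) into an $e_k$-multiple component and an $e_{n-k}^{\perp}$-multiple component. These two components have different natural global avatars: the Euler class $e(\mathscr{U}_{\mathscr{E}})$ lives with coefficients in $\det(\mathscr{U}_{\mathscr{E}})^{\vee} = O(1)$, whereas $e(\mathscr{U}_{\mathscr{E}}^{\perp})$ lives in $\det(\mathscr{U}_{\mathscr{E}}^{\perp})^{\vee} = O(1) \otimes p^*\det(\mathscr{E})^{\vee}$, which matches $p^*\det(\mathscr{E}) \otimes O(1)$ up to squares in $Pic$. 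The hypothesis on $w(s)$ selects which component $s$ lies in and thereby fixes the twist of the globalization; the gluing itself proceeds as in part (3), noting that $O(1)$-twisted cohomology forms a well-defined presheaf since the transition $Gr(k,\varphi_{ab})^*O(1) \cong O(1)$ canonically.

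The main obstacle is expected to be part (1): we need transition invariance of $\eta_{MW}$, but Proposition \ref{oriinv1} as stated only gives invariance in $\widetilde{DM}^{eff}$, and we cannot in general assume $k(n-k)$ is even in (1). The argument must therefore lean on the Cartesian-square description of $\eta_{MW}$ plus invariance in $DM^{eff}$ (available for all $k,n$ by Proposition \ref{oriinvb}), and one has to verify that this square is functorial under the endomorphism $Gr(k,\varphi_{ab})$ of $Gr(k,\mathscr{E})|_{U_{ab}}$, with the Steenrod square transported compatibly. A secondary difficulty is case (4)(b): tracking the $\det(\mathscr{E})$-twist through the \v{C}ech gluing requires identifying, for each $a$, the trivialization $\det(\mathscr{E})|_{U_a} \cong O_{U_a}$ used to write $\widetilde{s}_a$, and reconciling the induced transition cocycle with the intrinsic twist carried by $e_{n-k}^{\perp}$ on the global $Gr(k,\mathscr{E})$; this is essentially a determinant calculation but must be done carefully enough that the condition $w(s) \in e_{n-k}^{\perp}\cdot H^*(-,\textbf{W})$ suffices.
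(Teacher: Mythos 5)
Your approach—pulling $s$ back to each $U_a$ and gluing via transition invariance—is genuinely different from the paper's. The paper never glues: it constructs $\varphi_{\mathscr{E}}(s)$ by explicit formulas intrinsic to $Gr(k,\mathscr{E})$. Concretely, it defines a globalization map $c_{\mathscr{E}}:CH^*(Gr(k,n))\to CH^*(Gr(k,\mathscr{E}))$ sending a Schubert cycle $\sigma_{a_1,\ldots,a_l}$ to the Giambelli determinant in the Chern classes of $\mathscr{U}_{\mathscr{E}}^{\perp}$ (with a separate formula using $c_j(\mathscr{U}_{\mathscr{E}})$ for shapes $T\cdot\sigma_{1^l}$), then writes down $\varphi_{\mathscr{E}}$ on the free generators $u_{\underline a}=(\sigma_{\underline a},Sq^2(\sigma_{\underline a}))$ and $v_{\underline a}=(0,2\sigma_{\underline a})$ of $\eta^*_{MW}(Gr(k,n))$, and checks membership in $\eta_{MW}$ with the $Sq^2$-formula for Schur polynomials (Proposition \ref{Schur}). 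For (3)--(4) it uses Proposition \ref{decomp1} to split $s$ into a $\textbf{GW}(k)$-polynomial in $p_j,p_j^{\perp}$ (globalized by substituting $p_j(\mathscr{U}_{\mathscr{E}}),p_j(\mathscr{U}_{\mathscr{E}}^{\perp})$) plus a $Ker_t(Sq^2\circ\pi)$-part globalized through $\partial\circ\varphi_{\mathscr{E}}\circ t$ for a section $t$ of $\partial$. The restriction property then holds by inspection, since $c_j(\mathscr{U}_{\mathscr{E}}^{\perp})|_{U_a}$ is the pullback of $\sigma_j$.

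The gap in your proposal is the gluing step itself. Producing a global element of $\eta_{MW}^i(Gr(k,\mathscr{E}))$ or $\widetilde{CH}^i(Gr(k,\mathscr{E}))$ from locally compatible classes requires a Mayer--Vietoris/\v{C}ech-descent result for these theories along $\{U_a\}$, and for the "canonical'' claim in (1)--(2) you additionally need uniqueness, i.e.\ injectivity of the restriction map. Neither of these is provided by the "acyclic covering'' machinery of \cite[Definition 5.12]{Y1} in the way you use it: in that reference and in the proof of Theorem \ref{Grassbdl}, acyclic coverings enter through \cite[Proposition 2.4]{Y1}, which checks whether an already-constructed morphism of MW-motives is an isomorphism by restricting to the cover; it does not assemble a class from local data. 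Unless you supply a Rost--Schmid-based \v{C}ech argument with the appropriate vanishing on multi-fold intersections, the global class is not produced. A secondary issue you flag but do not resolve: your use of the Cartesian square in Proposition \ref{eta} and of Proposition \ref{oriinv1}(2) on each $U_{ab}$ requires $_2CH^*(U_{ab})=0$, which does not follow from $_2CH^*(X)=0$ and must be arranged as part of the covering. The paper's direct construction circumvents both difficulties entirely, which is precisely why it takes the longer explicit route rather than gluing.
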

\begin{proof}
\begin{enumerate}
\item By \cite[Theorem 4.13]{Y1} and Proposition \ref{basis}, \(\eta_{MW}^{i}(Gr(k,n))\subseteq CH^i(Gr(k,n))\oplus CH^{i+1}(Gr(k,n))\) is freely generated by elements like
\[\begin{array}{ccc}u_{a_1,\cdots,a_l}=(\sigma_{a_1,\cdots,a_l},Sq^2(\sigma_{a_1,\cdots,a_l}))&and&v_{a_1,\cdots,a_l}=(0,2\sigma_{a_1,\cdots,a_l})\end{array}.\]
We define a morphism
\[\begin{array}{cccc}c_{\mathscr{E}}:&CH^*(Gr(k,n))&\longrightarrow&CH^*(Gr(k,{\mathscr{E}}))\\&\sigma_{a_1,\cdots,a_l}&\longmapsto&\left|\begin{array}{ccc}c_{a_1}(\mathscr{U}_{\mathscr{E}}^{\perp})&c_{a_1+1}(\mathscr{U}_{\mathscr{E}}^{\perp})&\cdots\\c_{a_2-1}(\mathscr{U}_{\mathscr{E}}^{\perp})&c_{a_2}(\mathscr{U}_{\mathscr{E}}^{\perp})&\cdots\\\cdots&&\end{array}\right|_{l\times l}\end{array}\]
Define
\[\varphi_{\mathscr{E}}(v_{a_1,\cdots,a_l})=(0,2c_{\mathscr{E}}(\sigma_{a_1,\cdots,a_l})).\]
We have
\[Sq^2(c_i(\mathscr{U}_{\mathscr{E}}^{\perp}))=(i+1)c_{i+1}(\mathscr{U}_{\mathscr{E}}^{\perp})+c_1(\mathscr{U}_{\mathscr{E}}^{\perp})c_i(\mathscr{U}_{\mathscr{E}}^{\perp})\]
in \(Ch^{{*}}(Gr(k,{\mathscr{E}}))\) by \cite[Proposition 5.5.2]{R}. If \((a_1,\cdots,a_l)=T\cdot\sigma_{1^l}\) where \(T\) is completely even, \(l\) is even and \(T_{l+1}=0\), we define
\[\varphi_{\mathscr{E}}(u_{a_1,\cdots,a_l})=(c_{\mathscr{E}}(T)c_l(\mathscr{U}_{\mathscr{E}}),c_{\mathscr{E}}(T)(c_{l+1}(\mathscr{U}_{\mathscr{E}})+c_1(\mathscr{U}_{\mathscr{E}})c_l(\mathscr{U}_{\mathscr{E}})));\]
Otherwise define
\[\varphi_{\mathscr{E}}(u_{a_1,\cdots,a_l})=(c_{\mathscr{E}}(\sigma_{a_1,\cdots,a_l}),c_{\mathscr{E}}(\sum_{i=1}^{l}\mu(a_i-i+1)\sigma_{\cdots,a_{i-1},a_i+1,a_{i+1},\cdots}+\mu(l)\sigma_{a_1,\cdots,a_l,1}))\]
where
\[\begin{array}{cccc}\mu:&\mathbb{Z}&\longrightarrow&\{0,1\}\\&\{\textrm{odd numbers}\}&\longmapsto&1\\&\{\textrm{even numbers}\}&\longmapsto&0\end{array}.\]
We have \(\varphi_{\mathscr{E}}(u_{a_1,\cdots,a_l})\in\eta_{MW}^{\sum a_i}(Gr(k,{\mathscr{E}}))\) by Proposition \ref{Schur}. Then we expand the definition linearly to obtain a map
\[\varphi_{\mathscr{E}}:\eta_{MW}^i(Gr(k,n))\longrightarrow\eta_{MW}^i(Gr(k,{\mathscr{E}})).\]
\item The proof is completely the same as (1). By Proposition \ref{eta} and \ref{basis}, \(\eta_{MW}^{i}(Gr(k,n),O(1))\subseteq CH^{i}(Gr(k,n))\oplus CH^{i+1}(Gr(k,n))\) is freely generated by elements like
\[\begin{array}{ccc}u_{a_1,\cdots,a_l}=(\sigma_{a_1,\cdots,a_l},Sq^2_{O(1)}(\sigma_{a_1,\cdots,a_l}))&and&v_{a_1,\cdots,a_l}=(0,2\sigma_{a_1,\cdots,a_l})\end{array}.\]
We define
\[\varphi_{\mathscr{E}}(v_{a_1,\cdots,a_l})=(0,2c_{\mathscr{E}}(\sigma_{a_1,\cdots,a_l})).\]
If \((a_1,\cdots,a_l)=T\cdot\sigma_{1^l}\) where \(T\) is completely even, \(l\) is even and \(T_{l+1}=0\), we define 
\[\varphi_{\mathscr{E}}(u_{a_1,\cdots,a_l})=(c_{\mathscr{E}}(T)\cdot c_l(\mathscr{U}_{\mathscr{E}}),c_{\mathscr{E}}(T)(c_{l+1}(\mathscr{U}_{\mathscr{E}})+c_1(p^*\mathscr{L})c_l(\mathscr{U}_{\mathscr{E}}));\]
Otherwise we define
\[\varphi_{\mathscr{E}}(u_{a_1,\cdots,a_l})=(c_{\mathscr{E}}(\sigma_{a_1,\cdots,a_l}),c_{\mathscr{E}}(\sum_{i=1}^{l}\mu(a_i-i+1)\sigma_{\cdots,a_{i-1},a_i+1,a_{i+1},\cdots}+\mu(l)\sigma_{a_1,\cdots,a_l,1})+\delta(\mathscr{E}))\]
where \(\delta(\mathscr{E})=c_1(p^*\mathscr{L}\otimes O(1))c_{\mathscr{E}}(\sigma_{a_1,\cdots,a_l})\).
\item By Proposition \ref{decomp1}, there is a \(u_1'=\psi(p_j,p_j^{\perp})\) such that \(\psi\) is a polynomial with coefficients in \(\textbf{GW}(F)\) and \(s-u_1'\in Ker_t(Sq^2\circ\pi)_i\). Suppose
\[t:Ker_t(Sq^2\circ\pi)_i\longrightarrow\eta_{MW}^{i-1}(Gr(k,n))\]
is a section of \(\partial\) (see Proposition \ref{etaIm}). Then \((\pi\circ\varphi_{\mathscr{E}}\circ t)(s-u_1')\) is a global version of \(s-u_1'\). The global version of \(u_1'\) exists when \(k(n-k)\) is even, which is
\[x=\psi(p_j(\mathscr{U}_{\mathscr{E}}),p_j(\mathscr{U}_{\mathscr{E}}^{\perp})).\]
Then we set \(\psi_{\mathscr{E}}(s)=x+(\pi\circ\varphi_{\mathscr{E}}\circ t)(s-u_1')\).
\item
\begin{enumerate}
\item By Proposition \ref{decomp1}, there is a \(u_1'=\psi(p_j,p_j^{\perp},e_k)\) such that \(\psi\) is a polynomial with coefficients in \(\textbf{GW}(F)\) and \(s-u_1'\in Ker_t(Sq^2_{O(1)}\circ\pi)_i\). Then proceed the same proof as (3).
\item By Proposition \ref{decomp1}, there is a \(u_1'=\psi(p_j,p_j^{\perp},e_{n-k}^{\perp})\) such that \(\psi\) is a polynomial with coefficients in \(\textbf{GW}(F)\) and \(s-u_1'\in Ker_t(Sq^2_{O(1)}\circ\pi)_i\). Then proceed the same proof as (3).
\end{enumerate}
\end{enumerate}
\end{proof}

Now in order to perform more precise computation, we have to globalize the elements \(p(\Lambda)\) defined in Definition \ref{special} in a canonical way, in addition to the \(\varphi_{\mathscr{E}}\) defined above. Note that the \(\psi_{\mathscr{E}}\) above is not canonical.
\begin{proposition}\label{precise}
Suppose that \(X\in Sm/F\) and that \(\mathscr{E}\) is a vector bundle of rank \(n\) over \(X\). Recall the morphisms \(\gamma:\widetilde{CH}^{{*}}\longrightarrow CH\) and \(w:\widetilde{CH}^{{*}}\longrightarrow H^{{*}}(-,\textbf{W}(-))\) defined in Proposition \ref{gen}.
\begin{enumerate}
\item If \(\mathbb{Z}(Gr(k,\mathscr{E}))\) splits as an MW-motive, for every completely even Young diagram
\[\Lambda=(\cdots,2a_i,2a_i,\cdots),\]
there is a unique element \(p_{\mathscr{E}}(\Lambda)\in\widetilde{CH}^{|\Lambda|}(Gr(k,\mathscr{E}))\) such that
\[\gamma(p_{\mathscr{E}}(\Lambda))=c_{\mathscr{E}}(\Lambda)\]
\[w(p_{\mathscr{E}}(\Lambda))=\left|\begin{array}{ccc}p_{2a_1}(\mathscr{U}_{\mathscr{E}}^{\perp})&p_{2a_1+2}(\mathscr{U}_{\mathscr{E}}^{\perp})&\cdots\\p_{2a_2-2}(\mathscr{U}_{\mathscr{E}}^{\perp})&p_{2a_2}(\mathscr{U}_{\mathscr{E}}^{\perp})&\cdots\\\cdots&&\end{array}\right|.\]
\item If \(Th(O_{Gr(k,\mathscr{E})}(1))\) splits as an MW-motive, for every even Young diagram
\[\Lambda=(\cdots,2a_i,2a_i,\cdots)\cdot\sigma_{1^k}\]
(when it makes sense), there is a unique element \(p_{\mathscr{E}}(\Lambda)\in\widetilde{CH}^{|\Lambda|}(Gr(k,\mathscr{E}),O(1))\) such that
\[\gamma(p_{\mathscr{E}}(\Lambda))=c_{\mathscr{E}}(\sigma_{\cdots,2a_i,2a_i,\cdots})\cdot c_k(\mathscr{U}_{\mathscr{E}})\]
\[w(p_{\mathscr{E}}(\Lambda))=\left|\begin{array}{ccc}p_{2a_1}(\mathscr{U}_{\mathscr{E}}^{\perp})&p_{2a_1+2}(\mathscr{U}_{\mathscr{E}}^{\perp})&\cdots\\p_{2a_2-2}(\mathscr{U}_{\mathscr{E}}^{\perp})&p_{2a_2}(\mathscr{U}_{\mathscr{E}}^{\perp})&\cdots\\\cdots&&\end{array}\right|\cdot e(\mathscr{U}_{\mathscr{E}}).\]
\item If \(Th(O_{Gr(k,\mathscr{E})}(1)\otimes p^*det(\mathscr{E}))\) splits as an MW-motive, for every even Young diagram
\[\Lambda=(\cdots,2a_i,2a_i,\cdots)\cdot\sigma_{n-k}\]
(when it makes sense), there is a unique element \(p_{\mathscr{E}}(\Lambda)\in\widetilde{CH}^{|\Lambda|}(Gr(k,\mathscr{E}),p^*det(\mathscr{E})\otimes O(1))\) such that
\[\gamma(p_{\mathscr{E}}(\Lambda))=c_{\mathscr{E}}(\sigma_{\cdots,2a_i,2a_i,\cdots})\cdot c_{n-k}(\mathscr{U}_{\mathscr{E}}^{\perp})\]
\[w(p_{\mathscr{E}}(\Lambda))=\left|\begin{array}{ccc}p_{2a_1}(\mathscr{U}_{\mathscr{E}}^{\perp})&p_{2a_1+2}(\mathscr{U}_{\mathscr{E}}^{\perp})&\cdots\\p_{2a_2-2}(\mathscr{U}_{\mathscr{E}}^{\perp})&p_{2a_2}(\mathscr{U}_{\mathscr{E}}^{\perp})&\cdots\\\cdots&&\end{array}\right|\cdot e(\mathscr{U}_{\mathscr{E}}^{\perp})\]
where \(p:Gr(k,\mathscr{E})\longrightarrow X\) is the structure map.
\end{enumerate}
\end{proposition}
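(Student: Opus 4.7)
The plan is to handle uniqueness and existence separately, in both cases pushing the splitting hypothesis through Theorem \ref{sq} and the Cartesian square of Corollary \ref{Cartesian}.

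For uniqueness, if $p,p'\in\widetilde{CH}^{|\Lambda|}(Gr(k,\mathscr{E}),\mathscr{M})$ both satisfy the prescribed $\gamma$ and $w$, then by Theorem \ref{sq}(2) the difference $p-p'$ lies in $\ker\gamma=\textbf{I}(k)\cdot H^{|\Lambda|}(Gr(k,\mathscr{E}),\textbf{W}(\mathscr{M}))$, and on this submodule $w$ is the natural inclusion into $H^{|\Lambda|}(-,\textbf{W}(\mathscr{M}))$. Since $H^{*}(-,\textbf{W}(\mathscr{M}))$ is a free $\textbf{W}(k)$-module by Theorem \ref{sq}(1) and $\textbf{I}(k)\hookrightarrow\textbf{W}(k)$, that inclusion is injective, forcing $p=p'$.

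For existence, Corollary \ref{Cartesian} reduces the problem to producing compatible elements $\alpha\in Ker(Sq^{2}_{\mathscr{M}}\circ\pi)_{|\Lambda|}$ and $\beta\in H^{|\Lambda|}(-,\textbf{W}(\mathscr{M}))$ with the same image in $E^{|\Lambda|}(-,\mathscr{M})$. I would take $\alpha$ to be the prescribed Chow class (namely $c_{\mathscr{E}}(\sigma_{\cdots,2a_i,2a_i,\cdots})$ in (1), multiplied by $c_{k}(\mathscr{U}_{\mathscr{E}})$ in (2) and by $c_{n-k}(\mathscr{U}^{\perp}_{\mathscr{E}})$ in (3)), and verify $Sq^{2}_{\mathscr{M}}\pi(\alpha)=0$ by combining the Cartan formula $Sq^{2}(c_i)=(i+1)c_{i+1}+c_{1}c_i$ in $Ch^{\bullet}(Gr(k,\mathscr{E}))$ with Proposition \ref{Schur}(3)--(4) for the completely even tableau; the top Chern class factors in (2)--(3) are annihilated by $Sq^{2}_{\mathscr{M}}$ because the corresponding Euler classes live in the matched twisted Witt cohomology. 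I would take $\beta$ to be the prescribed determinant of Pontryagin classes, multiplied by the stated Euler class; this lies in $H^{*}(-,\textbf{W}(\mathscr{M}))$ by Proposition \ref{odd} (odd Pontryagin classes vanish in Witt cohomology, leaving only even ones) and by the identification $\det(\mathscr{U}^{\perp}_{\mathscr{E}})\cong p^{*}\det\mathscr{E}\otimes O(1)$ modulo squares.

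The heart of the argument is the compatibility $\pi(\alpha)\equiv\delta(\beta)\pmod{Im(Sq^{2}_{\mathscr{M}})}$. I would reduce this to the universal identity
\[
s_{(a_1,\ldots,a_l)}(c(\mathscr{U}^{\perp}_{\mathscr{E}}))^{2}\ \equiv\ c_{\mathscr{E}}(\sigma_{\cdots,2a_i,2a_i,\cdots})\pmod{Im(Sq^{2}_{\mathscr{M}})},
\]
using $p_{2i}\equiv c_{2i}^{2}\pmod 2$ modulo Wu-type relations absorbed in $Im(Sq^{2})$. This is precisely the Cramer expansion / induction already carried out in the proof of Proposition \ref{can} on the trivial Grassmannian, and being a universal polynomial identity in Chern classes it pulls back along the classifying map of $\mathscr{U}^{\perp}_{\mathscr{E}}$ to arbitrary $\mathscr{E}$. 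Parts (2) and (3) then follow from (1) by multiplying with $e(\mathscr{U}_{\mathscr{E}})$ or $e(\mathscr{U}^{\perp}_{\mathscr{E}})$, whose mod-$2$ reductions are the respective top Chern classes. The main obstacle I anticipate is the twisted case: the Pieri/Cramer calculation of Proposition \ref{Schur}(3) is formulated for untwisted $Sq^{2}$, so for (2)--(3) one must show that the extra term in $Sq^{2}_{\mathscr{M}}=Sq^{2}+c_{1}(\mathscr{M})\cup$ is either absorbed by the Euler-class factor or cancelled via $c_{1}(O(1))=c_{1}(\det\mathscr{U}_{\mathscr{E}})$ and $c_{1}(\det(\mathscr{U}^{\perp}_{\mathscr{E}}))=c_{1}(p^{*}\det\mathscr{E})-c_{1}(O(1))$; once this twisted compatibility is in place, the three parts follow uniformly.
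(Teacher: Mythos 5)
Your proposal takes essentially the same approach as the paper: both reduce the statement via the Cartesian square of Corollary \ref{Cartesian} to producing compatible elements in $Ker(Sq^2_{\mathscr{M}}\circ\pi)$ and $H^{\bullet}(-,\textbf{W}(\mathscr{M}))$, with the compatibility in $E^{\bullet}$ being exactly the Cramer/Giambelli identity established in the proof of Proposition \ref{can}, transported to arbitrary $\mathscr{E}$ via the free polynomial-ring calculus of Proposition \ref{Schur}, and the twisted cases (2)--(3) handled by the Euler-class multiplication trick of Proposition \ref{can1}. The paper's proof is terse (``essentially the same as Proposition \ref{can}''); your version simply fills in the same steps in more detail, including the correct observation that uniqueness is automatic from the fiber-product structure of the Cartesian square.
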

\begin{proof}
\begin{enumerate}
\item Suppose \(|\Lambda|=n\). We have a commutative diagram with squares being Cartesian
\[
	\xymatrix
	{
		\widetilde{CH}^n(Gr(k,\mathscr{E}))\ar[r]\ar[d]	&Ker(Sq^2\circ\pi)_n\ar[d]\\
		H^n(Gr(k,\mathscr{E}),\textbf{W})\ar[r]				&E^n(Gr(k,\mathscr{E}))
	}
\]
by Theorem \ref{sq}. Set \(c_i=c_i(\mathscr{U}_{\mathscr{E}}^{\perp})\) and \(p_i=p_i(\mathscr{U}_{\mathscr{E}}^{\perp})\). Then \(p_i=c_i^2\) in \(Ch^*(Gr(k,\mathscr{E}))\). Hence we are to prove that
\[\left|\begin{array}{ccc}c_{2a_1}&c_{2a_1+2}&\cdots\\c_{2a_2-2}&c_{2a_2}&\cdots\\\cdots&&\end{array}\right|^2=\left|\begin{array}{ccc}c_{2a_1}&c_{2a_1+1}&\cdots\\c_{2a_2-1}&c_{2a_2}&\cdots\\\cdots&&\end{array}\right|+Im(Sq^2),\]
which follows by Proposition \ref{Schur}.
\item Applying Theorem \ref{sq}, the statement follows from same method as in Proposition \ref{can1}.
\item Applying Theorem \ref{sq}, the statement follows from same method as in Proposition \ref{can1}.
\end{enumerate}
\end{proof}
Suppose \(f,g\in Hom_{\mathscr{C}}(A,B)\) are morphisms in some triangulated category \(\mathscr{C}\). We say that \(f\) could be simplified to \(g\) if \(g=h\circ f\) where \(h\in Aut(B)\). If \(B=C\oplus D\), any morphism \(C\longrightarrow D\) gives an element of \(Aut(C\oplus D)\) by an elementary matrix. Simplification of morphisms does not change their mapping cones.

In the sequel, let \(R\) be a commutative ring with identity.
\begin{theorem}\label{Grassbdl}
Let \(S\in Sm/F\), \(X\in Sm/S\) be quasi-projective, \(\mathscr{L}\in Pic(X)\) and \({\mathscr{E}}\) be a vector bundle of rank \(n\) over \(X\). Denote by \(p:Gr(k,\mathscr{E})\longrightarrow X\) the structure map.
\begin{enumerate}
\item We have
\[R(Gr(k,\mathscr{E}))/\eta\cong\bigoplus_{\Lambda\textrm{ \((k,n)\)-truncated}}R(X)/\eta((|\Lambda|))\]
in \(\widetilde{DM}(S,R)\).
\item If \(k(n-k)\) is even, we have
\[Th(p^*\mathscr{L})\cong\bigoplus_{\Lambda\textrm{ even}}Th(\mathscr{L})((|\Lambda|))\oplus\bigoplus_{\Lambda\textrm{ irred. not full}, i_1>1}R(X)/{\eta}((|\Lambda_{i_1,\cdots,i_l}|+1))\]
in \(\widetilde{DM}(S,R)\).
\item If both \(k\) and \(n\) are even, we have
\[\begin{array}{c}Th(p^*\mathscr{L}\otimes O(1))\cong\bigoplus_{\Lambda=\sigma_{n-k}T}Th(det(\mathscr{E})^{\vee}\otimes\mathscr{L})((|\Lambda|))\oplus\bigoplus_{\Lambda=\sigma_{1^k}T}Th(\mathscr{L})((|\Lambda|))\\\oplus\bigoplus_{i_1>1}R(X)/{\eta}((|\Lambda_{i_1,\cdots,i_l}|+1))\end{array}\]
in \(\widetilde{DM}(S,R)\), where \(T\) is completely even.
\item If \(n-k\) is odd, we have
\[Th(p^*\mathscr{L}\otimes O(1))\cong\bigoplus_{\Lambda\textrm{ even}}Th(\mathscr{L})((|\Lambda|))\oplus\bigoplus_{\Lambda\textrm{ irred. not full}, i_1>1}R(X)/{\eta}((|\Lambda_{i_1,\cdots,i_l}|+1))\]
in \(\widetilde{DM}(S,R)\).
\item If \(k\) and \(n\) are odd, we have
\[Th(p^*\mathscr{L}\otimes O(1))\cong\bigoplus_{\Lambda\textrm{ even}}Th(\mathscr{L}\otimes det(\mathscr{E})^{\vee})((|\Lambda|))\oplus\bigoplus_{\Lambda\textrm{ irred. not full}, i_1>1}R(X)/{\eta}((|\Lambda_{i_1,\cdots,i_l}|+1))\]
in \(\widetilde{DM}(S,R)\).
\item If \(k\) is odd, \(n\) is even and \(e(\mathscr{E})=0\in\widetilde{CH}^n(X,det(\mathscr{E})^{\vee})\), there is an isomorphism
\[\begin{array}{c}Th(p^*\mathscr{L})\cong\bigoplus_{\Lambda=\mathcal{R}\cdot T}Th(\mathscr{L}\otimes det(\mathscr{E})^{\vee})((|\Lambda|))\oplus\bigoplus_{\Lambda=T}Th(\mathscr{L})((|\Lambda|))\\\oplus\bigoplus_{\Lambda\textrm{ irred. not full}, i_1>1}R(X)/{\eta}((|\Lambda_{i_1,\cdots,i_l}|+1))\end{array}\]
in \(\widetilde{DM}(S,R)\), where \(T\) is completely even.
\end{enumerate}
\end{theorem}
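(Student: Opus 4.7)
The plan is to globalize the absolute decomposition of Theorem \ref{Grass1} along the base $X$ by assembling canonical global cycles, and then to verify the resulting morphism is an isomorphism by reducing to a trivializing cover of $\mathscr{E}$.

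First I would build the candidate morphism. For each even Young tableau $\Lambda$, Proposition \ref{precise} constructs a global Chow-Witt class $p_{\mathscr{E}}(\Lambda)$ on $Gr(k,\mathscr{E})$, carrying the appropriate $det(\mathscr{E})^{\vee}$ or $O(1)$ twist dictated by whether $\Lambda$ factors through $\sigma_{n-k}$, $\sigma_{1^k}$, or the orientation class $\mathcal{R}$; its image in ordinary Chow theory is the Schur-like polynomial $c_{\mathscr{E}}(\Lambda)$ in the Chern classes of $\mathscr{U}_{\mathscr{E}}$ and $\mathscr{U}_{\mathscr{E}}^{\perp}$. For each irredundant-but-not-full tableau $\Lambda$ with $i_1>1$, Proposition \ref{global} provides a global $\eta$-class $\varphi_{\mathscr{E}}(c(\Lambda_{i_1,\dots,i_l}))$. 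Packaging these together with pullbacks along $p:Gr(k,\mathscr{E})\longrightarrow X$ and interpreting them via Proposition \ref{W} as morphisms from the appropriate Thom-space shifts, I obtain a candidate for the stated decomposition in $\widetilde{DM}(S,R)$.

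Next I would verify the morphism is an isomorphism. Part (1) is independent of the MW-refinement: after tensoring with $cone(\eta)$ the MW-theory collapses to Voevodsky's motives, where the decomposition is the classical Grassmannian bundle theorem (\cite[Proposition 2.4]{S}) combined with Lemma \ref{decomp}. For parts (2)--(6), I would reduce to the absolute case handled by Theorem \ref{Grass1}. Since $X/S$ is quasi-projective, Jouanolou's trick allows assuming $X$ is affine, after which I choose a trivializing acyclic cover $\{U_a\}$ for $\mathscr{E}$. A Mayer-Vietoris (Čech-descent) argument then reduces the isomorphism to the iterated intersections $U_{a_1}\cap\cdots\cap U_{a_r}$, over which $\mathscr{E}$ trivializes. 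On such a trivialization, by construction (Propositions \ref{global} and \ref{precise}) the classes $p_{\mathscr{E}}(\Lambda)$ and $\varphi_{\mathscr{E}}(c(\Lambda_{i_1,\dots,i_l}))$ restrict to the pullbacks of the absolute classes $p(\Lambda)$ and $c(\Lambda_{i_1,\dots,i_l})$ from Definition \ref{special}, so the claim reduces to the external product of Theorem \ref{Grass1} with $R(U)$.

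The main obstacle I foresee is the twist-bookkeeping in cases (3), (5), and (6). The even tableaux split according to whether they factor through the top row $\sigma_{n-k}$, the leftmost column $\sigma_{1^k}$, or (in case (6)) the orientation class $\mathcal{R}$, and each choice determines the $det(\mathscr{E})^{\vee}$ twist on the matching target summand. Ensuring that $p_{\mathscr{E}}(\Lambda)$ carries exactly the asserted twist requires the Schur-function identities of Proposition \ref{Schur} together with the inversion formulas of Proposition \ref{inversion} relating the Chern classes of $\mathscr{E}$, $\mathscr{U}_{\mathscr{E}}$, and $\mathscr{U}_{\mathscr{E}}^{\perp}$. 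In case (6) the hypothesis $e(\mathscr{E})=0\in\widetilde{CH}^n(X,det(\mathscr{E})^{\vee})$ is precisely what permits globalizing the orientation class $\mathcal{R}$, playing an analogous role to the vanishing Euler-class hypothesis in the even-rank projective bundle theorem of Proposition \ref{pbtsq}.
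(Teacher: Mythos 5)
Your overall strategy for parts (1)--(5) matches the paper's: construct global classes via Propositions \ref{global} and \ref{precise}, and verify the resulting map is an isomorphism by restricting to a trivializing cover of $\mathscr{E}$ (what you call a \v{C}ech-descent argument is, in the paper, the invocation of the local-to-global principle of \cite[Proposition 2.4]{Y1}). One omitted step in your outline: after Jouanolou's trick the paper further reduces to the universal case $X=S=Gr(n,m)$ via the classifying map $f:X\to Gr(n,m)$ with $f^*\mathscr{U}_{n,m}=\mathscr{E}$, which makes $\mathbb{Z}(X)$ itself split; this is used to invoke the uniqueness clause of Proposition \ref{precise}. Also, your reduction of part (1) to Voevodsky's Grassmannian bundle theorem after ``collapsing to $DM$'' is a plausible alternative but is not the paper's argument: the paper stays inside $\widetilde{DM}$, tensors Theorem \ref{Grass1} by $\mathbb{Z}/\eta$, and computes morphism groups directly using the strong duality of $\mathbb{Z}/\eta$ (\cite[Proposition 5.8]{Y1}).

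The genuine gap is in part (6). You treat this case uniformly with (2)--(5), claiming the vanishing of $e(\mathscr{E})$ ``permits globalizing the orientation class $\mathcal{R}$'' and then applying the same cover argument. But the globalization tool you are relying on does not cover this situation: Proposition \ref{global}(3) explicitly requires $k(n-k)$ even, whereas in case (6) ($k$ odd, $n$ even) $k(n-k)$ is odd; and Proposition \ref{precise}(1) is conditioned on $\mathbb{Z}(Gr(k,\mathscr{E}))$ \emph{already} splitting, which is precisely what you are trying to establish. So the class $\mathcal{R}$ does not simply restrict to the absolute one on trivializing patches, and a bare \v{C}ech argument does not close. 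The paper handles (6) with a fundamentally different induction: it uses the Gysin distinguished triangle
\[
Th(O_{Gr(k-1,\mathscr{E})}(1))\xrightarrow{\;i^*\;}Th(O_{Gr(k,\mathscr{E}\oplus O_X)}(1))\longrightarrow\mathbb{Z}(Gr(k,\mathscr{E}))((k+1))\longrightarrow\cdots[1],
\]
where the first two terms are already decomposed by case (3) (both $k$, $n+1$... adjusted parities make (3) applicable after replacing $\mathscr{E}$ by $\mathscr{E}\oplus O_X$), computes $i^*$ with respect to these decompositions using the canonical classes of Proposition \ref{precise} together with the Whitney/inversion identities (Propositions \ref{odd}, \ref{inversion}, \ref{Schur}), and then \emph{simplifies} the matrix of $i^*$ by elementary automorphisms so that its cone is visibly a direct sum. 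The hypothesis $e(\mathscr{E})=0$ enters at the very end, after pulling back along $f:X\to Gr(n,m)$, to kill the error terms that are multiples of $e(q^*\mathscr{E})$. Without this distinguished-triangle induction, case (6) does not follow from the globalization technique you describe.
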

\begin{proof}
It suffices to prove the case \(R=\mathbb{Z}\) and \(\mathscr{L}=O_X\). Since \(X\) is quasi-projective, by Jouanolou's trick we may assume \(X\) is affine hence there is an epimorphism \(O_X^{\oplus m}\longrightarrow\mathscr{E}\). So there is a map \(f:X\longrightarrow Gr(n,m)\) such that \(f^*\mathscr{U}_{n,m}=\mathscr{E}\). So we may suppose \(X=S=Gr(n,m)\), in particular, \(\mathbb{Z}(X)\) splits.
\begin{enumerate}
\item By tensoring the formulas in Theorem \ref{Grass1} with \(\mathbb{Z}/\eta\) we obtain an isomorphism
\[\mathbb{Z}(Gr(k,n))/\eta\xrightarrow{t_{\Lambda}}\bigoplus_{\Lambda\textrm{ \((k,n)\)-truncated}}\mathbb{Z}/\eta((|\Lambda|))\]
in \(\widetilde{DM}(pt,\mathbb{Z})\) by \cite[Proposition 5.4]{Y1}. We have
\[[\mathbb{Z}(Gr(k,n))/\eta,\mathbb{Z}/\eta((j))]_{MW}=\eta_{MW}^{j-1}(Gr(k,n))\oplus\eta_{MW}^j(Gr(k,n))\]
by strong duality of \(\mathbb{Z}/\eta\) (see \cite[Proposition 5.8]{Y1}). So by Proposition \ref{global}, we could find
\[(t_{\Lambda})_{\mathscr{E}}\in[\mathbb{Z}(Gr(k,\mathscr{E}))/\eta,\mathbb{Z}/\eta((|\Lambda|))]_{MW}\]
for every \(\Lambda\) such that
\[(t_{\Lambda})_{\mathscr{E}}|_{U_i}=q^*(t_{\Lambda})\]
under the chosen trivialization of \(E|_{U_i}\) where \(q:Gr(k,\mathscr{E}|_{U_i})\longrightarrow Gr(k,n)\) is the projection. Then the map
\[\mathbb{Z}(Gr(k,\mathscr{E}))/\eta\xrightarrow{(t_{\Lambda})_{\mathscr{E}}}\mathbb{Z}/\eta((|\Lambda|))\]
is an isomorphism in \(\widetilde{DM}(X,\mathbb{Z})\) by \cite[Proposition 2.4]{Y1}.
\item We have morphisms
\[\begin{array}{cc}\psi_{\mathscr{E}}(T):\mathbb{Z}(Gr(k,\mathscr{E}))\longrightarrow\mathbb{Z}(X)((|T|))&\varphi_{\mathscr{E}}(\Lambda):\mathbb{Z}(Gr(k,\mathscr{E}))\longrightarrow\mathbb{Z}(X)/\eta((|\Lambda|))\end{array}\]
by Proposition \ref{global}, which gives the ismorphism desired using \cite[Proposition 2.4]{Y1}, where \(T\) is completely even and \(\Lambda\) is not full as in Theorem \ref{Grass1}.
\item We have morphisms
\[\begin{array}{c}\psi_{\mathscr{E}}(\sigma_{1^k}T):Th(O_{Gr(k,\mathscr{E})}(1))\longrightarrow\mathbb{Z}(X)((|\sigma_{1^k}T|+1))\\\varphi_{\mathscr{E}}(\Lambda):Th(O_{Gr(k,\mathscr{E})}(1))\longrightarrow\mathbb{Z}(X)/\eta((|\Lambda|+1))\\\psi_{\mathscr{E}}(\sigma_{n-k}T):Th(O_{Gr(k,\mathscr{E})}(1))\longrightarrow Th(det(\mathscr{E})^{\vee})((|\sigma_{n-k}T|))\end{array}\]
by Proposition \ref{global}, where \(T\) and \(\Lambda\) are same as above. Then proceeds in the same way as (2).
\item We have morphisms
\[\begin{array}{c}\psi_{\mathscr{E}}(\sigma_{1^k}T):Th(O_{Gr(k,\mathscr{E})}(1))\longrightarrow\mathbb{Z}(X)((|\sigma_{1^k}T|+1))\\\varphi_{\mathscr{E}}(\Lambda):Th(O_{Gr(k,\mathscr{E})}(1))\longrightarrow\mathbb{Z}(X)/\eta((|\Lambda|+1))\end{array}\]
by Proposition \ref{global}, where \(T\) and \(\Lambda\) are same as above. Then proceeds in the same way as (2).
\item We have morphisms
\[\begin{array}{c}\psi_{\mathscr{E}}(\sigma_{n-k}T):Th(O_{Gr(k,\mathscr{E})}(1))\longrightarrow Th(det(\mathscr{E})^{\vee})((|\sigma_{n-k}T|))\\\varphi_{\mathscr{E}}(\Lambda):Th(O_{Gr(k,\mathscr{E})}(1))\longrightarrow\mathbb{Z}(X)/\eta((|\Lambda|+1))\end{array}\]
by Proposition \ref{global}, where \(T\) and \(\Lambda\) are same as above. Then proceeds in the same way as (2).
\item We will adopt the notation in Proposition \ref{gen}. We have a distinguished triangle
\[Th(O_{Gr(k-1,\mathscr{E})}(1))\xrightarrow{i^*}Th(O_{Gr(k,\mathscr{E}\oplus O_X)}(1))\longrightarrow\mathbb{Z}(Gr(k,\mathscr{E}))((k+1))\longrightarrow\cdots[1]\]
where \(i:Gr(k-1,\mathscr{E})\longrightarrow Gr(k,\mathscr{E}\oplus O_X)\) is the canonical embedding.

By the discussion above, we have already had the splitting formula for the first two terms, which split by \(X=Gr(n,m)\). Then we could apply the precise classes defined in Proposition \ref{precise}. The idea is to compute \(i^*\) with respect to their splitting and simplify \(i^*\). We will denote by \(T\) a completely even Young diagram.

Suppose \(\Lambda=T\cdot\sigma_{n+1-k}\) is a \((k,n+1)\)-truncated twisted even Young diagram. If \(\Lambda_k=0\), we have
\[i^*(p_{\mathscr{E}}(\Lambda))=p_{\mathscr{E}}(\Lambda)\]
by Proposition \ref{precise}. Let us suppose \(\Lambda_k>0\), so \(T_{k-1}>0\). By Proposition \ref{odd}, we have
\[\sum_ap_{2a}(\mathscr{U}_{\mathscr{E}})t^{2a}\sum_bp_{2b}(\mathscr{U}_{\mathscr{E}}^{\perp})t^{2b}=\sum_cp_{2c}(q^*\mathscr{E})t^{2c}\]
in \(H^{{*}}(Gr(k-1,\mathscr{E}),\textbf{W})[t]\), where \(q\) is the structure map of \(Gr(k-1,\mathscr{E})\). Let \(s=-t^2\). We obtain the following equation by Proposition \ref{inversion}
\[w(p_{\mathscr{E}}(T))\equiv\sum_{C\leq T,C_{k-1}=0,C\textrm{ comp. even}}w(p_{\mathscr{E}}(C))\cdot\mathbb{Z}[p_{2c}(q^*\mathscr{E})]\textrm{ (mod \(e(\mathscr{U}_{\mathscr{E}})\))}.\]
Hence
\[w(p_{\mathscr{E}}(\Lambda))\equiv\sum_{C\leq T,C_{k-1}=0,C\textrm{ comp. even}}w(p_{\mathscr{E}}(C\cdot\sigma_{n-k+1}))\cdot f_C(p_{2c}(q^*\mathscr{E}))\textrm{ (mod \(e(q^*\mathscr{E})\))}.\]
where \(f_C\) is a polynomial with integer coefficients. Hence we could simplify \(i^*\) to some \(\varphi\) via the morphisms
\[Th(det(\mathscr{E}))((|C|+n-k+1))\xrightarrow{f_C}Th(det(\mathscr{E}))((|\Lambda|))\]
for every \(C\) above, so that \(w(p_{\mathscr{E}}(\Lambda)\circ\varphi)\equiv0\textrm{ (mod \(e(q^*\mathscr{E})\))}\). Thus
\[(\pi\circ\gamma)(p_{\mathscr{E}}(\Lambda)\circ\varphi)\in Im(Sq^2_{q^*\mathscr{E}\otimes O(1)})\textrm{ (mod \(e(q^*\mathscr{E})\))}\]
by Theorem \ref{sq}. So \(p_{\mathscr{E}}(\Lambda)\circ\varphi\) is equal, up to a multiple of \(e(q^*\mathscr{E})\), to a composite
\[Th(O_{Gr(k-1,\mathscr{E})}(1))\longrightarrow Th(det(\mathscr{E}))/\eta((|\Lambda|-1))\xrightarrow{\partial}Th(det(\mathscr{E}))((|\Lambda|)),\]
where the first arrow comes from a sum composites of three kinds, according to the decomposition given by (3). Let us discuss case by case:
\begin{enumerate}
\item A composite induced by \(C=T\cdot\sigma_{n-k+1}\).

In this case, the composite factors through \(i^*\) and \(C\neq\Lambda\).
\item A composite induced by \(C=T\cdot\sigma_{1^{k-1}}\).

In this case, \(C\) is an irredundant non-even \((k,n+1)\)-truncated twisted diagram. There is a commutative diagram
\[
	\xymatrix
	{
		Th(O_{Gr(k-1,\mathscr{E})}(1))\ar[r]^{i^*}\ar[d]_{p_{\mathscr{E}}(C)}	&Th(O_{Gr(k,\mathscr{E}\oplus O_X)}(1))\ar[d]_{\varphi_{\mathscr{E}}(C)}\\
		\mathbb{Z}(X)((|C|+1))\ar[r]																&\mathbb{Z}(X)((|C|+1))/\eta
	}
\]
where the lower horizontal arrow is given by the quotient map. Hence the composite factors through \(i^*\) and \(C\neq\Lambda\).
\item A composite induced by \(C\) which is irredundant and not even.

In this case, the composite factors through \(i^*\) and \(C\neq\Lambda\).
\end{enumerate}
Hence we see that \(\varphi\) could be simplified so that
\[p_{\mathscr{E}}(\Lambda)\circ\varphi=0\textrm{ (mod \(e(q^*\mathscr{E})\))}.\]

Suppose \(\Lambda=C_{i_1,\cdots,i_l}, i_1>1\) where \(C\) is a \((k,n+1)\)-truncated irredundant twisted Young diagram. If \(\Lambda_k=0\) and \(|A(\Lambda)|>1\), there is an obvious naturality. If \(\Lambda_k=0\) and \(|A(\Lambda)|=1\), \(\Lambda=T\cdot\sigma_{1^{k-1}}\) is even, regarded as a \((k-1,n)\)-truncated twisted Young diagram. Then \(\varphi_{\mathscr{E}}(\Lambda)\circ\varphi\) is the composite
\[Th(O_{Gr(k-1,\mathscr{E})}(1))\xrightarrow{p_{\mathscr{E}}(\Lambda)}\mathbb{Z}(X)((|\Lambda|+1))\longrightarrow\mathbb{Z}(X)/\eta((|\Lambda|+1))\]
by Proposition \ref{global}. If \(\Lambda_k>0\), we could simplify \(\varphi\) according to the decomposition of \(Th(O_{Gr(k-1,\mathscr{E})}(1))\) as above so that \(\varphi_{\mathscr{E}}(\Lambda)\circ\varphi=0\).

Now pullback everything along \(f\), the \(e(q^*\mathscr{E})\) all vanish. Summarizing the computation above, we see that
\begin{footnotesize}
\[\mathbb{Z}(Gr(k,\mathscr{E}))=\bigoplus_{\Lambda\in S_1}\mathbb{Z}(X)((|\Lambda|-k+1))\oplus\bigoplus_{\Lambda\in S_2}Th(det(\mathscr{E}^{\vee}))((|\Lambda|-k-1))\oplus\bigoplus_{\Lambda\in S_3}\mathbb{Z}(X)/\eta((|\Lambda|-k))\]
\end{footnotesize}
where
\[S_1=\{\sigma_{1^{k-1}}\cdot T|T\textrm{ completely even}\}\]
\[S_2=\{\sigma_{n-k+1}\cdot T|T_{k-1}>0,T\textrm{ completely even}\}\]
\[S_3=\{C_{i_1,\cdots,i_l}|i_1>1,C_k>0,C\textrm{ irredundant and not full}\}.\]
Then we obtain the statement by erasing the first column of every diagram in \(S_1,S_2,S_3\), in order to get bijections between indexes.
\end{enumerate}
Finally we tensor the equations with \(Th(\mathscr{L})\) and apply \(g_{\#}\) where \(g:X\longrightarrow S\) is the structure map.
\end{proof}
We note that the (6) above is the most difficult part of the theorem, which is not a consequence of Proposition \ref{oriinv1}. Because the \(e(\mathscr{E})=0\) is a global condition.

Suppose \(A\in\widetilde{DM}(pt,\mathbb{Z})\) splits. In order to compute \(A\), it suffices to compute its image in \(DM\) and \(\widetilde{DM}_{\eta}\) by Lemma \ref{uniqueness}. For flag varieties \(Gr(d_1,\cdots,d_t)\), their decompositions in \(DM\) are well-known (see \cite[pp. 22]{S}).
\begin{proposition}\label{flag2}
For any odd \(n\) and \(\mathscr{M}\in Pic(Gr(n-2,n-1,n))/2\), \(Th(\mathscr{M})\) splits as an MW-motive and we have
\[Th(\mathscr{M})=\begin{cases}\mathbb{Z}[1]\oplus\mathbb{Z}[2n-2]&\textrm{if }\mathscr{M}=0\\0&\textrm{else}\end{cases}\]
in \(\widetilde{DM}_{\eta}\). So \(Th(\mathscr{M})\) are mutually isomorphic in \(\widetilde{DM}(pt,R)\) if \(\mathscr{M}\neq 0\). Denote by \(G\) this common object.

Suppose that \(S\in Sm/F\), \(X\in Sm/S\) is quasi-projective, \(\mathscr{M}\in Pic(\mathbb{P}(\Omega_p(1)))/2\) and that \(\mathscr{E}\) is a vector bundle of odd rank \(n\) on \(X\). Denote by \(p:\mathbb{P}(\mathscr{E})\longrightarrow X\), \(q:\mathbb{P}(\Omega_p(1))\longrightarrow\mathbb{P}(\mathscr{E})\) the structure maps. We have
\[Th(\mathscr{M})\cong\begin{cases}Th(\mathscr{L})\otimes R(Gr(n-2,n-1,n))&\begin{array}{c}p_{n-1}(\mathscr{E})=0\in H^{2n-2}(X,\textbf{W})\\\mathscr{M}=q^*p^*\mathscr{L},\mathscr{L}\in Pic(X)/2\end{array}\\R(X)\otimes G&\mathscr{M}\notin Pic(X)/2\end{cases}\]
in \(\widetilde{DM}(S,R)\).
\end{proposition}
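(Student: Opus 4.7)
My plan is to establish the absolute case $X=pt$, $\mathscr{E}=k^{\oplus n}$ first, then bootstrap to the bundle version via Jouanolou's trick and Proposition \ref{global}. The variety to analyze is $Y=\mathbb{P}(\Omega_{p}(1))$ for $p:\mathbb{P}^{n-1}\to pt$, which is identified with $Gr(n-2,n-1,n)$ by flag/quotient duality; since $\mathrm{Pic}(Y)\cong\mathbb{Z}^{2}$ is generated by the two tautological line bundles, mod $2$ there are exactly four twist classes to consider.

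For the trivial twist, Proposition \ref{flag2sq} applies directly over a point (since $p_{n-1}(\mathscr{E})=0$ trivially in $E^{2n-2}(pt)=0$) and gives $E^{\bullet}(Y)=\mathbb{Z}/2\oplus\mathbb{Z}/2\cdot T$ concentrated in degrees $0$ and $2n-3$. For nontrivial twists $\mathscr{M}$, I would iterate Proposition \ref{pbtsq}: the odd rank of $\mathscr{E}$ produces a clean shift on $\mathbb{P}(\mathscr{E})$, while the even rank of $\Omega_{p}(1)$ introduces an Euler-class term whose square equals $p_{n-1}(\mathscr{E})=0$; a case analysis across the four twist classes shows $E^{\bullet}(Y,\mathscr{M})=0$ whenever $\mathscr{M}$ is nontrivial mod $2$. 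The parallel projective bundle theorem for Witt cohomology yields $H^{\bullet}(Y,\textbf{W})=\textbf{W}(k)\oplus\textbf{W}(k)\cdot T$ concentrated in degrees $0$ and $2n-3$, and $H^{\bullet}(Y,\textbf{W}(\mathscr{M}))=0$ for nontrivial $\mathscr{M}$ mod $2$.

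To deduce that $Th(\mathscr{M})$ splits as an MW-motive, I would lift $1$ and $T$ to explicit Chow-Witt classes via Proposition \ref{can} and its twisted variant \ref{can1}, and check using Proposition \ref{gen} that the induced morphism $Th(\mathscr{M})\to\bigoplus\mathbb{Z}((|\Lambda|))\oplus\bigoplus\mathbb{Z}/\eta((|\Lambda|))$ is an isomorphism, where the $\mathbb{Z}/\eta$-summands are indexed by irredundant nonfull Young tableaux arising from the two-step-flag combinatorics. Once splitting is known, Lemma \ref{uniqueness} identifies the multiplicities $w_{i}$ and $t_{j}$ from the $DM$-realization (the standard Schubert decomposition of $R(Y)$, independent of $\mathscr{M}$) and from the $\widetilde{DM}_{\eta}$-realization computed above: for $\mathscr{M}=0$ mod $2$ this yields $\mathbb{Z}[1]\oplus\mathbb{Z}[2n-2]$, and for nontrivial $\mathscr{M}$ it yields $0$. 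Consequently any two nontrivial $Th(\mathscr{M})$ share the same multiplicities and are isomorphic as split MW-motives, defining $F$.

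For the bundle version, Jouanolou's trick reduces to $X$ affine, after which $X$ admits a classifying map to some $Gr(n,m)$ pulling back $\mathscr{E}$ from the tautological bundle, so we may assume $X=Gr(n,m)$ (whose motive splits by Theorem \ref{Grass1}). Then Proposition \ref{global} globalizes the Chow-Witt generators constructed above; the hypothesis $p_{n-1}(\mathscr{E})=0\in H^{2n-2}(X,\textbf{W})$ is precisely what is needed so that the globalized class $T_{\mathscr{E}}$ remains $Sq^{2}$-closed after twisting, assembling into the K\"unneth-style isomorphism $Th(q^{*}p^{*}\mathscr{L})\cong Th(\mathscr{L})\otimes R(Gr(n-2,n-1,n))$. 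For $\mathscr{M}\notin\mathrm{Pic}(X)$, both $Th(\mathscr{M})$ and $R(X)\otimes F$ have identical $DM$-realization and vanishing twisted Witt cohomology relative to $X$, so a relative version of Lemma \ref{uniqueness} forces the isomorphism. The main obstacle will be controlling the extension data when $\mathscr{M}\notin\mathrm{Pic}(X)$ and ensuring the absolute ``common object'' $F$ globalizes uniformly over $X$; this is resolved by the same fiberwise Lemma \ref{uniqueness} argument, together with the $DM$-Thom isomorphism which shows that the $DM$-realization of any line-bundle twist depends only on its rank.
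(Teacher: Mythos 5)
Your proposal is considerably more laborious than the paper's proof and contains a gap at the final step. The paper does not pass through an absolute-case computation of $E^{\bullet}(Y,\mathscr{M})$ at all; instead it directly applies the already-proven Grassmannian bundle theorem (Theorem \ref{Grassbdl}) twice — once to $\mathscr{E}$ of odd rank $n$ (giving the decomposition of $Th(\mathscr{M}|_{\mathbb{P}(\mathscr{E})})$ via case (4) with $k=1$), and once to $\Omega_{p}(1)$ of even rank $n-1$ (via case (2) or (6) with $k=1$). This immediately yields both the absolute and bundle-version decompositions in a single stroke, since Theorem \ref{Grassbdl} is already a fiber-bundle statement.

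The key tool you are missing is Proposition \ref{flagthom}. When $\mathscr{M}\notin\mathrm{Pic}(X)$, that proposition exhibits an isomorphism $Th(q^{*}(p^{*}M(1)))\cong Th((q^{*}p^{*}M)(1))$ in $\widetilde{DM}^{eff}(X,\mathbb{Z})$, which reduces all nontrivial twist classes in $\mathrm{Pic}(\mathbb{P}(\Omega_{p}(1)))/2$ to the single case $\mathscr{M}=(q^{*}\mathscr{L})(1)$. Your ``case analysis across the four twist classes'' is then rendered unnecessary, and more importantly, it is not obvious how your analysis would establish that the resulting $Th(\mathscr{M})$ are \emph{mutually isomorphic} in $\widetilde{DM}^{eff}(pt,R)$ without some such identification. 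Further, your invocation of a ``relative version of Lemma \ref{uniqueness}'' at the end is not a real step: Lemma \ref{uniqueness} only identifies multiplicities \emph{after} splitting is known, and over a base $X$ with a nontrivial twist there is no statement in the paper that matching $DM$- and $\widetilde{DM}_{\eta}$-realizations fiberwise forces an isomorphism. The paper avoids this entirely because Theorem \ref{Grassbdl} already delivers the isomorphism in $\widetilde{DM}(S,R)$. Finally, note that Propositions \ref{can} and \ref{can1}, which you want to apply, are stated for Grassmannians $Gr(k,n)$, not for the two-step flag variety $Gr(n-2,n-1,n)$; lifting classes there would require a separate argument which the paper never needs.

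The one part of the paper's proof you would have had to reproduce is the $\mathscr{M}=p^{*}\mathscr{L}$ case: there the paper uses the Whitney sum formula to derive $e(\Omega_{p}(1))^{2}=p_{n-1}(\Omega_p(1))=p^{*}(p_{n-1}(\mathscr{E}))=0$ in Witt cohomology, shows via Theorem \ref{Grassbdl} (after applying $L$) that the multiplication-by-Euler-class map vanishes, and then applies \cite[Corollary 5.15]{Y1} to split $\mathbb{Z}(\mathbb{P}(\Omega_{p}(1)))$. Your plan to control this via Proposition \ref{global} and a hypothetical $T_{\mathscr{E}}$ class is workable in spirit but is not what the paper does and leaves the extension question open.
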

\begin{proof}
If \(\mathscr{M}\) does not come from \(X\), Proposition \ref{flagthom} shows that we may suppose \(\mathscr{M}=(q^*\mathscr{L})(1)\). Then the statements follow from applying Theorem \ref{Grassbdl} on both \(\mathscr{E}\) and \(\Omega_p(1)\). Otherwise we may suppose \(S=X=Gr(n,m)\) and \(R=\mathbb{Z}\) as before. The Whitney sum formula for even Pontryagin classes tells us that
\[p^*(p_{n-1}(\mathscr{E}))=p_{n-1}(\Omega_p(1))=e(\Omega_p(1))^2\in H^{2n-2}(\mathbb{P}(\mathscr{E}),\textbf{W})\]
hence they all vanish. The composite
\[\mathbb{Z}(\mathbb{P}(\Omega_p(1)))\xrightarrow{\cdot e(\Omega_p(1))}Th(p^*det(\mathscr{E})(1))(n-3)[2n-5]\xrightarrow[\cong]{e(\Omega_p(1))\cdot p^*}\mathbb{Z}(X)(2n-1)[4n-3]\]
is zero by \(e(\Omega_p(1))^2=0\) and the second arrow is an isomorphism after applying \(L\) by Theorem \ref{Grassbdl}, so the first arrow is zero. Hence we have
\[\mathbb{Z}(\mathbb{P}(\Omega_p(1)))\cong Th(p^*det(\mathscr{E})(1))((n-3))\oplus\mathbb{Z}(\mathbb{P}(\mathscr{E}))\oplus\bigoplus_{i=1}^{\frac{n-3}{2}}\mathbb{Z}(\mathbb{P}(\mathscr{E}))/\eta((2i-1))\]
by \cite[Corollary 5.15]{Y1}. Then the statement follows from Theorem \ref{Grassbdl}.
\end{proof}
\begin{theorem}\label{flag}
Suppose \(\mathscr{L}\in Pic(Gr(1,\cdots,n))/2\). The \(Th(\mathscr{L})\) splits as an MW-motive. Moreover, we have
\begin{enumerate}
\item If \(n\) is odd, define \(deg(a)=4a-1\). We have
\[Th(\mathscr{L})=\begin{cases}\bigoplus_{1\leq t\leq\frac{n-1}{2}}\bigoplus_{1\leq a_1<\cdots<a_t\leq\frac{n-1}{2}}\mathbb{Z}[1+\sum_sdeg(a_s)]&\textrm{if }\mathscr{L}=0\\0&\textrm{else}\end{cases}.\]
in \(\widetilde{DM}_{\eta}\).
\item If \(n\) is even, define
\[deg(a)=\begin{cases}4a-1&1\leq a\leq\frac{n}{2}-1\\n-1&a=\frac{n}{2}\end{cases}.\]
We have
\[Th(\mathscr{L})=\begin{cases}\bigoplus_{1\leq t\leq\frac{n}{2}}\bigoplus_{1\leq a_1<\cdots<a_t\leq\frac{n}{2}}\mathbb{Z}[1+\sum_sdeg(a_s)]&\textrm{if }\mathscr{L}=0\\0&\textrm{else}\end{cases}.\]
in \(\widetilde{DM}_{\eta}\).
\end{enumerate}
So \(Th(\mathscr{L})\) are mutually isomorphic in \(\widetilde{DM}(pt,R)\) if \(\mathscr{L}\neq 0\). Denote by \(G\) this common object.

Suppose \(X\in Sm/S\) is quasi-projective, \(\mathscr{M}\in Pic(Fl(\mathscr{E}))/2\) and \(\mathscr{E}\) is a vector bundle of rank \(n\) on \(X\). Denote by \(p:Fl(\mathscr{E})\longrightarrow X\) the structure map. We have
\[Th(\mathscr{M})\cong\begin{cases}Th(\mathscr{L})\otimes R(Gr(1,\cdots,n))&\begin{array}{c}p_i(\mathscr{E}),e(\mathscr{E})=0\in H^{{*}}(X,\textbf{W}(-))\\\textrm{for any }i>0\\\mathscr{M}=p^*\mathscr{L},\mathscr{L}\in Pic(X)/2\end{array}\\R(X)\otimes G&\mathscr{M}\notin Pic(X)/2\end{cases}\]
in \(\widetilde{DM}(S,R)\).
\end{theorem}
\begin{proof}
In the context of Definition \ref{flagdef}, we have
\[Y_i=Gr(n-i,n-i+1,\Omega_{p_{i-2}}).\]
Then the statements follow from inductively using Proposition \ref{flag2} and \cite[Corollary 5.15]{Y1}.
\end{proof}
{}
\end{document}